\documentclass[12pt]{amsart}



\usepackage{amssymb}

\usepackage{enumitem}

\usepackage{amsmath}
\usepackage{latexsym}
\usepackage{extarrows}
\usepackage{enumerate}
\usepackage{txfonts}
\usepackage{mathtools}
\usepackage{bm}
\usepackage{tikz-cd}
\usepackage{xcolor}
\usepackage{hyperref}

\hypersetup{
    hidelinks,
    linktoc=page,
    }

\counterwithin{figure}{section}

\makeatletter
\@namedef{subjclassname@2020}{%
  \textup{2020} Mathematics Subject Classification}
\makeatother

\usepackage[T1]{fontenc}


\newtheorem{theorem}{Theorem}[section]
\newtheorem{corollary}[theorem]{Corollary}
\newtheorem{lemma}[theorem]{Lemma}
\newtheorem{proposition}[theorem]{Proposition}



\theoremstyle{definition}
\newtheorem{definition}[theorem]{Definition}
\newtheorem{remark}[theorem]{Remark}
\newtheorem{example}[theorem]{Example}



\numberwithin{equation}{section}


\frenchspacing

\textwidth=13.5cm
\textheight=23cm
\parindent=16pt
\oddsidemargin=-0.5cm
\evensidemargin=-0.5cm
\topmargin=-0.5cm



\newcommand\E{\mathbb{E}}

\newcommand\R{\mathbb{R}}

\newcommand\C{\mathbb{C}}

\newcommand\N{\mathbb{N}}

\newcommand\B{\mathcal{B}}
\newcommand\X{\mathcal{X}}
\newcommand\Y{\mathcal{Y}}

\newcommand\blue{\color{blue}}

\newcommand{\taildoubleheadrightarrow}{%
  \rightarrowtail\mathrel{\mspace{-15mu}}\rightarrow
}
\newcommand{\doubleheadrightleftarrow}{%
  \twoheadleftarrow\mathrel{\mspace{-15mu}}\twoheadrightarrow
}

\renewcommand{\c}[1]{\mathcal{#1}}
\newcommand\op{{\operatorname{op}}}
\newcommand\Hom{\operatorname{Hom}}

\newcommand\Nat{\operatorname{Nat}}

\newcommand\id{\operatorname{id}}
\newcommand\eps{\varepsilon}


\newcommand\Cat{\mathcal{C}}
\newcommand\AbsMes{\mathbf{AbsMbl}}
\newcommand\ConcMes{\mathbf{CncMbl}}
\newcommand\AbsProb{\mathbf{AbsPrb}}
\newcommand\ConcProb{\mathbf{CncPrb}}

\newcommand\CH{\mathbf{CH}}
\newcommand\CHpt{{(\mathrm{pt} \downarrow \CH)}}
\newcommand\LCH{\mathbf{LCH}}
\newcommand\LCHpr{\mathbf{LCH_{p}}}
\newcommand\Polish{\mathbf{Pol}}
\newcommand\CMet{\mathbf{CMet}}
\newcommand\CMetProb{{\mathbf{CMetPrb}}}
\newcommand\CHProb{{\mathbf{CHPrb}}}
\newcommand\PolishProb{{\mathbf{PolPrb}}}
\newcommand\LCHProb{{\mathbf{LCHPrb}}}
\newcommand\CHptProb{(\mathrm{pt} \downarrow \CH)\mathbf{Prb}}
\newcommand\LCHprProb{\mathbf{LCH_{p}Prb}}

\newcommand\Bool{\mathbf{Bool}}
\newcommand\Boolop{\mathbf{Bool^\op}}
\newcommand\SigmaAlg{{\mathbf{Bool}_\sigma}}
\newcommand\SigmaAlgop{{\mathbf{Bool}_\sigma^\op}}
\newcommand\ProbAlg{\mathbf{PrbAlg}}
\newcommand\CStarAlgMult{\mathbf{CC^*Alg_{\mathtt{Mult,nd}}}}
\newcommand\CStarAlgMultop{\mathbf{CC^*Alg_{\mathtt{Mult,nd}}^\op}}
\newcommand\CStarAlgNd{\mathbf{CC^*Alg_{\mathtt{nd}}}}
\newcommand\CStarAlgNdop{\mathbf{CC^*Alg_{\mathtt{nd}}^\op}}
\newcommand\CStarAlg{\mathbf{CC^*Alg}}

\newcommand\CStarAlgUnit{\mathbf{CC^*Alg_1}}
\newcommand\CStarAlgUnitop{\mathbf{CC^*Alg_1^\op}}
\newcommand\CStarAlgUnitInf{(\mathbf{CC^*Alg_1} \downarrow \C)}
\newcommand\CStarAlgUnitInfop{(\mathbf{CC^*Alg_1} \downarrow \C)^\op}
\newcommand\CStarAlgMultTrace{\mathbf{CC^*Alg}^\tau_{\mathtt{Mult,nd}}}
\newcommand\CStarAlgMultTraceop{(\mathbf{CC^*Alg}^\tau_{\mathtt{Mult,nd}})^\op}
\newcommand\CStarAlgNdTrace{\mathbf{CC^*Alg}^\tau_{\mathtt{nd}}}
\newcommand\CStarAlgNdTraceop{(\mathbf{CC^*Alg}^\tau_{\mathtt{nd}})^\op}
\newcommand\CStarAlgTrace{\mathbf{CC^*Alg}^\tau}

\newcommand\CStarAlgUnitTrace{\mathbf{CC^*Alg^\tau_1}}
\newcommand\CStarAlgUnitTraceop{(\mathbf{CC^*Alg^\tau_1})^\op}
\newcommand\CStarAlgUnitInfTrace{(\mathbf{CC^*Alg_1} \downarrow \C)^\tau}
\newcommand\CStarAlgUnitInfTraceop{((\mathbf{CC^*Alg_1} \downarrow \C)^\tau)^\op}
\newcommand\Set{\mathbf{Set}}
\newcommand\Group{\mathbf{Grp}}
\newcommand\StoneCat{\mathbf{Stone}}
\newcommand\StoneCatSigma{\mathbf{Stone_\sigma}}

\newcommand\AbsDelete{\mathbf{AbsNul}}
\newcommand\ConcDelete{\mathbf{CncNul}}
\newcommand\CHDelete{\mathbf{CHNul}}
\newcommand\vonNeumann{\mathbf{CvNAlg}^\tau}
\newcommand\vonNeumannop{(\mathbf{CvNAlg}^\tau)^\op}
\newcommand\Model{\mathbf{Model}}

\newcommand\Baire{\mathcal{B}a}
\newcommand\BaireB{\mathcal{B}a_b}
\newcommand\BaireC{\mathcal{B}a_c}
\newcommand\Borel{\mathcal{B}o}
\newcommand\Mes{\mathtt{Alg}}
\newcommand\BaireMeagFunc{\mathtt{Meager}}
\newcommand\BaireFunc{\mathtt{Bair}}
\newcommand\BaireBFunc{\mathtt{Bair_b}}
\newcommand\BaireCFunc{\mathtt{Bair_c}}
\newcommand\BorelFunc{\mathtt{Bor}}
\newcommand\Loomis{\mathtt{LS}}
\newcommand\Spec{\mathtt{Spec}}
\newcommand\Stone{\mathtt{Conc}}
\newcommand\Inc{\mathtt{Inc}}
\newcommand\StoneFunc{\mathtt{Stone}}
\newcommand\Clopen{\mathtt{Clopen}}
\newcommand\StoneFuncSigma{\mathtt{Stone}_\sigma}
\newcommand\ClopenSigma{\mathtt{Clopen}_\sigma}
\newcommand\Abs{\mathtt{Abs}}
\newcommand\ident{\mathtt{id}}
\newcommand\Func{\mathtt{Func}}
\newcommand\Idem{\mathtt{Proj}}
\newcommand\Vertex{\mathtt{Vertex}}
\newcommand\Riesz{\mathtt{Riesz}}

\newcommand\CFunc{\mathtt{C}}
\newcommand\CoFunc{\mathtt{C}_0}
\newcommand\CbFunc{\mathtt{C_b}}
\newcommand\CcFunc{\mathtt{C_c}}
\newcommand\Linfty{\mathtt{L^\infty}}
\newcommand\Mult{\mathtt{Mult}}
\newcommand\Alex{\mathtt{Alex}}
\newcommand\Unit{\mathtt{Unit}}
\newcommand\Forget{\mathtt{Forget}}
\newcommand\Cast{\mathtt{Cast}}
\newcommand\Inv{\mathtt{Inv}_\Gamma}



\begin{document}


\baselineskip=17pt


\title[Foundational aspects of uncountable measure theory]{Foundational aspects of uncountable measure theory: Gelfand duality, Riesz representation, canonical models, and canonical disintegration}

\author[A. Jamneshan]{Asgar Jamneshan}
\address{Department of Mathematics\\ Koc University \\ 
Istanbul \\
34450, Turkey}
\email{ajamneshan@ku.edu.tr}

\author[T. Tao]{Terence Tao}
\address{Department of Mathematics\\ University of California \\ 
Los Angeles \\
CA 90095-1555, USA}
\email{tao@math.ucla.edu}

\date{\today}

\begin{abstract}
We collect several foundational results regarding the interaction between locally compact spaces, probability spaces and probability algebras, and commutative $C^*$-algebras and von Neumann algebras equipped with traces, in the ``uncountable'' setting in which no separability, metrizability, or standard Borel hypotheses are placed on these spaces and algebras.  In particular, we review the Gelfand dualities and Riesz representation theorems available in this setting.  We also present a canonical model that represents probability algebras as compact Hausdorff probability spaces in a completely functorial fashion, and apply this model to obtain a canonical disintegration theorem and to readily construct various product measures.  These tools are useful in applications to ``uncountable'' ergodic theory (as demonstrated by the authors and others).
\end{abstract}

\subjclass[2020]{Primary 28A60; Secondary 46L05, 28A50.}

\keywords{Uncountable measure theory, Gelfand duality, Riesz representations, probability algebras, canonical model, Stone duality, disintegration of measures}

\maketitle

\tableofcontents

\section{Introduction}\label{sec-intro}

In this paper we establish various foundational results about the measure theory (and also point set topology and functional analysis) of ``uncountable'' spaces: topological spaces that are not required to be separable or Polish, measurable spaces that are not required to be standard Borel, measure spaces that are not required to be standard Lebesgue, and $C^*$-algebras that are not required to be separable.  In other work by us and other authors \cite{roth,jt20,jamneshan2019fz,jst} we use these results to establish various results in ``uncountable'' ergodic theory (in which the acting groups $\Gamma$ are not required to be countable or the underlying probability spaces/ algebras are not required to be separable), which in turn can be applied to various ``uncountable'' systems constructed using ultraproducts and similar devices to obtain combinatorial consequences.  

In this paper we focus on the following (interrelated) families of results:

\begin{itemize}
\item[(i)] The compactification of locally compact Hausdorff spaces, and the Gel- fand dualities between categories of these spaces and various categories of commutative $C^*$-algebras.
\item[(ii)]  Riesz representation theorems on compact and locally compact Hausdorff spaces, leading to various ``Riesz dualities'' between categories of compact or locally compact Hausdorff probability spaces and categories of \emph{tracial} commutative $C^*$-algebras.
\item[(iii)]  Construction of a \emph{canonical model} of probability algebras as compact Hausdorff spaces with good category-theoretic properties (based on combining the above dualities with a ``probability duality'' between abstract probability spaces and tracial commutative von Neumann algebras).
\item[(iv)] Construction of a canonical disintegration of probability measures with respect to a factor space, via the aforementioned canonical model, and the use of this disintegration to construct relatively independent products.
\item[(v)]  Connections with various Stone dualities between categories of Stone spaces and categories of Boolean algebras, focusing in particular on the duality provided by the Loomis--Sikorski theorem, and using this duality to establish an abstract version of the Kolmogorov extension theorem. 
\end{itemize}

Most of the above results are already known in the literature, though sometimes in a different guise; we discuss relevant references at all stages of this paper. Our primary contribution is to synthesize them into an arrangement in which they appear as different aspects of a coherent whole. As the above descriptions indicate, we will rely hereby on the language of category theory to describe, organize, and interpret our results, as well as the results already in the literature.  Indeed, we found that an insistence on ensuring that various operations or identifications can be viewed as functors or natural transformations to be extremely elucidating, for instance clarifying the different versions of the Baire algebra or the Riesz representation theorem that exist in the literature by assigning each such version to a slightly different category.  We highlight the category-theoretic notions of \emph{categorical product}, \emph{natural isomorphism}, and \emph{duality of categories} as being of particular relevance to our investigations.  We review the basic terminology of category theory we will need in Appendix \ref{category-appendix}.

\begin{table}
    \centering
    \begin{tabular}{|l|l|}
    \hline
     $\CH$, $\CStarAlgUnit$ & Definition \ref{def-ch} \\
     \hline
     $\LCH$, $\LCHpr$, $\CHpt$ & Definition \ref{def-alex} \\
     \hline
     $\CStarAlgNd$, $\CStarAlgMult$, $\CStarAlgUnitInf$ & Definition \ref{def-gelfand} \\ 
     \hline
     $\CMet$, $\Polish$, $\ConcMes$ & Definition \ref{bairefunc-def} \\
     \hline 
     $\ConcProb$, $\CMetProb$, $\CHProb$, $\CHptProb$, & Definition \ref{top-prob-cat}\\
     $\PolishProb$, $\LCHProb$, $\LCHprProb$ & \\
    \hline 
     $\CStarAlgTrace$, $\CStarAlgUnitInfTrace$,  $\CStarAlgNdTrace$, $\CStarAlgMultTrace$  & Definition \ref{tracialcstar-def} \\
    \hline
    $\Bool$,  $\SigmaAlg$, $\AbsMes$, $\AbsProb$, $\ProbAlg$ & Definition \ref{abscat-def} \\
    \hline
         $\vonNeumann$ & Definition \ref{vonneumann-def}\\
         \hline
        $\StoneCat$, $\StoneCatSigma$ & Definition \ref{stone-dual}\\
        \hline
        $\ConcDelete$, $\AbsDelete$, $\CHDelete$ & Definition \ref{null-def} \\
         \hline
    \end{tabular}
    \caption{A list of the categories in this paper and the location where they are first defined or introduced.}
    \label{tab:categories}
\end{table}

\subsection{Compactification and Gelfand duality}

In this paper we use the term \emph{Gelfand duality} to refer to a number of duality of categories between categories of compact or locally compact Hausdorff spaces on one hand, and categories of commutative $C^*$-algebras on the other.  To illustrate the most basic example of Gelfand duality, we introduce the compact Hausdorff category $\CH$ and the unital commutative $C^*$-algebra category $\CStarAlgUnit$.

\begin{definition}[$\CH$ and $\CStarAlgUnit$]\ \label{def-ch}
\begin{itemize}
\item[(i)]  A \emph{$\CH$-space} is a compact Hausdorff space $X = (X_\Set, {\mathcal F}_X)$, that is to say a set $X_\Set$ equipped with a topology ${\mathcal F}_X$ that makes the set compact and Hausdorff.  A \emph{$\CH$-morphism} $f \colon X \to Y$ between two $\CH$-spaces is a $\Set$-morphism (i.e., a function) $f_\Set \colon X_\Set \to Y_\Set$ between the underlying sets which is continuous, using the usual $\Set$-composition law.
\item[(ii)]  A \emph{$\CStarAlgUnit$-algebra} is a unital commutative $C^*$-algebra ${\mathcal A}$.  A \emph{$\CStarAlgUnit$-morphism} $\Phi \colon {\mathcal A} \to {\mathcal B}$ is a unital $*$-homomorphism from ${\mathcal A}$ to ${\mathcal B}$.
\item[(iii)]  If $X$ is a $\CH$-space, we define $\CFunc(X)$ to be the $\CStarAlgUnit$-algebra of continuous functions $f \colon X \to \C$ from $X$ to the complex numbers $\C$, endowed with the obvious structure of a unital $C^*$-algebra.  If $T \colon X \to Y$ is a $\CH$-morphism, we define $\CFunc(T) \colon \CFunc(Y) \to \CFunc(X)$ to be the Koopman operator $\CFunc(T)(f) \coloneqq f \circ T$.
\item[(iv)]  If ${\mathcal A}$ is a $\CStarAlgUnit$-algebra, we define $\Spec({\mathcal A})$ (the \emph{Gelfand spectrum} of ${\mathcal A}$) to be the space $\Hom_{\CStarAlgUnit}({\mathcal A} \to \C)$ of $\CStarAlgUnit$-morphisms $\lambda \colon {\mathcal A} \to \C$ from ${\mathcal A}$ to $\C$ (viewing the latter as a $\CStarAlgUnit$-algebra), equipped with the topology induced from the product topology on the space $\C^{\mathcal A}$ of all functions from ${\mathcal A}$ to $\C$; this is a $\CH$-space thanks to the Banach-Alaoglu theorem.  If $\phi \colon {\mathcal A} \to {\mathcal B}$ is a $\CStarAlgUnit$-morphism, then $\Spec(\Phi) \colon \Spec({\mathcal B}) \to \Spec({\mathcal A})$ is the $\CH$-morphism defined by $\Spec(\Phi)(\lambda) \coloneqq \lambda \circ \Phi$ for all $\lambda \in \Hom_{\CStarAlgUnit}({\mathcal A} \to \C)$.
\end{itemize}
\end{definition}

It is a routine matter to verify that $\CH$ and $\CStarAlgUnit$ are categories, and $\CFunc \colon \CH \to \CStarAlgUnitop$ and $\Spec \colon \CStarAlgUnitop \to \CH$ are functors between the indicated categories. 

It is well-known (see, e.g., \cite{negrepontis}, \cite{semadeni} or \cite[Theorem 1.20]{folland-harmonic-analysis}) that the functors $\CFunc, \Spec$ are faithful and full which invert each other up to natural isomorphisms, thus giving a duality of categories which we refer to as the \emph{Gelfand duality} between $\CH$ and $\CStarAlgUnit$.  We summarize all these facts as a single diagram
\begin{center}
\begin{tikzcd}
    \CStarAlgUnitop \arrow[d, tail,"\Spec",two heads,shift left=.75ex]  \\ 
    \CH \arrow[u,"\CFunc",   tail, two heads,shift left=.75ex] 
\end{tikzcd}.
\end{center}
In fact we have the larger, essentially commuting, diagram of Gelfand dualities depicted in Figures\footnote{These figures, as well as several other figures in this paper, can be viewed as ``coordinate charts'' of a single enormous diagram of categories that encompass a large number of types of objects and morphisms that are studied in topology, measure theory, probability theory, and operator algebras.  This unified diagram is far too large and dense to depict in a presentable fashion, so we have opted instead to only reveal portions of it at a time.} \ref{fig:gelfand-dual}, \ref{fig:gelfand-forget}, where (roughly speaking)

\begin{itemize}
 \item $\LCH$ is the category of locally compact Hausdorff spaces, with morphisms required to be continuous;
 \item $\LCH_p$ is the category of locally compact Hausdorff spaces, with morphisms required to be both continuous and proper;
 \item $\CHpt$ is the category of pointed compact Hausdorff spaces, equipped with a distinguished point, and with morphisms required to preserve this point;
 \item $\beta \colon \LCH \to \CH$ is the Stone--{\v C}ech compactification functor;
 \item $\Alex \colon \LCH_p \to \CHpt$ is the Alexandroff (or one-point) compactification functor;
 \item $\CStarAlgNd$ is the category of commutative $C^*$-algebras with morphisms taking values in the target algebra and required to be nondegenerate;
 \item $\CStarAlgMult$ is the category of commutative $C^*$-algebras with morphisms taking values in the multiplier algebra and required to be nondegenerate;
 \item $\CStarAlgUnitInf$ is the category of unital commutative $C^*$-algebras ${\mathcal A}$ endowed with a distinguished unital $*$-homomorphism to $\C$, with the morphisms required to preserve this $*$-homomorphism;
 \item $\CoFunc(X)$ is the space of continuous functions on $X$ which vanish at infinity;
 \item $\CbFunc(X)$ is the space of bounded continuous functions on $X$;
 \item $\Mult$ is the multiplier algebra functor;
 \item $\Unit$ is the functor that adjoins a unit to a $C^*$-algebra ${\mathcal A}$ to create a unital $C^*$-algebra ${\mathcal A} \oplus \C$, with the coordinate $*$-homomorphism $\lambda_* \colon {\mathcal A} \oplus \C \to \C$.
\end{itemize}

We describe these categories and functors in more detail in Section \ref{gelfand-sec}, where we also present various commutativity relations and dualities of categories that are implicit in Figures \ref{fig:gelfand-dual}, \ref{fig:gelfand-forget}, formalized as Theorem \ref{gelfand-dualities}.  Each of these functors and equivalences already occur either implicitly or explicitly in the literature, but to our knowledge this is the first time they have been combined into the above two diagrams.  In particular, we believe that these diagrams clarify an ambiguity in the Gelfand duality literature in which morphisms between locally compact Hausdorff spaces were sometimes, but not always, required to be proper, and morphisms between $C^*$-algebras were sometimes, but not always, required to lie in the target algebra rather than the multiplier algebra.  This ambiguity is resolved by noting that there are \emph{two} natural categories $\LCH$, $\LCHpr$ of locally compact Hausdorff spaces, and \emph{two} natural categories $\CStarAlgNd$, $\CStarAlgMult$ of commutative $C^*$-algebras.

\begin{figure}
    \centering
    \begin{tikzcd}
      \CStarAlgUnitInfop \arrow[d, tail,two heads,"\Spec",shift left=.75ex]  & \CStarAlgNdop \arrow[d, tail,two heads,"\Spec",shift left=.75ex] \arrow[l, tail,"\Unit"']  \arrow[r, tail] & \CStarAlgMultop \arrow[r,tail, "\Mult"]   \arrow[d, tail,"\Spec",two heads,shift left=.75ex] & \CStarAlgUnitop \arrow[d, tail,"\Spec",two heads,shift left=.75ex] \\ 
    \CHpt \arrow[u,"\CFunc", two heads,  tail, shift left=.75ex]  & \LCHpr  \arrow[u,  "\CoFunc",tail,two heads,shift left=.75ex] \arrow[l,tail, "\Alex"] \arrow[r, tail] & \LCH \arrow[ur,  tail, "\CbFunc"] \arrow[r, tail,"\beta"'] \arrow[u,  "\CoFunc",tail, two heads,shift left=.75ex] &  \CH \arrow[u, "\CFunc",tail,two heads,shift left=.75ex] 
\end{tikzcd}
    \caption{Gelfand dualities. Tailed arrows indicate faithful functors; and an arrow with a doubled head indicates a full functor.  Unlabeled functors are forgetful functors. (These conventions remain in force for all other diagrams of categories and functors in this paper.)  This diagram commutes up to natural isomorphisms.}
    \label{fig:gelfand-dual}
\end{figure}

\begin{figure}
    \centering
    \begin{tikzcd}
      \CStarAlgUnitInfop \arrow[d, tail,two heads,"\Spec",shift left=.75ex] \arrow[r,blue,tail]  & \CStarAlgUnitop \arrow[d, tail,"\Spec",two heads,shift left=.75ex] \arrow[r,blue, tail, two heads] \arrow[rr,blue, tail, two heads, bend left] & \CStarAlgNdop \arrow[d, tail,two heads,"\Spec",shift left=.75ex] \arrow[r,  tail]  & \CStarAlgMultop \arrow[d, tail,"\Spec",two heads,shift left=.75ex] &  \\ 
    \CHpt \arrow[r, blue,tail]  \arrow[u,"\CFunc", two heads,  tail, shift left=.75ex]  & 
     \CH \arrow[u, "\CFunc",tail,two heads,shift left=.75ex] \arrow[r, blue, tail, two heads] \arrow[rr,blue, tail, bend right, two heads] & 
    \LCHpr  \arrow[u,  "\CoFunc",tail,two heads,shift left=.75ex] \arrow[r, tail] & \LCH \arrow[u,"\CoFunc",tail, two heads,shift left=.75ex] 
\end{tikzcd}
    \caption{Forgetful functors in the locally compact and $C^*$-algebra categories.  This diagram also commutes up to natural isomorphisms, but does not commute with the previous diagram. Blue arrows indicate casting functors, as per Definition \ref{cast}.  We do not deem the forgetful functor from $\LCHpr$ to $\LCH$ (or from $\CStarAlgNd$ to $\CStarAlgMult$) to be casting, as these functors do not commute with other casting functors we will use later, such as $\BaireBFunc$ and $\BaireCFunc$.}
    \label{fig:gelfand-forget}
\end{figure}

\subsection{Baire algebras and Riesz duality}

We now augment the Gelfand dualities just discussed by endowing the locally compact Hausdorff spaces with a probability measure on one hand, and endowing the commutative $C^*$-algebras with a trace on the other hand, giving rise to a new collection of dualities of categories based on various forms of the Riesz representation theorem, which we shall term ``Riesz dualities''.

In order to describe these Riesz dualities, one must first address a fundamental measure-theoretic question, namely which $\sigma$-algebra one should associate to a given topological space $X$.  In the literature there are three commonly used options to choose from:
\begin{itemize}
\item[(i)]  The \emph{Borel $\sigma$-algebra} $\Borel(X)$, generated by the open (or equivalently, closed) subsets of $X$.
\item[(ii)]  The \emph{$\CbFunc$-Baire $\sigma$-algebra} $\BaireB(X)$, generated by the bounded complex-valued\footnote{We always endow $\R$ and $\C$ with the Borel $\sigma$-algebra.} continuous functions $\CbFunc(X)$ of $X$ (or equivalently, the space of arbitrary continuous functions into $\C$).  
\item[(iii)]  The \emph{$\CcFunc$-Baire $\sigma$-algebra} $\BaireC(X)$, generated by the compactly supported complex-valued continuous functions $\CcFunc(X)$ of $X$ (or equivalently, by the space $\CoFunc(X) \coloneqq \overline{\CcFunc(X)}$ of continuous complex-valued functions that vanish at infinity).  We also refer to $\BaireC(X)$ as the \emph{$\CoFunc$-Baire $\sigma$-algebra}.  In Proposition \ref{baire-prop} we also establish the well-known fact that $\BaireC(X)$ is generated by the compact $G_\delta$ subsets of $X$.
\end{itemize}

When $X$ is a compact metrizable space, the three $\sigma$-algebras $\Borel(X)$, $\BaireB(X)$, $\BaireC(X)$ agree, and we also clearly have $\BaireB(X)=\BaireC(X)$ for compact Hausdorff spaces $X$; thus in these cases we can refer to both $\BaireB(X)$ and $\BaireC(X)$ simply as the Baire $\sigma$-algebra $\Baire(X)$.  In general we only have the obvious inclusions
\begin{equation}\label{baire-include}
\BaireC(X) \subseteq \BaireB(X) \subseteq \Borel(X);
\end{equation}
see Remark \ref{baire-remark} for further discussion.  Note that once one leaves the $\CH$ setting, there is no consensus in the literature as to which of $\BaireB(X)$, $\BaireC(X)$ should be referred to as \emph{the} Baire $\sigma$-algebra; the $\CbFunc$-Baire algebra $\BaireB(X)$ is favored for instance in \cite[Volume 2]{bogachev2006measure},  \cite{fremlinvol4}, \cite{dudley01}, \cite{hewitt2012abstract}, while the $\CcFunc$-Baire algebra $\BaireC(X)$ is favored in \cite{halmos-measure-theory}, \cite{royden-real-analysis}.  From our investigations we have concluded that the choice of $\sigma$-algebra should be determined by the category one has chosen to work in.  Specifically:

\begin{itemize}
\item In the category $\Polish$ of Polish spaces, the Borel $\sigma$-algebras $\Borel(X)$ are the most natural to use.
\item In the category $\LCH$ of locally compact Hausdorff spaces, the $\CbFunc$-Baire $\sigma$-algebras $\BaireB(X)$ are the most natural to use.
\item In the category $\LCHpr$ of locally compact Hausdorff spaces with proper morphisms, the $\CcFunc$-Baire $\sigma$-algebras $\BaireC(X)$ are the most natural to use.
\item In the category $\CH$ of compact Hausdorff spaces or the category $\CHpt$ of pointed compact Hausdorff spaces, the Baire $\sigma$-algebras $\Baire(X) = \BaireB(X) = \BaireC(X)$ are the most natural to use.
\item In the category $\CMet$ of compact metrizable spaces, the $\sigma$-algebras $\Borel(X) = \Baire(X) = \BaireB(X) = \BaireC(X)$ agree, and one can use them interchangeably.
\end{itemize}

With these choices we obtain functors $\BorelFunc \colon \Polish \to \ConcMes$, $\BaireFunc \colon \CH \to \ConcMes$, $\BaireBFunc \colon \LCH \to \ConcMes$, $\BaireCFunc \colon \LCHpr \to \ConcMes$ to the category $\ConcMes$ of (concrete) measurable spaces, as detailed in Definition \ref{bairefunc-def}; see Figure \ref{fig:baire}.  These functors also enjoy other pleasant category-theoretic properties, for instance being compatible with various product constructions; see Proposition \ref{prod-top}.  These choices are compatible with the folklore philosophy that Baire $\sigma$-algebras are ``less pathological'' than their Borel counterparts when working in ``uncountable'' settings in which the spaces are not assumed to be separable, metrizable, or Polish.  We caution that with the Baire algebra, individual points and other compact sets may become non-measurable, but this turns out to be surprisingly much less of a difficulty than one might initially imagine, particularly if one adopts an ``abstract'', ``point-free'' or ``pointless'' approach to measure theory (see Section \ref{abs-prob-sec}). See also \cite[Remark 5.8]{EFHN} for a comparative analysis of the Borel and Baire algebras in the context of Riesz representation theorems and product space constructions.

\begin{figure}
    \centering
    \begin{tikzcd}
    \CHpt \arrow[r,blue,tail,] & \CH \arrow[dl, blue,  tail, two heads] \arrow[dd,bend left, blue,tail,"\BaireFunc",shift left=0.25ex] \arrow[d,blue,tail, two heads] & \CMet \arrow[l,blue,tail, two heads] \arrow[d,blue,tail, two heads] \\
    \LCHpr \arrow[dr, blue, "\BaireCFunc"', tail] & \LCH \arrow[d, blue, "\BaireBFunc"', tail] & \Polish \arrow[dl, blue, tail, "\BorelFunc"] \\
    \Set & \ConcMes \arrow[l,blue,tail]
\end{tikzcd}
    \caption{Functors from topological categories to the concrete measurable category $\ConcMes$, which in turn has a forgetful functor to the category $\Set$ of sets. This diagram commutes (as is required as per the casting conventions in Definition \ref{cast}).}
    \label{fig:baire}
\end{figure}

Now that we have fixed the choice of $\sigma$-algebra to place on spaces in each of the topological categories, one can define the notion of a \emph{Radon probability measure}\footnote{We will not attempt to set up a Riesz representation theory for Polish spaces $X$, as these spaces need not be locally compact and so the spaces $\CcFunc(X), \CoFunc(X)$ can be quite degenerate. See \cite{varadarajan-riesz} for some exploration of Riesz representation type theorems in the absence of a hypothesis of local compactness.} on $\CH$-spaces, $\CHpt$-spaces, $\LCH$-spaces, and $\LCHpr$-spaces.    In the literature these Radon measures are usually defined on Borel sets and required to be inner regular with respect to compact sets; with our ``Baire-centric'' philosophy, the measures are instead defined on $\CoFunc$-Baire sets and are inner regular with respect to compact\footnote{A $G_\delta$ set is a countable intersection of open sets, and an $F_\sigma$ set is similarly a countable union of closed sets.} $G_\delta$ sets.  With this setup, it becomes possible to systematically attach Radon probability measures to the spaces in the categories $\CH$, $\CHpt$, $\LCH$, $\LCHpr$ to obtain categories $\CHProb$, $\CHptProb$, $\LCHProb$, $\LCHprProb$ of various types of locally compact Hausdorff spaces equipped with a Radon probability measure; see Definition \ref{top-prob-cat}.  For the categories $\CH, \CHpt$ the Radon hypothesis is in fact automatic (see Proposition \ref{automatic}) and may thus be omitted.  On the dual side, one can similarly attach a ``trace'' to the algebras in the categories $\CStarAlgUnit$, $\CStarAlgUnitInf$, $\CStarAlgMult$, $\CStarAlgNd$ to obtain categories $\CStarAlgUnitTrace$, $\CStarAlgUnitInfTrace$, $\CStarAlgMultTrace$, $\CStarAlgNdTrace$ of various types of commutative $C^*$-algebras equipped with a trace.  This is very much in line with the philosophy of noncommutative probability, in which a noncommutative probability space is often defined as some sort of $C^*$-algebra equipped with a trace, though in our case we are restricting attention solely to commutative $C^*$-algebras.  In Theorem \ref{rrt} below we then establish the fundamental Riesz representation theorems relating the categories
$\CHProb, \CHptProb, \LCHProb, \LCHprProb$ to their counterparts
$\CStarAlgUnitTrace, \CStarAlgUnitInfTrace, \CStarAlgMultTrace, \CStarAlgNdTrace$; our main tools for this will be several existing versions of the Riesz representation theorem (and the closely related Daniell-Stone representation theorem) in the literature.  As a consequence we obtain completely analogous versions of the diagrams of categories in Figures \ref{fig:gelfand-dual}, \ref{fig:gelfand-forget}; see Figures \ref{fig:riesz-dual}, \ref{fig:riesz-forget}.  A precise formulation of this statement is given in Theorem \ref{riesz-dualities}.

\begin{figure}
    \centering
    \begin{tikzcd}
      \CStarAlgUnitInfTraceop \arrow[d, tail,two heads,"\Riesz",shift left=.75ex]  & \CStarAlgNdTraceop \arrow[d, tail,two heads,"\Riesz",shift left=.75ex] \arrow[l, tail,"\Unit"']  \arrow[r, tail] & \CStarAlgMultTraceop \arrow[r,tail, "\Mult"]   \arrow[d, tail,"\Riesz",two heads,shift left=.75ex] & \CStarAlgUnitTraceop \arrow[d, tail,"\Riesz",two heads,shift left=.75ex] \\ 
    \CHptProb \arrow[u,"\CFunc", two heads,  tail, shift left=.75ex]  & \LCHprProb  \arrow[u,  "\CoFunc",tail,two heads,shift left=.75ex] \arrow[l,tail, "\Alex"] \arrow[r, tail] & \LCHProb \arrow[ur,  tail, "\CbFunc"] \arrow[r, tail,"\beta"'] \arrow[u,  "\CoFunc",tail, two heads,shift left=.75ex] &  \CHProb \arrow[u, "\CFunc",tail,two heads,shift left=.75ex] 
\end{tikzcd}
    \caption{Riesz dualities. This diagram commutes up to natural isomorphisms. There are forgetful functors to the corresponding categories in Figure \ref{fig:gelfand-dual}.}
    \label{fig:riesz-dual}
\end{figure}

\begin{figure}
    \centering
    \begin{tikzcd}
      \CStarAlgUnitInfTraceop \arrow[d, tail,two heads,"\Riesz",shift left=.75ex] \arrow[r,blue,tail]  & \CStarAlgUnitTraceop \arrow[d, tail,"\Riesz",two heads,shift left=.75ex] \arrow[r,blue, tail, two heads] \arrow[rr,blue, tail, bend left, two heads] & \CStarAlgNdTraceop \arrow[d, tail,two heads,"\Riesz",shift left=.75ex] \arrow[r,  tail]  & \CStarAlgMultTraceop \arrow[d, tail,"\Riesz",two heads,shift left=.75ex] &  \\ 
    \CHptProb \arrow[r, blue,tail]  \arrow[u,"\CFunc", two heads,  tail, shift left=.75ex]  & 
     \CHProb \arrow[u, "\CFunc",tail,two heads,shift left=.75ex] \arrow[r, blue, tail, two heads] \arrow[rr,blue, tail, bend right, two heads] & 
    \LCHprProb  \arrow[u,  "\CoFunc",tail,two heads,shift left=.75ex] \arrow[r, tail] & \LCHProb \arrow[u,  "\CoFunc",tail, two heads,shift left=.75ex] 
\end{tikzcd}
    \caption{Forgetful functors in the locally compact probabilistic and tracial $C^*$-algebra categories.  This diagram also commutes up to natural isomorphisms, but does not commute with the previous diagram.}
    \label{fig:riesz-forget}
\end{figure}

\subsection{Canonical models of abstract probability spaces}

Given a probability space $X = (X_\ConcMes, \mu_X) = (X_\Set, \Sigma_X, \mu_X)$, one can form the \emph{probability algebra}\footnote{This is a special case of the more familiar notion of a \emph{measure algebra}, which corresponds to the setting in which $X$ is a measure space instead of a probability space.} $X_\ProbAlg = (\Sigma_X/_{\mathcal{N}_X}, \mu_X)$, where ${\mathcal N}_X \coloneqq \{ E \in \Sigma_X \colon \mu_X(E) = 0\}$ is the null ideal, $\Sigma_X/_{\mathcal{N}_X}$ is the quotient algebra with respect to the $\sigma$-ideal ${\mathcal N}_X$ (which is well defined as a $\sigma$-complete Boolean algebra, though it need not be represented concretely as a $\sigma$-algebra of sets), and $\mu_X \colon \Sigma_X/_{\mathcal{N}_X} \to [0,1]$ is the descent of the measure $\mu_X \colon \Sigma_X \to [0,1]$ (here we abuse notation and write $\mu_X$ for both the concrete measure and its descent on the associated probability algebra).   Informally, one should view $X_\ProbAlg$ as a ``point-free'' or ``pointless'' abstraction of $X$ in which the null sets have been ``deleted''.  Every measure-preserving map $T \colon X \to Y$ between probability spaces $X,Y$ then gives rise to a \emph{$\ProbAlg$-morphism}\footnote{This is essentially the same concept as a \emph{measure space homomorphism} from \cite[Definition 5.1]{furstenberg2014recurrence}.} $T_\ProbAlg \colon X_\ProbAlg \to Y_\ProbAlg$ (by convention, we implicitly define $\ProbAlg$ as an opposite category in which the direction of all morphism arrows are reversed, see Definition \ref{abscat-def} for details), which remains unchanged if one modifies $T$ on a null set; see Definition \ref{abscat-def} for precise definitions.

A large part of ergodic theory can be viewed as taking place on probability algebras, by replacing any concrete measure-preserving transformation $T \colon X \to X$ with its abstract probability space counterpart $T_\ProbAlg \colon X_\ProbAlg \to X_\ProbAlg$.  This ``point-free'' approach to ergodic theory seems particularly well suited for studying actions of uncountable (discrete) groups, as by deleting the null sets in advance, one can avoid to a large extent the standard difficulty that an uncountable union of null sets is null. See our previous paper \cite{jt19} for an example of this philosophy.

However, in some applications one would like to be able to reverse the abstraction process, and represent an abstract measure-preserving action by a concrete one, preferably with some additional regularity properties (such as continuity).  If one insists on fixing the concrete model in advance, such a representation is not always possible, see, e.g., \cite[\S 343, 344]{fremlinvol3} or \cite{gtw}, where in \cite{gtw} an interesting example of a natural action (of the automorphism group of infinite dimensional Gaussian measure) is given which can only act in an abstract fashion on the underlying measure space, and cannot be described in terms of a Borel action.   

If one imposes suitable ``countability'' hypotheses on the group and measure space, however, one can \emph{model} an abstract group action by a concrete one\footnote{We stress that an implicit requirement in Theorem \ref{countable-model} is that the sought model satisfies also a separability hypothesis which is needed in applications of such separable models, for example for disintegration of measures and ergodic decomposition.}, where the action is now given by continuous maps.  Here is a typical such theorem:

\begin{theorem}[Continuous model for countable abstract systems] \label{countable-model} Let $\Gamma$ be a group, and let $X_\ProbAlg$ be a probability algebra.  Assume furthermore:
\begin{itemize}
\item[(a)] $\Gamma$ is at most countable.
\item[(b)] The $\sigma$-complete Boolean algebra associated to $X_\ProbAlg$ is separable.
\end{itemize}
Suppose that $\Gamma$ acts on $X_\ProbAlg$ by $\ProbAlg$-morphisms $T^\gamma_{X_\ProbAlg} \colon X_\ProbAlg \to X_\ProbAlg$ for $\gamma \in \Gamma$. Then there exists a Cantor probability space $X^* = (X^*, \mu_{X^*})$ (with $\mu_{X^*}$ a Borel probability measure) and an action of $\Gamma$ on $X^*$ by measure-preserving homeomorphisms $T^\gamma_{X^*} \colon X^* \to X^*$, and a $\ProbAlg$-isomorphism ${\mathfrak A} : X_\ProbAlg \to X^*_\ProbAlg$ such that
$$ (T^\gamma_{X^*})_\ProbAlg \circ {\mathfrak A} = {\mathfrak A} \circ T^\gamma_{X_\ProbAlg}$$
for $\gamma \in \Gamma$.
\end{theorem}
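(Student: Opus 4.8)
The plan is to build the concrete model not from a fixed Cantor system but directly from $X_\ProbAlg$, by equipping $\{0,1\}^{\N}$ with a $\Gamma$-invariant Borel measure so that the abstract automorphisms $T^\gamma_{X_\ProbAlg}$ are realized by the most rigid homeomorphisms available, namely permutations of coordinates. First I would unwind the $\ProbAlg$-action into a genuine left action of $\Gamma$ on the underlying separable $\sigma$-complete Boolean algebra $B$ of $X_\ProbAlg$, by measure-preserving Boolean automorphisms $\tilde T^\gamma$ (replacing $\gamma$ by $\gamma^{-1}$ if the opposite-category convention forces an anti-action). Using hypothesis (b) I would pick a countable dense set $D_0 \subseteq B$ and set $D := \{\tilde T^\gamma b : \gamma \in \Gamma,\ b \in D_0\}$; by hypothesis (a) this is still countable, it is still dense, and it is now $\Gamma$-invariant. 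Enumerating $D = \{b_n\}_{n\in\N}$, the $\Gamma$-action on the \emph{set} $D$ transports to an action $\Gamma \acts \N$ characterized by $b_{\gamma\cdot n} = \tilde T^\gamma b_n$.

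Next I would take $X^* := \{0,1\}^{\N}$ with $\Gamma$ acting by coordinate permutation, $(\gamma\cdot x)_n := x_{\gamma^{-1}\cdot n}$, which are homeomorphisms. Let $\mathcal C$ be the countable Boolean algebra of clopen subsets of $X^*$, freely generated by the cylinders $E_n := \{x : x_n = 1\}$, on which the action satisfies $\gamma \cdot E_n = E_{\gamma \cdot n}$. Define the Boolean homomorphism $h \colon \mathcal C \to B$ by $h(E_n) := b_n$ (well defined by freeness); its image is the subalgebra of $B$ generated by $D$, which is dense, and $h$ is $\Gamma$-equivariant since it is so on the generators $E_n$. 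Then $\nu_0 := \mu_X \circ h$ is a finitely additive probability measure on $\mathcal C$ which is automatically a premeasure (by compactness of $X^*$ any clopen partition of a clopen set is finite) and is $\Gamma$-invariant because each $\tilde T^\gamma$ preserves $\mu_X$; by the Carath\'eodory/Hahn--Kolmogorov extension theorem it extends uniquely to a Borel probability measure $\mu_{X^*}$ on $X^*$, still $\Gamma$-invariant by the uniqueness clause. Hence $\Gamma$ acts on the Cantor probability space $(X^*,\mu_{X^*})$ by measure-preserving homeomorphisms $T^\gamma_{X^*}$.

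Finally, since $\mathcal C$ generates the Borel $\sigma$-algebra of $X^*$ it is dense in the probability algebra $X^*_\ProbAlg$, and $h$ is measure-preserving, hence an isometry for the symmetric-difference metrics; I would extend $h$ to a measure-preserving $\sigma$-homomorphism $\bar h \colon X^*_\ProbAlg \to B$. It is injective (being measure-preserving) and has dense image (containing $h(\mathcal C)$); the image is also closed, being the isometric image of a complete space, so $\bar h$ is a $\sigma$-isomorphism. I then set $\mathfrak A := \bar h^{-1}$ (read in the direction appropriate to $\ProbAlg$). The intertwining identity $(T^\gamma_{X^*})_\ProbAlg \circ \mathfrak A = \mathfrak A \circ T^\gamma_{X_\ProbAlg}$ follows because $\bar h \circ (T^\gamma_{X^*})_\ProbAlg$ and $\tilde T^\gamma \circ \bar h$ are $\sigma$-homomorphisms that agree on the dense subalgebra $\mathcal C$ — which is precisely the $\Gamma$-equivariance of $h$ — and hence agree everywhere.

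The step I expect to carry the real content is the second half of the first paragraph: upgrading abstract measure-algebra automorphisms to genuine homeomorphisms succeeds only because the model is chosen \emph{after} the action is given, and selecting a $\Gamma$-invariant countable generating set — which is what forces the coordinate-permutation action on $\{0,1\}^{\N}$ to match the prescribed one — is exactly where hypotheses (a) and (b) are simultaneously used; without countability of $\Gamma$ the set $D$ need not be countable, and without separability of $X_\ProbAlg$ the index set need not be $\N$. Note finally that atoms of $X_\ProbAlg$ require no separate treatment: the measure $\mu_{X^*}$ produced above may be atomic or fail to have full support, but it is still a Borel probability measure on the Cantor space and $\bar h$ still identifies its measure algebra with $X_\ProbAlg$.
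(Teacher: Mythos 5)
Your construction is correct, but it is worth noting that the paper itself gives no argument here: Theorem \ref{countable-model} is dispatched with a one-line citation to Glasner's Theorem 2.15, whose proof is essentially the construction you have written out. So what you have produced is a self-contained version of the standard argument rather than a genuinely new route. The load-bearing steps all check out: the clopen algebra of $\{0,1\}^{\N}$ really is the free Boolean algebra on the cylinders $E_n$, so $h$ is well defined and equivariant once it is equivariant on generators; a finitely additive measure on the clopen algebra of a compact zero-dimensional space is automatically a premeasure (a countable disjoint clopen cover of a clopen set is finite by compactness), so Carath\'eodory applies and uniqueness gives $\Gamma$-invariance of the extension; and the passage from $h$ to $\bar h$ works because $h$ descends to the measure algebra of $(X^*,\mu_{X^*})$ (if $\mu_{X^*}(E\triangle F)=0$ then $\mu_X(h(E)\triangle h(F))=0$, hence $h(E)=h(F)$ by faithfulness of $\mu_X$ on the probability algebra $B$), is an isometry for the symmetric-difference metrics, and therefore extends to the completion, with surjectivity coming from density of the algebra generated by $D$ and completeness of $B$. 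Your identification of where (a) and (b) enter — in making the invariant generating set $D$ countable — is exactly right, and your handling of the opposite-category convention (inverting $\gamma$ to turn the induced right action on $B$ into a left action) is the correct bookkeeping.

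One cosmetic gap: if $X_\ProbAlg$ is finite, then $D$ is finite and $\{0,1\}^{D}$ is a finite discrete space, not a Cantor space, so your enumeration of $D$ by $\N$ breaks down. This is trivially repaired by taking the product of your model with $\{0,1\}^{\N}$ carrying a point mass and the trivial $\Gamma$-action (which does not change the measure algebra), or by padding $D$ to a countably infinite invariant set; but as written the statement "enumerate $D=\{b_n\}_{n\in\N}$" silently assumes $D$ is infinite.
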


\begin{proof} This is a special case of \cite[Theorem 2.15]{glasner2015ergodic}.
\end{proof}

Informally, the above theorem asserts that under the ``countability'' hypotheses (a), (b), an abstract measure-preserving system can be \emph{modeled} by a concrete and continuous measure-preserving system (on a Cantor space).  The model provided by this theorem is not completely canonical; however, the full version of \cite[Theorem 2.15]{glasner2015ergodic} asserts, roughly speaking, that any pair of abstract measure-preserving systems $(X_\ProbAlg, T_{X_\ProbAlg})$,  $(Y_\ProbAlg, T_{Y_\ProbAlg})$ connected by a factor map $\pi \colon X_\ProbAlg \to Y_\ProbAlg$ can be \emph{simultaneously} modeled by compatible continuous models.  We refer the reader to \cite{glasner2015ergodic} for a more precise statement.  In the case when $X_\ProbAlg,Y_\ProbAlg$ come from standard probability spaces $X, Y$ and $\Gamma$ is at most countable, one can also invoke a well known theorem of von Neumann \cite{vonneumann} to model the $\ProbAlg$-morphisms by concrete measurable maps on the indicated spaces $X$, $Y$; see, e.g., \cite[Theorem F.9]{EFHN}.

For applications to uncountable ergodic theory, it is desirable to remove countability hypotheses such as (a), (b) from the above type of theorem, and also make the model completely canonical.  This will be achieved in Section \ref{canonical-sec}, in which we construct a \emph{canonical model functor}
\begin{center}
\begin{tikzcd}
    \ProbAlg \arrow[r,"\Stone",  tail, two heads] & \CHProb 
\end{tikzcd}
\end{center}
that assigns to each probability algebra $X$ a pair $$\Stone(X)  = (\Stone(X)_\CH, \mu_{\Stone(X)})$$ consisting of
 a compact Hausdorff space $\Stone(X)_\CH$ equipped with a Radon probability measure $\mu_{\Stone(X)}$ that models $X$ in the sense that probability algebra $\Stone(X)_\ProbAlg$ is (naturally) isomorphic to $X$, such that every $\ProbAlg$-morphism $T \colon X \to Y$ is assigned a continuous measure-preserving map $\Stone(T) \colon \Stone(X) \to \Stone(Y)$ in a completely functorial fashion; see Theorem \ref{canon-model} for a precise statement.   The functor $\Stone$ turns out to be full and faithful, thus it identifies the category $\ProbAlg$ of probability algebras with a subcategory of the much more structured category $\CHProb$ of compact Hausdorff probability spaces.  The functoriality of $\Stone$ is convenient for ergodic theory applications, as it automatically allows one to transfer any dynamical structure on the $\ProbAlg$ algebras to their $\CHProb$ counterparts via the canonical model. For instance, we now have an uncountable version of Theorem \ref{countable-model}:

\begin{theorem}[Continuous model for uncountable abstract systems] \label{uncountable-model} Let $\Gamma$ be a group, and let $X_\ProbAlg$ be a probability algebra.  
Suppose that $\Gamma$ acts on $X_\ProbAlg$ by $\ProbAlg$-morphisms $T^\gamma_{X_\ProbAlg} \colon X_\ProbAlg \to X_\ProbAlg$ for $\gamma \in \Gamma$. Then there is action of $\Gamma$ on $\Stone(X)$ by measure-preserving homeomorphisms $\Stone(T^\gamma_X) \colon \Stone(X) \to \Stone(X)$ and a $\ProbAlg$-isomorphism ${\mathfrak A} \colon X_\ProbAlg \to \Stone(X)_\ProbAlg$ such that
$$ \Stone(T^\gamma_{X})_\ProbAlg \circ {\mathfrak A} = {\mathfrak A} \circ T^\gamma_{X_\ProbAlg}$$
for $\gamma \in \Gamma$.
\end{theorem}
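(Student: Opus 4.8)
The plan is to obtain this theorem as a purely formal corollary of the construction of the canonical model functor $\Stone \colon \ProbAlg \to \CHProb$ and its basic properties established in Theorem \ref{canon-model}; all of the analytic content has already been spent there, and what remains is to unwind functoriality and naturality.

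First I would apply $\Stone$ to each of the $\ProbAlg$-morphisms $T^\gamma_{X_\ProbAlg} \colon X_\ProbAlg \to X_\ProbAlg$, producing $\CHProb$-morphisms $\Stone(T^\gamma_X) \colon \Stone(X) \to \Stone(X)$, i.e.\ continuous measure-preserving self-maps of the compact Hausdorff probability space $\Stone(X)$. Since $\gamma \mapsto T^\gamma_{X_\ProbAlg}$ is an action, one has $T^{\gamma_1\gamma_2}_{X_\ProbAlg} = T^{\gamma_1}_{X_\ProbAlg} \circ T^{\gamma_2}_{X_\ProbAlg}$ and $T^e_{X_\ProbAlg} = \id_{X_\ProbAlg}$; because $\Stone$ preserves composition and identities, the same relations hold for $\gamma \mapsto \Stone(T^\gamma_X)$, so this is an action of $\Gamma$ on $\Stone(X)$ by $\CHProb$-morphisms. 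Moreover each $T^\gamma_{X_\ProbAlg}$ is a $\ProbAlg$-isomorphism, with two-sided inverse $T^{\gamma^{-1}}_{X_\ProbAlg}$; since any functor sends isomorphisms to isomorphisms, $\Stone(T^\gamma_X)$ is a $\CHProb$-isomorphism with inverse $\Stone(T^{\gamma^{-1}}_X)$. Unwinding the definition of $\CHProb$, an isomorphism there is precisely a measure-preserving homeomorphism, which is the regularity claimed.

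It remains to produce the intertwining $\ProbAlg$-isomorphism. I would take $\mathfrak{A} \colon X_\ProbAlg \to \Stone(X)_\ProbAlg$ to be the component at $X$ of the natural isomorphism between $\id_{\ProbAlg}$ and the functor $X \mapsto \Stone(X)_\ProbAlg$ provided by Theorem \ref{canon-model} (this is exactly the statement that $\Stone$ models probability algebras, \emph{naturally}). Naturality of this isomorphism applied to the morphism $T^\gamma_{X_\ProbAlg}$ says precisely that the square
\begin{center}
\begin{tikzcd}
X_\ProbAlg \arrow[r,"{\mathfrak A}"] \arrow[d,"T^\gamma_{X_\ProbAlg}"'] & \Stone(X)_\ProbAlg \arrow[d,"\Stone(T^\gamma_X)_\ProbAlg"] \\
X_\ProbAlg \arrow[r,"{\mathfrak A}"'] & \Stone(X)_\ProbAlg
\end{tikzcd}
\end{center}
commutes, i.e.\ $\Stone(T^\gamma_X)_\ProbAlg \circ \mathfrak{A} = \mathfrak{A} \circ T^\gamma_{X_\ProbAlg}$ for every $\gamma \in \Gamma$, as desired.

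I do not expect a genuine obstacle in this proof: everything is deferred to the construction and verification of $\Stone$ in Theorem \ref{canon-model}. The one subtlety worth flagging is that no hidden countability assumption is needed to conclude that $\Stone(T^\gamma_X)$ is an honest homeomorphism; this uses only functoriality applied to the single identity $T^\gamma_{X_\ProbAlg} \circ T^{\gamma^{-1}}_{X_\ProbAlg} = \id_{X_\ProbAlg}$, with no union over $\Gamma$ anywhere, which is why the uncountability of $\Gamma$ and the possible non-separability of $X_\ProbAlg$ cause no trouble here, in contrast with Theorem \ref{countable-model}. Note also that neither fullness nor faithfulness of $\Stone$ is used, only that it is a functor equipped with the natural modeling isomorphism.
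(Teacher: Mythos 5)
Your proof is correct and is essentially the paper's own argument: the paper disposes of this theorem by noting it is immediate from Theorem \ref{canon-thm} (functoriality of $\Stone$) together with Proposition \ref{model-basic}(i), exactly the two ingredients you use. The only quibble is a reference: the natural isomorphism between $\ident_{\ProbAlg}$ and $\Cast_{\CHProb \to \ProbAlg} \circ \Stone$ that you take $\mathfrak{A}$ from is Proposition \ref{model-basic}(i), not Proposition \ref{canon-model} (which is the universality statement).
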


There are several ways to construct the canonical model $\Stone$, but the easiest way to proceed is via Riesz duality, and specifically to set
$$ \Stone(X) \coloneqq \Riesz( \Linfty(X)_{\CStarAlgUnitTrace} )$$
where $\Linfty(X)_{\CStarAlgUnitTrace}$ is the space of bounded (abstractly) measurable functions on the probability algebra $X$, viewed as a unital tracial commutative $C^*$-algebra (i.e., a $\CStarAlgUnitTrace$-algebra).  One can view this construction in terms of a further duality, namely a ``probability duality'' between the category $\ProbAlg$ of probability algebras and the category $\vonNeumann$ of commutative tracial von Neumann algebras, with the functor $\Linfty \colon \ProbAlg \to \vonNeumannop$ being one of the two functors witnessing this equivalence of categories; see Figure \ref{fig:canon}.  Versions of this construction have implicitly appeared in the literature in several places \cite{segal}, \cite{DNP}, \cite{ellis}, \cite[\S 12.3, 13.4]{EFHN}, with the model referred to as the \emph{Stone model} in \cite{EFHN}.  One can also proceed by applying Kakutani duality \cite{kakutani-l} to the Banach lattice $L^1(X)$ rather than Gelfand duality to the $C^*$ algebra $\Linfty(X)$; see \cite{dieudonne-counter}.  In \cite{halmos-dieudonne} the canonical model $\Stone(X)$ is referred to as the \emph{Kakutani space} of $X$.

  In Section \ref{loomis-sec} we will also give an equivalent alternate construction of $\Stone$ using the Loomis--Sikorski theorem (which can be viewed as an instance of Stone duality rather than Gelfand or Riesz duality).  A version of this alternate construction also implicitly appears in \cite{fremlinvol3}, \cite{doob-ratio}.  The canonical model functor $\Stone$ also obeys certain universality properties analogous to those enjoyed by the Stone--{\v C}ech functor $\beta$; see Propositions \ref{canon-model}, \ref{factor-of-stone}.
  
 \begin{remark} The Banach spaces $\Linfty(X)$ are almost never separable, and so the canonical model spaces $\Stone(X)$ are also almost never separable, even when the original probability algebra $X$ is separable.  As such, the canonical model can only be constructed in this uncountable framework, and thus presents an \emph{advantage} of this framework over the more traditional countable setting of ergodic theory (even if one was initially only interested in separable spaces), in analogy to how the Stone--{\v C}ech compactification can only be applied in similarly ``uncountable'' frameworks in which the topological spaces one works with are not required to obey any separability, metrizability or countability axioms.
 \end{remark}

In Theorem \ref{countable-model}, the model spaces $X^*$ were not arbitrary topological spaces, but had the structure of a Cantor space.  In a similar vein, the model spaces $\Stone(X)$ constructed by our canonical model have the structure of an (extremally disconnected) Stone space (also called \emph{Stonean spaces}), and furthermore enjoy a remarkable property which we call the \emph{strong Lusin property}: every bounded Baire-measurable function is equal \emph{almost everywhere} (as opposed to merely outside of a set of small measure) to a \emph{unique} continuous function.  See Proposition \ref{model-basic}.  Also, it turns out that the null Baire-sets of $\Stone(X)$ are precisely the Baire-meager sets, see Remark \ref{lusin-equiv}.

\subsection{Canonical disintegration}

One application of the canonical model functor $\Stone$ is to provide a canonical and functorial way to disintegrate a probability measure with respect to a factor map.  Disintegration theorems for measures go back to the work of Rohklin \cite{rohlin}.  There are many arrangements of this theorem; we present here one from \cite{simmons}.  See also \cite[Theorem 5.8]{furstenberg2014recurrence} or \cite[Th. 5]{dieudonne-counter} for a similar statement.

\begin{theorem}[Rohklin disintegration theorem]  Let $(X,\mu_X)$ and $(Y,\mu_Y)$ be probability spaces, and let $\pi \colon X \to Y$ be a measurable map such that $\pi_* \mu_X = \mu_Y$.  Assume furthermore:
\begin{itemize}
\item[(a)] $X$ is universally measurable and $\mu$ is a Borel measure.
\item[(b)] There is a measurable injective map from $Y$ into a standard Borel space.
\end{itemize}
Then for $\mu_Y$-almost every $y \in Y$ one can find a Borel probability measure $\mu_y$ on $\pi^{-1}(\{y\})$ such that one has the identity
$$ \int_X f(x) g(\pi(x))\ d\mu_X(x) = \int_Y \left(\int_X f(x)\ d\mu_y(x)\right) g(y)\ d\mu_Y(y)$$
for all bounded measurable $f \colon X \to \C$, $g \colon Y \to \C$ (in particular the integral $\int_X f(x)\ d\mu_y(x)$ is a measurable function of $y$).  Furthermore, this assignment $y \mapsto \mu_y$ is unique up to $\mu_Y$-almost everywhere equivalence.
\end{theorem}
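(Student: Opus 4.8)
The plan is to prove this by the classical route of conditional expectations plus the Riesz representation theorem, with hypotheses (a), (b) entering precisely to turn an \emph{uncountable} gluing problem into a countable one. First I would use (b): fix a measurable injection $j\colon Y\to Z$ into a standard Borel space $Z$, set $\psi\coloneqq j\circ\pi\colon X\to Z$ and $\nu\coloneqq\psi_*\mu_X=j_*\mu_Y$, and observe that since $j$ is injective every measurable subset of $Z\setminus j(Y)$ is $\nu$-null, so $\nu$ is concentrated on $j(Y)$. Then I would use (a): a universally measurable subset of a Polish space is Borel-isomorphic to one sitting inside the compact metric space $K\coloneqq[0,1]^{\N}$, so without loss of generality $X$ is a $\mu_X$-measurable subset of $K$ and $\mu_X$ extends to a Borel probability measure $\hat\mu$ on $K$ concentrated on $X$; extend $\psi$ to a $\hat\mu$-measurable map $\hat\psi\colon K\to Z$. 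It then suffices to disintegrate $(K,\hat\mu)$ over $\hat\psi$ and restrict at the end.

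\emph{Construction.} Put $\mathcal F\coloneqq\hat\psi^{-1}(\text{Borel sets of }Z)$. For bounded $\hat\mu$-measurable $f$ on $K$, the conditional expectation $\E_{\hat\mu}[f\mid\mathcal F]$ has an $\mathcal F$-measurable version, which, being $\hat\psi^{-1}(\text{Borel})$-measurable, factors as $g_f\circ\hat\psi$ for some Borel $g_f\colon Z\to\C$. Now fix a countable $\Q$-subalgebra $\mathcal A\subseteq C(K)$ that is uniformly dense (Stone--Weierstrass) and a countable family $\{B_n\}_{n\in\N}$ of Borel subsets of $Z$ that generates the Borel $\sigma$-algebra and separates points (both available because $Z$ is standard Borel). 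Discarding the countably many $\nu$-null sets coming from $\Q$-linearity and positivity of $f\mapsto g_f(z)$ on $\mathcal A$, from the normalisation $g_1=1$, from $g_{\mathbf 1_{\hat\psi^{-1}(B_n)}}=\mathbf 1_{B_n}$, and from $g_{\mathbf 1_{K\setminus X}}=0$, I obtain a $\nu$-conull Borel set $Z_0$ so that for each $z\in Z_0$ the map $f\mapsto g_f(z)$ is a positive $\Q$-linear functional of norm one on $\mathcal A$; it extends uniquely to a state on $C(K)$, and the Riesz representation theorem yields a Borel probability measure $\mu_z$ on $K$ with $\int f\,d\mu_z=g_f(z)$ for $f\in\mathcal A$, hence for all $f\in C(K)$, and, by a monotone-class argument, for all bounded Borel $f$, with $z\mapsto\int f\,d\mu_z$ Borel.

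\emph{Fibre concentration, identity, and uniqueness.} For $z\in Z_0$ the relations $\mu_z(\hat\psi^{-1}(B_n))=\mathbf 1_{B_n}(z)$ force $\mu_z$ to be concentrated on $\hat\psi^{-1}(\{z\})$ (as $\{B_n\}$ separates points), while $\int\mathbf 1_{K\setminus X}\,d\mu_z=0$ forces concentration on $X$; so $\mu_z$ lives on $\hat\psi^{-1}(\{z\})\cap X$, which for $z=j(y)$ is exactly $\pi^{-1}(\{y\})$. Since $\nu$ is concentrated on $j(Y)$, for $\mu_Y$-a.e.\ $y$ the point $j(y)$ lies in $Z_0$, and I set $\mu_y$ to be the restriction of $\mu_{j(y)}$ to $\pi^{-1}(\{y\})$, a Borel probability measure on the fibre. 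The disintegration identity
\[
\int_X f(x)\,g(\pi(x))\,d\mu_X(x)=\int_Y\Big(\int_X f\,d\mu_y\Big)\,g(y)\,d\mu_Y(y)
\]
follows by writing $\int_X f\,g(\pi)\,d\mu_X=\int_X\E[f\mid\mathcal F]\,g(\pi)\,d\mu_X$ (since $g\circ\pi$ is $\mathcal F$-measurable), replacing $\E[f\mid\mathcal F]$ by $g_f\circ\hat\psi$, pushing forward along $\pi$ using $\psi_*\mu_X=j_*\mu_Y$, and using $g_f(j(y))=\int f\,d\mu_{j(y)}=\int_X f\,d\mu_y$; measurability of $y\mapsto\int_X f\,d\mu_y=g_f(j(y))$ is then immediate. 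Uniqueness follows by testing this identity with $g=\mathbf 1_A$ for measurable $A\subseteq Y$ and $f$ ranging over the countable set $\mathcal A$: two disintegrations agree $\mu_Y$-a.e.\ on $\mathcal A$, hence agree as measures for $\mu_Y$-a.e.\ $y$.

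\emph{Main obstacle.} The heart of the matter --- and the reason hypotheses (a), (b) cannot be dropped --- is exactly the passage from the a.e.-defined conditional expectations $\E[f\mid\mathcal F]$, one for each of uncountably many $f$, to a single honestly-defined family $\{\mu_y\}$ of measures on the fibres. This is made possible only by replacing the uncountable family $f$ by a countable uniformly dense subalgebra in a compact metric model of $X$ (so that Riesz representation reconstitutes a measure from countably many numbers) and the base by a countable point-separating Borel generating family (so that fibre concentration $\mu_y(\pi^{-1}(\{y\}))=1$ survives discarding only countably many null sets). The preliminary step of modelling $X$ inside a compact metric space and extending $\mu_X$ there is routine, but it is precisely where universal measurability of $X$ and Borelness of $\mu_X$ are used.
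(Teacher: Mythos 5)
First, a point of reference: the paper does not actually prove this theorem --- it is stated as background and attributed to \cite{simmons}, so there is no in-paper proof to compare against. Your argument is the standard classical one (conditional expectation, a countable dense $\Q$-subalgebra of $\CFunc(K)$ in a compact metric model, Riesz representation, a countable separating Borel family to localize to fibres), and its overall architecture is sound: the reduction to $K=[0,1]^\N$ via universal measurability, the countable-to-uncountable gluing, the fibre concentration, and the uniqueness argument are all correct in outline, and you correctly identify where (a) and (b) enter.

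There is, however, one genuine gap, in the derivation of the disintegration identity. You write $\int_X f\, g(\pi)\,d\mu_X=\int_X\E[f\mid\mathcal F]\, g(\pi)\,d\mu_X$ ``since $g\circ\pi$ is $\mathcal F$-measurable'', where $\mathcal F=\hat\psi^{-1}(\mathrm{Borel}(Z))$. But $g$ is an arbitrary bounded $\Sigma_Y$-measurable function, so $g\circ\pi$ is only $\pi^{-1}(\Sigma_Y)$-measurable, and $\pi^{-1}(\Sigma_Y)$ may strictly contain $\psi^{-1}(\mathrm{Borel}(Z))=\pi^{-1}\bigl(j^{-1}(\mathrm{Borel}(Z))\bigr)$: hypothesis (b) asserts only that some injective measurable $j$ exists, not that $j$ generates $\Sigma_Y$. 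As written, your identity is therefore established only for test functions $g$ that are Borel functions of $j$, not for all bounded measurable $g$ on $Y$. The missing step is to show that $\pi^{-1}(\Sigma_Y)$ and $\psi^{-1}(\mathrm{Borel}(Z))$ agree modulo $\mu_X$-null sets. This is true under the stated hypotheses, but it requires an argument of Blackwell--Mackey type: pass to a Borel $\hat\mu$-conull set $X'\subseteq X$ (so that one works in a standard Borel space); note that for $A\in\Sigma_Y$ the Borel set $\pi^{-1}(A)\cap X'$ is saturated for $\psi|_{X'}$ --- this is a second, essential use of the injectivity of $j$, which identifies the fibres of $\psi$ with those of $\pi$; then the $\psi$-images of $\pi^{-1}(A)\cap X'$ and of its complement in $X'$ are disjoint analytic subsets of $Z$, and Lusin separation yields a Borel $B\subseteq Z$ with $\pi^{-1}(A)\cap X'=\psi^{-1}(B)\cap X'$, whence $A=j^{-1}(B)$ modulo $\mu_Y$-null sets. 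With this lemma inserted your proof goes through; without it, the claimed $\mathcal F$-measurability of $g\circ\pi$ is simply false in general, and the identity is proved for too small a class of $g$.
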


Continuing the spirit of the ``uncountable'' approach to ergodic theory, we would like to remove hypotheses such as (a) and (b) from this theorem.  As stated, the theorem can fail without these hypotheses; see, e.g., \cite{dieudonne-counter}, \cite[p. 624]{doob}, \cite[p. 210]{halmos-measure-theory}.  However, we can recover a disintegration (with additional uniqueness and topological properties) as long as we pass to the canonical model to perform the disintegration:

\begin{theorem}[Canonical disintegration]\label{canon-disint}  Let $X, Y$ be $\ProbAlg$-spaces, and let $\pi \colon X \to Y$ be a $\ProbAlg$-morphism.  Then there is a unique Radon probability measure $\mu_y$ on $\Stone(X)_\CH$ for each $y \in \Stone(Y)$ which depends continuously on $y$ in the vague topology in the sense that $y \mapsto \int_{\Stone(X)_\CH} f\ d\mu_y$ is continuous for every $f \in \CFunc(\Stone(X))$, and such that
\begin{equation}\label{disint-form}
\int_{\Stone(X)} f(x) g(\Stone(\pi)(x))\ d\mu_{\Stone(X)}(x) = \int_{\Stone(Y)} \left(\int_{\Stone(X)_\CH} f\ d\mu_y\right) g\ d\mu_{\Stone(Y)}
\end{equation}
for all $f \in \CFunc(\Stone(X))$, $g \in \CFunc(\Stone(Y))$.  Furthermore, for each $y \in \Stone(Y)$, $\mu_y$ is supported on the compact set $\Stone(\pi)^{-1}(\{y\})$, in the sense that $\mu_Y(E)=0$ whenever $E$ is a measurable set disjoint from $\Stone(\pi)^{-1}(\{y\})$. (Note that this conclusion does \emph{not} require the fibers $\Stone(\pi)^{-1}(\{y\})$ to be measurable.)
\end{theorem}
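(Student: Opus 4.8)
The plan is to produce the disintegration from a conditional expectation operator living entirely on the canonical model, exploiting the strong Lusin property (Proposition \ref{model-basic}) to turn $L^\infty$-classes into honest continuous functions.

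Write $A \coloneqq \CFunc(\Stone(X))$ and $B \coloneqq \CFunc(\Stone(Y))$, and let $\iota \coloneqq \CFunc(\Stone(\pi)) \colon B \to A$ be the Koopman embedding $\iota(g) \coloneqq g \circ \Stone(\pi)$. Since $\Stone$ takes values in $\CHProb$, the map $\Stone(\pi)$ is measure-preserving, so $\iota$ is an $L^2$-isometry and extends to an isometric embedding $L^2(\Stone(Y)) \hookrightarrow L^2(\Stone(X))$ with closed range $H$. Let $P$ be the orthogonal projection of $L^2(\Stone(X))$ onto $H$; by the standard properties of conditional expectation, $P$ maps $\Linfty(\Stone(X))$ into $\Linfty(\Stone(Y))$ (identified with a subspace of $\Linfty(\Stone(X))$ via $\iota$), is positive, unital, and fixes $B$. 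Now comes the point: because $\Stone(Y)_\CH$ is Stonean with the strong Lusin property, $\Linfty(\Stone(Y))$ is canonically identified with $B$ — every bounded Baire-measurable function agrees $\mu_{\Stone(Y)}$-almost everywhere with a unique element of $B$. Thus for $f \in A$ we obtain a genuine continuous function $\E f \in B$, and we define $\lambda_y(f) \coloneqq (\E f)(y)$ for $y \in \Stone(Y)$. As $\E$ is positive and unital and $\mu_{\Stone(Y)}$ has full support (so an a.e.-nonnegative continuous function is everywhere nonnegative), each $\lambda_y$ is a state on $A$, and the Riesz representation theorem supplies a unique Radon probability measure $\mu_y$ on $\Stone(X)_\CH$ with $\lambda_y(f) = \int f\, d\mu_y$. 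Continuity of $y \mapsto \int f\, d\mu_y = (\E f)(y)$ is then immediate.

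Identity \eqref{disint-form} is just the defining property of $\E$ transported through $\iota$: for $f \in A$, $g \in B$, the element $f - \iota(\E f)$ is $L^2$-orthogonal to $H \ni \iota(g)$, so
\[
\int_{\Stone(X)} f\,(g\circ\Stone(\pi))\, d\mu_{\Stone(X)} = \int_{\Stone(X)} \iota(\E f)\,(g\circ\Stone(\pi))\, d\mu_{\Stone(X)} = \int_{\Stone(Y)} (\E f)\, g\, d\mu_{\Stone(Y)},
\]
the last step using that $\Stone(\pi)$ is measure-preserving; the right-hand side equals $\int_{\Stone(Y)} \big(\int f\, d\mu_y\big)\, g\, d\mu_{\Stone(Y)}$ by definition of $\mu_y$. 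For the support statement, fix $y$ and a measurable $E$ disjoint from the compact fibre $\Stone(\pi)^{-1}(\{y\})$; by inner regularity of $\mu_y$ we may assume $E = K$ is a compact $G_\delta$. Then $\Stone(\pi)(K)$ is compact and avoids $y$, so Urysohn's lemma gives $g \in B$ with $0 \le g \le 1$, $g(y) = 0$, and $g \equiv 1$ on $\Stone(\pi)(K)$. Since $\E$ fixes $B$, $\int_{\Stone(X)} (g\circ\Stone(\pi))\, d\mu_y = (\E \iota(g))(y) = g(y) = 0$, and since $K \subseteq \{g\circ\Stone(\pi) = 1\}$ this forces $\mu_y(K) = 0$; hence $\mu_y(E) = 0$.

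For uniqueness, if $y \mapsto \nu_y$ also satisfies the conclusion, then for each $f \in A$ the continuous function $\phi(y) \coloneqq \int f\, d\mu_y - \int f\, d\nu_y$ satisfies $\int_{\Stone(Y)} \phi\, g\, d\mu_{\Stone(Y)} = 0$ for all $g \in B$; taking $g = \bar\phi$ and using faithfulness of $\mu_{\Stone(Y)}$ forces $\phi \equiv 0$, so $\int f\, d\mu_y = \int f\, d\nu_y$ for every $f$ and $y$, whence $\mu_y = \nu_y$ by uniqueness in the Riesz representation theorem. The main obstacle — and the reason the disintegration must be performed on the canonical model rather than on arbitrary probability spaces — is the passage from the measure-theoretic conditional expectation to an everywhere-defined continuous function $\E f$: this is exactly where the strong Lusin property of the Stonean spaces $\Stone(Y)_\CH$ is indispensable, and it also obliges one to check that the relevant a.e.\ identities and inequalities (positivity of $\E$, the fact that $\E$ fixes $B$) upgrade to statements holding at \emph{every} point, which is precisely what the full support and faithfulness of $\mu_{\Stone(Y)}$ deliver.
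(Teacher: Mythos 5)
Your proposal is correct and follows essentially the same route as the paper's proof: conditional expectation via the orthogonal projection induced by the Koopman isometry, the strong Lusin property to obtain continuous representatives $\E f \in \CFunc(\Stone(Y))$, the Riesz representation theorem pointwise in $y$, and the same inner-regularity-plus-Urysohn argument for the support claim. The only (welcome) refinement is that you make explicit the upgrade from a.e.\ positivity to everywhere positivity via full support of $\mu_{\Stone(Y)}$, which the paper leaves as a routine verification.
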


We prove this theorem in Section \ref{disint-sec}.   Among other things, this disintegration gives a ``concrete'' way to construct relatively independent products of $\ProbAlg$-spaces, in the spirit of \cite[\S 5.5]{furstenberg2014recurrence}; see Theorem \ref{rel-prod}.  The use of the canonical model to perform canonical disintegration also appears in 
\cite[\S 3]{halmos-dieudonne}, \cite[Theorem 2.3]{ellis}.
In \cite[\S 4]{halmos-dieudonne} it is claimed that the canonical disintegration yields an ergodic decomposition for abstract measure-preserving actions of arbitrary groups $\Gamma$, but this claim is incorrect as stated; we provide a counterexample in Appendix \ref{halmos-sec}.

\subsection{Connection to the Loomis--Sikorski theorem}

In Section \ref{loomis-sec} we show that if one ``removes'' the probability measures from the canonical model functor
\begin{center}
\begin{tikzcd}
    \ProbAlg \arrow[r,"\Stone",  tail, two heads] & \CHProb 
\end{tikzcd}
\end{center}
one obtains an analogous \emph{Loomis--Sikorski functor}
\begin{center}
\begin{tikzcd}
    \AbsMes \arrow[r,"\Loomis",  tail, two heads] & \CHDelete 
\end{tikzcd}
\end{center}
that takes an abstract measurable space $X$, and obtains a concrete model $\Loomis(X)$ of this space, which has the structure of a compact Hausdorff space equipped with a null ideal of the Baire $\sigma$-algebra.  The original $\SigmaAlg$-algebra $X$ is then naturally isomorphic to the Baire $\sigma$-algebra of $\Loomis(X)$, quotiented by the given null ideal.  In fact, $\Loomis(X)$ has the structure of a special type of Stone space which we call a $\StoneCatSigma$-space (a Stone space in which every Baire set\footnote{In descriptive set theory, a \emph{Baire-measurable} set is one that has the property of Baire (that is, differing from an open set by a meager set). To avoid potential confusion, we stress that when writing Baire set (or Baire meager set), then we always mean Baire measurable sets (or Baire measurable sets that are also meager) in the sense introduced in Section \ref{baire-sec}, rather than in the descriptive set theory sense (which we will not use here).} differs from a clopen set by a Baire-meager set), and the null ideal is also the ideal of Baire-meager sets.  The existence and basic properties of the functor $\Loomis$ is a fully functorial form of the Loomis--Sikorski theorem \cite{loomis1947, sikorski}, and we establish it using Stone dualities relating the categories $\Bool, \SigmaAlg$ of Boolean algebras and $\sigma$-complete Boolean algebras with the categories $\StoneCat, \StoneCatSigma$ of Stone spaces respectively; see Figure \ref{fig:altcon} for how these dualities relate to the functors $\Loomis$ and $\Stone$.  We then use $\Loomis$ to give an alternate construction of $\Stone$ that proceeds via Stone duality instead of Riesz duality.  We remark that some very closely related constructions also appear in \cite{fremlinvol3}, although our more ``Baire-centric'' presentation places a greater emphasis on the role of the Baire $\sigma$-algebra, and also ties the construction to the operator-algebraic formalism of $C^*$-algebras and von Neumann algebras rather than the order-theoretic formalism of Riesz algebras.

As a byproduct of this analysis we are also able to clarify the nature of categorical products in the categories $\AbsMes, \StoneCatSigma, \CHDelete$ (or of the categorical coproduct in $\SigmaAlg$), in particular revealing some subtle differences between the product $\prod^\AbsMes$ on abstract measurable spaces, and the product $\prod^\ConcMes$ of concrete measurable spaces (the former being a strict subset of the latter in a category-theoretic sense).  As one manifestation of this distinction, we establish in Theorem \ref{kolmo} a version of the Kolmogorov extension theorem in the abstract measurable category $\AbsMes$ that does not require any regularity hypotheses on the factor spaces, in contrast with the classical version of this theorem in the concrete measurable category $\ConcMes$, which fails in general unless one imposes hypotheses such as the standard Borel property on the factor spaces.

Stone duality is the restriction of a more general duality between (coherent) locales and distributive lattices (locales are the basic structures in point-free topology). 
We will not pursue such generality here since the cost of introducing more categories to the already long list of categories that are employed in this paper would outweigh the restricted benefits such generality would have to our aim of connecting the separate categorical aspects of measure-theoretic dynamics\footnote{This would also duplicate existing efforts, as the recent paper \cite{pavlov2020} already provides a detailed overview on Stone-type dualities with a list of relevant references (to which we refer the interested reader).}.   
However, there are many parallels between our efforts to develop solid foundations for point-free measure theory (with a view towards ergodic theory) and locale theory.  
We would like to point out several existing parallel efforts in the same direction.  
Fremlin's treatise \cite{fremlinvol3} develops systematically (abstract) measure theory on general measure algebras. 
Pavlov establishes a Gelfand-type duality for commutative (not necessarily tracial) von Neumann algebras in \cite{pavlov2020}. 
The main result of \cite{pavlov2020} proves that the following five categories are equivalent: (1) the opposite category of commutative von Neumann algebras; (2) compact strictly localizable enhanced measurable spaces; (3) measurable
locales; (4) hyperstonean locales; (5) hyperstonean spaces. This provides a ``von Neumann duality'' which is to measurable spaces as Gelfand dualities are to locally compact spaces, probability dualities are to probability algebras, and Riesz dualities are to locally compact probability spaces. However, we will not discuss further these dualities here. 
We point out that deriving the equivalence between the categories in (1) and (2) is essentially\footnote{Pavlov's notion of a "measurable locale" is closely related to our notion of an "abstract measurable space" with the difference that Pavlov requires completeness of the underlying $\SigmaAlg$-algebra and quotients out by the "universal" $\sigma$-ideal.} identical to our second construction of the canonical model with the help of Stone duality and the Loomis--Sikorski theorem. 

We thank the anonymous referee for pointing out the following interesting references at the intersection of point-free topology and measure theory \cite{coquand,chen,henry,simpson,vickers,vickers1} and  continuous logic \cite{yaacov,yaacov1,ibarlucia}. We hope to investigate further these connections in future work.

\subsection{Casting conventions}\label{sec-cast}

In this section, we introduce the concept of a \emph{casting functor}, which we use as a device to formally identify objects in different categories.  It is inspired by the use of casting conventions in programming languages, and is conventional in nature, that is to say  our choice of functors which we declare to be casting primarily serves us to organize better the numerous categories related to each other in this paper. As for our category-theoretical notation, we refer the reader to Appendix \ref{category-appendix}. 

A \emph{diagram of categories}, such as the one depicted in the various figures in this paper, is a collection of categories together with some functors between these categories.  Such a diagram is \emph{commutative} if for any pair of categories $\Cat, \Cat'$ in the diagram, there is at most one\footnote{Strictly speaking, it would be more natural from a category-theoretic perspective to require the functor from $\Cat$ to $\Cat'$ to merely be unique up to natural isomorphisms (which are also required to satisfy a coherence condition with respect to the other functors in the diagram); however we shall abuse notation in this paper by identifying various "canonically isomorphic" objects in order not to deal with this ambiguity.} functor from $\Cat$ to $\Cat'$ that can be obtained by composing finitely many of the functors in the diagram.  The diagrams in our figures are not always commutative, but the subdiagram consisting of just the functors depicted by {\blue blue arrows} will always be commutative.  We exploit this commutativity via the following useful notational convention:

\begin{definition}[Casting operators]\label{cast}  Define a \emph{casting functor} (or \emph{casting operator}) to be any one of the following functors:
\begin{itemize}
    \item[(i)]  A functor depicted in {\blue blue} in any of the diagrams of categories in this paper.
    \item[(ii)]  The identity functor $\ident_\Cat$ on any category $\Cat$. 
    \item[(iii)] The vertex functors ${ \Vertex} \colon (\Cat \downarrow X) { \to} \Cat$, ${ \Vertex} \colon (X \downarrow \Cat) \to \Cat$ that map a $(\Cat \downarrow X)$-object $Y\to X$ or $(X\downarrow \Cat)$-object $X\to Y$ to its vertex object $X_\Cat$, and any morphism $f$ in $(\Cat \downarrow X)$ or $(X \downarrow \Cat)$ to the corresponding $\Cat$-morphism $f_\Cat$.
    \item[(iv)] Any finite composition of functors from the above list.
\end{itemize}
The casting functors in this paper are chosen to form a commutative diagram; thus for any two categories $\Cat,\Cat'$ there is at most one casting functor ${\Cast}_{\Cat \to \Cat'} \colon \Cat {\to} \Cat'$ from the former to the latter.  If such a casting functor exists, we say that $\Cat$ can be \emph{casted} to $\Cat'$, and for any $\Cat$-object $X = X_\Cat$ we define the \emph{cast} of $X$ to $\Cat'$ to be the corresponding object in $\Cat'$, we write $X_{\Cat'}$ for ${\Cast}_{\Cat \to \Cat'}(X)$, and refer to $X_{\Cat'}$ as the \emph{cast} of $X$ to $\Cat'$ (and $X_\Cat$ as a \emph{promotion} of $X_{\Cat'}$ to $\Cat$).  We may cast or promote morphisms in $\Cat$, different notions of products and coproducts in $\Cat$ to $\Cat'$ in a similar fashion.  Thus for instance a $\Cat'$-morphism has at most one promotion to a $\Cat$-morphism if the casting functor is faithful.  (Informally, one should view the $\Cat'$-cast or $\Cat'$-promotion of a mathematical structure associated to $\Cat$ as the ``obvious'' corresponding $\Cat'$-structure associated to the $\Cat$-structure, with the choice of casting functors in the diagrams in this paper formalizing what ``obvious'' means.)

When a mathematical expression or statement requires an object or morphism to lie in $\Cat$, but an object or morphism in another category $\Cat'$ appears in its place, then it is understood that a casting operator from $\Cat'$ to $\Cat$ is automatically applied.  In particular, if a statement is said to ``hold in $\Cat$'' or ``be interpreted in $\Cat$'', or if an object or morphism is to be understood as a $\Cat$-object or a $\Cat$-morphism, then the appropriate casting operators to $\Cat$ are understood to be automatically applied.  We will sometimes write $X =_\Cat Y$ to denote the assertion that an identity $X=Y$ holds in $\Cat$.  

If one composes a named functor $\Func$ on the left or right (or both) with forgetful casting functors, the resulting functor will also be called $\Func$ when there is no chance of confusion (or if the ambiguity is irrelevant). 
\end{definition}

There is significant overlap\footnote{Indeed, from the perspective of Definition \ref{cast}, a common ``abuse of notation'' in mathematics is to interpret every forgetful functor as a casting functor.} between the concepts of a casting functor and a forgetful functor, but with the conventions we adopt in this paper, not every casting functor is forgetful, and not every forgetful functor is casting.  Similarly, most of the casting functors we will use are faithful in nature, but there is a key exception, namely the abstraction functors $\Abs$ that map concrete spaces to their abstract counterparts.

The following examples will help illustrate this casting convention (the definitions of the categories are given in the body of the paper, see Table \ref{tab:categories} for the location of these definitions). 

\begin{example}\
\begin{itemize}
\item[(i)] If $X = X_\CHProb = (X_\Set, {\mathcal F}_X, \mu_X)$ is a compact Hausdorff probability space, then $X_\CH = (X_\Set, {\mathcal F}_X)$ is the associated compact Hausdorff space, $X_\ConcMes = \BaireFunc(X_\CH)=(X_\Set, \Sigma_X)$ is the associated measurable space, $X_\ConcProb = (X_\ConcMes, \mu_X)$ is the associated concrete probability space, $\Sigma_X = \Baire(X)$ is the associated $\SigmaAlg$-algebra, $X_\ProbAlg = (\Baire(X)/_{{\mathcal N}_X}, \mu_X)$ is the associated probability algebra, 
and $X_\Set$ is the set $X$ with no additional structures. 
\item[(ii)]  If $T \colon X \to Y$ is a $\ConcProb$-morphism, then $T_{\SigmaAlg} : Y_{\SigmaAlg} \to X_{\SigmaAlg}$ is the associated pullback map.
\item[(iii)] If $X$ is a $\CH$-space and $Y$ is an $\AbsProb$-space, then $X \times^{\AbsMes} K$ denotes the abstract measurable space $X_\AbsMes \times^{\AbsMes} Y_\AbsMes$.
\item[(iv)]  If $X \in \ConcMes$ and $Y \in \CH$, we write ``$T \colon X \to Y$ is a $\AbsMes$-morphism from $X$ to $Y$'' as shorthand for ``$T \colon X_\AbsMes \to Y_\AbsMes$ is an $\AbsMes$-morphism from $X_\AbsMes$ to $Y_\AbsMes$''.
\item[(v)]  If $f \colon X \to Y$ is an $\AbsProb$-morphism, we write ``$f$ is an $\AbsMes$-epimorphism'' as shorthand for ``$f_\AbsMes \colon X_\AbsMes \to Y_\AbsMes$ is an $\AbsMes$-epimorphism''. 
\item[(vi)] Let $X = (X,\X,\mu) \in \ConcProb$, $Y = (Y,\Y,\nu) \in \ConcProb$ be concrete probability spaces, and let $f \colon X \to Y$ be a measurable map.  Then $f$ is a $\ConcMes$-morphism, and can be promoted to a $\ConcProb$-morphism if and only if $f_* \mu = \nu$.
\item[(vii)] If $X \in \ConcProb$, $Y, Z \in \CH$, $f \colon X_\ConcMes \to Y_\ConcMes$ is a $\ConcMes$-morphism, $\pi \colon Y \to Z$ is a $\CH$-morphism, and $g \colon X_\AbsProb \to Z_\AbsProb$ is an $\AbsProb$-morphism, we say that the identity $\pi \circ f = g$ holds in $\AbsMes$, and write $\pi \circ f =_\AbsMes g$, if $\pi_\AbsMes \circ f_\AbsMes = g_\AbsMes$. 
\item[(viii)]  If $f_1, f_2 \colon X \to Y$ are $\ConcProb$-morphisms that agree almost everywhere, then they agree in $\ProbAlg$: $f_1 =_{\ProbAlg} f_2$, that is to say the $\ProbAlg$-morphisms $(f_1)_\ProbAlg \colon X_\ProbAlg \to Y_\ProbAlg$ and $(f_2)_{\ProbAlg} \colon X_\ProbAlg \to Y_\ProbAlg$ agree.  (The converse implication can fail; see \cite[Examples 5.1, 5.2]{jt19}.) 
\end{itemize}
\end{example}

\section{Gelfand dualities}\label{gelfand-sec}

In this section we construct the various categories and functors in Figure \ref{fig:gelfand-dual}, and verify that the diagram commutes up to natural isomorphism (mostly by appealing to existing literature).  We begin by constructing the Alexandroff compactification (also known as the \emph{one-point compactification}), which in our formalism is given by a functor $\Alex \colon \LCHpr \to \CHpt$.

\begin{definition}[Alexandroff compactification]\label{def-alex}\ 
\begin{itemize}
\item[(i)] An \emph{$\LCH$-space} is a locally compact Hausdorff space $X = (X_\Set, {\mathcal F}_X)$.  An \emph{$\LCH$-morphism} $f \colon X \to Y$ between $\LCH$-spaces $X = (X_\Set, {\mathcal F}_X)$, $Y = (Y_\Set, {\mathcal Y})$ is a continuous function $f_\Set \colon X_\Set \to Y_\Set$, with the usual $\Set$-composition law. 
\item[(ii)] $\LCHpr$ is the subcategory of $\LCH$ consisting of the same class of spaces (thus every $\LCH$-space is an $\LCHpr$-space and vice versa), and the \emph{$\LCHpr$-morphisms} consisting of those $\LCH$-morphisms $T \colon X \to Y$ which are proper (thus the pullback $T^*(K) \coloneqq T^{-1}_\Set(K)$ of any compact subset $K$ of $Y$ is compact in $X$).
\item[(iii)] $\CHpt$ is the coslice category of $\CH$ with respect to a point $\mathrm{pt}$, as defined in Definition \ref{def-slice}.  Thus, a \emph{$\CHpt$-space} $X = (X_\CH, *)$ is a $\CH$-space $X_\CH$ equipped with a distinguished $\CH$-morphism $* \colon \mathrm{pt} \to \CH$ (by abuse of notation we use $*$ to refer simultaneously to all distinguished morphisms of all $\CHpt$-spaces).  A \emph{$\CHpt$-morphism} $T \colon X \to Y$ between $\CHpt$-spaces $X,Y$ is a $\CH$-morphism $T \colon X_\CH \to Y_\CH$ such that $T \circ * = *$, with the usual $\Set$-composition law.
\item[(iv)] If $X$ is an $\LCHpr$-space, the $\CHpt$-space $\Alex(X)$ is defined as the disjoint union $\Alex(X)_\Set \coloneqq X_\Set \sqcup \{\infty\}$, with distinguished morphism $*$ mapping $\mathrm{pt}$ to $\infty$, equipped with the topology ${\mathcal F}_{\Alex(X)}$ consisting of sets that are either open in $X$, or are the complement in $\Alex(X)_\Set$ of a compact set in $X$.  If $T \colon X \to Y$ is an $\LCHpr$-morphism, then $\Alex(T) \colon \Alex(X) \to \Alex(Y)$ is the map defined by $\Alex(T)_\Set(x)\coloneqq T(x)$ when $x \in X$ and $\Alex(T)_\Set(\infty) \coloneqq \infty$.
\end{itemize}
\end{definition}

It is clear that $\LCH, \LCHpr$ are categories with a faithful functor from $\LCHpr$ to $\LCH$.  One easily verifies that if $T \colon X \to Y$ is an $\LCHpr$-morphism then $\Alex(T) \colon \Alex(X) \to \Alex(Y)$ is continuous; from this it is not difficult to verify that $\Alex \colon \LCHpr \to \CHpt$ is a faithful functor between the indicated categories.  Note that without the properness hypothesis in the definition of $\LCHpr$-morphism, $\Alex$ would fail to be a functor taking values in $\CHpt$.  For instance, formally applying $\Alex$ to the non-proper zero map $0 \colon \R \to \R$ would lead to a discontinuous map $\Alex(0) \colon \R \sqcup \{\infty\} \to \R \sqcup \{\infty\}$ which mapped all real numbers $\R$ to $0$ and mapped $\infty$ to $\infty$, which is not a $\CHpt$-morphism.  One also observes an obvious faithful forgetful functor from $\CHpt$ to $\CH$, and obvious forgetful faithful and full functors from $\CH$ to $\LCHpr$ and from $\LCHpr$ to $\LCH$. 

\begin{remark}  The functor $\Alex$ is not full.  For instance, the $\CHpt$-morphism from $\Alex(\R)$ to $\Alex(\R)$ that maps all of $\Alex(\R) = \R \sqcup \{\infty\}$ to $\infty$ does not arise from applying $\Alex$ to an $\LCHpr$-morphism.  
\end{remark}

Now we define the additional $C^*$-algebra categories and their Gelfand dualities\footnote{We refer the interested reader to \cite[Chapter 1]{folland-harmonic-analysis} for the basic background in operator algebras required in this paper.}.

\begin{definition}[Gelfand dualities]\label{def-gelfand}\ 
\begin{itemize}
  \item[(i)]  If ${\mathcal A}$ is a commutative $C^*$-algebra, we use $\Mult({\mathcal A})$ to denote its multiplier algebra, that is to say the space of pairs $(L,R)$ of bounded operators on ${\mathcal A}$ obeying the double centralizer condition $aL(b)=R(a)b$ for all $a,b \in {\mathcal A}$.  As is well-known, this has the structure of a $\CStarAlgUnit$-algebra.  If $f = (L,R) \in \Mult({\mathcal A})$, we write $fb$ for $L(b)$ and $af$ for $R(a)$, thus $(af)b=a(fb)$.  Note that ${\mathcal A}$ can be identified with a subalgebra of $\Mult({\mathcal A})$.
  \item[(ii)]  A \emph{$\CStarAlgNd$-algebra} is a commutative $C^*$-algebra.  A \emph{$\CStarAlgNd$-morphism} $\Phi \colon {\mathcal A} \to {\mathcal B}$ between $\CStarAlgNd$-algebras ${\mathcal A}, {\mathcal B}$ is a $*$-homomorphism $\Phi \colon {\mathcal A} \to {\mathcal B}$ which is \emph{non-degenerate}\footnote{Such morphisms were referred to as \emph{proper homomorphisms} in \cite{pedersen}.} in the sense that the linear span of $\Phi({\mathcal A}){\mathcal B} \coloneqq \{ \Phi(a) b \colon a \in {\mathcal A}, b \in {\mathcal B}\}$ is dense in ${\mathcal B}$.  Composition is given by the usual $\Set$-composition.
  \item[(iii)]  A \emph{$\CStarAlgMult$-algebra} is a commutative $C^*$-algebra.  
  A \emph{$\CStarAlgMult$-morphism} $\Phi \colon {\mathcal A} \to {\mathcal B}$ between $\CStarAlgMult$-spaces ${\mathcal A}, {\mathcal B}$ is a $*$- homomorphism\footnote{In particular, we caution that $\CStarAlgMult$ is \emph{not} a concrete category, because the morphisms $\Phi \colon {\mathcal A} \to {\mathcal B}$ are \emph{not} described by concrete functions ${\mathcal A}$ to ${\mathcal B}$, but only by functions from ${\mathcal A}$ to the larger set $\Mult({\mathcal B})$. Similarly, the composition law for these morphisms is \emph{not} the usual $\Set$-composition, but is instead defined indirectly by requiring \eqref{mult-ident}.} $\tilde \Phi \colon {\mathcal A} \to \Mult({\mathcal B})$ which is non-degenerate in the sense that the linear span of $\tilde \Phi({\mathcal A}){\mathcal B} \coloneqq \{ \tilde \Phi(a) b \colon a \in {\mathcal A}, b \in {\mathcal B}\}$ is dense in 
  ${\mathcal B}$.  It is known (see, e.g., \cite[Corollary 2.51]{Raeburn-Williams}) that $\tilde \Phi$ can be uniquely extended to a $\CStarAlgUnit$-morphism $\Mult(\Phi) \colon \Mult({\mathcal A}) \to \Mult({\mathcal B})$.  The composition $\Psi \circ \Phi \colon {\mathcal A} \to {\mathcal C}$ of two $\CStarAlgMult$-morphisms $\Phi \colon {\mathcal A} \to {\mathcal B}$, $\Psi \colon {\mathcal B} \to {\mathcal C}$ is then defined to be the unique $\CStarAlgMult$-morphism for which
  \begin{equation}\label{mult-ident}
  \Mult(\Psi \circ \Phi) = \Mult(\Psi) \circ \Mult(\Phi).
  \end{equation}
  (The existence and uniqueness of this morphism follows from \cite[Proposition 1]{huef}.)
  \item[(iv)]  $\CStarAlgUnitInf$ is the slice category of $\CStarAlgUnit$ over $\C$, as defined in Definition \ref{def-slice}.  Thus, a \emph{$\CStarAlgUnitInf$-algebra} is a $\CStarAlgUnit$-algebra ${\mathcal A}_{\CStarAlgUnit}$ equipped with a $\CStarAlgUnit$ -morphism $* \colon {\mathcal A}_{\CStarAlgUnit} \to \C$.  A \emph{$\CStarAlgUnitInf$-morphism} $\Phi \colon {\mathcal A} \to {\mathcal B}$ between $\CStarAlgUnitInf$-algebras ${\mathcal A}, {\mathcal B}$ is a $\CStarAlgUnit$-morphism $\Phi_\CStarAlgUnit \colon {\mathcal A}_\CStarAlgUnit \to {\mathcal B}_\CStarAlgUnit$ such that $* \circ \Phi_\CStarAlgUnit = *$.
  \item[(v)] The functors $\CFunc \colon \CH \to \CStarAlgUnit$, $\Spec \colon \CStarAlgUnit \to \CH$ induce functors $\CFunc \colon \CHpt \to \CStarAlgUnitInf$, $\Spec \colon \CStarAlgUnitInf \to \CHpt$ as per Example \ref{range-ex} (identifying $\C$ with $\CFunc(\mathrm{pt})$ and $\mathrm{pt}$ with $\Spec(\C)$).
  \item[(vi)]  If $X$ is an $\LCHpr$-space, $\CoFunc(X)$ is the $\CStarAlgNd$-algebra of continuous functions $f \colon X \to \C$ that vanish at infinity (i.e., for any $\eps>0$ there is a compact subset $K$ of $X$ outside of which one has $|f| \leq \eps$).  If $T \colon X \to Y$ is an $\LCHpr$-morphism, $\CoFunc(T) \colon \CoFunc(Y) \to \CoFunc(X)$ is the Koopman operator $\CoFunc(T)(f) \coloneqq f \circ T$; this is easily verified to be a $\CStarAlgNd$-morphism (note it is essential here that $T$ is proper).
  \item[(vii)]  If $X$ is an $\LCH$-space, $\CoFunc(X)$ is the $\CStarAlgMult$-algebra of continuous functions $f \colon X \to \C$ that vanish at infinity, and $\CbFunc(X)$ is the $\CStarAlgUnit$-algebra of bounded continuous functions $f \colon X \to \C$. Note that $\CbFunc(X)$ can be identified with $\Mult(\CoFunc(X))$.  If $T \colon X \to Y$ is an $\LCH$-morphism, we define $\CoFunc(T)(f) \colon \CoFunc(Y) \to \CoFunc(X)$ and $\CbFunc(T)(f) \colon \CbFunc(Y) \to \CbFunc(X)$ to be the Koopman operators $\CoFunc(T)(f) \coloneqq f \circ T$ for $f \in \CoFunc(Y)$ and $\CbFunc(T)(f) \coloneqq f \circ T$ for $f \in \CbFunc(Y)$ (here we use the identification of $\CbFunc(X)$ and $\Mult(\CoFunc(X))$ to define $\CoFunc(T)$).
  \item[(viii)]  If ${\mathcal A}$ is a $\CStarAlgNd$-algebra, $\Spec({\mathcal A})\coloneqq \Hom_{\CStarAlgNd}({\mathcal A} \to \C)$ is the set of $\CStarAlgNd$-morphisms from ${\mathcal A}$ to $C$ (i.e., non-zero *-homomorphisms from ${\mathcal A}$ to $\C$), with the topology induced from $\C^{\mathcal A}$, and with $\Spec(T) \lambda = \lambda \circ T$ for any $\CStarAlgNd$-morphisms $T \colon {\mathcal A} \to {\mathcal B}$ and $\lambda \colon {\mathcal B} \to \C$.  Similarly with $\CStarAlgNd$ replaced by $\CStarAlgMult$ throughout.
  \item[(ix)]  If ${\mathcal A} \in \CStarAlgNd$, we define $\Unit({\mathcal A}) \in \CStarAlgUnitInf$ to be the $\CStarAlgUnit$-algebra $\Unit({\mathcal A})_{\CStarAlgUnit} \coloneqq {\mathcal A} \oplus \C$ of formal sums $a+c1$ with $a \in {\mathcal A}, c \in \C$, together with the coordinate $\CStarAlgUnit$-morphism from ${\mathcal A} \oplus \C$ to $\C$ that maps $a+c1$ to $c$.  If $T \colon {\mathcal A} \to {\mathcal B}$ is a $\CStarAlgNd$-morphism, we define the $\CStarAlgUnitInf$-morphism $\Unit(T) \colon \Unit({\mathcal A}) \to \Unit({\mathcal B})$ by defining $\Unit(T)_\CStarAlgUnit(a + c1) \coloneqq Ta+c1$ for $a \in {\mathcal A}$, $c \in \C$.
  \item[(x)]  We define the \emph{Stone--{\v C}ech compactification functor} $\beta \colon \LCH \to \CH$ to be the functor $\beta \coloneqq \Spec \circ \CbFunc$.
  \item[(xi)] We have obvious forgetful functors from $\CStarAlgUnitInf$ to $\CStarAlgUnit$, from $\CStarAlgUnit$ to $\CStarAlgNd$ (note that any unital *-homomorphism is automatically non-degenerate), and from $\CStarAlgNd$ to $\CStarAlgMult$.
\end{itemize}
\end{definition}

We then have

\begin{theorem}[Gelfand dualities]\label{gelfand-dualities}  The categories in Figures \ref{fig:gelfand-dual}, \ref{fig:gelfand-forget} are indeed categories, and the functors in these figures are indeed functors between the indicated categories, with the indicated faithfulness and fullness properties.  Furthermore, both of these diagrams commute up to natural isomorphisms. (In particular, each pair of vertical functors generates a duality of categories.)
\end{theorem}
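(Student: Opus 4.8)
The plan is to bootstrap the entire diagram from the single classical instance of Gelfand duality recalled above, namely that $\CFunc\colon\CH\to\CStarAlgUnitop$ and $\Spec\colon\CStarAlgUnitop\to\CH$ are mutually inverse (up to natural isomorphism) full and faithful functors. I would first dispose of the routine categorical checks: that $\LCH$, $\LCHpr$, $\CHpt$, $\CStarAlgNd$, $\CStarAlgMult$, $\CStarAlgUnitInf$ are categories, and that $\Alex$, $\Unit$, $\CoFunc$, $\CbFunc$, $\beta$, $\Mult$ and the various forgetful functors are functors with the stated faithfulness and fullness. Almost all of this is mechanical; two points deserve real attention. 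First, the composition law on $\CStarAlgMult$ is defined only implicitly by \eqref{mult-ident}; its well-definedness and associativity follow from the cited functoriality of $\Mult$ on nondegenerate morphisms together with the faithfulness of the inclusion ${\mathcal B}\hookrightarrow\Mult({\mathcal B})$, which makes $\Mult$ itself faithful and thus forces the prescribed composite to be unique and associative. Second, $\CoFunc(T)$ is nondegenerate precisely because $T$ is proper in the $\LCHpr$ setting, whereas for a merely continuous $T$ one must read $\CoFunc(T)$ as a $\CStarAlgMult$-morphism via the identification $\CbFunc(X)=\Mult(\CoFunc(X))$; the failure of $\Alex$ to be a functor without properness, already noted after Definition \ref{def-alex}, is the mirror image of this.

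Next I would establish the four vertical dualities one column at a time, moving rightward from the known one. For the $\CHpt$/$\CStarAlgUnitInf$ column I would use the general principle that a duality $F\colon\mathcal C\to\mathcal D^\op$ restricts to a duality between the coslice category $(c\downarrow\mathcal C)$ and $(\mathcal D\downarrow Fc)^\op$; applying this with $c=\mathrm{pt}$ and $\CFunc(\mathrm{pt})\cong\C$ gives the duality between $\CHpt$ and $\CStarAlgUnitInfop$ (this is exactly the induced pair recorded in Definition \ref{def-gelfand}(v)), with fullness and faithfulness inherited automatically. For the $\LCHpr$/$\CStarAlgNd$ column I would invoke the classical non-unital Gelfand duality, that $\CoFunc\colon\LCHpr\to\CStarAlgNdop$ and $\Spec\colon\CStarAlgNdop\to\LCHpr$ are mutually inverse full faithful functors, and check it is consistent with the unital case via the natural isomorphisms $\CFunc\circ\Alex\cong\Unit\circ\CoFunc$ and $\Spec\circ\Unit\cong\Alex\circ\Spec$ (which rest on the standard identifications $\CoFunc(X)=\ker(\CFunc(\Alex X)\to\C)$ and $\Spec(\Unit\,{\mathcal A})\cong\Spec({\mathcal A})\sqcup\{\infty\}$); although $\Alex$ and $\Unit$ are themselves not full, one checks directly that a $\CHpt$-morphism $\Alex X\to\Alex Y$ arises from an $\LCHpr$-morphism exactly when it sends no point of $X$ to the point at infinity of $Y$, matching nondegeneracy on the algebra side, so the correspondence on morphisms is bijective. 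Finally, the forgetful functor $\CH\to\LCHpr$ is full and faithful (every continuous map between compact spaces is proper), as is its dual $\CStarAlgUnit\to\CStarAlgNd$ (a nondegenerate $*$-homomorphism between unital algebras is automatically unital, since its value at $1$ is a projection whose range ideal is dense and closed, hence all of the target).

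The hard column, and what I expect to be the main obstacle, is $\LCH$/$\CStarAlgMult$, because $\CStarAlgMult$ is not a concrete category and its morphisms and composition are defined indirectly. I would route through the Stone--{\v C}ech functor. The identification $\CbFunc(X)=\Mult(\CoFunc(X))$ yields natural isomorphisms $\Spec\circ\Mult\cong\beta\circ\Spec$ and $\Mult\circ\CoFunc\cong\CbFunc\cong\CFunc\circ\beta$, so that $\beta=\Spec\circ\CbFunc$ (a definition) is compatible with $\Spec\circ\Mult\circ\CoFunc$. For full faithfulness of $\Spec\colon\CStarAlgMultop\to\LCH$ one argues: a $\CStarAlgMult$-morphism ${\mathcal A}\to{\mathcal B}$ amounts to a nondegenerate $*$-homomorphism ${\mathcal A}\to\Mult({\mathcal B})$, equivalently (by its unique unital extension $\Mult({\mathcal A})\to\Mult({\mathcal B})$ and ordinary unital Gelfand duality) a continuous map $\beta\Spec{\mathcal B}\to\beta\Spec{\mathcal A}$, subject to the nondegeneracy constraint, which says exactly that this map carries the dense open subspace $\Spec{\mathcal B}$ into $\Spec{\mathcal A}$; conversely any continuous $T\colon\Spec{\mathcal B}\to\Spec{\mathcal A}$ extends uniquely to $\beta T$ through the inclusion $\Spec{\mathcal A}\hookrightarrow\beta\Spec{\mathcal A}$, and the associated unital $*$-homomorphism restricts to a nondegenerate one $\CoFunc(\Spec{\mathcal A})\to\Mult(\CoFunc(\Spec{\mathcal B}))$. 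This is a natural bijection between $\CStarAlgMult$-morphisms and $\LCH$-morphisms in the opposite direction, i.e., full faithfulness; the most delicate bookkeeping is verifying that it respects the composition law \eqref{mult-ident} (using that the intermediate $\Spec$ lands in the non-compactified spectrum), which is where the indirect definition of composition in $\CStarAlgMult$ is actually used. Together with $\CoFunc\colon\LCH\to\CStarAlgMultop$ being the inverse up to natural isomorphism, this gives the last duality, and $\Mult$, $\beta$, $\CbFunc$ are then seen to be faithful but not full.

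Finally I would assemble commutativity. For Figure \ref{fig:gelfand-dual} it suffices to record the generating natural isomorphisms already named --- $\CFunc\circ\Spec\cong\id$, $\Spec\circ\CFunc\cong\id$, $\CFunc\circ\Alex\cong\Unit\circ\CoFunc$, $\Spec\circ\Unit\cong\Alex\circ\Spec$, $\CbFunc\cong\Mult\circ\CoFunc$, $\beta=\Spec\circ\CbFunc$ --- together with the evident compatibility of the forgetful functors with $\CFunc$, $\CoFunc$, $\Spec$ at the level of objects and morphisms; since the paper's conventions identify canonically isomorphic objects, every composite around the diagram reduces to one of these. For Figure \ref{fig:gelfand-forget} the blue casting subdiagram commutes by the conventions of Definition \ref{cast}, so what remains is that the displayed forgetful and casting functors are functors with the indicated properties: $\CStarAlgUnit\to\CStarAlgNd$ and $\CH\to\LCHpr$ are full as above, while $\CStarAlgNd\to\CStarAlgMult$, $\LCHpr\to\LCH$, $\CHpt\to\CH$ and $\CStarAlgUnitInf\to\CStarAlgUnit$ are faithful but not full (the Remark after Definition \ref{def-alex} exhibits the obstruction for $\Alex$, and parallel explicit examples --- the zero map $\R\to\R$, a basepoint-forgetting map, a constant unital embedding into $\CoFunc(\R)$ --- handle the others). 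The concluding assertion that each vertical pair generates a duality is then simply the observation that a pair of full, faithful functors mutually inverse up to natural isomorphism is an equivalence of categories, here between a category and the opposite of another.
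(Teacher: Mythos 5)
Your proposal is correct, and its overall architecture matches the paper's: routine category/functor verifications, the classical duality between $\CH$ and $\CStarAlgUnit$ as the seed, the $\CHpt$/$\CStarAlgUnitInf$ column by abstract nonsense on coslice categories, and commutativity assembled from the same three generating natural isomorphisms ($\CFunc = \CoFunc$ on $\CH$, $\CbFunc \cong \Mult \circ \CoFunc$, and $\CFunc \circ \Alex \cong \Unit \circ \CoFunc$). Where you genuinely diverge is in the treatment of the two non-unital columns: the paper simply cites Pedersen (or Folland) for the $\LCHpr$/$\CStarAlgNd$ duality and an Huef--Raeburn--Williams for the $\LCH$/$\CStarAlgMult$ duality, whereas you bootstrap both from the compact case --- the former via the Alexandroff compactification (characterizing which $\CHpt$-morphisms $\Alex X \to \Alex Y$ restrict to proper maps, and matching this against nondegeneracy of the restriction of a $\CStarAlgUnitInf$-morphism to the kernel ideal) and the latter via Stone--{\v C}ech (identifying $\CStarAlgMult$-morphisms with continuous maps $\beta\Spec{\mathcal B}\to\beta\Spec{\mathcal A}$ carrying the dense open copy of $\Spec{\mathcal B}$ into $\Spec{\mathcal A}$, using that the closed span of $\tilde\Phi({\mathcal A}){\mathcal B}$ is an ideal). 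Both arguments are sound; the ideal/open-subset dictionary you use to translate nondegeneracy into "no point maps to infinity" is exactly the right mechanism, and your observation that composition in $\CStarAlgMult$ is controlled by its image under $\Mult$ correctly isolates where the indirect composition law \eqref{mult-ident} is actually needed. What your route buys is a self-contained proof that makes the two compactifications visibly dual to $\Unit$ and $\Mult$; what the paper's route buys is brevity and a clean delegation of the hardest analysis (well-definedness of the $\CStarAlgMult$ composition, which you also ultimately lean on via the cited functoriality of $\Mult$) to the literature. Your additional checks --- that a nondegenerate $*$-homomorphism between unital algebras is automatically unital, hence the forgetful functors $\CH\to\LCHpr$ and $\CStarAlgUnit\to\CStarAlgNd$ are full --- are correct and are implicitly assumed rather than spelled out in the paper.
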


\begin{proof}  The verification of the category and functor axioms are routine, as are the faithfulness for the horizontal functors in both figures.   
As mentioned in the introduction, the duality of categories between $\CH$ and $\CStarAlgUnit$ is proven in \cite{negrepontis} (or  \cite[Theorem 1.20]{folland-harmonic-analysis}), which then implies the duality of categories between $\CHpt$ and $\CStarAlgUnitInf$ by abstract nonsense (as well as the obvious identifications  $\mathrm{pt} \equiv \Spec(\C)$ and $\C \equiv \CFunc(\mathrm{pt})$).  The duality of categories between $\LCHpr$ and $\CStarAlgNd$ can be found in \cite{pedersen} or \cite[Theorem 1.31]{folland-harmonic-analysis}.  The duality of categories between $\LCH$ and $\CStarAlgMult$ is established in \cite[Theorem 2]{huef-raeburn-williams}. Note that these dualities ensure that the vertical functors in Figures \ref{fig:gelfand-dual}, \ref{fig:gelfand-forget} are full and faithful. 

The commutativity (up to natural isomorphisms) of the three squares in Figure \ref{fig:gelfand-forget} follows easily from the observation that $\CFunc(X) = \CoFunc(X)$ when $X$ is a $\CH$-space.  This also gives the commutativity of the middle square of Figure \ref{fig:gelfand-dual}.  For the commutativity of the functors in the right square of Figure \ref{fig:gelfand-dual}, one uses the definition of $\beta$ and the identification of $\CbFunc$ with $\Mult \circ \CoFunc$, which one can routinely verify to be a natural isomorphism.  The latter identification also ensures that $\CbFunc$ is faithful. 
Finally, to verify the commutativity of the left square of Figure \ref{fig:gelfand-dual} it suffices to establish a natural isomorphism between $\CFunc \circ \Alex \colon \CH \to \CStarAlgUnitInfop$ and $\Unit \circ \CoFunc \colon \CH \to \CStarAlgUnitInfop$, but this follows easily after noting that every function $f \in \CFunc(\Alex(X))$ for a $\CH$-space $X$ can be uniquely expressed as $f = f' + c1$ for some $f' \in \CoFunc(X)$ (which we identify with an element of $\CFunc(\Alex(X))$ in the obvious fashion) and some $c \in \C$.
\end{proof}

\begin{example}  Let $0 \colon \N \to \N$ be the zero $\LCH$-morphism on the $\LCH$-space $\N = \{0,1,\dots\}$.  The $\CStarAlgMult$-morphism $\CoFunc(0) \colon \CoFunc(\N) \to  \CoFunc(\N)$ can be identified with the $*$-homomorphism $\widetilde{\CoFunc(0)} \colon \CoFunc(\N) \to \CbFunc(\N)$ (identifying $\CbFunc(\N)$ with the multiplier algebra of $\CoFunc(\N)$) defined by $\widetilde{\CoFunc(0)}(a) \coloneqq a(0) 1$ for any $a \in \CoFunc(\N)$.  Note that $\widetilde{\CoFunc(0)}$ does not take values in $\CoFunc(\N)$, which reflects the fact that $0$ is not a proper map and thus not an $\LCHpr$-morphism; it also reflects the non-concrete nature of the category $\CStarAlgMult$.  The $\CH$-morphism $\beta(0) \colon \beta \N \to \beta \N$ is the constant zero map, and the $\CStarAlgUnit$-morphism $\CbFunc(0) \colon \CbFunc(\N) \to \CbFunc(\N)$ is defined by $\CbFunc(0)(a) \coloneqq a(0) 1$ for any $a \in \CbFunc(\N)$. Note that $\CbFunc(\N)$ is also naturally isomorphic to $\CFunc(\beta \N)$.
\end{example}

\begin{remark}  The functors in Figure \ref{fig:gelfand-dual} do not all commute with the functors in Figure \ref{fig:gelfand-forget}, but are instead related to each other by various natural transformations.  For instance, there is a natural monomorphism from the identity functor $\ident_\LCH \colon \LCH \to \LCH$ to $\Forget_{\CH \to \LCH} \circ \beta \colon \LCH \to \LCH$, reflecting the canonical inclusion of an $\LCH$-space $X$ in its Stone--{\v C}ech compactification $\beta X$.  Closely related to this is the well-known fact (see, e.g., \cite[Chapter 10]{walker2012stone}) that $\beta \colon \LCH \to \CH$ is left-adjoint to $\Forget_{\CH \to\LCH} \colon \CH \to \LCH$.
However, there is no such adjoint relationship for the Alexandroff compactification, as there are no $\LCHpr$-morphisms from non-compact spaces to compact spaces.  On the other hand, one can construct a natural epimorphism from the functor $\beta \circ \Forget_{\LCHpr \to \LCH} \colon \LCHpr \to \CH$ to the functor $\Forget_{\CHpt \to \CH} \circ \Alex \colon \LCHpr \to \CH$, reflecting the canonical projection from the Stone--{\v C}ech compactification to the Alexandroff compactification; we leave the details to the interested reader.
\end{remark}

\begin{remark}  Every $\LCH$-morphism $T \colon X \to Y$ between $\LCH$ spaces extends to a $\CH$-morphism $\beta(T) \colon \beta X \to \beta Y$ between the associated Stone--{\v C}ech compactifications, but not every such $\CH$-morphism from $\beta X$ to $\beta Y$ arises from an $\LCH$-morphism from $X$ to $Y$; that is to say, the functor $\beta$ is not full.  For instance, if $p$ is an element of $\beta \N \backslash \N$ (i.e., a non-principal ultrafilter) then the constant $\CH$-morphism from $\beta \N$ to $\beta \N$ that maps all elements of $\beta \N$ to $p$ does not arise from any $\LCH$-morphism on $\N$.  Applying Gelfand duality, we conclude that every $\CStarAlgMult$-morphism $\Phi \colon {\mathcal A} \to {\mathcal B}$ between $\CStarAlgMult$-algebras induces a $\CStarAlgUnit$-morphism $\Mult(\Phi) \colon \Mult({\mathcal A}) \to \Mult({\mathcal B})$ which extends the *-homomorphism $\tilde \Phi \colon {\mathcal A} \to \Mult({\mathcal B})$, but not every $\CStarAlgUnit$-morphism from $\Mult({\mathcal A})$ to $\Mult({\mathcal B})$ arises in this fashion (i.e., $\Mult$ is not full).  For instance, with $p$ as before, and identifying $\Mult(\CoFunc(\N))$ with $\CbFunc(\N)$, the map $\Psi \colon \CbFunc(\N) \to \CbFunc(\N)$ defined by
$$ \Psi(f) \coloneqq (\lim_{n \to p} f(n)) 1$$
(where $\lim_{n \to p} f(n)$ denotes the limiting value of $f$ along the ultrafilter $p$) is a $\CStarAlgUnit$-endomorphism on $\CbFunc(\N)$ that does not arise from applying $\Mult$ to any $\CStarAlgMult$-morphism on $\CoFunc(\N)$, basically because the restriction of $\Psi$ to $\CoFunc(\N)$ vanishes and is therefore not non-degenerate.
\end{remark}

For future reference we record some variants of Urysohn's lemma in these categories.

\begin{proposition}[Urysohn properties]\label{urysohn}
Let $X$ be an $\LCH$-space and $K\subseteq X_\Set$ be compact. In (i),(ii),(iii) further assume that $K\subseteq U$ for some open $U\subseteq X_\Set$. 
\begin{itemize}
\item[(i)] There exists an open $V\subseteq X_\Set$ with compact closure $\overline{V}$ such that $K\subseteq V\subseteq \overline{V}\subseteq U$.  
\item[(ii)] (Urysohn's lemma)  There exists $f\in C_c(X)$ with $0\leq f\leq 1$ such that $f(x)=1$ for all $x\in K$ and $f(x)=0$ for all $x\in U^c$.  
\item[(iii)] There exists a compact $G_\delta$-set $\tilde{K}$ such that $K\subseteq \tilde{K}\subseteq U$.  
\item[(iv)]  $K$ is $G_\delta$ if and only if there exists $f\in \CoFunc(X)$ with $0\leq f\leq 1$ such that $K=f^*\{1\}$. 
\end{itemize}
\end{proposition}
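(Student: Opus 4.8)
\emph{Proof proposal.} I would establish (i)--(iv) in order, each reducing to a classical point-set fact; the guiding device is to trade local compactness for the normality of a compact Hausdorff space. For (i), regard $X$ as a topological space and form its one-point compactification $X^{+}=X_\Set\sqcup\{\infty\}$, which is compact Hausdorff, hence normal, and in which $X_\Set$ is open. In $X^{+}$ the set $K$ is closed (Hausdorff) and $X^{+}\setminus U$ is closed, and the two are disjoint, so normality supplies disjoint open sets $V,W\subseteq X^{+}$ with $K\subseteq V$ and $X^{+}\setminus U\subseteq W$. Since $\infty\in X^{+}\setminus U\subseteq W$, the $X^{+}$-closure of $V$ avoids $W$, hence avoids $\infty$, so $\overline V$ is a compact subset of $X$ with $K\subseteq V\subseteq\overline V\subseteq X^{+}\setminus W\subseteq U$, and $V$ is open in $X$ (being open in $X^{+}$ and contained in the open set $X_\Set$).

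For (ii), apply (i) to get an open $V$ with $K\subseteq V\subseteq\overline V\subseteq U$ and $\overline V$ compact. The compact Hausdorff space $\overline V$ is normal and $K$, $\overline V\setminus V$ are disjoint closed subsets of it, so classical Urysohn gives continuous $g\colon\overline V\to[0,1]$ with $g\equiv1$ on $K$, $g\equiv0$ on $\overline V\setminus V$. Extending $g$ by $0$ on $X_\Set\setminus V$ (the two prescriptions agree on the overlap $\overline V\setminus V$) yields, via the pasting lemma for the closed cover $\{\overline V,\,X_\Set\setminus V\}$, a continuous $f\colon X\to[0,1]$ with $f\equiv1$ on $K$; since $\{f\ne0\}\subseteq V\subseteq U$ we get $\supp f\subseteq\overline V$ compact and $f\equiv0$ on $U^{c}$, so $f\in C_c(X)$. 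For (iii), set $\tilde K\coloneqq f^*\{1\}$ for this $f$: it is closed, contained in $\supp f$ hence compact, contains $K$, lies in $U$ (as $f\equiv0$ on $U^{c}$), and is $G_\delta$ since $\tilde K=\bigcap_{n\ge1}\{x:|f(x)-1|<1/n\}$.

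For (iv), the ``if'' direction is immediate, as $K=f^*\{1\}=\bigcap_{n\ge1}f^{-1}(\{z\in\C:|z-1|<1/n\})$ exhibits $K$ as $G_\delta$. For ``only if'', write $K=\bigcap_{n\ge1}U_n$ with $U_n$ open; using (i) pick an open $V_0\supseteq K$ with compact closure and replace $U_n$ by $V_0\cap\bigcap_{i\le n}U_i$, so that without loss of generality the $U_n$ decrease, have compact closures inside $\overline{V_0}$, satisfy $K\subseteq U_n$, and still have $\bigcap_nU_n=K$. Apply (ii) to each pair $K\subseteq U_n$ to get $g_n\in C_c(X)$ with $0\le g_n\le1$, $g_n\equiv1$ on $K$, $g_n\equiv0$ on $U_n^{c}$, and put $f\coloneqq\sum_{n\ge1}2^{-n}g_n$. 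The series converges uniformly, so $f$ is continuous with $0\le f\le1$, and since $\{g_n\ne0\}\subseteq U_n\subseteq V_0$ one has $\supp f\subseteq\overline{V_0}$, whence $f\in C_c(X)\subseteq\CoFunc(X)$. Finally $f\equiv1$ on $K$, and conversely $f(x)=1$ forces $g_n(x)=1$, hence $x\in U_n$, for every $n$ (because $\sum_n2^{-n}=1$ and $0\le g_n\le1$), so $x\in\bigcap_nU_n=K$; thus $f^*\{1\}=K$.

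The one step deserving genuine care is (i): one must be sure that the neighbourhood extracted from the normality argument inside $X^{+}$ is open \emph{in $X$} and has compact $X$-closure lying \emph{inside $U$} (the analogous care is needed if one instead runs the familiar argument of covering $K$ by finitely many open sets with compact closure contained in $U$, using regularity of a compact Hausdorff neighbourhood of a point). After (i), the remaining parts are routine: the pasting lemma, uniform convergence of $\sum2^{-n}g_n$, and the observation that the $1$-level set of a continuous $[0,1]$-valued function is a closed $G_\delta$.
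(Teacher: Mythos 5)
Your proof is correct and follows essentially the same route as the paper: the paper cites (i) and (ii) as standard facts (from Rudin), derives (iii) from a Urysohn function exactly as you do (using a sublevel set of the complementary function rather than your level set $f^*\{1\}$, an immaterial difference), and proves (iv) via the same $\sum_n 2^{-n}g_n$ construction, merely letting the series converge in the Banach space $\CoFunc(X)$ instead of normalizing the supports into a fixed compact set as you do. Your self-contained one-point-compactification and pasting-lemma arguments for (i) and (ii) are sound and simply fill in what the paper delegates to references.
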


\begin{proof}
Claims (i) and (ii) are standard facts (e.g., see \cite[Theorems 2.7 and 2.10]{rudinrealcomplex}).  
As for (iii), use (i) and (ii) to find a continuous function $f:X\to [0,1]$ such that $f(x)=0$ for all $x\in K$ and $f(x)=1$ for all $x\in V^c$ where $K\subseteq V\subseteq \overline{V}\subseteq U$ and $\overline{V}$ is compact.  Then the claim follows with
$$ \tilde K \coloneqq f^*\left[0,\frac{1}{2}\right] = \bigcap_{n=2}^\infty f^*\left[0,\frac{1}{2}+\frac{1}{n}\right).$$
Finally, we show (iv). If $K=\bigcap_{n=1}^\infty U_n$ is a $G_\delta$ set for some decreasing open $U_n \subseteq X_\Set$, then by (ii) there exist $f_n\in C_c(X)$, $0\leq f_n\leq 1$ such that $f_n(x)=1$ for all $x\in K$ and $f_n(x)=0$ for all $x\in U_n^c$ for all $n\geq 1$. Then $\sum_{n=1}^\infty 2^{-n} f_n$ converges in the Banach space $\CoFunc(X)$ to an element $f \in \CoFunc(X)$
with $f^*\{1\}=K$ and $0 \leq f \leq 1$. 

Conversely, if there exists $f \in \CoFunc(X)$ with $0 \leq f \leq 1$ and $f^*\{1\}=K$, then 
$$K = \bigcap_{n=1}^\infty f^*\left(1-\frac{1}{n},1+\frac{1}{n}\right)$$
is a $G_\delta$ set.
\end{proof}

As one application of Urysohn's lemma, we can classify the monomorphisms and epimorphisms in $\CH, \LCH, \LCHpr, \CHpt$.

\begin{proposition}[Morphisms of locally compact categories]\label{top-prop}
Let $\Cat$ be one of the categories $\CH, \LCH, \LCHpr, \CHpt$.
\begin{itemize}
\item[(i)] A $\Cat$-morphism is a $\Cat$-monomorphism if and only if it is injective. 
\item[(ii)] A $\Cat$-morphism is a $\Cat$-epimorphism if and only if it has dense image.  If $\Cat=\CH, \LCHpr, \CHpt$, it is also true that a $\Cat$-morphism is a $\Cat$-epimorphism if and only if it is surjective.
\item[(iii)] If $\Cat=\CH, \LCHpr, \CHpt$, then every $\Cat$-bimorphism is a $\Cat$-isomorphism. 
\end{itemize}
\end{proposition}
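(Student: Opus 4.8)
The plan is to establish the two cancellation properties (i) and (ii) first and then read off (iii). In all four categories $\CH,\LCH,\LCHpr,\CHpt$ the composition law is ordinary function composition, so an injective morphism is left-cancellable; this gives the easy implication ``injective $\Rightarrow$ monomorphism'' of (i) for free. For the converse I would argue by contraposition: if $f\colon X\to Y$ satisfies $f(x_1)=f(x_2)$ with $x_1\neq x_2$, I produce two distinct $\Cat$-morphisms $g,h$ with $f\circ g=f\circ h$. For $\Cat=\CH,\LCH,\LCHpr$ one can take the domain to be a single point (a legitimate object of each of these categories, and every map out of it is automatically proper), with $g,h$ selecting $x_1$ and $x_2$. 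The only case needing a different test object is $\CHpt$, where a point admits just one morphism into each pointed space; there I would use the two-point discrete space $\{0,1\}$ based at $0$, letting $g,h$ send $0\mapsto *_X$ and $1$ to $x_1$, $x_2$ respectively (relabelling so that $x_1\neq *_X$).

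For the first assertion of (ii), ``dense image $\Rightarrow$ epimorphism'' holds because two $\Cat$-morphisms $Y\to Z$ agreeing on a dense subset of $Y$ agree everywhere, $Z$ being Hausdorff --- properness and basepoints play no role here. For ``epimorphism $\Rightarrow$ dense image'' I would again use contraposition: if $C\coloneqq\overline{f(X)}\subsetneq Y$, pick $y_0\in Y\setminus C$ and, by Urysohn's lemma (Proposition \ref{urysohn}(ii), or the classical statement when $\Cat=\CH,\CHpt$), a continuous $\phi\colon Y\to[0,1]$ with $\phi|_C\equiv 0$ and $\phi(y_0)=1$. Then take $Z\coloneqq Y\times[0,1]$ with the two morphisms $g\colon y\mapsto(y,0)$ and $h\colon y\mapsto(y,\phi(y))$: one has $g\circ f=h\circ f$ since $f(X)\subseteq C$, while $g(y_0)\neq h(y_0)$. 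It remains to check these are genuine $\Cat$-morphisms: both are continuous; both are proper because a map $y\mapsto(y,\psi(y))$ is a closed embedding (its image is the graph of $\psi$, closed in $Y\times[0,1]$); and when $\Cat=\CHpt$ both fix the basepoint $(*_Y,0)$ of $Z$, since $*_Y=f(*_X)\in C$ forces $\phi(*_Y)=0$.

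The second assertion of (ii) and statement (iii) both rest on the observation that, in $\CH,\LCHpr,\CHpt$, a $\Cat$-morphism $f\colon X\to Y$ has \emph{closed} image: for $\CH,\CHpt$ because $f(X)$ is compact, and for $\LCHpr$ because a proper continuous map into a locally compact Hausdorff space is closed (the usual compact-neighbourhood argument: for closed $A\subseteq X$ and $y\in\overline{f(A)}$, intersect with a compact neighbourhood $K$ of $y$ and use that $f(A\cap f^{-1}(K))$ is compact). Consequently a morphism with dense image in these categories is surjective, which together with ``surjective $\Rightarrow$ dense image $\Rightarrow$ epimorphism'' yields ``epimorphism $\iff$ surjective''. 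For (iii): a $\Cat$-bimorphism is, by (i) and (ii), a bijective $\Cat$-morphism, and it is then a $\Cat$-isomorphism because its set-theoretic inverse is continuous (a closed continuous bijection is a homeomorphism, and $f$ is closed by the above), is proper when $\Cat=\LCHpr$ (it pulls a compact $K\subseteq X$ back to the compact set $f(K)$), and fixes the basepoint when $\Cat=\CHpt$.

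I expect the main difficulty to lie in the bookkeeping for the two ``non-standard'' categories rather than in any single deep step. In $\CHpt$ a single point fails to separate morphisms, which forces the two-point pointed test object in (i); and in $\LCHpr$ every test morphism must be proper --- there are, for instance, no $\LCHpr$-morphisms from a non-compact space to a compact one --- so the naive constant-map constructions are unavailable, and the device that makes the epimorphism argument go through uniformly is the use of the graph maps into $Y\times[0,1]$, which are proper for free. The one genuinely topological lemma required, that proper maps into locally compact Hausdorff spaces are closed, underlies both the surjectivity characterisation of epimorphisms and the bimorphism-implies-isomorphism statement in $\LCHpr$, and I would isolate and prove it first.
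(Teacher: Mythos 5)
Your proof is correct and follows essentially the same route as the paper's: monomorphisms are detected by maps from a point (two points in $\CHpt$), epimorphisms are characterised via Urysohn's lemma and the two graphing morphisms $y\mapsto(y,0)$, $y\mapsto(y,\phi(y))$ into $Y\times[0,1]$, and surjectivity and (iii) follow from compactness of the image (resp.\ the compact-neighbourhood argument for proper maps). The only cosmetic difference is that you package the $\LCHpr$ surjectivity step as ``proper maps into $\LCH$-spaces are closed,'' whereas the paper argues directly that $y\in\overline{T(T^{-1}(K))}=T(T^{-1}(K))$; these are the same argument.
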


Note that the canonical embedding of $\N$ into $\beta \N$ is an $\LCH$-morphism which is injective and has dense image, but is not surjective, which shows that the claim (iii) and the second claim in (ii) cannot be extended to $\Cat = \LCH$.

\begin{proof} 
Since $\Cat$ is a category of sets,  by Lemma \ref{epimorph} and Example \ref{set-example}, every injective (resp. surjective) $\Cat$-morphism is a  $\Cat$-monomorphism (resp. epimorphism). 
By the identification of elements of a $\Cat$-space $X$ with the $\Cat$-morphisms from a point (or two points, in the case $\Cat = \CHpt$) to $X$, every $\Cat$-monomorphism is also injective. This yields (i). 

Now we show (ii).  By  continuity and the Hausdorff property, every $\Cat$-morphism with dense image is a $\Cat$-epimorphism.  Next, we show that any $\Cat$-epimorphism has a dense image (cf. \cite[Proposition 10.18]{walker2012stone}). Suppose for contradiction that $T \colon X\to Y$ is a $\Cat$-epimorphism with non-dense image $\overline{T(X)}\neq Y$.  By Proposition \ref{urysohn}(ii), one can find a non-trivial continuous function $f \colon Y \to [0,1]$ which vanishes on $\overline{T(X)}$.  The graphing functions $S_0, S_1 \colon Y \to Y \times [0,1]$ defined by $S_0(Y) \coloneqq (y,0)$ and $S_1(Y) \coloneqq (y,f(y))$ are then distinct $\Cat$-morphisms such that $S_0 \circ T = S_1 \circ T$, contradicting the hypothesis that $T$ is an $\Cat$-epimorphism.  This gives the first part of (ii).  To conclude, we need to show that for $\Cat = \CH, \LCHpr, \CHpt$, that every $\Cat$-morphism with dense image is surjective.  For $\Cat = \CH, \CHpt$ this follows since the image is compact.    
For $\Cat=\LCHpr$, let $T \colon X\to Y$ be an $\LCHpr$-morphism with dense image, and let $y\in Y$. Let $K$ be a compact neighborhood of $y$ in $Y$, then the set $T( T^{-1}(K) )$ is compact (by the proper continuous nature of $T$) and contains $y$ in its closure (as $T$ has dense image), hence $y \in T(T^{-1}(K)) \subseteq T(X)$.  Thus $T$ is surjective as required.

Finally, the assertions in (iii) follow from (i), (ii) and the well-known fact that any proper bijective continuous function is a homeomorphism. 
\end{proof}

As is well-known, the Stone--{\v C}ech and Alexandroff compactifications serve as universal ``maximal'' and ``minimal'' compactifications of an $\LCH$-space (or $\LCHpr$-space) $X$.  We can formalize these statements in category-theoretic language as follows.  Define a \emph{compactification} of an $\LCH$-space $X$ as a pair $(\tilde{X},\iota_X)$ such that $\tilde{X}\in \CH$ and $\iota_X: X\to \tilde{X}_\LCH$ is an $\LCH$-monomorphism with dense image. By Proposition \ref{top-prop}, $\iota_X$ is an $\LCH$-bimorphism, hence we could equivalently define a compactification as a pair $(\tilde{X},\iota_X)$ such that $\tilde X\in \CH$ and $\iota_X:X\to \tilde{X}_\LCH$ is an $\LCH$-bimorphism.  The class $\mathbf{Compact}(X)$ of all such compactifications of $X$ forms a partially ordered set (and so can be viewed as a small category), with ordering $(\tilde{X}, \iota_X) \leq (\tilde{X}', \iota_{X'})$ whenever there is a $\CH$-morphism $\tilde \pi \colon \tilde X \to \tilde{X}'$ such that $\iota_{X'} \circ \tilde \pi_{\LCH} = \iota_X$. 
The Stone-\v{C}ech functor $\beta$ gives one compactification $(\beta X, \iota_{X,\beta})$ in $\mathbf{Compact}(X)$, where $\iota_{X,\beta} \colon X \to (\beta X)_\LCH$ is the canonical inclusion; the Alexandroff functor gives another compactification $(\Alex(X), \iota_{X,\Alex})$ in $\mathbf{Compact}(X)$, where by abuse of notation $\Alex(X)$ is $\Alex$ applied to $X$ viewed as an $\LCHpr$-space, and $\iota_{X,\Alex} \colon X \to \Alex(X)_\LCH$ is the canonical inclusion.  It is then routine to verify that these two compactifications are the least and greatest elements in $\mathbf{Compact}(X)$; see Figure \ref{fig:stone-cech}.  Using Gelfand duality, one can also identify compactifications of an $\LCH$-space $X$ (up to natural isomorphisms) with unital subalgebras of $\CbFunc(X)$ that contain $\CoFunc(X) \oplus \C$, somewhat in the spirit of the fundamental theorem of Galois theory; we leave the details to the interested reader. 

\begin{figure}
    \centering
    \begin{tikzcd}
    & \beta(X)_\LCH \arrow[d,two heads,"\pi_\LCH"] \\
    X \arrow[ur, tail,"\iota_{X,\beta}"] \arrow[r, tail,"\iota_X"] \arrow[dr, tail,"\iota_{X,\Alex}"'] & \tilde X_\LCH \arrow[d,two heads,"\pi'_\LCH"] \\ 
    & \Alex(X)_\LCH
\end{tikzcd}
    \caption{The universal properties of the Stone-\v{C}ech and Alexandroff compactifications. For any compactification $(\tilde X, \iota_X)$ of an $\LCH$-space $X$, there are unique $\CH$-morphisms $\pi, \pi'$ that make the above diagram commute (in $\LCH$).}
    \label{fig:stone-cech}
\end{figure}

\section{Baire algebras}\label{baire-sec}

In this section we describe all the categories and functors depicted in Figure \ref{fig:baire}.

\begin{definition}[Topological and measurable categories and functors]\label{bairefunc-def}\ 
\begin{itemize}
    \item[(i)]  A \emph{$\CMet$-space} is a compact metrizable space $X = (X_\Set, \mathcal{F}_X)$. A \emph{$\CMet$-morphism} is a continuous function between $\CMet$-spaces.  
    \item[(ii)]  A \emph{$\Polish$-space} is a Polish space $X = (X_\Set, {\mathcal F}_X)$ (i.e., a separable topological space that is completely metrizable). A \emph{$\Polish$-morphism} is a continuous function between $\Polish$-spaces. 
    \item[(iii)]  A \emph{$\ConcMes$-space} is a concrete measurable space $X = (X_\Set, \Sigma_X)$, i.e., a set $X_\Set$ endowed with a $\sigma$-complete Boolean algebra $\Sigma_X$ of subsets of $X_\Set$.  A \emph{$\ConcMes$-morphism} is a measurable map between $\ConcMes$-spaces. In (i)-(iii), composition is given by the usual $\Set$-composition law.
    \item[(iv)]  Forgetful functors from $\CMet$ to $\CH, \Polish$ are defined in the obvious fashion.
    \item[(v)]  If $X$ is a $\Polish$-space (resp. $\LCH$-space, $\LCHpr$-space), one defines $\BorelFunc(X) \coloneqq (X_\Set, \Borel(X))$ (resp. $\BaireBFunc(X) \coloneqq (X_\Set, \Baire(X))$, $\BaireCFunc(X) \coloneqq (X_\Set, \BaireC(X))$).  If $T$ is a $\Polish$-morphism (resp. $\LCH$-morphism, $\LCHpr$-morphism), we define $\BorelFunc(T) \coloneqq T$ (resp. $\BaireBFunc(T) \coloneqq T$, $\BaireCFunc(T) \coloneqq T$). (Here we abuse notation by identifying $\Cat$-morphisms $T$ with their underlying $\Set$-morphism $T_\Set$ for the concrete categories $\Cat = \Polish, \LCH, \LCHpr,$ $\ConcMes$.)
    \item[(vi)]  We define $\BaireFunc \coloneqq \BaireCFunc \circ \Forget_{\CH \to \LCHpr} = \BaireBFunc \circ \Forget_{\CH \to \LCH}$.
\end{itemize}
\end{definition}

It is a routine matter to verify that the categories and functors in Figure \ref{fig:baire} are indeed categories and functors with the indicated faithfulness properties, and that the diagram commutes.  Note that it is essential that $\LCHpr$-morphisms $T \colon X \to Y$ be proper in order for $\BaireCFunc$ to be a functor, since otherwise the Koopman operator $f \mapsto f \circ T$ would not map $\CcFunc(Y)$ to $\CcFunc(X)$.

We recall some standard product constructions in these categories (the reader is referred to Appendix \ref{sec-product} for our category-theoretic notation of categorical and non-categorical products and how these can be functorially related to each other):

\begin{proposition}[Products in topological and measurable categories]\label{prod-top}
\text{}
\begin{itemize}
  \item[(i)] The categories $\Set$, $\CH$, $\CHpt$, $\ConcMes$ admit categorical products $\prod^{\Set}$, $\prod^{\CH}$, $\prod^{\CHpt}$, $\prod^\ConcMes$, defined for arbitrarily many factors. 
    \item[(ii)]  The categories $\CMet$, $\Polish$ admit categorical products $\prod^\CMet$, $\prod^\Polish$, defined for at most countably many factors. 
    \item[(iii)] The category $\LCH$ admits categorical products $\prod^{\LCH}$, defined for finitely many factors. 
    \item[(iv)] The category $\LCHpr$ does not admit categorical products, not even for two factors. 
    \item[(v)]  With the exception of the categorical $\ConcMes$-product, all the categorical products listed in (i), (ii), (iii) are related in the sense of Definitions \ref{def-monoidal} and \ref{def-infinit-prod} (see Example \ref{exp-cartesian} and Remark \ref{rem-cartesian}) to each other with respect to the casting functors in Figure \ref{fig:baire}.
    \item[(vi)]  (Weil's theorem)  Every $\CH$-space $K$ is $\CH$-isomorphic to a compact subspace of a product $\prod_{\alpha \in A}^\CH S_\alpha$ of $\CMet$-spaces. In fact one can take each $S_\alpha$ to be a compact subset of $\R$. 
    \item[(vii)]  If $(S_\alpha)_{\alpha \in A}$ is a family of $\CMet$-spaces and $K$ is a closed $\CH$-subspace of $\prod_{\alpha \in A}^\CH S_\alpha$, then $K_\ConcMes$ is the restriction of $(\prod_{\alpha \in A}^\CH S_\alpha)_\ConcMes$ to $K_\Set$ (that is, the measurable sets in $K_\ConcMes$ are precisely the sets of the form $E \cap K_\Set$, where $E$ is measurable in $(\prod_{\alpha \in A}^\CH S_\alpha)_\ConcMes$), even if $K_\Set$ itself fails to be measurable in $(\prod_{\alpha \in A}^\CH S_\alpha)_\ConcMes$.
    \item[(viii)]  All the categorical products listed in (i), (ii) agree with each other in the sense of Definitions \ref{def-monoidal} and \ref{def-infinit-prod} with respect to the casting functors in Figure \ref{fig:baire}.
   \end{itemize}
\end{proposition}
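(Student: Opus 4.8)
The plan is to organize the eight assertions into four clusters: the existence claims in (i)--(iii); the non-existence claim (iv) (together with the implicit non-existence of uncountable products in $\CMet$, $\Polish$ and of infinite products in $\LCH$); the compatibility claims (v), (viii); and the two ``realization'' facts (vi), (vii). For the existence parts the candidate in each case is the Cartesian product of underlying sets carrying the expected structure: the product topology for $\CH$, $\CHpt$, $\CMet$, $\Polish$, $\LCH$, with the tuple of base points in the $\CHpt$ case, and the product $\sigma$-algebra $\bigotimes_\alpha \Sigma_\alpha$ (generated by the cylinders $\pi_\alpha^{-1}(\Sigma_\alpha)$) for $\ConcMes$; the universal property is then checked exactly as in $\Set$ or in the category of topological spaces, so the only real content is the stability of the defining property of the category under products. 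For $\CH$ this is Tychonoff's theorem together with the stability of the Hausdorff property, valid for arbitrary index sets; for $\CMet$ and $\Polish$ a countable family of (complete) metrics $d_n$ is recombined into $\sum_n 2^{-n}\min(d_n,1)$ and separability is preserved by countable products, so countable products stay in $\CMet$ (resp.\ $\Polish$); a finite product of locally compact Hausdorff spaces is locally compact Hausdorff, giving (iii); and for $\CHpt$ one invokes the general fact that the coslice category $(\mathrm{pt}\downarrow\CH)$ inherits products from $\CH$, since a morphism under $\mathrm{pt}$ into $\prod_i X_i$ is precisely a tuple of morphisms under $\mathrm{pt}$. For $\ConcMes$, a set map into $(\prod_\alpha X_\alpha,\bigotimes_\alpha\Sigma_\alpha)$ is measurable iff each coordinate is, because preimages of a generating family suffice to test measurability.

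For the negative statements the strategy is a ``forced shape'' argument: a hypothetical categorical product is pinned down, as a set with coordinate maps, by testing it against the one-point space $\mathrm{pt}$, which lies in all of these categories and (being compact) admits only morphisms that are automatically proper, so that $\Hom(\mathrm{pt}\to Y)$ is always just the point set of $Y$. Concretely, for (iv) take $X=Y=\N$ with the discrete topology and suppose $(P,p_1,p_2)$ is their product in $\LCHpr$. Applying the universal property to $\mathrm{pt}$ shows that for every $(k,\ell)\in\N^2$ the fibre $p_1^{-1}(\{k\})\cap p_2^{-1}(\{\ell\})$ consists of exactly one point, so $(p_1,p_2)$ is a bijection $P\to\N^2$; since each such fibre is clopen (the preimage of a clopen set under a continuous map), $P$ must carry the discrete topology, and then $p_1^{-1}(\{0\})$ is an infinite discrete set, hence not compact, contradicting the properness of the $\LCHpr$-morphism $p_1$. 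The same device disposes of the remaining cases: a hypothetical uncountable $\CMet$-product of two-point spaces $S_\alpha=\{0,1\}$ ($\alpha\in A$) is forced, using one-point test objects (which realize every point of $\{0,1\}^A$ and, by uniqueness of factorizations, separate points of $P$), to be a compact space admitting a continuous bijection onto the product $\{0,1\}^A$, hence homeomorphic to it, contradicting metrizability; and, if (ii), (iii) are read as also asserting non-existence beyond the stated cardinalities, the analogous argument forces an uncountable $\Polish$-product (resp.\ infinite $\LCH$-product of copies of $\R$) to be the ordinary Cartesian product, which is not metrizable (resp.\ not locally compact).

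The compatibility statements (v) and (viii) follow from the observation that all the products built above live on the same underlying Cartesian set and that the casting functors of Figure~\ref{fig:baire} merely forget or compare structure on that set. The metric $\sum_n 2^{-n}\min(d_n,1)$ on a countable $\CMet$-product induces exactly the product topology, so $\prod^{\CMet}$ casts to $\prod^{\CH}$ and to $\prod^{\Polish}$; $\prod^{\CHpt}$ casts to $\prod^{\CH}$ via the vertex functor; and all of them cast to $\prod^{\Set}$. The genuinely new content of (viii) over (v) is that the $\ConcMes$-product also agrees with these: here one uses the Stone--Weierstrass theorem to see that the finite-coordinate cylinder functions are uniformly dense in $C(\prod^{\CH}_\alpha K_\alpha,\R)$, hence generate the same $\sigma$-algebra as all of $C(\prod^{\CH}_\alpha K_\alpha,\R)$, which identifies $\Baire(\prod^{\CH}_\alpha K_\alpha)$ with $\bigotimes_\alpha\Baire(K_\alpha)$, that is, with the $\ConcMes$-product of the $\BaireFunc(K_\alpha)$; the countable case then specializes this to $\CMet$ and $\Polish$. (This is precisely why the $\ConcMes$-product is excluded from (v): for general $\LCH$-spaces, e.g.\ uncountable discrete ones, $\BaireB$ of a product is strictly larger than the tensor product of the $\BaireB$'s, since Stone--Weierstrass density fails on non-compact spaces.)

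Finally, (vi) is Weil's theorem: $C(K,\R)$ separates the points of a $\CH$-space $K$ (Urysohn's lemma, Proposition~\ref{urysohn}), so the evaluation map $K\to\prod_{f\in C(K,\R)}^{\CH}[\inf f,\sup f]$ is a continuous injection of a compact space into a Hausdorff product, hence a homeomorphism onto a closed $\CH$-subspace, each factor being a compact subinterval of $\R$; and (vii) follows from the Tietze extension theorem: for $K$ closed in the (compact Hausdorff, hence normal) space $\prod^{\CH}_\alpha S_\alpha$, every $f\in C(K,\R)$ extends to some $\tilde f\in C(\prod^{\CH}_\alpha S_\alpha,\R)$, so $\Baire(K)$ (generated by $C(K,\R)$) is contained in the trace $\sigma$-algebra $\{E\cap K_\Set:E\in\Baire(\prod^{\CH}_\alpha S_\alpha)\}$, the reverse inclusion being immediate since restrictions of continuous functions are continuous; this identity does not need $K_\Set$ to be Baire-measurable in the product. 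The main obstacle throughout is the negative direction, and above all (iv): one must phrase the forced-shape argument so that it simultaneously pins down the underlying set of the hypothetical product and exposes the failure of the coordinate projections to be proper. A secondary point requiring care is that in (vii) and (viii) the Stone--Weierstrass and Tietze inputs must be applied to the \emph{compact} space $\prod^{\CH}_\alpha S_\alpha$, where density and extension are available, rather than to a merely locally compact space.
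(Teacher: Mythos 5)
Your proposal is correct in substance and in several places more self-contained than the paper's own proof, though it follows a recognizably different route at three points. For (iv) you work with $\N\times\N$ and a clean ``forced shape'' argument (test against $\mathrm{pt}$, deduce that the would-be product is discrete and in bijection with $\N^2$, then observe that $p_1^{-1}(\{0\})$ is infinite discrete and hence not compact); the paper instead argues with $\R\times\R$, noting that the slices $\{y\}\times\R$ would embed properly and that the projections then cannot be proper. Both are valid; yours is the more carefully pinned-down version. For (vii) the paper simply cites \cite[Lemma 2.1]{jt19}, whereas you give a direct Tietze-extension proof of the two inclusions between $\Baire(K)$ and the trace $\sigma$-algebra; this is a correct and welcome expansion. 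For the compact-Hausdorff half of (viii) the paper deduces $\Baire(\prod^\CH_\alpha X_\alpha)=\bigotimes_\alpha\Baire(X_\alpha)$ from (vi) and (vii), while you prove it directly by Stone--Weierstrass density of the finite-coordinate cylinder algebra; this is essentially Varadarajan's argument, which the paper also cites, and it is fine.

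The one genuine gap is the Polish case of (viii). You assert that ``the countable case then specializes this to $\CMet$ and $\Polish$,'' but the Stone--Weierstrass identification of the Baire algebra of a product is an argument about \emph{compact} spaces and does not specialize to non-compact Polish spaces such as $\R^\N$: uniform density of cylinder functions in the algebra of all bounded continuous functions fails there (as you yourself point out when explaining why $\ConcMes$ is excluded from (v)). The claim $\Borel(\prod^\Polish_{n}X_n)=\bigotimes_n\Borel(X_n)$ needs its own (standard) argument via second countability: a countable base of finite intersections of cylinders over basic open balls shows every open set of the product is a countable union of measurable rectangles, giving $\Borel(\prod_n X_n)\subseteq\bigotimes_n\Borel(X_n)$, the reverse inclusion being automatic from continuity of the projections. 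This is exactly the step the paper supplies (``constructing a countable subbase \dots arising from open balls''). A secondary, non-fatal remark: your parenthetical forced-shape arguments for the non-existence of infinite $\LCH$-products are not complete as stated (a continuous bijection onto the Cartesian product does not by itself force a homeomorphism without compactness), but since the proposition only asserts existence for the stated cardinalities, nothing in the statement depends on this.
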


\begin{proof}
The assertions in (i) are standard; for instance, the existence of the categorical product in $\CH$, for instance, follows readily from Tychonoff's theorem.  For $\CMet$ there is the issue of how to assign the metric on a countable product $\prod^\CMet_{n \in \N} X_n$ of the factor metric spaces $X_n = ((X_n)_\Set, d_n)$ in a manner that is compatible with the product topology, but this can be achieved in any number of ways, e.g., by using the metric
$$ d\left( (x_n)_{n \in \N}, (y_n)_{n \in \N} \right) \coloneqq \sum_{n \in \N} 2^{-n} \frac{d_n(x_n,y_n)}{1+d_n(x_n,y_n)}$$
(this construction can also be used to verify that the product of countably many Polish spaces is Polish).  

The assertions in (ii) and (iii) are also well-known (the category $\LCH$ does not admit general infinite products, since if it did, then so would the category of locally compact Hausdorff abelian groups. However, there is no product of countably many copies of the real numbers in the category of locally compact Hausdorff abelian groups, for if there were, then by the universal property of the categorical product, it would become a real Hausdorff topological vector space, contradicting the well known fact that the only locally compact Hausdorff topological vector spaces are finite-dimensional.).  

To verify (iv), suppose there would exist a categorical product of $\R$ with itself, say $X$. Then for any point $y\in \R$, there must exist unique proper maps $\{y\}\times \R\to X$ and $\R\times \{y\}\to X$ by the universal property of the categorical product $X$. 
This implies that $\R\times \R$ embeds properly into $X$, but then the projection maps from $X$ to $\R$ cannot be proper, giving the claim. 

Claim (v) follows from a routine expansion of the definitions, with matters boiling down to establishing easy identities such as $(\prod_{n \in \N}^\CMet X_n)_\CH = \prod^\CH_{n \in \N} (X_n)_\CH$ for a countable sequence $X_n$ of $\CMet$ spaces.

Claim (vi) was established in \cite{weil1937espaces}; for a canonical construction, take $A \coloneqq \CFunc(K)$, set $S_f \coloneqq f(K)$ for $f \in \CFunc(K)$, and identify each point $k \in K$ with the tuple $(f(k))_{f \in \CFunc(K)}$; the required properties are then easily verified using Urysohn's lemma \ref{urysohn}.

Claim (vii) was established in \cite[Lemma 2.1]{jt19}. For Claim (viii), the only non-routine step is in establishing that $\BaireFunc(\prod_{\alpha \in A} X_\alpha) = \prod_{\alpha \in A} \BaireFunc(X_\alpha)$ for any (possibly uncountable) family $(X_\alpha)_{\alpha \in A}$ of $\CH$-spaces, and that $\BorelFunc(\prod_{n \in \N} X_n) = \prod_{n \in \N} \BorelFunc(X_n)$ for any countable family of $\Polish$-spaces.  The first claim follows from (vi), (vii) (and is also proven in \cite[Proposition 2.3]{varadarajan-riesz}), and the second follows from constructing a countable subbase for $\prod_{n \in \N} X_n$ arising from open balls in the individual $X_n$. 
\end{proof}

The compatibility of the $\CH$ and $\ConcMes$ products via the Baire functor $\BaireFunc$ (which, as mentioned above, was first proven in \cite[Proposition 2.3]{varadarajan-riesz}) is one of the major reasons why it is preferable to use the Baire $\sigma$-algebra instead of the Borel $\sigma$-algebra for $\CH$-spaces.  On the other hand, the $\LCH$ product is not compatible with the $\ConcMes$ product even when multiplying just two spaces together.  For instance, if $X$ is a discrete $\LCH$-space with cardinality greater than the continuum, then $X \times_\LCH X$ is also discrete, hence every subset is $\CbFunc$-Baire-measurable (every indicator function is bounded continuous).  In particular, the diagonal $\{ (x,x): x \in X \}$ is $\CbFunc$-Baire-measurable.  On the other hand, it is easy to see that every set $E$ measurable in the product space  $\BaireB(X) \times_{\ConcMes} \BaireB(X)$ has the property that the slices $E_x \coloneqq \{ y \in X: (x,y) \in X \}$ lie in a countably generated $\sigma$-algebra.  By cardinality considerations, the diagonal does not have this property, hence $\BaireB(X \times_\LCH X) \neq \BaireB(X) \times_\ConcMes \BaireB(X)$, demonstrating the incompatibility of the $\LCH$ and $\ConcMes$ products.

We have the following useful descriptions of the $\CbFunc$-Baire and $\CcFunc$-Baire $\sigma$-algebras: 

\begin{proposition}[Characterization of Baire algebras]\label{baire-prop}
\text{}
\begin{itemize}
\item[(i)] Let $X$ be an $\LCH$-space. Then $B\in \BaireB(X)$ if and only if there exist a sequence of real $f_n\in \CbFunc(X), n\in \N$ and $A\in \Borel(\prod^\Polish_{n\in \N} \R)$ such that $B=(\prod_{n\in \N}^\Polish f_n)^*(A)$.  
\item[(ii)] Let $X$ be an $\LCHpr$-space. Then
$B\in \BaireC(X)$ if and only if there exist a sequence of continuous functions $f_n:X\to [0,1], n\in \N$ and $A\in \Borel(\prod^\CMet_{n\in \N} [0,1])$ such that $B=(\prod_{n\in \N}^\CMet f_n)^*(A)$.  Equivalently, $\BaireC(X)$ is generated by all compact $G_\delta$ subsets of $X$.   
\end{itemize}
\end{proposition}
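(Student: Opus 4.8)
My plan is to treat (i) and (ii) in parallel: in each case let $\mathcal{C}$ denote the family of all sets of the stated ``cylinder'' form $(\prod_n f_n)^{*}(A)$, show that $\mathcal{C}$ is a $\sigma$-algebra, that it contains a family generating the relevant Baire $\sigma$-algebra, and conversely that each member of $\mathcal{C}$ lies in that Baire $\sigma$-algebra. Two routine inputs will be used throughout. First, the Borel $\sigma$-algebra of $\prod^{\Polish}_{n\in\N}\R$ (resp.\ of $\prod^{\CMet}_{n\in\N}[0,1]$) coincides with the product $\sigma$-algebra $\bigotimes_n\Borel(\R)$ (resp.\ $\bigotimes_n\Borel([0,1])$); this is the instance of Proposition \ref{prod-top}(viii) obtained by casting the $\Polish$- (resp.\ $\CMet$-) product to $\ConcMes$. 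Second, the $\sigma$-algebra generated by any family of real- or complex-valued functions is the union of the $\sigma$-algebras generated by its countable subfamilies, so a fixed $B$ in such a $\sigma$-algebra already lies in one generated by countably many of the functions.

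For (i), write $\R^{\N}=\prod^{\Polish}_{n\in\N}\R$. That $\mathcal{C}$ is a $\sigma$-algebra is immediate: it contains $\emptyset$ and $X_\Set$; it is closed under complements since $(\prod_n f_n)^{*}(\R^{\N}\setminus A)=X_\Set\setminus(\prod_n f_n)^{*}(A)$; and it is closed under countable unions because, given members $(\prod_n f_{k,n})^{*}(A_k)$, I will re-index the countable family $\{f_{k,n}\}_{k,n}$ as a single sequence $(g_m)$, rewrite each set as $(\prod_m g_m)^{*}(A_k')$ with $A_k'$ the preimage of $A_k$ under the relevant (continuous, hence Borel) coordinate projection $\R^{\N}\to\R^{\N}$, and take $\bigcup_k A_k'$. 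Next, $\mathcal{C}$ contains $u^{*}(V)$ for every real $u\in\CbFunc(X)$ and open $V\subseteq\R$ (take $f_1=u$, the other $f_n\equiv 0$, and $A$ the cylinder over $V$). Since $\BaireB(X)$ is by definition generated by the preimages $f^{*}(W)$ of open $W\subseteq\C$ under $f\in\CbFunc(X)$, and since for $f=u+iv$ the set $f^{*}(W)=(u,v)^{*}(W)$ lies in $\sigma(\{u^{*}(V),v^{*}(V):V\subseteq\R\text{ open}\})$ (using $\Borel(\C)=\Borel(\R)\otimes\Borel(\R)$), we get $\BaireB(X)\subseteq\mathcal{C}$. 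Conversely, if $B=(\prod_n f_n)^{*}(A)\in\mathcal{C}$, then by the first input above the map $F=\prod_n f_n\colon X\to\R^{\N}$ is $(\BaireB(X),\Borel(\R^{\N}))$-measurable as soon as each coordinate $f_n$ is $\BaireB(X)$-measurable, which holds since $f_n$ is bounded continuous; hence $B=F^{*}(A)\in\BaireB(X)$. Thus $\mathcal{C}=\BaireB(X)$.

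For (ii) I will first dispose of the ``compact $G_\delta$'' reformulation. By Proposition \ref{urysohn}(iv) the compact $G_\delta$ subsets of $X$ are exactly the sets $f^{*}(\{1\})$ with $f\in\CoFunc(X)$, $0\le f\le 1$; and for such $f$ the identities $\{f\ge t\}=(\min(f/t,1))^{*}(\{1\})$ for $t\in(0,1]$ and $\{f=0\}=X_\Set\setminus\bigcup_n\{f\ge 1/n\}$ show $\sigma(f)$ is generated by sets of that form. Combining this with the decomposition of a complex $f\in\CoFunc(X)$ into (rescaled) positive and negative parts of its real and imaginary components yields $\BaireC(X)=\sigma(\CoFunc(X))=\sigma(\{\text{compact }G_\delta\text{ subsets of }X\})$, which is the second assertion. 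The functional characterization then runs parallel to (i), with $[0,1]^{\N}=\prod^{\CMet}_{n\in\N}[0,1]$ in place of $\R^{\N}$ and with $[0,1]$-valued functions from $\CoFunc(X)$ as generators. For ``$B\in\BaireC(X)$ $\Rightarrow$ cylinder form'', use $\BaireC(X)=\sigma(\CoFunc(X))$, pass to a countable, rescaled, $[0,1]$-valued subfamily $\{h_m\}\subseteq\CoFunc(X)$ with $B\in\sigma(\{h_m\})$, and write $B=(\prod_m h_m)^{*}(A)$ for a suitable $A\in\Borel([0,1]^{\N})$. For the reverse, given $f_n\in\CoFunc(X)$ valued in $[0,1]$, the tuple $F=\prod_n f_n\colon X\to[0,1]^{\N}$ extends to a continuous map $\Alex(X)\to[0,1]^{\N}$ sending $\infty$ to the origin (a basic neighbourhood of the origin pulls back to the complement of a finite union of compact sets, i.e.\ a neighbourhood of $\infty$); applying the Baire functor to this $\CH$-morphism shows it pulls $\Baire([0,1]^{\N})=\Borel([0,1]^{\N})$ back into $\Baire(\Alex(X))$, and restricting to $X\subseteq\Alex(X)$ — where $\Baire(\Alex(X))$ restricts to $\sigma(\CoFunc(X))=\BaireC(X)$, since $\CFunc(\Alex(X))$ restricts on $X$ to $\C\cdot 1\oplus\CoFunc(X)$ — gives $B=F^{*}(A)\in\BaireC(X)$.

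The step I expect to be the genuine obstacle is the reverse inclusion in (ii): this is precisely where the $\CcFunc$/$\CoFunc$ setting departs from the $\CbFunc$ one, since $\BaireC(X)$ — unlike $\BaireB(X)$ — does \emph{not} contain the preimages of open sets under arbitrary bounded continuous functions, so one cannot simply declare $F=\prod_n f_n$ to be $\BaireC(X)$-measurable. The hypothesis that the generating functions vanish at infinity is essential and must be exploited to push $F$ through the one-point compactification, and the attendant identification of the trace of $\Baire(\Alex(X))$ on $X$ with $\BaireC(X)$ is the one point that is not pure $\sigma$-algebra bookkeeping. Everything else — the closure properties of $\mathcal{C}$, the reduction to generators, and the reduction to real-valued functions — is routine once Proposition \ref{prod-top}(viii) and Proposition \ref{urysohn}(iv) are in hand.
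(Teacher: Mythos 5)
Your proof is correct and follows the same overall architecture as the paper's (show that the cylinder sets form a $\sigma$-algebra containing the generators of the relevant Baire algebra, then show conversely that each cylinder set is Baire), but you handle both converse directions by genuinely different means. In (i), the paper fixes the sequence $(f_n)$ and runs a good-sets argument over $A$, reducing to closed $F\subseteq\prod_n\R$ and exhibiting $(\prod_n f_n)^*(F)$ as the zero set of the single function $\min(\mathrm{dist}((f_n(\cdot))_{n},F),1)\in\CbFunc(X)$; you instead invoke $\Borel(\prod^\Polish_n\R)=\bigotimes_n\Borel(\R)$ from Proposition \ref{prod-top}(viii), so that coordinatewise Baire-measurability of $\prod_n f_n$ already suffices. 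Both are sound; the paper's trick avoids the product-$\sigma$-algebra identification, yours avoids the good-sets induction. In (ii) the paper only says the first claim "can be shown similarly to (i)", and you correctly identify why this is the delicate point: the distance-function composite lies in $\CbFunc(X)$ but generally not in $\CoFunc(X)$, so the argument of (i) does not transfer verbatim. Your detour through $\Alex(X)$ --- extending $\prod_n f_n$ continuously by sending $\infty$ to the origin and using that $\Baire(\Alex(X))$ restricts on $X$ to $\BaireC(X)$ --- is a clean way to close exactly the gap the paper glosses over, and your derivation of the compact-$G_\delta$ reformulation via the level sets $\{f\ge t\}=(\min(f/t,1))^*(\{1\})$ matches the paper's in substance. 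One remark: as literally stated, part (ii) only requires the $f_n$ to be continuous and $[0,1]$-valued, under which the "if" direction fails (take $X$ uncountable discrete and $f_1$ the indicator of an uncountable co-uncountable set); your reading, with $f_n\in\CoFunc(X)$, is clearly the intended one (compare Remark \ref{baire-remark}, where the $f_n$ are taken in $\CcFunc(X)$), and your closing paragraph correctly isolates the vanishing-at-infinity hypothesis as the point where it is used.
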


\begin{proof} We begin with (i). The set of all $B$ of the form $B = (\prod_{n \in \N}^\Polish f_n)^*(A)$ for some real $f_n \in \CbFunc(X)$ and $A \in \Borel(\prod^\Polish_{n \in \N} \R)$ is a $\sigma$-algebra that contains the preimages $f^*(E)$ of any Borel subset $E$ of $\C$ by elements $f$ of $\CbFunc(X)$, and thus contains $\BaireB(X)$.  To obtain the converse inclusion, it suffices to show that for any fixed real $f_n\in \CbFunc(X), n\in\N$, the collection $\{A\in \Borel(\prod^\Polish_{n\in \N} \R): (\prod_{n\in \N} f_n) ^*(A)\in \BaireB(X)\}$ is all of $\Borel(\prod^\Polish_{n\in \N} \R)$.  Since this collection is a $\sigma$-algebra, it suffices to show that it contains all closed subsets $F$ of $\prod^\Polish_{n\in \N} \R$.  Let $F$ be such a closed set, and let $g \colon X \to [0,1]$ be the function $g(x) \coloneqq \min( \mathrm{dist}((f_n(x))_{n \in \N},F), 1)$ for $x \in X$ (using a suitable product metric on $\prod^\Polish_{n\in \N} \R$).  Then $g \in \CbFunc(X)$ and $F = g^*\{0\}$, giving the claim.  This proves (i).

Now we establish\footnote{We are indebted to Minghao Pan for pointing out a mistake in the proof of Proposition \ref{baire-prop}(ii) in a preliminary manuscript.}  (ii). The first claim can be shown similarly to (i). By Proposition \ref{urysohn}(iv), every $G_\delta$ set $K$ is of the form $K=f^*\{1\}$ for some $f\in C_0(X)$ which shows that the $\sigma$-algebra generated by the compact $G_\delta$ sets is included in $\BaireC(X)$. Conversely, for any real $f \in \CcFunc(X)$, the level sets $\{f\geq r\}, r\in \R$  belong to the $\sigma$-algebra generated by compact $G_\delta$-sets, and hence the entirety of $\BaireC(X)$ does also (decomposing complex-valued functions in $\CcFunc(X)$ into real and imaginary parts). This gives (ii).
\end{proof}

As is well-known, an illustrative example of the subtleties of uncountable topological spaces is provided by the first uncountable ordinal $\omega_1$.  

\begin{proposition}\label{omega1} 
Let the intervals $[0,\omega_1)$ and $[0,\omega_1]$ be endowed with  the order topology.
\begin{itemize}
\item[(i)] The space $[0,\omega_1)$ is an $\LCH$-space (or $\LCHpr$-space) which is countably compact, sequentially compact, and first countable, but not compact, $\sigma$-compact, paracompact or second countable.  The space $[0,\omega_1]$ is a $\CH$-space which is not first countable.   
\item[(ii)] Both $[0,\omega_1)$ and $[0,\omega_1]$ are  zero-dimensional.   
\item[(iii)] A subset of $[0,\omega_1)$ (resp. $[0,\omega_1]$) is compact if and only if it is complete\footnote{An ordered set is said to be \emph{complete} if every non-empty subset has an infimum and a supremum.}.  Moreover, every compact subset of $[0,\omega_1)$ is $G_\delta$.   
\item[(iv)] Both $[0,\omega_1)$ and $[0,\omega_1]$ are completely normal, but neither is perfectly normal.  
\item[(v)] Every complex continuous function on $[0,\omega_1)$ (resp. $[0,\omega_1]$) is eventually constant.  
Therefore, we have $\CcFunc([0,\omega_1))=\CoFunc([0,\omega_1))$ and $\CFunc([0,\omega_1])=\CbFunc([0,\omega_1))=\CoFunc([0,\omega_1)) \oplus \C$.   
\item[(vi)] One has $\BaireC([0,\omega_1)) = \BaireB([0,\omega_1)) \subsetneq \Borel([0,\omega_1))$ and $\BaireC([0,\omega_1]) = \BaireB([0,\omega_1]) \subsetneq \Borel([0,\omega_1])$. 
\item[(vii)] Both the Stone-\v{C}ech compactification $\beta [0,\omega_1)$ and the Alexandroff compactification $\Alex([0,\omega_1))$ are identifiable\footnote{This should be contrasted with the fact that the Stone-\v{C}ech compactification of $\omega_0$ (the first infinite ordinal) is much larger than its  Alexandroff compactification.} with $[0,\omega_1]$. 
\end{itemize}
\end{proposition}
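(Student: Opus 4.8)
The plan is to reduce all seven items to two structural pillars about $[0,\omega_1)$: (a) \emph{every countable subset is bounded}, since a countable supremum of countable ordinals is countable; and (b) \emph{$[0,\alpha]$ is compact for every $\alpha\le\omega_1$}, proved by transfinite induction ($[0,\alpha{+}1]=[0,\alpha]\cup\{\alpha{+}1\}$, and for limit $\lambda$ cover $\lambda$ by some $(\gamma,\lambda]$ and reduce to $[0,\gamma]$). From (b), $[0,\omega_1)$ is locally compact Hausdorff (each $[0,\alpha]$ is a compact neighbourhood of $\alpha$), hence also an $\LCHpr$-space, and $[0,\omega_1]$ is the $\alpha=\omega_1$ instance, so compact. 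Countable and sequential compactness of $[0,\omega_1)$: any countable set, or any sequence, lands in some compact $[0,\beta]$, and a monotone subsequence (which always exists in a linear order) converges there — increasing to its supremum, decreasing being eventually constant by well-ordering. First countability of $[0,\omega_1)$: $0$ and successors are isolated, and a countable limit $\lambda$ has an increasing cofinal $\omega$-sequence $\lambda_n$, giving the base $\{(\lambda_n,\lambda]\}$; failure at $\omega_1$ in $[0,\omega_1]$ follows from (a), since a countable family of neighbourhoods of $\omega_1$ contains intervals $(\gamma_n,\omega_1]$ with $\sup\gamma_n<\omega_1$ and thus misses $(\sup\gamma_n{+}1,\omega_1]$. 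Non-compactness, non-$\sigma$-compactness, non-Lindelöfness, and (via ``a locally finite open cover of a countably compact space is finite'') non-paracompactness all come from the open cover $\{[0,\alpha):\alpha<\omega_1\}$ together with (a)–(b); non-second-countability follows because (a) makes $[0,\omega_1)$ non-separable. This settles (i); (ii) is immediate once one notes $[0,\alpha]=[0,\alpha{+}1)$ and $(\alpha,\beta]=[0,\beta]\setminus[0,\alpha]$ are clopen and form a base at every point.

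For (iii): in the compact Hausdorff space $[0,\omega_1]$, ``compact'' means ``closed'', and a set $S$ is closed iff it is complete — every nonempty $A\subseteq S$ has a minimum by well-ordering and a supremum by boundedness, and $\sup A$ is either in $A$ or a limit point of $A$, so closedness of $S$ forces $\sup A\in S$, while conversely completeness gives closure under limit points. For $[0,\omega_1)$ one adds the boundedness requirement, which is exactly completeness being witnessed inside $[0,\omega_1)$. The $G_\delta$ claim: a compact $S\subseteq[0,\omega_1)$ lies in some $[0,\beta]$ with $\beta<\omega_1$, which is a countable compact Hausdorff space, hence metrizable (a countable ordinal embeds as a compact subset of $[0,1]$); in a metric space closed sets are $G_\delta$, and $[0,\beta]=[0,\beta{+}1)$ is open in $[0,\omega_1)$, so the $G_\delta$-property transfers upward.

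Item (v) is the analytic heart and drives (vi)–(vii). Given continuous $f\colon[0,\omega_1)\to\C$ and $\varepsilon>0$: if $\operatorname{diam}f([\beta,\omega_1))>\varepsilon$ for every $\beta<\omega_1$, then one builds an increasing sequence $\alpha_n$ along which $f$ oscillates by more than $\varepsilon$; but $\alpha\coloneqq\sup_n\alpha_n<\omega_1$ is a limit ordinal approached by the $\alpha_n$, so continuity at $\alpha$ forces $f(\alpha_n)\to f(\alpha)$, a contradiction. Hence there is $\beta_\varepsilon<\omega_1$ with $\operatorname{diam}f([\beta_\varepsilon,\omega_1))\le\varepsilon$, and $\beta\coloneqq\sup_n\beta_{1/n}<\omega_1$ makes $f$ constant on $[\beta,\omega_1)$; the case of $[0,\omega_1]$ follows by restriction plus continuity at $\omega_1$. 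The algebra identities then follow mechanically: eventually constant implies bounded, so $\CbFunc([0,\omega_1))=\CFunc([0,\omega_1))$; an eventually constant function lies in $\CcFunc([0,\omega_1))$, equivalently $\CoFunc([0,\omega_1))$, precisely when its eventual value is $0$; the splitting $f=(f-c)+c\cdot 1$ gives $\CbFunc([0,\omega_1))=\CoFunc([0,\omega_1))\oplus\C$; and restriction is a $\CStarAlgUnit$-isomorphism $\CFunc([0,\omega_1])\cong\CbFunc([0,\omega_1))$ since $[0,\omega_1)$ is dense and every continuous function extends by its eventual value. With (v) available, (vi) is short: adjoining constants does not enlarge a $\sigma$-algebra, so $\BaireB=\BaireC$ on $[0,\omega_1)$ (and they coincide automatically on the compact $[0,\omega_1]$); and by Proposition \ref{baire-prop} every Baire set $B$ is $(\prod_n f_n)^*(A)$ for countably many continuous $f_n$, each constant past some countable ordinal, so $B$ is determined past a single $\beta<\omega_1$, i.e.\ $[\beta,\omega_1)$ (resp.\ $[\beta,\omega_1]$) is contained in or disjoint from $B$ — which fails for the closed sets $\{\omega_1\}$ and the club $C$ of limit ordinals (it contains $\beta{+}\omega$ but not $\beta{+}1$), yielding the strict inclusions into $\Borel$. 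Likewise (vii): $[0,\omega_1]$ is a compactification of $[0,\omega_1)$ adding a single point, hence \emph{is} the Alexandroff compactification; and $\beta[0,\omega_1)=\Spec\CbFunc([0,\omega_1))\cong\Spec\CFunc([0,\omega_1])\cong[0,\omega_1]$ by the isomorphism from (v) and Gelfand duality (Definition \ref{def-ch}, Theorem \ref{gelfand-dualities}).

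For (iv), complete (i.e.\ hereditary) normality of both spaces I would simply invoke the classical fact that every linearly ordered topological space — hence every subspace, being a generalized ordered space — is monotonically normal, a fortiori hereditarily normal. The failure of perfect normality is the main obstacle, since it requires producing a closed set that is not $G_\delta$. For $[0,\omega_1]$ this is $\{\omega_1\}$, non-$G_\delta$ by pillar (a) (a countable intersection of neighbourhoods of $\omega_1$ contains an uncountable tail). For $[0,\omega_1)$ I would use $C$, the set of countable limit ordinals, which is closed; the crucial point is that any open $U\supseteq C$ has \emph{finite} complement, because $[0,\omega_1)\setminus U$ is a closed set containing no limit ordinal, while any infinite subset of $[0,\omega_1)$ has a limit point by countable compactness, and that limit point can only be a limit ordinal (isolated points are $0$ and successors). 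Thus if $C=\bigcap_n U_n$ then $[0,\omega_1)\setminus C=\bigcup_n([0,\omega_1)\setminus U_n)$ would be countable, contradicting that the non-limit ordinals are uncountable. Beyond this the only risk is bookkeeping — keeping the half-open endpoint conventions straight in the ``eventually constant'' reductions of (v)–(vii) and in the boundedness arguments — but no further ideas are needed.
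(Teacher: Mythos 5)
Your proposal is correct, and for most items it tracks the paper's proof, with the difference that where the paper simply cites Steen--Seebach (local/countable/sequential compactness in (i), the compactness-equals-completeness characterization in (iii), complete normality in (iv), and the eventual-constancy statement (v)) you supply self-contained arguments — transfinite induction on compactness of $[0,\alpha]$, the oscillation/diameter argument for (v), monotone normality of ordered spaces — all of which are sound. The genuine divergence is in (iv), for the non-$G_\delta$-ness of the closed set $C$ of limit ordinals in $[0,\omega_1)$: the paper invokes Fodor's Pressing Down Lemma to show that every open $O \supseteq C$ contains a ray $[\alpha,\omega_1)$, then takes a supremum over countably many such rays. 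You instead note that the complement of any open $U \supseteq C$ is a closed set consisting only of isolated points, hence finite (an infinite set of countable ordinals has a strictly increasing $\omega$-subsequence whose supremum is a limit ordinal and a limit point), so $\bigcap_n U_n$ has countable complement while $C$ has uncountably many successors in its complement. This is strictly more elementary — no stationarity or pressing-down is needed — and it proves a stronger fact about neighbourhoods of $C$ that immediately implies the paper's ray conclusion. Your ``tail dichotomy'' for Baire sets in (vi), applied uniformly to $C$ in $[0,\omega_1)$ and to $\{\omega_1\}$ in $[0,\omega_1]$, is the paper's Proposition \ref{baire-prop} argument in a slightly cleaner packaging, and the remaining items ((ii), (vii), the algebra identities in (v)) coincide with the paper's reasoning. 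No gaps.
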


\begin{proof}
A proof of the properties in (i), (v) can be found in \cite[\S 42]{counterexamplestopology}, and (vii) follows easily from (v).  
As for (ii), notice that we will not be able to construct a strictly decreasing infinite sequence in $[0,\omega_1)$ or $[0,\omega_1]$, and therefore the collection of intervals $(\alpha,\beta]$, $\alpha<\beta$ together with $\{0\}$ forms a base of clopen subsets respectively.  
See \cite[\S 39.7]{counterexamplestopology} for the characterization of compactness in terms of completeness for subsets of $[0,\omega_1)$ and $[0,\omega_1]$.  
This characterization implies that every compact subset of $[0,\omega_1)$ can be viewed as a closed subset of a compact interval in $[0,\omega_1)$.  A compact interval in $[0,\omega_1)$ is second-countable with respect to the subspace topology, and hence is metrizable. It is well-known that closed subsets of metric spaces are $G_\delta$, giving (iii). 

See \cite[\S 39.6]{counterexamplestopology} for a proof that $[0,\omega_1)$ and $[0,\omega_1]$ are completely normal. 
To prove that neither are perfectly normal, it is enough to find a closed set that is not $G_\delta$ respectively. 
It is easy to see that $\{\omega_1\}$ is not $G_\delta$ in $[0,\omega_1]$.  
We show that the set $A=\{\alpha\in [0,\omega_1): \alpha \text{ limit ordinal} \}$   is not $G_\delta$ in $[0,\omega_1)$.  
Let $O$  be an open set including $A$. 
Then for each $\gamma\in A$ there exists $\alpha_\gamma\in \omega_1$ such that $(\alpha_\gamma,\gamma]=[\alpha_\gamma+1,\gamma]\subseteq O$. 
Define $f:A\to [0,\omega_1)$ to be $f(\gamma):=\alpha_\gamma+1$.  By Fodor's Pressing Down Lemma (e.g., see \cite[Lemma  III.6.14]{kunen}), there exist $\alpha\in \omega_1$ and a set $B\subseteq A$ which has nonempty intersection with any unbounded closed subset of $\omega_1$ such that $f(\beta)= \alpha$ for all $\beta\in B$. By construction, $[\alpha,\omega_1)\subseteq O$.
Now let $(O_n)$ be a sequence in $[0,\omega_1)$ such that $A\subseteq O_n$ for all $n$. 
For each $n$ choose $\alpha_n$ such that $[\alpha_n,\omega_1)\subseteq O_n$.  
Then $\alpha_*=\sup\{\alpha_n\}$ is a countable ordinal and $[\alpha_*,\omega_1)\subseteq \bigcap_n O_n$. 
As such a ray $[\alpha_*,\omega_1)$ must include a successor ordinal, $A\neq \bigcap_n O_n$.  

Now we establish (vi).  From (v) we have $\CcFunc(([0,\omega_1)) = \CbFunc(([0,\omega_1))$, hence $\BaireC([0,\omega_1)) = \BaireB([0,\omega_1))$; similarly, from the compactness of $[0,\omega_1]$ one has $\BaireC([0,\omega_1]) = \BaireB([0,\omega_1]) = \Baire([0,\omega_1])$.  By \eqref{baire-include} it remains to show that $\BaireB([0,\omega_1)) \neq \Borel([0,\omega_1))$ and
$\Baire([0,\omega_1]) \neq \Borel([0,\omega_1])$. To establish the first claim, it suffices to show that the set $A$ of all limit ordinals smaller in $\omega_1$  is not Baire-measurable (as a closed set it is clearly Borel-measurable).  If for contradiction $A$ were an element of $\BaireB([0,\omega_1))$, by Proposition \ref{baire-prop} there would exist $f_n\in \CbFunc([0,\omega_1))$, $B\in \Borel(\prod_{n\in \N}^\Polish \R)$ such that $A=(\prod_n f_n)^*(B)$.  By Proposition \ref{omega1}(v), each $f_n$ is eventually constant with some constant value $c_n$ for all ordinals larger or equal than $\alpha_n$.  If $(c_1,c_2,\ldots)\in B$, then $(\prod_n f_n)^*(B)$ includes the interval $[\sup_n\{\alpha_n\},\omega_1)$, and thus cannot be $A$, giving a contradiction.  Hence $(c_1,c_2,\ldots)\not\in B$, in which case there exists some $c_i$ which is not an element of the $i$th projection of $B$, so if $\beta\in (\prod_n f_n)^*(B)$ then $\beta<\alpha_i$, and thus also in this case $A$ cannot be $(\prod_n f_n)^*(B)$, again giving the contradiction.  

It remains to show $\Baire([0,\omega_1]) \neq \Borel([0,\omega_1])$.  But this follows after observing from (v) that every Baire-measurable subset of $[0,\omega_1]$ is either bounded, or has a bounded complement, so in particular the Borel-measurable set $[0,\omega_1)$ (or the complement $\{\omega_1\}$) is not Baire-measurable.  Alternatively, we can also derive this from \cite[Proposition 1.4]{comfort} which establishes the following equivalence: 
If $X$ is an $\LCH$-space (resp. $\LCHpr$-space) and $A\in \BaireC(X)$ is closed, then $A$ is $\sigma$-compact if and only if $A$ is Baire measurable in $\beta(X)$.  
Now $[0,\omega_1)$ is clearly closed in $\BaireC([0,\omega_1))$ but by Proposition \ref{omega1}(i) is not $\sigma$-compact, so $[0,\omega_1)$ is not in $\Baire(\beta(X))=\Baire([0,\omega_1])$ by Proposition \ref{omega1}(vii).  
\end{proof}

\begin{remark}\label{baire-remark} We now discuss when the inclusions in \eqref{baire-include} are strict.  The inclusion $\BaireC(X) \subseteq \BaireB(X)$ is strict when $X$ is an uncountable discrete space.  Proposition \ref{baire-prop}(i), (v) offers an obvious (though not obviously useful) necessary and sufficient condition for $\BaireC(X)=\BaireB(X)$: For all $f\in \CbFunc(X)$ there exist $f_n\in \CcFunc(X), n\in \N$, $A\in \Borel(\prod^\CMet_{n\in \N} X_n)$, where the $X_n\subseteq \R$ are compact, such that $f^*(\{0\})=(\prod_{n\in \N} f_n) ^*(A)$. 
Another merely sufficient condition is that any $f\in \CbFunc(X)$ is the pointwise limit of a sequence of functions in $\CoFunc(X)$. 
This condition is equivalent to saying that $X$ is $\sigma$-compact or that the $C^*$-algebra $\CoFunc(X)$ has a countable approximate identity.   
However, Proposition \ref{omega1}(vi) shows that this condition is not necessary in order to have $\BaireC(X)=\BaireB(X)$.  

Proposition \ref{omega1}(vi) also gives examples in which 
$\BaireB(X)\neq \Borel(X)$.  A sufficient condition for $\BaireB(X)=\Borel(X)$ is that $X$ is perfectly normal\footnote{A topological space $X$ is said to be \emph{perfectly normal} if two disjoint closed sets $E,F$ can be perfectly separated by a continuous function, that is there is $f \colon X\to [0,1]$ such that $f^*(\{0\})=E$ and $f^*(\{1\})=F$. Equivalently, $X$ is perfectly normal if it is normal and every closed set is $G_\delta$.}. However, as the example of an uncountable discrete space showed, being perfectly normal is definitely not enough to also have $\BaireC(X)=\BaireB(X)$. 
\end{remark}

\begin{remark} From the Gelfand dualities in Figure \ref{fig:gelfand-dual}, it is not difficult to show that for an $\LCH$-space $X$ (which we also view as an $\LCHpr$-space) that $\BaireC(X)$ is the restriction of the Baire algebra $\Baire(\Alex(X))$ of the Alexandroff compactification $\Alex(X)$ to $X$, while $\BaireB(X)$ is similarly the restriction of the Baire algebra $\Baire(\beta X)$ of the Stone--{\v C}ech compactification $\beta X$ to $X$.  Thus we see that the two canonical compactifications and two canonical Baire algebras of locally compact Hausdorff spaces are naturally divided up between the two categories $\LCH$, $\LCHpr$.
\end{remark}

\section{Regular measures and \texorpdfstring{$\tau$}{tau}-additivity}

In the theory of both Baire and Borel probability measures it is common to impose additional axioms such as inner or outer regularity, $\tau$-additivity, or the Radon measure property; see, e.g., \cite{knowles}.  We recall the relevant notions.

\begin{definition}[Regularity properties]  Let $X = (X_\ConcMes, {\mathcal F}_X)$ be a $\ConcMes$ space $X_\ConcMes = (X_\Set, \Sigma_X)$ equipped with a topology ${\mathcal F}_X$ on $X_\Set$.  Let $\mu_X$ be a finite measure on $X$.
\begin{itemize}
\item[(i)]  We say that $\mu$ is \emph{$\tau$-additive} in $X$ if
$$ \sup_{\alpha \in A} \mu(O_\alpha) = \mu(O)$$
whenever $(O_\alpha)_{\alpha \in A}$ is a net of open measurable sets $O_\alpha \in \Sigma_X$ which is non-decreasing (thus $O_{\alpha} \subseteq O_\beta$ whenever $\alpha \leq \beta$, and $O \coloneqq \bigcup_{\alpha \in A} O_\alpha$ is also open measurable).
\item[(ii)]  If $\langle \mathrm{adjective} \rangle$ is an adjective pertaining to subsets of $X$ which applies in particular to the empty set, such as ``closed'', ``closed $G_\delta$'', ``compact'', or ``compact $G_\delta$'', we say that $\mu$ is \emph{$\langle\mathrm{adjective}\rangle$ inner regular} in $X$ if
$$ \mu(E) = \sup \{ \mu(F) \colon F \in \Sigma_X, F \subseteq E, F \ \langle\mathrm{adjective}\rangle \}$$
for all $E \in \Sigma_X$. Similarly, if $\langle \mathrm{adjective} \rangle$ be an adjective pertaining to subsets of $X$ which applies in particular to the whole set $X_\Set$, such as ``open'', or ``open $F_\sigma$'', we say that $\mu$ is
\emph{$\langle\mathrm{adjective}\rangle$ outer regular} in $X$ if
$$ \mu(E) = \inf \{ \mu(O) \colon O \in \Sigma_X, E \subseteq O, O \ \langle\mathrm{adjective}\rangle \}$$
for all $E \in \Sigma_X$.  
\item[(iii)]  We say that $\mu$ is \emph{Radon} in $X$ if it is compact $G_\delta$ inner regular.
\end{itemize}
\end{definition}

Using $\mu(E^c) = \mu(X)-\mu(E)$ we easily verify the logical implications
\begin{gather*}
\hbox{Radon} \iff \hbox{compact $G_\delta$ inner regular}\\
\Downarrow \\
\hbox{closed $G_\delta$ inner regular} \iff \hbox{open $F_\sigma$ outer regular} \\
\Downarrow\\
\hbox{closed inner regular} \iff \hbox{open outer regular}
\end{gather*}
in Hausdorff spaces (in which compact sets are closed).  In metrizable spaces we can reverse the second downward arrow (because closed sets are automatically $G_\delta$), and in $\CH$-spaces we can reverse the first downward arrow (because closed sets are automatically compact).  For Borel measures, the notions of compact inner regularity (also known as \emph{tightness}) and open outer regularity are the most frequently employed, but for Baire probability measures the notion of closed $G_\delta$ inner regularity (or equivalently open $F_\sigma$ outer regularity) is of more use.  In particular we will not make much use of the concept of compact inner regularity in this paper.  As we shall shortly see, the property of $\tau$-additivity is automatic in $\CH$-spaces, but can be non-trivial in non-compact spaces.

It is a well-known theorem of Ulam (see, e.g. \cite{halmos-measure-theory}, \cite[Theorem 7.1.4]{dudley01} or \cite[Proposition 4.2]{sunder}) that Borel probability measures on $\CMet$-spaces are automatically Radon. We review several further results (also reasonably well-known) of this type:

\begin{proposition}[Automatic regularity of Borel and Baire measures]\label{automatic}  Let $\Cat$ be one of $\CMet$, $\Polish$, $\CH$, $\CHpt$, $\LCH$, $\LCHpr$, let $X$ be a $\Cat$-space, and let $\mu$ be a probability measure on $X_\ConcMes$ (here we use the casting functors from Figure \ref{fig:baire}).
\begin{itemize}
\item[(i)] $\mu$ is closed $G_\delta$ inner regular and open outer $F_\sigma$ regular. (In particular $\mu$ is closed inner regular and open outer regular.)
\item[(ii)]  If $\Cat = \CMet$, $\CH$, $\CHpt$, $\LCH$, $\LCHpr$, then $\mu$ is Radon in $X$ if and only if it is $\tau$-additive.
\item[(iii)] If $\Cat = \CMet$, $\CH$, $\CHpt$, then $\mu$ is both Radon and $\tau$-additive in $X$.
\end{itemize}
\end{proposition}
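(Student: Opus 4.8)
The plan is to handle the three assertions in order, with (i) supplying closed $G_\delta$ inner regularity which (ii) then upgrades to the Radon property, and (iii) emerging as a corollary.

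\smallskip
\noindent\textbf{Proof of (i).} I would introduce the class $\mathcal R$ of all $E \in \Sigma_X$ that are simultaneously closed $G_\delta$ inner regular and open $F_\sigma$ outer regular. Since the complement of a closed $G_\delta$ set is an open $F_\sigma$ set and vice versa, and since $\mu(E^c) = 1 - \mu(E)$, the two conditions defining $\mathcal R$ are equivalent on a fixed $E$; in particular $\mathcal R$ is closed under complementation. A routine $\eps 2^{-n}$ argument, using that a finite union of closed $G_\delta$ sets is again closed $G_\delta$, that a countable union of open $F_\sigma$ sets is again open $F_\sigma$, and the finiteness of $\mu$, shows $\mathcal R$ is closed under countable unions; hence $\mathcal R$ is a sub-$\sigma$-algebra of $\Sigma_X$. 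It remains to check that $\mathcal R$ contains a generating family. In the metrizable cases $\Cat = \CMet, \Polish$ the Borel $\sigma$-algebra is generated by the closed sets, and any closed $C$ satisfies $C = \bigcap_n \{x : d(x,C) < 1/n\}$, a decreasing intersection of open $F_\sigma$ sets, so $C \in \mathcal R$ by continuity of $\mu$ from above. In the cases $\Cat = \CH, \CHpt, \LCH, \LCHpr$ the relevant Baire $\sigma$-algebra is generated by the zero sets $f^*\{0\}$ of continuous functions in the appropriate class (using Proposition \ref{baire-prop}, together with Proposition \ref{urysohn}(iv) in the $\BaireC$ case to recognize compact $G_\delta$ sets as such zero sets); each $Z = f^*\{0\}$ is closed $G_\delta$, and $Z = \bigcap_n \{|f| < 1/n\}$ exhibits it as a decreasing intersection of open $F_\sigma$ sets, so $Z \in \mathcal R$. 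This gives (i).

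\smallskip
\noindent\textbf{Proof of (ii).} For ``Radon $\Rightarrow$ $\tau$-additive'', let $(O_\alpha)_{\alpha \in A}$ be a non-decreasing net of open measurable sets with open measurable union $O$. Given $\eps > 0$, pick a measurable compact $G_\delta$ set $K \subseteq O$ with $\mu(K) > \mu(O) - \eps$; as the $O_\alpha$ form a directed family of open sets covering the compact set $K$, a finite subcover together with directedness yields a single index $\alpha$ with $K \subseteq O_\alpha$, whence $\sup_\alpha \mu(O_\alpha) \geq \mu(K) > \mu(O) - \eps$, giving (with the trivial reverse inequality) $\tau$-additivity. For the converse ``$\tau$-additive $\Rightarrow$ Radon'', I split on the category. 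When $\Cat = \CMet, \CH, \CHpt$ the space is compact, so closed $G_\delta$ sets coincide with compact $G_\delta$ sets and the Radon property follows already from (i), with no hypothesis on $\mu$. When $\Cat = \LCH, \LCHpr$, I would exploit $\tau$-additivity on the family $\mathcal W$ of cozero sets $\{h > 0\}$ with $h \in \CcFunc(X)$: this family is directed (since $\{f>0\} \cup \{g>0\} = \{f^2+g^2 > 0\}$ with $f^2 + g^2 \in \CcFunc(X)$), consists of measurable open sets with compact closure, and covers $X_\Set$ by local compactness and Urysohn's lemma (Proposition \ref{urysohn}(ii)); hence $\tau$-additivity forces $\sup_{W \in \mathcal W} \mu(W) = \mu(X) = 1$. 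Fixing $\eps > 0$, choose $f \in \CcFunc(X)$ with $\mu(\{f > 0\}) > 1 - \eps/2$ and then $n$ with $\mu(\{f \geq 1/n\}) > 1 - \eps$; then $L \coloneqq \{f \geq 1/n\}$ is a measurable compact $G_\delta$ set with $\mu(X_\Set \setminus L) < \eps$. Given any $E \in \Sigma_X$, use (i) to pick a measurable closed $G_\delta$ set $F \subseteq E$ with $\mu(E \setminus F) < \eps$; then $F \cap L$ is a measurable compact $G_\delta$ subset of $E$ with $\mu(E \setminus (F \cap L)) < 2\eps$, so $\mu$ is compact $G_\delta$ inner regular, i.e., Radon.

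\smallskip
\noindent\textbf{Proof of (iii) and the main obstacle.} Assertion (iii) is then immediate: for $\Cat = \CMet, \CH, \CHpt$ the Radon property holds by (i) together with compactness, and $\tau$-additivity follows from the ``Radon $\Rightarrow$ $\tau$-additive'' half of (ii). I expect the only genuinely delicate point to be the converse direction of (ii) in the locally compact non-compact setting: one must produce a directed family of Baire-measurable open sets with compact closures exhausting the space and then carefully track which of the auxiliary sets are measurable, compact, and $G_\delta$ — in particular that the intersection of a compact $G_\delta$ set with a closed $G_\delta$ set is again compact $G_\delta$ and measurable. Everything else is bookkeeping with the regularity/$\eps 2^{-n}$ machinery already used in (i).
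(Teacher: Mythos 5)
Your proof is correct, but it takes a genuinely different route from the paper's in two places. For (i), the paper reduces everything to the Polish case: it invokes Ulam's theorem for $\Polish$, and for $\LCH$ (resp.\ $\LCHpr$) it writes an arbitrary Baire set as $T^*A$ for a continuous map $T$ into a Polish space via Proposition \ref{baire-prop}, pushes the measure forward, regularizes there, and pulls back. You instead run a self-contained ``good sets'' argument: the class $\mathcal R$ of sets that are closed $G_\delta$ inner regular and open $F_\sigma$ outer regular is a $\sigma$-algebra, and it contains the zero sets (resp.\ closed sets) that generate the relevant $\sigma$-algebra. Both work; the paper's version outsources the $\sigma$-algebra induction to the classical Ulam theorem and makes the ``Baire sets are pullbacks of Polish Borel sets'' philosophy do the heavy lifting, while yours avoids the pushforward/pullback step entirely and makes the mechanism visible. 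For the converse half of (ii) in the non-compact cases, the paper applies $\tau$-additivity separately to each Baire open set $O$, using the net of open Baire sets $U$ sandwiched as $U\subseteq K\subseteq O$ with $K$ compact $G_\delta$, and then does a three-step sandwich for general Baire sets. You instead apply $\tau$-additivity once, globally, to the directed cover of $X$ by cozero sets of $\CcFunc(X)$ to extract a single compact $G_\delta$ set $L$ with $\mu(X\setminus L)<\eps$ (tightness), and then intersect with the closed $G_\delta$ sets from (i). Your factorization ``Radon $=$ tight $+$ closed $G_\delta$ inner regular'' is arguably cleaner and isolates exactly where $\tau$-additivity is used. The arguments for ``Radon $\Rightarrow$ $\tau$-additive'' and for (iii) coincide with the paper's. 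One cosmetic repair: for complex-valued $f,g\in\CcFunc(X)$ the function $f^2+g^2$ need not be real nonnegative, so to see that the cozero sets form a directed family you should either restrict to nonnegative real $h$ (as Urysohn's lemma provides) and use $\{f>0\}\cup\{g>0\}=\{f+g>0\}$, or replace $f^2+g^2$ by $|f|+|g|$; this does not affect the argument.
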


\begin{proof}  We begin with (i).  By applying forgetful functors it suffices to check the cases $\Cat = \Polish$, $\LCH$, $\LCHpr$.  For $\Cat=\Polish$ this follows from Ulam's tightness theorem (see, e.g., \cite[Theorem 7.1.4]{dudley01}), noting that in $\Polish$-spaces closed sets are automatically $G_\delta$ due to metrizability.  Now we establish the claim for $\Cat=\LCH$.  It suffices to establish closed $F_\sigma$ inner regularity.  Let $E \in \Sigma_X = \BaireB(X)$, then by Proposition \ref{baire-prop}(i) we have $E = T^* A$ for some $\LCH$-morphism $T \colon X \to Y_\LCH$ and some $\Polish$-space $Y$ (indeed one can take $Y = \prod^\Polish_{n \in \N} \R$).  Applying the $\Cat=\Polish$ case of (i) to the pushforward measure $T_* \mu$, we see that for any $\eps>0$ there is a closed $G_\delta$ subset $F$ of $A$ such that $T_* \mu(A \backslash F) \leq \eps$, hence $\mu(E \backslash T^* F) \leq \eps$. Since $T^* F$ is a closed $G_\delta$ subset of $E$, this establishes closed $G_\delta$ inner regularity when $\Cat = \LCH$.  The case $\Cat=\LCHpr$ is obtained similarly using Proposition \ref{baire-prop}(ii).  We also remark that the $\Cat=\CH$ case was established in \cite[II, 7.1.8]{bogachev2006measure}.

Now we establish (ii).  By applying forgetful functors it suffices to establish the claim for $\Cat = \LCH$, $\LCHpr$.  We begin with the $\Cat = \LCH$ case.  Suppose first that $\mu$ is Radon in $X$, and $O = \bigcup_{\alpha \in A} O_A$ for some non-decreasing net $(O_\alpha)_{\alpha \in A}$ of open Baire sets whose union $O$ is also open Baire.  By the Radon hypothesis, for any $\eps$ there is a compact $G_\delta$ subset $K$ of $O$ such that $\mu(O \backslash K) \leq \eps$.  By compactness, $K$ is covered by a finite number of the $O_\alpha$, hence (by the non-decreasing net hypothesis) one has $K \subseteq O_\beta \subseteq O$ for some $\beta \in A$, which establishes $\tau$-additivity.  Conversely, suppose $\mu$ is $\tau$-additive in $X$, and let $O$ be Baire open in $X$.  Consider the family ${\mathcal F}$ of open Baire subsets $U$ of $O$ with the property that $U \subseteq K \subseteq O$ for some compact $G_\delta$ $K$, ordered by set inclusion. This is a non-decreasing net of open Baire sets, and from Proposition \ref{urysohn} we see that every $x \in U$ is contained in at least one set $U$ from this family ${\mathcal F}$.  From $\tau$-additivity we conclude that
$$ \mu(O) = \sup \{ \mu(U): U \in {\mathcal F} \}$$
and hence
\begin{equation}\label{muo}
 \mu(O) = \sup \{ \mu(K): K \hbox{ compact } G_\delta \},
\end{equation}
which gives the Radon property for Baire open sets $O$.  Now if $E$ is a Baire set and $\eps>0$, we see from (i) that there is an open $F_\sigma$ set $O \supset E$ such that
$$ \mu(O \backslash E) \leq \eps$$
then by \eqref{muo} there is a compact $G_\delta$ set $K \subseteq O$ such that
$$ \mu(O \backslash K) \leq \eps.$$
Applying (i) again we also have an open $F_\sigma$ set $U \supset O \backslash E$ such that 
$$ \mu(U) \leq 2\eps.$$
The set $K \backslash U$ is then a compact $G_\delta$ subset of $E$ with
$$ \mu(E \backslash (K \backslash U)) \leq 3\eps.$$
Since $\eps>0$ is arbitrary, we conclude the Radon property for general Baire sets $E$.

The claim (iii) follows from (i) and (ii) after noting in these categories that closed sets are automatically compact. (A slightly weaker version of this claim, dropping the $G_\delta$ requirement, is also established in \cite[Theorem 7.1.5]{dudley01}.)
\end{proof}

\section{Riesz representation theorems}

We can now introduce the probability theory analogues $\CMetProb$, $\CHProb$, $\CHptProb$, $\PolishProb$, $\LCHProb$, $\LCHprProb$ of the topological categories $\CMet$, $\CH$, $\CHpt$, $\Polish$, $\LCH$, $\LCHpr$, as well as the analogue $\ConcProb$ of $\ConcMes$. We will do this by following a general categorical construction called action categories (see Definition \ref{def-action}). 

\begin{definition}[Topological-probabilistic categories]\label{top-prob-cat}  Let $\Cat = \CMet$, $\CH$, $\CHpt$, $\Polish$, $\LCH$, $\LCHpr$, and let $\Cat\mathbf{Prb}$ be the string formed by appending $\mathbf{Prb}$ to $\Cat$.
\begin{itemize}
\item[(i)] Let the functor $\mathtt{Prb}\colon \Cat\to \Set$ send an object $X$ in $\Cat$ to the set $\mathtt{Prb}(X)$ of Radon probability measures on $X_\ConcMes=(X_\Set,\Sigma_X)$ (using the casting functors from Figure \ref{fig:baire}) and a $\Cat$-morphism $f\colon X\to Y$ to the pushforward map $\mathtt{Prb}(f)\colon \mathtt{Prb}(X)\to \mathtt{Prb}(Y)$  defined by $\mathtt{Prb}(f)(\mu)=f_\ast\mu$, that is to say $f_\ast\mu(E) = \mu(f^*(E))$ for all $E \in \Sigma_X$. 
We define the category $\Cat\mathbf{Prb}$ to be the action category\footnote{This category can be identified with the category whose objects are Radon probability spaces on $\Cat$-spaces and whose morphisms are measure-preserving $\Cat$-morphisms.}   $\Cat\ltimes \mathtt{Prb}$; see Definition \ref{def-action}. 
\item[(ii)]  We can construct the category $\ConcProb$ of concrete probability spaces by a similar construction using the functor $\mathtt{Prb}\colon \Set$ associating to any $\ConcMes$-space the set of probability measures on it (see Example \ref{concprob-ex}). 
\end{itemize}
\end{definition}
 By Proposition \ref{automatic} we see that the requirement that $\mathtt{Prb}(X)$ are Radon probability measures can be dropped when $\Cat = \CMet, \CH$, and replaced with $\tau$-additivity when $\Cat = \LCH, \LCHpr$.  By definition, any Radon probability measure $\mu_X$ on $X_\Cat$ generates a \emph{$\Cat\mathbf{Prb}$-promotion} $(X_\Cat,\mu_X)$ of the $\Cat$-space $X_\Cat$ to a $\Cat\mathbf{Prb}$-space.  We note the subtle difference between an $\LCHProb$-space and an $\LCHprProb$-space: both spaces are locally compact Hausdorff spaces equipped with a Radon probability measure, but in the former case the measure is defined on the $\CbFunc$-Baire $\sigma$-algebra, but in the latter case the measure is defined on the smaller $\CoFunc$-Baire $\sigma$-algebra.  However, the distinction between the two types of Radon probability measure (as well as the Borel measure counterpart) can be erased in practice; see Corollary \ref{canon} below.  We also note that the category $\CHptProb$ of pointed $\CH$ spaces equipped with a probability measure is not the same as the (significantly less interesting) category $(\mathrm{pt} \downarrow \CHProb)$ of pointed $\CHProb$-spaces, as in the latter the distinguished point would be required to support the entire probability measure thanks to the definition of a $\CHProb$-morphism.

The functors in Figure \ref{fig:baire} have analogues in probabilistic categories which we depict in Figure \ref{fig:prob}.  All of these functors will be deemed to be casting functors, as are the forgetful functors from $\Cat\mathbf{Prb}$ to $\Cat$ for 
each category $\Cat$ appearing in Definition \ref{top-prob-cat}.

\begin{figure}
    \centering
    \begin{tikzcd}
    \CHptProb \arrow[r,blue,tail] & \CHProb \arrow[dl, blue,  tail,two heads] \arrow[d,blue, tail, two heads] & \CMetProb \arrow[l,blue,tail, two heads] \arrow[d,blue,tail,two heads] \\
    \LCHprProb \arrow[dr, blue, tail] & \LCHProb \arrow[d, blue, tail] & \PolishProb \arrow[dl, blue, tail] \\
    \Set & \ConcProb \arrow[l,blue,tail]
\end{tikzcd}
    \caption{Functors from topological-probabilistic categories to the concrete probabilistic category $\ConcProb$, which in turn has a forgetful functor to the category $\Set$ of sets. Every category here has a forgetful casting functor to its counterpart in Figure \ref{fig:baire}, and the union of these two diagrams together with these functors commutes.}
    \label{fig:prob}
\end{figure}

We now focus on the Riesz representation theory for $\LCHProb$-spaces and $\LCHprProb$-spaces.  We begin with the basic theory of linear functionals on $\CoFunc(X)$ and $\CbFunc(X)$ for $\LCH$-spaces $X$; these notions will end up being identified via Riesz dualities with Radon measures on $X$ and $\beta X$ respectively.

\begin{definition}[Functionals]\label{linfunc}  Let $X$ be an $\LCH$-space (which can also be identified with an $\LCHpr$-space).  A \emph{$\CoFunc$-functional} (resp. \emph{$\CbFunc$-functional}) on $X$ is a complex linear functional $\lambda \colon \CoFunc(X) \to \C$ (resp. $\lambda \colon \CbFunc(X) \to \C$).
\begin{itemize}
\item[(i)] We say that a $\CoFunc$-functional (resp. $\CbFunc$-functional) $\lambda$ is \emph{non-negative} if $\lambda(f) \geq 0$ whenever $f \geq 0$ is a real non-negative element of $\CoFunc(X)$ (resp. $\CbFunc(X)$).
\item[(ii)]  We say that a $\CoFunc$-functional (resp. $\CbFunc$-functional) $\lambda$ is \emph{$\tau$-smooth} if one has $\lim_\alpha \lambda(f_\alpha)=0$ whenever $(f_\alpha)_{\alpha \in A}$ is a net of real elements of $\CoFunc(X)$ (resp. $\CbFunc(X)$) which is non-increasing (thus $f_\alpha(x) \leq f_\beta(x)$ whenever $\alpha \geq \beta$ and $x \in X$) and converges pointwise to zero, thus $\lim_\alpha f_\alpha(x)=0$ for all $x \in X$.
\item[(iii)]  We say that $\CoFunc$-functional (resp.  $\CbFunc$-functional) $\lambda$ is a \emph{$\CoFunc$-state} (resp. \emph{$\CbFunc$-state}) if it is non-negative and has operator norm $1$.
\item[(iv)]  We say that a $\CoFunc$-functional (resp. $\CbFunc$-functional) $\lambda$ is \emph{represented} by a Radon probability measure $\mu_X$ on $X$ (or by the pair $(X,\mu_X)$) if one has $\lambda(f) = \int_X f\ d\mu_X$ for all $f$ in $\CoFunc(X)$ (resp. $\CbFunc(X)$).
\end{itemize}
If $X$ is a $\CH$-space, there is no distinction between $\CoFunc(X)$ and $\CbFunc(X)$, and so we drop the ``$\CoFunc$'' and ``$\CbFunc$'' prefixes in this case.
\end{definition}

Intuitively, a $\tau$-smooth functional is one which ``assigns no mass'' to $\beta X \backslash X$; we formalize this intuition later in Theorem \ref{rrt}(iii). In Examples \ref{ultra-ex}, \ref{dieu-ex}, \ref{unc-ex} below we give examples of $\CbFunc$-states that are not $\tau$-smooth.

\begin{proposition}[Properties of functionals]\label{func-prop}  Let $X$ be an $\LCH$-space (resp. an $\LCHpr$-space).
\begin{itemize}
\item[(i)]  If $\lambda$ is a non-negative $\CbFunc$-functional (resp. $\CoFunc$-functional) on $X$, then it is bounded; in particular it is a scalar multiple of a $\CbFunc$-state (resp. $\CoFunc$-state).
\item[(ii)]  Every $\CoFunc$-state $\lambda$ on $X$ is $\tau$-smooth.
\item[(iii)]  Any Radon probability measure $\mu_X$ on $X$ represents a unique $\tau$-smooth $\CbFunc$-state (resp. $\CoFunc$-state) $\lambda$.
\item[(iv)]  Every $\CoFunc$-state $\lambda$ on $X$ has a unique extension to a state on the Alexandroff compactification $\Alex(X)$.
\item[(v)] Every $\CoFunc$-state $\lambda$ on $X$ has a unique extension to a $\CbFunc$-state on $X$. Furthermore, this extension is $\tau$-smooth.
\end{itemize}
\end{proposition}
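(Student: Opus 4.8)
The plan is to dispatch (i) and (ii) as self-contained facts and then bootstrap (iii)--(v) from (ii) together with the automatic $\tau$-additivity of Radon measures proved in Proposition~\ref{automatic}. For (i) I would argue by contradiction: if a non-negative $\CbFunc$-functional (resp.\ $\CoFunc$-functional) $\lambda$ were unbounded, pick $f_n \ge 0$ with $\|f_n\|_\infty \le 1$ and $\lambda(f_n) \ge 4^n$; since $\CbFunc(X)$ (resp.\ $\CoFunc(X)$) is a Banach space, $g \coloneqq \sum_n 2^{-n} f_n$ converges there, and $g - 2^{-n} f_n \ge 0$ pointwise forces $\lambda(g) \ge 2^{-n}\lambda(f_n) \ge 2^n$ for every $n$, absurd. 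Boundedness in hand, $\|\lambda\|$ equals $\lambda(1)$ in the unital ($\CbFunc$) case and $\lim_\alpha \lambda(e_\alpha)$ along an approximate identity in the $\CoFunc$ case, so $\lambda/\|\lambda\|$ is a state (or $\lambda=0$). For (ii) let $(f_\alpha)_{\alpha\in A}$ be a non-increasing net in $\CoFunc(X)$ converging pointwise to $0$; then automatically $f_\alpha \ge 0$. Fix any $\alpha_0\in A$ and $\eps>0$. The set $K \coloneqq \{x : f_{\alpha_0}(x) \ge \eps\}$ is compact because $f_{\alpha_0}$ vanishes at infinity, and on $K$ the net $(f_\alpha|_K)$ decreases pointwise to $0$ through continuous functions, so Dini's theorem (valid for nets) yields $\alpha_1 \ge \alpha_0$ with $f_{\alpha_1} < \eps$ on $K$; since $f_{\alpha_1} \le f_{\alpha_0} < \eps$ off $K$ as well, $\|f_{\alpha_1}\|_\infty \le \eps$, whence $0 \le \lambda(f_\alpha) \le \lambda(f_{\alpha_1}) \le \|\lambda\|\,\eps = \eps$ for all $\alpha \ge \alpha_1$.

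For (iii) set $\lambda(f)\coloneqq \int_X f\,d\mu_X$. This is well defined since $\CbFunc(X)$ (resp.\ $\CoFunc(X)$) consists of bounded functions measurable for the $\sigma$-algebra carried by $X$ (which is generated precisely by those functions) and $\mu_X$ is finite; non-negativity and $\|\lambda\|\le 1$ are clear; $\|\lambda\|=1$ follows from $\lambda(1)=1$ in the $\CbFunc$ case, and from compact $G_\delta$ inner regularity of $\mu_X$ combined with Urysohn's lemma (Proposition~\ref{urysohn}) in the $\CoFunc$ case. For $\tau$-smoothness: in the $\CoFunc$ case it is immediate from (ii); in the $\CbFunc$ case Proposition~\ref{automatic}(ii) says $\mu_X$ is $\tau$-additive, so for a non-increasing net $f_\alpha \downarrow 0$ (rescaled so that $0\le f_\alpha\le 1$ past some index) the open Baire sets $O_\alpha \coloneqq \{f_\alpha < \eps\}$ increase to $X$, hence $\mu_X(O_{\alpha_1}) > 1-\eps$ for some $\alpha_1$ and $\int f_\alpha\,d\mu_X \le \int f_{\alpha_1}\,d\mu_X \le \eps + \mu_X(X\setminus O_{\alpha_1}) < 2\eps$ for $\alpha \ge \alpha_1$. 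Uniqueness is immediate, since $\lambda$ is pinned down by the displayed formula.

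For (iv) and (v), using the identification $\CFunc(\Alex(X)) \cong \CoFunc(X) \oplus \C 1$ from the proof of Theorem~\ref{gelfand-dualities}, any state on $\Alex(X)$ restricting to a $\CoFunc$-state $\lambda$ must be $f'+c1 \mapsto \lambda(f')+c$, giving uniqueness in (iv); for existence one checks this formula is non-negative, noting that $f'+c1 \ge 0$ forces $c\ge 0$ and that, for an approximate identity $(e_\alpha)$ of $\CoFunc(X)$ with $0\le e_\alpha\le 1$, $(f'+c)e_\alpha \in \CoFunc(X)$ is non-negative, so $\lambda(f'e_\alpha) + c\lambda(e_\alpha) \ge 0$, and taking the limit along $\alpha$ (with $\lambda(e_\alpha)\to 1$, using the Cauchy--Schwarz inequality for positive functionals) gives $\lambda(f')+c\ge 0$. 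For (v) I would extend $\lambda$ to $\CbFunc(X)$ by $\tilde\lambda(a)\coloneqq \lim_\alpha \lambda(ae_\alpha)$ along an increasing approximate identity; the limit exists because $\lambda(e_\beta)-\lambda(e_\alpha)\to 0$ bounds $|\lambda(a(e_\beta-e_\alpha))|$ via Cauchy--Schwarz, $\tilde\lambda$ is non-negative with $\tilde\lambda(1)=1$ hence a $\CbFunc$-state, it extends $\lambda$ because $ae_\alpha\to a$ in norm for $a\in\CoFunc(X)$, and the same bound applied to any competitor shows uniqueness. Finally $\tilde\lambda$ is $\tau$-smooth: for $f_\gamma\downarrow 0$ with $0\le f_\gamma\le 1$ and $\eps>0$, choose $\alpha_0$ with $1-\lambda(e_{\alpha_0}) < \eps$; then $\tilde\lambda(f_\gamma) \le \lambda(f_\gamma e_{\alpha_0}) + \tilde\lambda(1-e_{\alpha_0}) < \lambda(f_\gamma e_{\alpha_0}) + \eps$, and $(f_\gamma e_{\alpha_0})_\gamma$ is a non-increasing net in $\CoFunc(X)$ decreasing pointwise to $0$, so $\lambda(f_\gamma e_{\alpha_0})\to 0$ by (ii).

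I expect the main obstacle to be (ii) and its propagation, rather than the routine verifications elsewhere: monotone convergence and Dini's theorem handle sequences, but the objects here are nets on a possibly non-$\sigma$-compact space, and the essential trick is the localization to the compact set $\{f_{\alpha_0}\ge\eps\}$ afforded by the vanishing-at-infinity hypothesis. Correspondingly, the delicate point in (iii) and (v) is transporting $\tau$-smoothness from $\CoFunc(X)$ to $\CbFunc(X)$: a general $\CbFunc$-state is \emph{not} $\tau$-smooth (as the examples mentioned just after Definition~\ref{linfunc} show), so one genuinely needs either the $\tau$-additivity of Radon measures (Proposition~\ref{automatic}(ii)) or the ideal inclusion $\CoFunc(X)\subseteq\CbFunc(X)$ to reduce back to (ii).
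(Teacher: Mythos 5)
Your proof is correct, and while it follows the same overall skeleton as the paper (dispatch (i)--(ii) directly, then derive (iii)--(v)), two of the key steps are carried out by genuinely different arguments. For (ii) the paper fixes a cutoff $\chi\in\CcFunc(X)$ adapted to a near-norming element $f$ and splits $f_\alpha=\chi f_\alpha+(1-\chi)f_\alpha$, applying Dini only to the compactly supported part and estimating the tail by a norm inequality; your localization to the compact set $\{f_{\alpha_0}\ge\eps\}$ produces a single index $\alpha_1$ with $\|f_{\alpha_1}\|_{\CoFunc(X)}\le\eps$ outright, after which only boundedness of $\lambda$ is needed --- this is shorter and sidesteps the paper's somewhat delicate estimate $\|f+(1-\chi)g\|\le 1$. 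For (iii) the paper only says that ``repeating the previous arguments'' gives $\tau$-smoothness; your explicit appeal to Proposition \ref{automatic}(ii) via the increasing open Baire sets $\{f_\alpha<\eps\}$ is the right way to make the $\CbFunc$ case precise, and there is no circularity since that proposition does not depend on this one. For (v) the paper builds the extension order-theoretically as $\tilde\lambda(f)=\sup\{\lambda(g):g\in\CoFunc(X)_+,\ g\le f\}$ and must verify additivity by splitting dominated functions, then proves $\tau$-smoothness by contradiction with an auxiliary net; your $\tilde\lambda(a)=\lim_\alpha\lambda(ae_\alpha)$ along an increasing approximate identity is the standard operator-algebraic extension to the multiplier algebra, makes linearity, the extension property, and uniqueness nearly automatic via Cauchy--Schwarz (using that $\CoFunc(X)$ is an ideal in $\CbFunc(X)$ and that $(1-e_\alpha)^2\le 1-e_\alpha$), and reduces $\tau$-smoothness directly to (ii) through the net $(f_\gamma e_{\alpha_0})_\gamma$. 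Parts (i) and (iv) match the paper up to cosmetics: the paper's unital case of (i) is the one-line bound $0\le\lambda(f)\le\lambda(1)\|f\|$, and the positivity check in (iv) that the paper leaves to the reader is exactly the approximate-identity computation you supply. The trade-off is that the paper's constructions stay within the topological/order-theoretic language it uses elsewhere, while yours leans on standard $C^*$-algebra machinery; both are complete.
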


\begin{proof} We begin with (i).  It suffices to establish boundedness of $\lambda$ applied to non-negative real $f$ in $\CoFunc(X)$ or $\CbFunc(X)$.  When $\lambda$ is a $\CbFunc$-functional this is immediate from the bounds
$$ 0 \leq \lambda(f) \leq \lambda(1) \|f\|_{\CbFunc(X)} $$
arising from non-negativity.  Now suppose $\lambda$ is a $\CoFunc$-functional.  If $\lambda$ is unbounded for non-negative real $f$, then for each $n \in \N$ there exists non-negative $f_n \in \CoFunc(X)$ with $\|f_n\|_{\CoFunc(X)} \leq 2^{-n}$ such that $\lambda(f_n) \geq 1$.  But then by non-negativity $f \coloneqq \sum_{n=1}^\infty f_n$ is an element of $\CoFunc(X)$ such that 
$\lambda(f) \geq \lambda(\sum_{n=1}^N f_n) \geq N$ for any $N \in \N$, which is absurd.  Thus $\lambda$ is bounded.

To prove (ii), let $\eps>0$, then we can find a $f \in \CoFunc(X)$ with $\|f\|_{\CoFunc(X)} \leq 1$ and  $|\lambda(f)| \geq 1-\eps$.  By multiplying by a phase we may assume $\lambda(f)$ is real and positive, and taking real parts we may assume $f$ is real, then by replacing $f$ with $|f|$ we may assume that $f$ takes values in $[0,1]$.  As $f \in \CoFunc(X)$, there exists a compact subset $K$ of $X$ such that $|f(x)| \leq \frac{1}{2}$ outside of $K$.  By Proposition \ref{urysohn} we may find $\chi \in \CcFunc(X)$ taking values in $[0,1]$ with $\chi=1$ on $K$.  Then for any $g \in \CoFunc(X)$ taking values in $[0,1]$, we have
$$ \lambda(f) + \lambda((1-\chi)g) \leq \|f + (1-\chi)g\|_{\CoFunc(X)} \leq 1$$
and hence $\lambda((1-\chi)g) \leq \eps$.  

Now suppose that $(f_\alpha)_{\alpha \in A}$ is a non-increasing net in $\CoFunc(X)$ whose limit is zero.  We need to show that $\lim_\alpha \lambda(f_\alpha)=0$.  By rescaling we may assume that $f_\alpha$ takes values in $[0,1]$ for at least one $\alpha$, and then for all $\alpha$ after refining the net.  By the previous discussion we have
$$ \lambda((1-\chi) f_\alpha) \leq \eps.$$
Meanwhile, the net $(\chi f_\alpha)_{\alpha \in A}$ of continuous functions has uniform compact support and converges monotonically to zero, hence by Dini's theorem for nets (see, e.g., \cite[p. 239]{kelley}) it converges uniformly.  This implies that $\lim_\alpha \lambda(\chi f_\alpha)=0$, hence
$$ \overline{\lim}_\alpha |\lambda(f_\alpha)| \leq \eps.$$
Since $\eps>0$ is arbitrary, we obtain the claim.

Now we prove (iii).  Define $\lambda(f) \coloneqq \int_X f\ d\mu$ for $f \in \CbFunc(X)$ (resp. $f \in \CoFunc(X)$).  It is clear that $\lambda$ is non-negative and has operator norm at most $1$. From the Radon property we have that for any $\eps>0$ there exists compact $K$ such that $\mu_X(K^c) \leq \eps$, and then by using the cutoff $\chi$ as before one can establish that $\lambda$ has operator norm at least $1-\eps$ for any $\eps>0$, and is hence a state; repeating the previous arguments then also give $\tau$-smoothness. Uniqueness of the represented state $\lambda$ is clear from definition.

Now we prove (iv). Using the identification $\CFunc(\Alex(X)) \equiv \CoFunc(X) \oplus \C$, we can define an extension $\tilde \lambda \colon \CFunc(\Alex(X)) \to \C$ of $\lambda$ by the formula
$$ \tilde \lambda(f + c1) \coloneqq \lambda(f) + c$$
for $f \in \CoFunc(X)$ and $c \in \C$ (where we embed $\CoFunc(X)$ in $\CFunc(\Alex(X))$ in the usual fashion).  It is not difficult to see that $\tilde \lambda$ is non-negative with $\tilde \lambda(1)=1$, hence $\tilde \lambda$ is a state.  Conversely, every state $\tilde \lambda$ on the $\CHpt$-space $\Alex(X)$ has $\tilde \lambda(1)=1$, so the extension is unique by linearity.  

Finally, we show (v). Let $\CbFunc(X)_+$ (resp. $\CoFunc(X)_+$) denote the real nonnegative elements of $\CbFunc(X)$ (resp. $\CoFunc(X)$).  For any $f \in \CbFunc(X)_+$, define
$$ \tilde \lambda(f) \coloneqq \sup \{ \lambda(g): g \in \CoFunc(X)_+, g \leq f \}$$
where we use $g \leq f$ to denote the pointwise domination $g(x) \leq f(x)$ for all $x \in X$.  Since $\lambda$ is a $\CoFunc$-state, we see that $0 \leq \tilde \lambda(f) \leq \|f\|_{\CbFunc(X)}$.  One clearly has superadditivity $\tilde \lambda(f_1+f_2) \geq \tilde \lambda(f_1)+\tilde \lambda(f_2)$ for $f_1,f_2 \in \CbFunc(X)_+$.  Next, observe that if $f_1,f_2 \in \CbFunc(X)_+$ and $g \in \CoFunc(X)_+$ is such that $g \leq f_1+f_2$, then $g=g_1+g_2$ for some $g_1,g_2 \in \CoFunc(X)_+$ with $g_1 \leq f_1$ and $g_2 \leq f_2$; for instance one can take $g_1 \coloneqq \min(f,g_1)$ and $g_2 \coloneqq g-g_1$.  From this we see that we in fact have additivity $\tilde \lambda(f_1+f_2) = \tilde \lambda(f_1) + \tilde \lambda(f_2)$ for nonnegative $f_1,f_2 \in \CbFunc(X)_+$.  We also have the homogeneity property $\tilde \lambda(cf) = c \tilde \lambda(f)$ for $c \geq 0$ and $f \in \CbFunc(X)_+$.  Thus $\tilde \lambda$ extends to a $\CbFunc$-functional on $X$, which we continue to call $\tilde \lambda$.  By construction, $\tilde \lambda$ is non-negative. For any real $f \in \CbFunc(X)$ we then have 
$$ -\|f\|_{\CbFunc(X)} \leq \tilde \lambda(f) \leq \|f\|_{\CbFunc(X)},$$
which implies for any complex $f \in \CbFunc(X)$ and phase $e^{i\theta}$ that
$$ \mathrm{Re} e^{i\theta} \tilde \lambda(f) = \lambda(\mathrm{Re} e^{i\theta} f) \leq \|f\|_{\CbFunc(X)};$$
taking suprema in $\theta$, we conclude that $\tilde \lambda$ has operator norm at most $1$.  Since $\tilde \lambda$ extends $\lambda$ which already had operator norm $1$, we conclude that $\tilde \lambda$ has operator norm exactly equal to $1$ and is hence a $\CbFunc$-state.  If $\tilde \lambda$ were not $\tau$-smooth, then there would exist $\eps>0$ and a non-increasing net $(f_\alpha)_{\alpha \in A}$ of functions $f_\alpha \in \CbFunc(X)_+$ converging pointwise to zero such that $\tilde \lambda(f_\alpha)>\eps$ for all $\eps$.  If we then let $B$ be the collection of all $g \in \CoFunc(X)_+$ such that $\lambda(g) > \eps$ and $g \leq f_\alpha$ for some $\alpha \in A$, ordered by pointwise domination $\leq$, then $(g)_{g \in B}$ is a non-increasing net converging pointwise to zero. Thus $\lambda$ would not be $\tau$-smooth, contradicting (iii).  Thus $\tilde \lambda$ is $\tau$-smooth.

It remains to show that $\tilde \lambda$ is the unique  extension of $\lambda$ to a $\CbFunc$-state.  If $\lambda'$ is another such extension, we see from repeating the proof of (ii) that for any $\eps>0$ there exists $\chi \in \CcFunc(X)$ taking values in $[0,1]$ such that $\lambda'((1-\chi)g) \leq \eps$ and also $\tilde \lambda((1-\chi)g) \leq \eps$ for any $g \in \CbFunc(X)$ taking values in $[0,1]$.  On the other hand $\lambda'$ and $\tilde \lambda$ both agree with $\lambda$ on $\chi g$.  By the triangle inequality we conclude that $|\lambda'(g) - \tilde \lambda(g)| \leq 2\eps$ for all $g \in \CbFunc(X)$ taking values in $[0,1]$, hence on sending $\eps \to 0$ and using linearity we conclude that $\lambda', \tilde \lambda$ are identical.
\end{proof}

We now give the Riesz representation theorems for the categories $\CMet$, $\CH$, $\CHpt$, $\LCHpr$, $\LCH$.  These results are largely contained in prior literature, but are presented here in the notation of this paper.

\begin{theorem}[Riesz representation theorem]\label{rrt} Let $\Cat = \CMet$, $\CH$, $\CHpt$, $\LCHpr$, $\LCH$, and let $X$ be a $\Cat$-space.
\begin{itemize}
\item[(i)]  (Riesz representation theorem)  Every $\CoFunc$-state $\lambda$ on $X$ is represented by a unique promotion of $X$ to a $\Cat\mathbf{Prb}$-space $(X,\mu_X)$. (In other words, for each state $\lambda$ there is a unique Radon measure $\mu_X$ on $X$ such that $\lambda(f) = \int_X f\ d\mu_X$ for all $f \in \CoFunc(X)$.)
\item[(ii)]  (Daniell-Stone representation theorem)  If $\Cat = \LCH$, then every $\tau$-smooth $\CbFunc$-state $\lambda$ on $X$ is represented by a unique promotion of $X$ to a $\Cat\mathbf{Prb}$-space $(X,\mu_X)$.
\item[(iii)]  (Relationship with Stone--{\v C}ech compactification)  If $\Cat = \LCH$ and $\lambda$ is a $\CbFunc$-state on $X$, then there is a unique promotion of $\beta X$ to a $\CH\mathbf{Prb}$-space $(\beta X, \mu_{\beta X})$ such that $\lambda(f\downharpoonright_X) = \int_{\beta X} f\ d\mu_{\beta X}$ for all $f \in \CFunc(\beta X)$ (where $f\downharpoonright_X$ is the restriction of $\beta X$ to $X$, where we identify the latter with a subspace of the former).  Furthermore, the $\CbFunc$-state $\lambda$ is $\tau$-smooth if and only if $\beta X \backslash X$ has zero outer measure in the sense that
$$ \inf \{ \mu_{\beta X}( E ): E \in \Sigma_{\beta X}, E \supset \beta X \backslash X \} = 0.$$
\end{itemize}
\end{theorem}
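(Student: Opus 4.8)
The plan is to bootstrap everything from the classical Riesz--Markov representation theorem on compact Hausdorff spaces together with the extension properties of states recorded in Proposition \ref{func-prop}, working through the five categories and the three assertions in a fixed logical order: first the $\CH$ case of (i), then (iii), then (ii), and finally the remaining cases of (i). For the $\CH$ case of (i), given a state $\lambda$ on $\CFunc(X)$ the classical Riesz--Markov theorem (see, e.g., \cite{rudinrealcomplex}, \cite{bogachev2006measure}) yields a regular Borel probability measure representing $\lambda$; restricting it to the Baire $\sigma$-algebra gives a Baire probability measure $\mu_X$, which is automatically Radon by Proposition \ref{automatic}(iii) and still represents $\lambda$. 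Uniqueness is a Dynkin-class argument: two Radon probability measures agreeing on $\CFunc(X)$ agree on the $\pi$-system of compact $G_\delta$ sets---write the indicator of such a set as a pointwise-decreasing limit of continuous functions via Proposition \ref{urysohn}(iv) and use closed-$G_\delta$ inner regularity---and this $\pi$-system generates $\Baire(X)$ by Proposition \ref{baire-prop}(ii). The $\CMet$ case is the special case where all three $\sigma$-algebras coincide, and the $\CHpt$ case follows by forgetting the distinguished point.

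For (iii), by Gelfand duality (Theorem \ref{gelfand-dualities}) the restriction map $f \mapsto f\downharpoonright_X$ is a $*$-isomorphism $\CFunc(\beta X) \to \CbFunc(X)$, so a $\CbFunc$-state $\lambda$ on $X$ corresponds to the state $f \mapsto \lambda(f\downharpoonright_X)$ on $\CFunc(\beta X)$, and the already-established $\CH$ case of (i) applied to $\beta X$ produces the unique Radon probability measure $\mu_{\beta X}$ with the asserted integral identity. Since $X$, being locally compact, is open in $\beta X$ (see, e.g., \cite{walker2012stone}), the Radon property lets us rewrite the condition ``$\beta X \backslash X$ has zero outer measure'' as $\sup\{\mu_{\beta X}(K) : K \subseteq X \text{ compact } G_\delta \text{ in } \beta X\} = 1$. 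If $\lambda$ is $\tau$-smooth, run the net $(1-\chi)$ over $\chi \in \CcFunc(X)$ with $0 \le \chi \le 1$, directed by pointwise order: it decreases pointwise to $0$ on $X$, so $\tau$-smoothness gives $\lambda(\chi) \to 1$; each Gelfand transform $\tilde\chi \in \CFunc(\beta X)$ vanishes off the compact set $\supp \chi \subseteq X$ (by density of $X$ in $\beta X$ and continuity), $\supp\chi$ is contained in a compact $G_\delta$ subset of $X$ inside $\beta X$ (Proposition \ref{urysohn}(iii) applied in $\beta X$, using that $X$ is open), and hence $\lambda(\chi) = \int_{\beta X}\tilde\chi\,d\mu_{\beta X} \le \sup\{\mu_{\beta X}(K)\}$; passing to the supremum over $\chi$ forces this supremum to equal $1$. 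Conversely, if $\mu_{\beta X}^*(\beta X\backslash X) = 0$, then for each $\eps > 0$ there is a compact $G_\delta$ set $K \subseteq X$ with $\mu_{\beta X}(K) > 1 - \eps$; given a pointwise-nonincreasing net of real functions $f_\alpha \searrow 0$ in $\CbFunc(X)$ (rescaled so $0 \le f_\alpha \le 1$), Dini's theorem for nets (cf. \cite[p.~239]{kelley}) on the compact set $K$ gives $\sup_K f_\alpha < \eps$ eventually, and splitting $\int_{\beta X}\tilde f_\alpha\,d\mu_{\beta X}$ over $K$ and $\beta X\backslash K$ bounds it by $2\eps$, so $\lambda(f_\alpha) \to 0$.

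Part (ii) is then obtained by transporting $\mu_{\beta X}$ down: using the identification $\BaireB(X) = \Baire(\beta X)\downharpoonright_X$ noted in Section \ref{baire-sec} together with the fact just proved that $\mu_{\beta X}^*(\beta X\backslash X) = 0$ for $\tau$-smooth $\lambda$, set $\mu_X(E\cap X) \coloneqq \mu_{\beta X}(E)$ for $E \in \Baire(\beta X)$; this is well defined because $E_1 \cap X = E_2 \cap X$ forces $E_1 \triangle E_2 \subseteq \beta X \backslash X$ to be $\mu_{\beta X}$-null, it represents $\lambda$ since $\tilde f\downharpoonright_X = f$, and it is Radon because compact $G_\delta$ subsets of $\beta X$ lying in $X$ carry almost all of the mass and restrict to compact $G_\delta$ subsets of $X$ ($X$ being open in $\beta X$); uniqueness reduces to the $\CH$ uniqueness by pushing any competing measure forward along $X \hookrightarrow \beta X$. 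The $\LCH$ case of (i) follows because a $\CoFunc$-state extends uniquely to a $\tau$-smooth $\CbFunc$-state by Proposition \ref{func-prop}(v), to which (ii) applies, with uniqueness of the representing measure coming from Proposition \ref{func-prop}(iii) and (v). For the $\LCHpr$ case of (i), a $\CoFunc$-state $\lambda$ extends uniquely to a state on $\Alex(X)$ by Proposition \ref{func-prop}(iv); apply the $\CH$ case to get $\mu_{\Alex(X)}$, and note that $\sup_\chi \lambda(\chi) = \|\lambda\| = 1$ over $\chi \in \CcFunc(X)$ with $0 \le \chi \le 1$, combined with $\tau$-additivity of $\mu_{\Alex(X)}$ (Proposition \ref{automatic}(iii)), forces $\mu_{\Alex(X)}$ to be concentrated on $X$, so it restricts to a Radon probability measure $\mu_X$ on $\BaireC(X) = \Baire(\Alex(X))\downharpoonright_X$ representing $\lambda$; uniqueness again reduces to the $\CH$ case via the unique extension to $\Alex(X)$.

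The main obstacle is the second half of (iii)---converting the analytic statement ``$\lambda$ is $\tau$-smooth'' into the measure-theoretic statement ``$\beta X \backslash X$ is $\mu_{\beta X}$-outer null''. The delicate ingredients are (a) the repeated bookkeeping among the three Baire $\sigma$-algebras $\BaireC(X)$, $\BaireB(X) = \Baire(\beta X)\downharpoonright_X$, and $\Baire(\Alex(X))\downharpoonright_X$, including the well-definedness and Radon-ness of measures restricted across these identifications, and (b) the use of the standard but not entirely trivial fact that a locally compact dense subspace of a compact Hausdorff space is open, which is precisely what lets one relate compact supports of functions in $\CcFunc(X)$ to compact $G_\delta$ subsets of $X$ sitting inside $\beta X$; neither of these subtleties is visible in the classical countable or metrizable treatments.
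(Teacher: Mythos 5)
Your proposal is correct, but it reorganizes the logical dependencies in a way that genuinely differs from the paper's proof, and the reorganization is worth noting. The paper proves (ii) by citing the $\tau$-additive Daniell--Stone representation theorem (\cite[Theorem 7.8.6]{bogachev2006measure}), then uses (ii) both to obtain the $\LCH$ case of (i) and to prove the ``$\tau$-smooth $\Rightarrow$ outer null'' half of (iii) (via the large compact $G_\delta$ sets supplied by the Radon measure on $X$). You instead prove (iii) first and \emph{directly}: the forward direction comes from running the non-increasing net $(1-\chi)_{\chi \in \CcFunc(X), 0 \le \chi \le 1}$ through the $\tau$-smoothness hypothesis and trapping $\supp\chi$ inside a compact $G_\delta$ subset of $\beta X$ lying in $X$ (using that a locally compact dense subspace is open), and you then \emph{derive} (ii) by restricting $\mu_{\beta X}$ along the identification $\BaireB(X) = \Baire(\beta X)\downharpoonright_X$, with well-definedness coming from the outer-null property. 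This buys self-containedness: everything reduces to the classical compact Riesz theorem plus Proposition \ref{func-prop}, at the cost of having to do the $\sigma$-algebra restriction bookkeeping yourself (which you correctly flag as the delicate point). Your $\LCHpr$ argument is also organized differently from the paper's: rather than building $\mu_X$ as a supremum of restrictions $\mu_K$ over compact $G_\delta$ sets $K \subseteq X$, you show $\mu_{\Alex(X)}$ is concentrated on $X$ via $\sup_\chi \lambda(\chi) = 1$ (which does follow from the cutoff estimate $\lambda((1-\chi)g) \le \eps$ in the proof of Proposition \ref{func-prop}(ii), not merely from $\|\lambda\|=1$ --- be explicit about this, since a naive $f \le \chi + \tfrac12$ bound only yields $\sup_\chi\lambda(\chi) \ge \tfrac12$) and then restrict; the two constructions produce the same measure. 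All the individual steps check out, including the Dynkin/inner-regularity uniqueness argument in the compact case and the pushforward reduction for uniqueness in (ii).
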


We refer to \cite{knowles} for a further study of how the Riesz representation theorem interacts with the Stone--{\v C}ech compactification.  For instance, the second part of Theorem \ref{rrt}(iii) is essentially \cite[Theorem 2.4]{knowles}.

\begin{proof}  The claim (i) for $\Cat = \CH$ can be found for instance in \cite[\S 2]{varadarajan-riesz}, \cite[Theorem 3.3]{sunder}, \cite{hartig}, \cite[Theorem 7.4.1]{dudley01}, \cite[Theorem 5.7]{EFHN}, or \cite{garling}.  This implies the cases $\Cat = \CMet, \CHpt$ of claim (i) after applying forgetful casting functors.  We remark that the presentation in \cite{hartig} is particularly compatible with the category-theoretic viewpoint adopted in this paper.

Now we show (i) for $\Cat = \LCHpr$.  This result appears for instance in \cite[Theorem 4.1]{sunder} or \cite[\S 3]{varadarajan-riesz}, but for the convenience of the reader we give a proof here.  We begin with existence.  By Proposition \ref{func-prop}(iv), we can extend $\lambda$ to a state $\tilde \lambda \colon \CFunc(\Alex(X)) \to \C$ on $\Alex(X)$ (viewing $X$ as a subspace of $\Alex(X)$ and $\CoFunc(X)$ as a subalgebra of $\CFunc(\Alex(X))$).  By the $\Cat=\CH$ case of (i), $\tilde \lambda$ is represented by a Radon probability measure $\mu_{\Alex(X)}$ on $\Alex(X)$.  Now let $K$ be a compact $G_\delta$ subset of $X$.  From Proposition \ref{baire-prop} (as well as the Tietze extension theorem and Urysohn's lemma) we see that the $\CoFunc$-Baire $\sigma$-algebras $\BaireC(X), \BaireC(\Alex(X))$ both agree with $\BaireC(K) = \Baire(K)$ when restricted to $K$.  Thus $\mu_{\Alex(X)}$ may be restricted to a finite Baire measure $\mu_K$ on the $\CH$-space $K$.  These measures are compatible with each other in the sense that $\mu_{K'}$ is the restriction of $\mu_K$ to $K'$ whenever $K'$ is a compact $G_\delta$ subset of $K$.  We now define a $\CoFunc$-Baire measure $\mu_X$ on $X$ by
$$ \mu_X( E ) \coloneqq \sup \{ \mu_K(E \cap K) \colon K \subseteq X, \hbox{ compact } G_\delta \}.$$
Since each $\mu_K$ is countably additive of total mass at most one, one easily verifies that $\mu_X$ is countably additive also with total mass at most one.  As $\mu_{\Alex(X)}$ is compact $G_\delta$ inner regular on $X$, each $\mu_K$ is compact $G_\delta$ inner regular on $K$, which then implies that $\mu_X$ is compact $G_\delta$ inner regular.  If $f \in \CcFunc(X)$, then from Proposition \ref{urysohn} $f$ is supported in some compact $G_\delta$ set $K$, and
$$ \int_X f\ d\mu_X = \int_K f\ d\mu_K = \int_{\Alex(X)} f\ d\mu_{\Alex(X)} = \tilde \lambda(f) = \lambda(f).$$
Thus $\lambda$ is represented by $\mu_X$ on $\CcFunc(X)$, and hence also on $\CoFunc(X)$ by taking uniform closures.  Since $\lambda$ is a state, $\mu_X$ must therefore have total mass one, and is thus a Radon probability measure as required.

To show uniqueness, observe that if $\lambda$ is represented by any other Radon probability measure $\mu'_X$ on $X$, then by the uniqueness aspect in the $\Cat=\CH$ case of (i), $\mu'_X$ must agree with $\mu_X$ on each compact $G_\delta$ set $K$, and then by compact $G_\delta$ inner regularity $\mu'_X$ and $\mu_X$ must be identical.  

Now we establish (ii).  By \cite[Theorem 7.8.6]{bogachev2006measure} (see also \cite[Theorem 4.5.2]{dudley01}), there exists a unique $\tau$-additive probability measure $\mu_X$ on $\BaireC(X)$ that represents $\lambda$. The claim now follows from Proposition \ref{automatic}(ii).

Now, we establish (i) for $\Cat = \LCH$.  By Proposition \ref{func-prop}(v), we can extend $\lambda$ to a $\tau$-smooth $\CbFunc$-state $\tilde \lambda$ on $X$, which by (ii) is represented by a Radon probability measure $\mu_X$ on $X$.  Hence the $\CoFunc$-state $\lambda$ is also represented by $\mu_X$.  If $\lambda$ is represented by another Radon probability measure $\mu'_X$, then from dominated convergence and $\tau$-smoothness we see that $\tilde \lambda$ is also represented by $\mu'_x$, hence $\mu_X = \mu'_X$ by (ii), giving uniqueness.

Finally, we establish (iii).  Every function in $\CbFunc(X)$ has a unique extension to $\CFunc(\beta X)$, hence the $\CbFunc$-state $\lambda$ on $X$ can be identified with a state on $\beta X$.  The existence and uniqueness of the promotion $(\beta X, \mu_{\beta X})$ then follows from the $\Cat = \CH$ case of (i).  If $\lambda$ is $\tau$-smooth, then by (ii) $\lambda$ is also represented by a Radon probability measure $\mu_X$ on $X$, hence for any $\eps>0$ there is a compact $G_\delta$ subset $K$ of $X$ with $\mu_X(K) \geq 1-\eps$.  From the $\Cat=\CH$ case of (i) we see that $\mu_X$ and $\mu_{\beta X}$ must agree when restricted to $K$, thus $\mu_{\beta X}(K) \geq 1-\eps$, or equivalently $\mu_{\beta X}(\beta X \backslash K) \leq \eps$.  Hence $\beta X \backslash X$ has zero outer measure.

Conversely, if $\beta X \backslash X$ has zero outer measure, then by the Radon property for every $\eps$ there exists a compact $G_\delta$ subset $K$ of $\beta X$ contained in $X$ such that $\mu_{\beta X}(K) \geq 1-\eps$, or equivalently $\mu_{\beta X}(\beta X \backslash K) \leq \eps$.  From Proposition \ref{urysohn} we also see that $K$ is a compact $G_\delta$ subset of $X$.  Arguing using Dini's theorem as in the proof of Proposition \ref{func-prop}(iii) we conclude that $\lambda$ is $\tau$-smooth.
\end{proof}

As a corollary of the Riesz representation theorem, one can extend Radon measures on the smaller $\sigma$-algebras in \eqref{baire-include} to larger ones in a canonical fashion:

\begin{corollary}[Canonical extension]\label{canon}  Let $X$ be an $\LCH$-space (and hence also an $\LCHpr$-space).
\begin{itemize}
\item[(i)] Any Radon probability measure on $(X_\Set, \BaireC(X))$ has a unique extension to a Radon probability measure on $(X_\Set,\BaireB(X))$.
\item[(ii)] Any Radon probability measure on $(X_\Set,\BaireB(X))$ has a unique extension to a compact inner regular measure on $(X_\Set,\Borel(X))$.
\end{itemize}
\end{corollary}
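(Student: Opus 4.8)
The plan is to treat both parts through the correspondence, furnished by the Riesz representation theorems of Theorem \ref{rrt}, between Radon probability measures and states: a Radon probability measure on $(X_\Set,\BaireC(X))$ is the same datum as a $\CoFunc$-state on $X$, while a Radon probability measure on $(X_\Set,\BaireB(X))$ induces a $\CbFunc$-state that restricts on $\CoFunc(X)$ to such a $\CoFunc$-state. Enlarging the $\sigma$-algebra from $\BaireC(X)$ to $\BaireB(X)$ will then be nothing more than passing from the $\LCHpr$-version of Theorem \ref{rrt}(i) to its $\LCH$-version, whereas the passage to $\Borel(X)$ lies outside the reach of this duality and will instead be handled by the classical fact that a Baire (i.e. $\BaireC$) measure on a locally compact Hausdorff space extends uniquely to a regular Borel measure (see, e.g., \cite{halmos-measure-theory}, \cite{royden-real-analysis}).

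For (i), let $\mu$ be a Radon probability measure on $(X_\Set,\BaireC(X))$, and let $\lambda$ be the $\CoFunc$-state it represents (Proposition \ref{func-prop}(iii)). By Theorem \ref{rrt}(i) applied with $\Cat = \LCH$, there is a unique Radon probability measure $\mu'$ on $(X_\Set,\BaireB(X))$ representing $\lambda$. Its restriction $\mu'|_{\BaireC(X)}$ is again Radon — every compact $G_\delta$ subset of $X$ lies in $\BaireC(X)$ by Proposition \ref{baire-prop}(ii), so compact $G_\delta$ inner regularity on $\BaireB(X)$ descends to $\BaireC(X)$ — and it still represents $\lambda$ (elements of $\CoFunc(X)$ are $\BaireC(X)$-measurable), so by the uniqueness in Theorem \ref{rrt}(i) for $\Cat = \LCHpr$ we get $\mu'|_{\BaireC(X)} = \mu$; thus $\mu'$ is an extension. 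For uniqueness, if $\mu''$ is any Radon probability measure on $\BaireB(X)$ with $\mu''|_{\BaireC(X)} = \mu$, then $\mu''$ represents the $\CoFunc$-state $f \mapsto \int_X f\, d\mu'' = \int_X f\, d\mu = \lambda(f)$, so $\mu'' = \mu'$ by the uniqueness in the $\LCH$-version of Theorem \ref{rrt}(i).

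For (ii), let $\mu$ be a Radon probability measure on $(X_\Set,\BaireB(X))$. As in (i), $\mu|_{\BaireC(X)}$ is a Radon probability measure on $(X_\Set,\BaireC(X))$, hence a Baire probability measure on the locally compact Hausdorff space $X$, so by the cited classical theorem it extends uniquely to a regular (in particular compact inner regular) Borel probability measure $\hat\mu$ on $(X_\Set,\Borel(X))$. I claim $\hat\mu$ restricts on $\BaireB(X)$ to $\mu$: for $E\in\BaireB(X)$, using that $\mu$ is compact $G_\delta$ inner regular on $\BaireB(X)$, that compact $G_\delta$ sets lie in $\BaireC(X)$ where $\mu$ and $\hat\mu$ agree, and that $\hat\mu$ is monotone, one gets $\mu(E) = \sup\{\mu(K): K\subseteq E\text{ compact }G_\delta\} = \sup\{\hat\mu(K): K\subseteq E\text{ compact }G_\delta\}\le\hat\mu(E)$; applying the same to $X_\Set\backslash E$ and using that $\mu,\hat\mu$ are probability measures gives $\hat\mu(E)\le\mu(E)$, hence equality. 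So $\hat\mu$ is a compact inner regular Borel extension of $\mu$. For uniqueness, suppose $\tilde\mu_1,\tilde\mu_2$ are compact inner regular Borel probability measures on $X$ both extending $\mu$; since both are compact inner regular it suffices to check $\tilde\mu_1(K) = \tilde\mu_2(K)$ for every compact $K\subseteq X$. Fixing such a $K$ and $\eps>0$, choose by compact inner regularity of $\tilde\mu_1$ a compact $L\subseteq X_\Set\backslash K$ with $\tilde\mu_1(X_\Set\backslash(K\cup L))<\eps$, then use Proposition \ref{urysohn}(iii) (with the open set $X_\Set\backslash L\supseteq K$) to find a compact $G_\delta$ set $\tilde K$ with $K\subseteq\tilde K\subseteq X_\Set\backslash L$; since $\tilde K\in\BaireC(X)\subseteq\BaireB(X)$ we have $\tilde\mu_1(\tilde K) = \mu(\tilde K) = \tilde\mu_2(\tilde K)$, and $\tilde K\subseteq X_\Set\backslash L$ gives $\mu(\tilde K) = \tilde\mu_1(\tilde K)\le 1-\tilde\mu_1(L) < \tilde\mu_1(K)+\eps$, whence $\tilde\mu_2(K)\le\mu(\tilde K)<\tilde\mu_1(K)+\eps$; letting $\eps\to 0$ and symmetrizing gives $\tilde\mu_1(K) = \tilde\mu_2(K)$.

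The genuinely non-formal point is the existence half of (ii): once the target $\sigma$-algebra is $\Borel(X)$ rather than a Baire algebra, the clean dictionary between measures and states breaks down (a $\CbFunc$-functional cannot distinguish a non-Baire Borel set), so one must import the classical regular-Borel extension of a Baire measure and then reconcile it by hand — via the monotonicity/complementation pinch above — with the possibly finer data carried by the given $\BaireB(X)$-measure. A secondary subtlety is that the uniqueness clause of (ii) must be proved for competitors that are only assumed compact inner regular, not outer regular, which is why the separating-compact-set trick together with Proposition \ref{urysohn}(iii) is used rather than a direct regularity comparison.
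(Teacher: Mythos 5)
Your proof is correct and follows essentially the same route as the paper's: both parts are mediated by the Riesz representation machinery of Theorem \ref{rrt} and Proposition \ref{func-prop} (the paper invokes the Riesz--Markov--Kakutani theorem directly where you cite the classical Baire-to-Borel extension, but these are the same device), and the complementation pinch you use to identify the extension on $\BaireB(X)$ is exactly the paper's argument. Your uniqueness argument in (ii), separating compact sets by a compact $G_\delta$ via Proposition \ref{urysohn}(iii), is a spelled-out version of what the paper dispatches in one line by appealing to the uniqueness aspect of the Riesz--Markov--Kakutani theorem.
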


For $\CH$-spaces this corollary is well known (see, e.g., \cite[Theorem 7.3.1]{dudley01}, \cite[Proposition 5.4]{EFHN}).

\begin{proof}  We begin with (i).  If $\mu_X$ is a Radon probability measure on $(X_\Set, \BaireC(X))$, then by Proposition \ref{func-prop}(iii) it represents a $\CoFunc$-state $\lambda$ on $X$ (viewed as an $\LCHpr$-space).  By Theorem \ref{rrt}(i), $\lambda$ is also represented by a Radon probability measure $\tilde \mu_X$ on $(X_\Set, \BaireB(X))$.  By construction, $\int_X f\ d\mu_X = \int_X f\ d\tilde \mu_X$ for all $f \in \CcFunc(X)$. By Proposition \ref{urysohn}, if $K$ is a compact $G_\delta$ subset of $X$, then $1_K$ can be expressed as the pointwise limit of a decreasing sequence of functions $\CcFunc(X)$, thus by monotone convergence $\mu_X, \mu'_X$ agree on compact $G_\delta$ functions, hence on all $\CoFunc$-Baire functions by the Radon property.  The Radon property also ensures uniqueness of the extension (here we use the fact from Proposition \ref{baire-prop} that compact $G_\delta$ functions are $\CoFunc$-Baire measurable).

Now we prove (ii).  If $\mu'_X$ is a Radon probability measure on $(X_\Set, \BaireB(X))$, then by Proposition \ref{func-prop} it represents a $\CoFunc$-state $\lambda$ on $X$ (viewed as an $\LCH$-space).  By the Riesz--Markov--Kakutani theorem \cite{kakutani}, there is a compact inner regular probability measure $\mu''_X$ on $(X_\Set, \Borel(X))$ which represents $\lambda$.  By arguing as before we see that $\mu''_X, \mu'_X$ agree on compact $G_\delta$ sets, which by the regularity properties implies that $\mu''_X(E) \geq \mu'_X(E)$ for all $\CbFunc$-Baire $E$.  Taking complements we also have $\mu''_X(E) \leq \mu'_X(E)$.  Thus $\mu''_X$ extends $\mu'_X$.  The uniqueness of the extension follows from the uniqueness aspect of the Riesz--Markov--Kakutani theorem.
\end{proof}

The following examples show that the relationship between states and probability measures deteriorates if hypotheses such as the Radon property, $\tau$-additivity, or $\tau$-smoothness are dropped.

\begin{example}[Generalized limit functionals]\label{ultra-ex}  As is well-known, the Hahn-Banach theorem allows one to (non-uniquely) extend the limit functional $\lim \colon \CoFunc(\N) \oplus \C \to \C$ to a generalized limit functional $\lambda \colon \CbFunc(\N) \to \C$ which is a $\CbFunc$-state on $\N$.  Such a state is not $\tau$-smooth: indeed, the sequence of indicator functions $1_{n \geq N}$ for $N \in \N$ is non-decreasing and converges pointwise to zero in $\N$, but $\lambda(1_{n \geq N}) = 1$ does not converge to zero.  In particular, $\lambda$ is \emph{not} represented by any Radon probability measure on $\N$.  Indeed, the restriction of $\lambda$ to $\CoFunc(\N)$ is zero, so any probability measure that could represent $\lambda$ would vanish, which is absurd.  On the other hand, identifying $\CbFunc(\N)$ with $\CFunc(\beta \N)$, we see that $\lambda$ will be represented by a Radon probability measure on $\beta \N$, but this measure will assign full measure to $\beta \N \backslash \N$, so the second part of Theorem \ref{rrt}(iii) will not apply. (Conversely, any Radon probability measure supported on $\beta \N \backslash \N$ generates a generalized limit functional.)
\end{example}

\begin{example}[Dieudonn\'e's measure]\label{dieu-ex} Let ${\mathcal F}$ be the collection of unbounded closed subsets of $[0,\omega_1)$.  We claim that this collection is closed under countable intersections.  Indeed, if $(F_n)_{n \in \N}$ is a sequence of unbounded closed subsets, then $F \coloneqq \bigcap_{n \in \N} F_n$ is closed.  If for contradiction $F$ is bounded by some countable ordinal $\alpha$, then by repeatedly using the unbounded nature of the $F_n$ we can find countable ordinals $\alpha_{j,n} > \alpha$ in $F_n$ for all $j,n \in \N$ such that $\alpha_{j+1,n} > \sup_m \alpha_{j,m}$ for all $j,n \in \N$.  The countable ordinal $\sup_{j,n} \alpha_{j,n}$ is equal to $\sup_j \alpha_{j,n}$ for every $n$, hence is greater than $\alpha$ and  lies in every $F_n$ and hence in $F$, contradicting the choice of $\alpha$.

One can check that each Borel subset of $[0,\omega_1)$ either contains an element of ${\mathcal F}$, or is disjoint from an element of ${\mathcal F}$, but not both, by first verifying this for closed sets and then noting that the claim is preserved by $\sigma$-algebra operations.  Define \emph{Dieudonn\'e measure}\footnote{An early appearance of this example (in the Borel case) is in \cite[\S 53]{halmos-measure-theory} as Exercise 10. In the literature, the example is attributed to Dieudonn\'e (e.g., see \cite{bogachev2006measure,rudinrealcomplex}), and the related reference cited is \cite{dieudonnemeasure}.} $\mu_{[0,\omega_1)}$ on $([0,\omega_1), \Borel([0,\omega_1))$ by setting $\mu_{[0,\omega_1)}(E)$ to equal $1$ when $E$ contains an element of ${\mathcal F}$ and $0$ when $E$ is disjoint from an element of ${\mathcal F}$.  Then the above properties ensure that 
$\mu_{[0,\omega_1)}$ is a probability measure, which then represents a $\CbFunc$-state $\lambda$, which by Proposition \ref{omega1}(v) assigns to each $f \in \CbFunc(X)$ the limiting value of $f$ at $\omega_1$.  If we define \emph{Dieudonn\'e measure} $\mu_{[0,\omega_1]}$ on $[0,\omega_1]$ to be the extension of $\mu_{[0,\omega_1)}$ to $([0,\omega_1], \Borel([0,\omega_1])$ by giving $\{\omega_1\}$ zero mass, we then see that $\mu_{[0,\omega_1]}$ is a Borel probability measure that represents the same state on $[0,\omega_1]$ as the Dirac measure $\delta_{\omega_1}$, despite the two measures differing on Borel sets (although they do agree on Baire subsets of $[0,\omega_1]$, in accordance with Theorem \ref{rrt} and Proposition \ref{automatic}(iii)).  The state $\lambda$ also vanishes on $\CoFunc([0,\omega_1))$, but this is not a contradiction because $\lambda$ is not $\tau$-smooth (and $\mu_{[0,\omega_1)}$ is not $\tau$-additive or Radon).
\end{example}

\begin{example}\label{unc-ex} Let $X$ be an uncountable discrete $\LCH$-space, then $\BaireC(X)$ is the countable-cocountable $\sigma$-algebra (consisting of countable sets and their complements), while $\BaireB(X) = \Borel(X)$ is the discrete $\sigma$-algebra (since every indicator function is bounded continuous).  One can then check that a probability measure on $(X,\BaireC(X))$ is Radon iff it is $\tau$-additive iff it is supported on an at most countable set.  (For instance, the probability measure that assigns $0$ to countable sets and $1$ to cocountable sets has none of these properties.)  Meanwhile, a probability measure on $(X,\BaireB(X)) = (X,\Borel(X))$ is Radon iff it is compact inner regular iff it is $\tau$-additive iff it is supported on an at most countable set.  This is of course consistent with Proposition \ref{automatic} and Corollary \ref{canon}.
\end{example}

Now we can establish the Riesz duality analogues of the Gelfand dualities in Figures \ref{fig:gelfand-dual}, \ref{fig:gelfand-forget}.  If ${\mathcal A}$ is a $C^*$-algebra, define a \emph{state} on ${\mathcal A}$ to be a bounded linear functional $\tau \colon {\mathcal A} \to \C$ which is non-negative (it maps non-negative elements to non-negative reals) and is of operator norm $1$.   Note that this is consistent with the definition of a state for the algebras $\CoFunc(X), \CbFunc(X)$ in Definition \ref{linfunc}(iii).  We need a technical lemma:

\begin{lemma}[Extension of states]\label{ext-states}\   
\begin{itemize}
\item[(i)]  Let ${\mathcal A}$ be a $\CStarAlgMult$-algebra.  Then every state $\tau_{{\mathcal A}}$ on ${\mathcal A}$ has a unique extension $\tau_{\Mult({\mathcal A})}$ to a state on $\Mult({\mathcal A})$.
\item[(ii)]  Let ${\mathcal A}$ be a $\CStarAlgNd$-algebra.  Then every state $\tau_{{\mathcal A}}$ on ${\mathcal A}$ has a unique extension $\tau_{\Unit({\mathcal A})}$ to a state on $\Unit({\mathcal A})$.
\end{itemize}
\end{lemma}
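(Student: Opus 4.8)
The plan is to treat the two parts via the Gelfand–Riesz correspondence already set up in the excerpt, reducing everything to the Riesz representation theorem and the topological facts in Proposition \ref{func-prop}. For part (i): by Gelfand duality (Theorem \ref{gelfand-dualities}, the equivalence between $\CStarAlgMult$ and $\LCH$), write ${\mathcal A} =_{\CStarAlgMult} \CoFunc(X)$ for a (unique up to isomorphism) $\LCH$-space $X$, so that $\Mult({\mathcal A}) \cong \CbFunc(X)$. A state $\tau_{\mathcal A}$ on ${\mathcal A}$ is then precisely a $\CoFunc$-state on $X$ in the sense of Definition \ref{linfunc}(iii). Now invoke Proposition \ref{func-prop}(v): every $\CoFunc$-state on $X$ has a \emph{unique} extension to a $\CbFunc$-state on $X$ (which is moreover $\tau$-smooth). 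Transporting back across the isomorphism $\Mult({\mathcal A}) \cong \CbFunc(X)$ gives the unique state extension $\tau_{\Mult({\mathcal A})}$, completing (i). One should double-check that the notion of ``state'' as defined just before the lemma (non-negative, operator norm $1$, on a possibly non-unital $C^*$-algebra) agrees with the $\CoFunc$-state / $\CbFunc$-state notions under the identification — but this is exactly the consistency remark already made after Definition \ref{linfunc} and reiterated just before the lemma, so it is immediate.

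For part (ii): given a $\CStarAlgNd$-algebra ${\mathcal A}$, write ${\mathcal A} =_{\CStarAlgNd} \CoFunc(X)$ for an $\LCHpr$-space $X$ (Gelfand duality between $\CStarAlgNd$ and $\LCHpr$), so that $\Unit({\mathcal A}) = {\mathcal A} \oplus \C \cong \CFunc(\Alex(X))$ via the identification $\CFunc(\Alex(X)) \equiv \CoFunc(X) \oplus \C$ used throughout Section \ref{gelfand-sec}. A state $\tau_{\mathcal A}$ on ${\mathcal A} = \CoFunc(X)$ is a $\CoFunc$-state on $X$, and Proposition \ref{func-prop}(iv) says precisely that it has a unique extension to a state on $\Alex(X)$, i.e., a state on $\CFunc(\Alex(X)) \cong \Unit({\mathcal A})$. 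This gives existence and uniqueness of $\tau_{\Unit({\mathcal A})}$. (Concretely the extension is $\tau_{\Unit({\mathcal A})}(a + c1) \coloneqq \tau_{\mathcal A}(a) + c$; uniqueness is forced by $\tau_{\Unit({\mathcal A})}(1) = 1$, which holds because $1$ is a positive element of norm $1$ and a state has norm $1$.)

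I do not expect a serious obstacle here: the substance of the lemma has already been proved in the topological language (Proposition \ref{func-prop}(iv),(v)), and the only work is to thread it through Gelfand duality. The one point needing a little care is abstractness versus concreteness: for part (i) the category $\CStarAlgMult$ is \emph{not} concrete, so one should phrase the argument in terms of the functor $\CoFunc \colon \LCH \to \CStarAlgMultop$ and its quasi-inverse $\Spec$ rather than pretending ${\mathcal A}$ literally equals $\CoFunc(X)$ as a set of functions; however, since states are just linear functionals on the underlying algebra ${\mathcal A}$ (and on its multiplier algebra $\Mult({\mathcal A})$, which \emph{is} a concrete unital algebra), the transport is unambiguous. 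A second minor point: in (ii) one could alternatively give a fully algebraic proof — define the extension by the formula above, check positivity using that $a + c1 \geq 0$ in $\Unit({\mathcal A})$ forces $c \geq 0$ and $a + c1$ dominated by its norm, and check the norm bound directly — but routing through Proposition \ref{func-prop}(iv) is cleaner and keeps the paper's ``everything is a duality'' spirit.
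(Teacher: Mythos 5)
Your proposal is correct and follows essentially the same route as the paper: both parts are reduced via Gelfand duality to Proposition \ref{func-prop}(v) and (iv) respectively, with the same identifications $\Mult({\mathcal A}) \cong \CbFunc(X)$ and $\Unit({\mathcal A}) \cong \CFunc(\Alex(X))$, and the paper likewise notes the alternative explicit formula $\tau_{\Unit({\mathcal A})}(a+c1) = \tau_{\mathcal A}(a)+c$ for part (ii). No gaps.
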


\begin{proof}  We first prove (i).  By Gelfand duality (Theorem \ref{gelfand-dualities}) we may assume that ${\mathcal A} = \CoFunc(X)$ for some $\LCH$-space $X$, in which case we can identify $\Mult({\mathcal A})$ with $\CbFunc(X)$.  The claim now follows from Proposition \ref{func-prop}(v).  One can also avoid Gelfand duality by using approximate units of ${\mathcal A}$ as a substitute for the cutoff functions $\chi$ that arise in the proof of Proposition \ref{func-prop}(v); we leave this alternate argument to the interested reader.

The proof of (ii) is completely analogous, using Proposition \ref{func-prop}(iv) in place of Proposition \ref{func-prop}(v).  Alternatively, one can extend the trace directly via the formula $\tau_{\Unit({\mathcal A})}(a+c1) \coloneqq \tau_{\mathcal A}(a)+c$; we leave the details to the interested reader.
\end{proof}

We can now attach traces to the categories $\CStarAlgUnitInf$, $\CStarAlgUnit$, $\CStarAlgNd$, $\CStarAlgMult$ to obtain new categories  $\CStarAlgUnitInfTrace$, $\CStarAlgUnitTrace$, $\CStarAlgNdTrace$, $\CStarAlgMultTrace$, in a manner dual to how probability measures were attached to the categories $\CHpt$, $\CH$, $\LCHpr$, $\LCH$:

\begin{definition}[Tracial commutative $C^*$-algebra categories]\label{tracialcstar-def}  Let $\Cat$ be equal to $\CStarAlgUnitInf$, $\CStarAlgUnit$, $\CStarAlgNd$, or $\CStarAlgMult$.  Let $\Cat'$ be the Gelfand dual $\Cat = \CHpt$, $\CH$, $\LCHpr$, $\LCH$ to $\Cat$, thus we have  functors $\CoFunc \colon \Cat' \to   \Cat^\op$ and $\Spec \colon \Cat^\op \to \Cat'$ (note that we can write $\CoFunc$ as $\CFunc$ if $\Cat' = \CHpt$, $\CH$). Let $\Cat{\bf Prb}$ be the category defined in Definition \ref{top-prob-cat}.
\begin{itemize}
\item[(i)] A \emph{$\Cat^\tau$-algebra} is a pair ${\mathcal A} = ({\mathcal A}_{ \Cat}, \tau_{\mathcal A})$, where ${\mathcal A}_{ \Cat}$ is a $\Cat$-algebra and $\tau_{\mathcal A} \colon {\mathcal A}_{\Cat} \to \C$ is a state.  
\item[(ii)] A \emph{$\Cat^\tau$-morphism} $\Phi \colon {\mathcal A} \to {\mathcal B}$ between two $\Cat^\tau$-algebras ${\mathcal A} = ({\mathcal A}_{\Cat}, \tau_{\mathcal A})$, ${\mathcal B} = ({\mathcal B}_{\Cat}, \tau_{\mathcal B})$ is a ${\Cat}$-morphism $\Phi_{ \Cat} \colon {\mathcal A} \to {\mathcal B}$ which is required to obey the relation
\begin{equation}\label{taub}
 \tau_{\mathcal B} \circ \Phi_{ \Cat} = \tau_{\mathcal A}
 \end{equation}
if ${\Cat} = \CStarAlgUnitInf$, $\CStarAlgUnit$, $\CStarAlgNd$.  When ${ \Cat} = \CStarAlgMult$ one cannot impose \eqref{taub} because the morphism $\Phi_{ \Cat}$ describes a function $\tilde \Phi_{ \Cat}$ from ${\mathcal A}$ to $\Mult({\mathcal B})$, rather than a function from ${\mathcal A}$ to ${\mathcal B}$. Instead, one instead imposes the slightly different relation
$$ \tau_{\Mult(\mathcal B)} \circ \Mult(\Phi_{ \Cat}) = \tau_{\Mult(\mathcal A)}$$
where the extended states $\tau_{\Mult(\mathcal A)}$, $\tau_{\Mult(\mathcal B)}$ are defined by Lemma \ref{ext-states}.
\item[(iii)] One defines a   forgetful functor from ${ \Cat^\tau}$ to ${ \Cat}$ in the obvious fashion.
\item[(iv)]  If $X = (X_{\Cat'}, \mu_X)$ is a $\Cat'{\bf Prb}$-space, we define $\CoFunc(X)$ to be the ${ \Cat^\tau}$-algebra $\CoFunc(X) \coloneqq (\CoFunc(X_{\Cat'}), \tau)$, where $\tau$ is the $\CoFunc(X_{\Cat'})$-state represented by $\mu_X$.  If $T \colon X \to Y$ is a $\Cat'{\bf Prb}$-morphism, we define $\CoFunc(T) \colon \CoFunc(Y) \to \CoFunc(X)$ to be the unique
${\Cat^\tau}$-morphism from $\CoFunc(Y)$ to $\CoFunc(X)$ with $\CoFunc(T)_{\Cat}$ $= \CoFunc(T_{\Cat'})$.  When $\Cat' = \CH, \CHpt$ we abbreviate $\CoFunc$ as $\CFunc$.
\item[(v)]  If ${\mathcal A} = ({\mathcal A}_{\Cat}, \tau_{\mathcal A})$ is a ${\Cat^\tau}$-algebra, we define $\Riesz({\mathcal A})$ to be the $\Cat'{\bf Prb}$-space $(\Spec({\mathcal A}), \mu)$, where $\mu$ is the unique Radon probability measure on $\Spec({\mathcal A})$ that represents $\tau_{\mathcal A}$ (after using Gelfand duality to identify ${\mathcal A}$ with $\CoFunc(\Spec({\mathcal A}))$), as guaranteed by Theorem \ref{rrt}(i).  If $\Phi :{\mathcal A} \to {\mathcal B}$ is a ${\Cat^\tau}$-morphism, we define $\Spec(\Phi) \colon \Spec({\mathcal B}) \to \Spec({\mathcal A})$ to be the unique $\Cat'{\bf Prb}$-morphism such that $\Spec(\Phi)_{\Cat'} = \Spec(\Phi_{ \Cat})$.
\end{itemize}
\end{definition}

By using Gelfand duality and Theorem \ref{rrt}(i) (and also Lemma \ref{ext-states} in the case $\Cat = \CStarAlgMult$), we can verify that $\Cat^\tau$ is indeed a category, and that the functors $\CoFunc \colon \Cat{\bf Prb} \to (\Cat^\tau)^\op$ and $\Riesz \colon (\Cat^\tau)^\op \to \Cat{\bf Prb}$ form a duality of categories; we refer to these dualities of categories as ``Riesz dualities''.  The horizontal functors on the first row of Figures \ref{fig:gelfand-dual}, \ref{fig:gelfand-forget} extend in an obvious fashion to their tracial counterparts (using Lemma \ref{ext-states} as necessary), which by Riesz duality then allows one to analogously extend the functors on the second row as well to their probabilistic counterparts, and similarly for the ``diagonal'' functor $\CbFunc$.  Routine verification then gives

\begin{theorem}[Riesz dualities]\label{riesz-dualities}  The categories in Figures \ref{fig:riesz-dual}, \ref{fig:riesz-forget} are indeed categories, and the functors in these figures are indeed functors between the indicated categories, with the indicated faithfulness and  fullness properties.  Furthermore, both of these diagrams commute up to natural isomorphisms. (In particular, each pair of vertical functors generates a duality of categories.)
\end{theorem}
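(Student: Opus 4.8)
The plan is to bootstrap the entire statement from the Gelfand dualities of Theorem~\ref{gelfand-dualities}, using the Riesz representation theorem (Theorem~\ref{rrt}(i)) as the dictionary that matches states on a commutative $C^*$-algebra with Radon probability measures on its Gelfand spectrum. First I would check the category axioms for each $\Cat^\tau$: objects are pairs $({\mathcal A}_\Cat, \tau_{\mathcal A})$ and morphisms are $\Cat$-morphisms subject to the trace-compatibility relation~\eqref{taub} (or, when $\Cat = \CStarAlgMult$, its multiplier analogue). Since composition and identities are inherited from $\Cat$, the only new verification is that the trace condition is closed under composition and holds for identities; in the unital, nondegenerate, and coslice cases this is immediate, and in the multiplier case it follows from the functoriality of $\Mult$ (via~\eqref{mult-ident}) together with the uniqueness of the extended state supplied by Lemma~\ref{ext-states}.

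Next I would verify that $\CoFunc$ and $\Riesz$ are well-defined mutually inverse functors between $\Cat'\mathbf{Prb}$ and $(\Cat^\tau)^\op$. On objects, $\CoFunc$ attaches to a Radon probability space the state $f \mapsto \int f\, d\mu$, which is genuinely a state by Proposition~\ref{func-prop}(iii), and $\Riesz$ attaches to $({\mathcal A}, \tau)$ the unique Radon probability measure on $\Spec({\mathcal A})$ representing $\tau$, as provided by Theorem~\ref{rrt}(i). On morphisms the underlying $\Cat$- or $\Cat'$-morphism is handled by Gelfand duality, and one only checks that the measure is transported correctly: the pushforward identity $\int f\, d(g_*\mu) = \int (f\circ g)\, d\mu$ is exactly the translation of~\eqref{taub} across the duality. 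Functoriality then reduces to the corresponding statement in Theorem~\ref{gelfand-dualities} plus the uniqueness clause in Theorem~\ref{rrt}(i). The natural isomorphisms $\CoFunc \circ \Riesz \cong \id$ and $\Riesz \circ \CoFunc \cong \id$ are the Gelfand ones, and they automatically respect the trace/measure data again by that uniqueness, giving the duality of categories; faithfulness of the vertical functors is inherited from the Gelfand side, while fullness follows because a $\Cat'\mathbf{Prb}$-morphism between $\Riesz$-images comes from a $\Cat$-morphism by Gelfand fullness, and this $\Cat$-morphism respects the traces precisely because the original morphism respects the measures.

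It then remains to extend the horizontal functors ($\Unit$, the forgetful functors, the casting functors) and the diagonal functor $\CbFunc$ of Figures~\ref{fig:gelfand-dual} and~\ref{fig:gelfand-forget} to the tracial/probabilistic setting. For $\Unit$ and $\Mult$ one carries the state along by applying Lemma~\ref{ext-states}; for $\beta$ and $\CbFunc$ one pushes the Radon measure on an $\LCH$-space to its Stone--{\v C}ech compactification using Theorem~\ref{rrt}(iii); for the forgetful functor from $\LCHprProb$ to $\LCHProb$ (equivalently from $\CStarAlgNdTrace$ to $\CStarAlgMultTrace$) one uses the canonical extension of a Radon measure from $\BaireC(X)$ to $\BaireB(X)$ from Corollary~\ref{canon}(i). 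With these functors in place, commutativity up to natural isomorphism of the diagrams in Figures~\ref{fig:riesz-dual} and~\ref{fig:riesz-forget} follows from the commutativity of the underlying Gelfand diagrams in Theorem~\ref{gelfand-dualities} together with the uniqueness of the measure representing a prescribed state, which forces any two functorial routes to produce naturally isomorphic (indeed equal, after our identification conventions) measures.

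I expect the main obstacle to be the $\CStarAlgMult$/$\LCH$ column, because $\CStarAlgMult$ is not a concrete category: its morphisms are functions into a multiplier algebra and its composition is only defined implicitly by~\eqref{mult-ident}. One must check that the trace condition for $\CStarAlgMultTrace$-morphisms, phrased through the extended states of Lemma~\ref{ext-states}, is preserved under this indirect composition and corresponds under Riesz duality to the ($\tau$-additivity-preserving, hence by Proposition~\ref{automatic}(ii) Radon-preserving) pushforward for $\LCH$-morphisms. A closely related subtlety is bookkeeping the two $\sigma$-algebras in play --- the $\CbFunc$-Baire algebra for $\LCHProb$ versus the $\CoFunc$-Baire algebra for $\LCHprProb$ --- and confirming via Corollary~\ref{canon}(i) that the casting between these columns is consistent with everything else. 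Once these compatibilities are secured, all remaining verifications are routine expansions of the definitions.
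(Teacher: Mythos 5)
Your proposal is correct and follows essentially the same route as the paper: the paper likewise obtains Theorem \ref{riesz-dualities} by combining the Gelfand dualities of Theorem \ref{gelfand-dualities} with the Riesz representation theorem (Theorem \ref{rrt}(i)) and Lemma \ref{ext-states} (for the $\CStarAlgMult$ column), extending the horizontal and diagonal functors via Lemma \ref{ext-states}, Theorem \ref{rrt}(iii), and Corollary \ref{canon}(i), and leaving the rest as routine verification. Your write-up simply spells out in more detail the same ingredients the paper cites.
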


\begin{remark} Corollary \ref{canon}(i) can be interpreted category-theoretically as guaranteeing the existence of the ``forgetful functor'' from $\LCHprProb$ to $\LCHProb$ that appears in Figures \ref{fig:riesz-dual}, \ref{fig:riesz-forget}. 
Theorem \ref{rrt}(iii) (and Proposition \ref{func-prop}(iii)) can similarly be interpreted as a guarantee for the existence of the functor $\beta \colon \LCHProb \to \CHProb$.
\end{remark}

\section{Abstract probability theory}\label{abs-prob-sec}

\begin{figure}
    \centering
    \begin{tikzcd}
    \Polish \arrow[dr,blue,tail,"\BorelFunc"] & \CH \arrow[d,blue,tail, "\BaireFunc"] & \CHProb \arrow[l,blue,tail] \arrow[d,blue,tail] \\
     \Set & \ConcMes \arrow[l,blue,tail] \arrow[d,blue,"\Abs"] & \ConcProb \arrow[l,blue,tail] \arrow[d,blue,"\Abs"] \\
     & \AbsMes \arrow[d,blue,two heads] & \AbsProb \arrow[l,blue,tail] \arrow[d,blue,"\Mes"] \\ 
      \Boolop & \SigmaAlgop \arrow[l,blue,tail] \arrow[u,blue, two heads, "\id"] & \arrow[l,blue,tail] \ProbAlg  \arrow[u,"\Inc",shift left=1.25ex, tail,two heads] \arrow[uuu,bend right,"\Stone"',  tail, two heads, shift right = 1.5ex]
    \end{tikzcd}
    \caption{Basic functors between concrete and abstract probabilistic and measurable categories.  All the casting functors (displayed in blue) commute with each other, but the non-casting functors (displayed in black) need not commute with the rest of the diagram. Note that the categories in the first two rows have a faithful casting functor to $\Set$ and are thus concrete categories, while the other categories in this diagram should be viewed as being more abstract in nature. The canonical model functor $\Stone$, which crucially allows one to return from an abstract category to a concrete one, will be constructed in the next section. }
    \label{fig:prob-mes}
\end{figure}

In previous sections we have already seen the categories $\ConcMes, \ConcProb$ of concrete measurable spaces and concrete probability spaces respectively, as well as their compact Hausdorff counterparts $\CH$ and $\CHProb$.  Being concrete, these categories also have faithful forgetful functors to $\Set$.   In this section we introduce some more abstract categories of measurable and probability spaces (and their associated Boolean algebras) that we will use in the sequel.  These categories are summarized in Figure \ref{fig:prob-mes}.

\begin{definition}[Abstract categories]\label{abscat-def}\ 
\begin{itemize}
\item[(i)]  A \emph{$\Bool$-algebra} is an abstract Boolean algebra $\B = ({\mathcal B}, 0, 1, \wedge, \vee, \overline{\cdot})$.  A \emph{$\Bool$-morphism} is a Boolean algebra homomorphism between Boolean algebras, with the usual composition law.
\item[(ii)]  $\SigmaAlg$ is the subcategory of $\Bool$ in which the $\SigmaAlg$-algebras are those $\Bool$-algebras $\B$ which are $\sigma$-complete (every countable family $(E_n)_{n \in \N}$ in $\B$ has a meet $\bigvee_{n \in \N} E_n$ and a join $\bigwedge_{n \in \N} E_n$), and the $\SigmaAlg$-morphisms $\Phi \colon \B \to \B'$ are those $\Bool$-morphisms which preserve countable meets and joins, thus $\Phi\left( \bigvee_{n \in \N} E_n \right) = \bigvee_{n \in \N}\Phi(E_n)$ and $\Phi\left( \bigwedge_{n \in \N} E_n \right) = \bigwedge_{n \in \N}\Phi(E_n)$ for $E_n \in \B$.
\item[(iii)]  $\AbsMes$ is the opposite category to $\SigmaAlg$ (as defined in Definition \ref{opp-cat}). An $\AbsMes$-space is also called an \emph{abstract measurable space}, and an $\AbsMes$-morphism an \emph{abstractly measurable map}. 
\item[(iv)]  Let $\mathcal{A}\in \SigmaAlg$. A probability measure on $\mathcal{A}$ is a function $\mu \colon \mathcal{A} \to [0,1]$ such that $\mu(0)=0$, $\mu(1)=1$, and $\mu\left(\bigvee_{n \in \N} E_n\right) = \sum_{n \in \N} \mu(E_n)$ whenever $E_n$ are pairwise disjoint elements of $\mathcal{A}$ (thus $E_n \wedge E_m=0$ for all distinct $n,m \in \N$). 
We define the functor $\mathtt{Prb}\colon \AbsMes\to \Set$ to assign to a $\sigma$-algebra $\mathcal{A}$ the set $\mathtt{Prb}(\mathcal{A})$ of probability measures on $\mathcal{A}$ and to an $\AbsMes$-morphism $f\colon \mathcal{B}\to \mathcal{A}$ the pushforward map $\mathtt{Prb}(f)\colon \mathtt{Prb}(\mathcal{B})\to \mathtt{Prb}(\mathcal{A})$ defined as the pullback $\mathtt{Prb}(f)(\mu)=\mu\circ f$. 
We define the category $\AbsProb$ of \emph{abstract probability spaces and abstract measure-preserving maps} to be the action category $\AbsMes\ltimes \mathtt{Prb}$ (see Definition \ref{def-action}). 
\item[(v)]  We define the category $\ProbAlg$ of \emph{probability algebras}\footnote{These are special cases of \emph{measure algebras}, in which the measure $\mu$ is not required to map $1$ to $1$.} as a (non-full) subcategory of the category $\AbsProb=\AbsMes\ltimes \mathtt{Prb}$, where we additionally require that the set $\mathtt{Prb}(\mathcal{A})$ only contains probability measures on the $\sigma$-algebra $\mathcal{A}$ that have the additional property that $\mu(E) > 0$ whenever $E \in  \mathcal{A}$ is non-zero.  We let $\Inc$ be the faithful functor from $\AbsProb$ to $\ProbAlg$. 
\item[(vi)]  There are obvious forgetful functors from $\AbsProb$ to $\AbsMes$ and from $\SigmaAlg$ to $\Bool$. 
\item[(viii)]  If $X = (X_\Set, \Sigma_X)$ is a $\ConcMes$-space, we define the abstraction $\Abs(X)$ to be the $\AbsMes$-space $\Sigma_X$, where the $\sigma$-algebra $\Sigma_X$ of $X$ is viewed as an abstract $\sigma$-algebra.  Similarly if $T \colon X \to Y$ is a $\ConcMes$-morphism, we define $\Abs(T)$ to be the $\AbsMes$-morphism $\Abs(T)\colon \Sigma_X\to \Sigma_Y$ corresponding to the $\SigmaAlg$-morphism $T_\SigmaAlg \colon \Sigma_Y\to \Sigma_X$ defined as the pullback map $T_\SigmaAlg(E) \coloneqq T^*(E)$ for $E \in \Sigma_Y$.  The abstraction functor $\Abs$ from $\ConcProb$ to $\AbsProb$ is defined similarly.
\item[(ix)]  If $X = (X_\SigmaAlg, \mu_X)$ is an $\AbsProb$-space, we define $\Mes(X)$ to be the $\ProbAlg$-space $\Mes(X) \coloneqq (X_\SigmaAlg/_{\mathcal{N}_X}, \mu_X)$, where $X_\SigmaAlg/_{\mathcal{N}_X}$ denotes the quotient $\sigma$-algebra with respect to the $\sigma$-ideal ${\mathcal N}_X \coloneqq \{ E \in X_\SigmaAlg \colon \mu_X(E)=0\}$, also called the \emph{null ideal}, and $\mu_X : X_\SigmaAlg/_{\mathcal{N}_X} \to [0,1]$ is the descent of $\mu_X: X_\SigmaAlg \to [0,1]$ to $X_\SigmaAlg/_{\mathcal{N}_X}$ (by an abuse of notation). 
If $T \colon X \to Y$ is an $\AbsProb$-morphism, we define the $\ProbAlg$-morphism $\Mes(T) \colon \Mes(X) \to \Mes(Y)$ by the commutative diagram
\[\begin{tikzcd}
	{\mathtt{Alg}(X)} && {\mathtt{Alg}(Y)} \\
	\\
	X && Y
	\arrow["{\iota_X}"', from=1-1, to=3-1]
	\arrow["{\mathtt{Alg}(T)}", from=1-1, to=1-3]
	\arrow["{\iota_Y}", from=1-3, to=3-3]
	\arrow["T"', from=3-1, to=3-3]
\end{tikzcd}\]
where $\iota_X,\iota_Y$ denote the canonical inclusions.   
\end{itemize}
\end{definition}

It is a routine matter to check that this defines categories and functors as depicted in Figure \ref{fig:prob-mes} (with the exception of $\Stone$), with the indicated faithfulness and fullness properties, and with all the casting functors (depicted in blue) commuting with each other.  

Informally, the abstraction functors $\Abs$ ``abstract away the points'' from a concrete measurable or probability space, and the probability algebra functor $\Mes$ ``deletes the null sets''.  The faithful functor $\Inc \colon \ProbAlg \to \AbsProb$ re-interprets a probability algebra as a special type of abstract probability space, namely one in which there are no non-trivial null sets. 

\begin{remark}  The $\SigmaAlg$-algebra $\tilde {\mathcal A} = \Forget_{\ProbAlg \to \SigmaAlgop}({\mathcal A})$ associated to a $\ProbAlg$-algebra ${\mathcal A}$ has stronger properties than $\sigma$-completeness; as is well-known, these Boolean algebras are in fact complete and obey the countable chain condition (see, e.g., \cite[322G, 322C]{fremlinvol3}).  Also, the requirement that the $\SigmaAlg$-morphism $\tilde \Phi \colon \tilde {\mathcal A} \to \tilde {\mathcal B}$ associated to a $\ProbAlg$-morphism $\Phi \colon {\mathcal B} \to {\mathcal A}$ be $\sigma$-complete can be dropped as it follows automatically from the Boolean homomorphism hypothesis. These facts are easy to establish, but we shall not do so here as they will not be needed in our arguments.
\end{remark}

\begin{remark}
One can also enlarge the category $\ProbAlg$ by replacing the class of $\ProbAlg$-morphisms with the more general class of Markov operators, which is the abstract analogue of the class of probability kernels on $\ConcProb$-spaces.  These categories are studied in \cite[Ch. 13]{EFHN} and \cite{voevodsky} respectively. However, we will not investigate these categories further here.
\end{remark}

For future reference we develop some of the basic category-theoretic properties of these abstract categories, focusing on the classification of monomorphisms and epimorphisms, and the structure of products. We begin with the Boolean categories.

\begin{lemma}[Properties of Boolean categories]\label{bool-props} Let ${ \Cat} = \Bool, \SigmaAlg$. 
\begin{itemize}
    \item[(i)]  A ${ \Cat}$-morphism is a ${ \Cat}$-monomorphism (resp. ${ \Cat}$-epimorphism) if and only if it is injective (resp. surjective).  
    Any ${ \Cat}$-bimorphism is a ${ \Cat}$-isomorphism. 
    \item[(ii)] Dually, an $\AbsMes$-morphism $f$ is a monomorphism (resp. epimorphism) if and only if $f$ is surjective (resp.~injective), and any $\AbsMes$-bimorphism is an $\AbsMes$-isomorphism. 
    \item[(iii)]  If $\Phi$ is a $\ProbAlg$-morphism, then $\Inc(\Phi)_\AbsMes$ is an $\AbsMes$-epimorphism and $\Phi$ is a $\ProbAlg$-epimorphism. 
    \item[(iv)]  The category ${ \Cat}$ admits categorical coproducts for arbitrarily many factors. As a consequence, $\AbsMes$ admits a categorical products for arbitrarily many factors. 
    \item[(v)]  The categorical $\SigmaAlg$-coproduct contains the categorical $\Bool$-coproduct, but does not agree with it (in the sense of Definition \ref{def-monoidal}, where following Example \ref{exp-cartesian}, we interpret $\Bool$ and $\SigmaAlg$ as cocartesian symmetric monoidal categories) with respect to the forgetful functor from $\SigmaAlg$ to $\Bool$. 
\end{itemize}
\end{lemma}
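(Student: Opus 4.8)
The plan is to treat the five parts in order, using throughout that $\Bool$ and $\SigmaAlg$ are concrete over $\Set$ and that $\AbsMes = \SigmaAlg^\op$ by definition. For part (i), the implications ``injective $\Rightarrow$ monomorphism'' and ``surjective $\Rightarrow$ epimorphism'' hold in any concrete category, exactly as in the proof of Proposition \ref{top-prop} (via Lemma \ref{epimorph} and Example \ref{set-example}). For ``monomorphism $\Rightarrow$ injective'' I would use that the free Boolean algebra $F$ on one generator -- which, being finite, is automatically a $\SigmaAlg$-algebra -- corepresents the underlying-set functor on both $\Bool$ and $\SigmaAlg$, since a $\Cat$-morphism out of $F$ is determined by the (freely prescribable) image of the generator; hence a $\Cat$-monomorphism $\Phi$ with $\Phi(b_1)=\Phi(b_2)$ is forced, by precomposition with the two morphisms selecting $b_1$ and $b_2$, to satisfy $b_1=b_2$. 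A $\Cat$-bimorphism is then a bijective Boolean (resp.\ $\sigma$-complete Boolean) homomorphism, whose inverse function is automatically of the same type, giving ``$\Cat$-bimorphism $\Rightarrow$ $\Cat$-isomorphism''.

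For the last implication of (i), ``epimorphism $\Rightarrow$ surjective'', I would first reduce to showing that the inclusion of a proper sub-($\sigma$-complete-)algebra $C\subsetneq B$ is never a $\Cat$-epimorphism: any $\Cat$-morphism factors as a surjection onto its image followed by such an inclusion, the surjection is an epimorphism, and a left factor of an epimorphism through an epimorphism is again an epimorphism. Both cases then go through the Stone-type dualities identifying $\Bool$ and $\SigmaAlg$ with the opposites of full subcategories of $\CH$ -- the classical Stone duality $\Bool \simeq \StoneCat^\op$ for the first, and the Loomis--Sikorski duality $\SigmaAlg \simeq \StoneCatSigma^\op$ of Section \ref{loomis-sec} for the second (a forward reference, but not a circular one, since that development rests only on the classical theorems of Stone and Loomis--Sikorski). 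Under such a duality a $\Cat$-epimorphism dualizes to a monomorphism between Stone(-$\sigma$) spaces, which (by the argument of Proposition \ref{top-prop}(i)) is an injective continuous map, hence a closed embedding of compact Hausdorff spaces; its dual map of Boolean ($\sigma$-)algebras is then surjective, because in a zero-dimensional compact Hausdorff space a clopen subset of a closed subspace and its relative complement form two disjoint closed sets and are therefore separated by a clopen of the ambient space. I expect this $\SigmaAlg$ case to be the crux: the naive approach of testing against the two-element algebra fails outright, as an atomless probability algebra admits no $\SigmaAlg$-morphism to $\mathbf{2}$ whatsoever.

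Part (ii) is then immediate from (i) together with $\AbsMes = \SigmaAlg^\op$, since monomorphisms, epimorphisms and isomorphisms in $\AbsMes$ are precisely the epimorphisms, monomorphisms and isomorphisms in $\SigmaAlg$ read backwards. For part (iii), let $\Phi \colon A \to B$ be a $\ProbAlg$-morphism, and view $\Inc(\Phi)_\AbsMes$ as a $\SigmaAlg$-morphism $g$ (recall $\AbsMes=\SigmaAlg^\op$); measure-preservation of $\Phi$ reads $\mu_A \circ g = \mu_B$, so $g(E)=0$ gives $\mu_B(E)=\mu_A(0)=0$ and hence $E=0$ by the defining positivity property of $\ProbAlg$-algebras. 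Thus $g$ is injective, hence a $\SigmaAlg$-monomorphism by (i), so $\Inc(\Phi)_\AbsMes$ is an $\AbsMes$-epimorphism by (ii). Finally the composite forgetful functor $\ProbAlg \to \AbsProb \to \AbsMes$ is faithful, and faithful functors reflect epimorphisms, so $\Phi$ itself is a $\ProbAlg$-epimorphism.

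For part (iv), I would argue that $\Bool$ and $\SigmaAlg$ are the categories of models of a finitary, respectively (boundedly) infinitary, equational theory, and that free algebras exist in both -- for $\SigmaAlg$ this needs only the observation that a $\sigma$-complete Boolean algebra $\sigma$-generated by $\kappa$ elements has cardinality at most $\kappa^{\aleph_0}$, seen by closing the free Boolean algebra on $\kappa$ generators under countable joins through $\omega_1$ successive stages -- so both categories are cocomplete, and in particular have all coproducts (realized as quotients of free algebras by the obvious relations); dually $\AbsMes = \SigmaAlg^\op$ then has all products. For part (v), the coprojections into $\coprod^{\SigmaAlg}_{\alpha} B_\alpha$ are $\Bool$-morphisms, hence induce a canonical $\Bool$-morphism $\coprod^{\Bool}_{\alpha} B_\alpha \to \coprod^{\SigmaAlg}_{\alpha} B_\alpha$; this is injective because the free Boolean algebra on a set embeds into the free $\sigma$-complete Boolean algebra on that set (any $\Bool$-homomorphism of the former to $\mathbf{2}$ extends, by the $\SigmaAlg$ universal property, to a $\SigmaAlg$-homomorphism of the latter to $\mathbf{2}$, so no nonzero element can be killed), and similarly in the presence of relations. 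It is not surjective in general: taking each $B_n$ ($n\in\N$) to be the free Boolean algebra on one generator, $\coprod^{\Bool}_{n} B_n$ is the free Boolean algebra on countably many generators -- the algebra of clopen subsets of the Cantor space $\{0,1\}^\N$ -- which is not $\sigma$-complete, while $\coprod^{\SigmaAlg}_{n} B_n$ is the free $\sigma$-complete Boolean algebra on countably many generators -- the Baire $\sigma$-algebra $\Baire(\{0,1\}^\N) = \Borel(\{0,1\}^\N)$ -- which is strictly larger; hence the two coproducts do not agree in the sense of Definition \ref{def-monoidal}.
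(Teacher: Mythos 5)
Your treatment of (ii), (iii), (iv), the monomorphism half of (i) (testing against the four-element algebra $2^{\{0,1\}}$, exactly as in the paper), and the $\Bool$ half of the epimorphism claim is sound; the last of these is in fact a nice self-contained argument where the paper simply cites \cite{lagrange}. But there is a genuine gap at the step you yourself flag as the crux: ``$\SigmaAlg$-epimorphism $\Rightarrow$ surjective''. Your plan is to dualize to a $\StoneCatSigma$-monomorphism and deduce injectivity ``by the argument of Proposition \ref{top-prop}(i)'', i.e.\ by testing against points. This fails because $\StoneCatSigma$ is \emph{not} a full subcategory of $\CH$ or $\StoneCat$: a $\StoneCatSigma$-morphism must pull back Baire-meager sets to Baire-meager sets, so the inclusion of a point $x$ into a $\StoneCatSigma$-space $X$ is a $\StoneCatSigma$-morphism only when $x$ lies in no Baire-meager set. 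Under Stone duality the admissible points of $\StoneFuncSigma(C)$ are exactly the $\sigma$-complete ultrafilters on $C$, i.e.\ the $\SigmaAlg$-morphisms $C\to\{0,1\}$ --- and you have already observed that an atomless $C$ has none of these. So the very phenomenon that defeats ``test against the two-element algebra'' on the algebraic side defeats ``test against the point'' on the topological side; they are the same failure seen through the duality. Nor can you get injectivity of $\StoneFuncSigma(\Phi)$ directly from the epimorphism hypothesis, since $\StoneFuncSigma(\Phi)$ is precomposition on $\Hom_\Bool(\cdot\to\{0,1\})$, not on $\Hom_{\SigmaAlg}(\cdot\to\{0,1\})$. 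In fact ``every $\StoneCatSigma$-monomorphism is injective'' is precisely equivalent, under the duality, to the statement you are trying to prove (the paper derives the former \emph{from} Lemma \ref{bool-props}(i) just after Proposition \ref{prelim-ls}), so the difficulty has been relocated rather than resolved. This direction genuinely requires LaGrange's amalgamation argument (build the $\SigmaAlg$-pushout of $B \leftarrow C \rightarrow B$ over a proper $\sigma$-subalgebra and show the two copies of $B$ stay distinct, in the spirit of the group-theoretic remark in Example \ref{set-example}); the paper cites \cite{lagrange} for it.

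Two secondary problems in (v). First, your injectivity argument works for free algebras, but ``similarly in the presence of relations'' does not: a Boolean ultrafilter on a general $\SigmaAlg$-algebra $B_\alpha$ need not be $\sigma$-complete, so it induces no $\SigmaAlg$-morphism out of $\coprod^{\SigmaAlg}_\alpha B_\alpha$; the paper instead proves injectivity in Corollary \ref{bool-cat-extra} by passing through the Loomis--Sikorski representation and the $\CHDelete$-product. Second, your non-agreement counterexample is a countably infinite coproduct of four-element algebras, all of whose \emph{finite} sub-coproducts agree in $\Bool$ and $\SigmaAlg$; since Definition \ref{def-monoidal} concerns the binary tensor product, you need a binary example, such as the paper's $\X \otimes^{\Bool} \X$ versus $\X \otimes^{\SigmaAlg} \X$ with $\X = \Borel([0,1])$.
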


\begin{remark}\label{rem-prod-probalg}
There is no categorical product in $\ProbAlg$ (for reasons similar to Remark \ref{no-univ-prob}), but one can build infinite tensor products in $\ProbAlg$ in the sense of Definition \ref{def-infinit-prod} following a similar construction as in Example \ref{concprob-ex}.  
\end{remark}

\begin{proof} The ``if'' portion of the first part of (i) follows from Proposition \ref{epimorph}, since there is a faithful forgetful functor from ${ \Cat}$ to $\Set$.  The ``only if'' portion of the first part of (i) for monomorphisms follows by observing that elements $E$ of a ${ \Cat}$-algebra $\X$ can be identified with ${ \Cat}$-morphisms from the $2^2$-element ${ \Cat}$-algebra $2^{\{0,1\}}$ to $\X$.  The ``only if'' portion of the first part of (i) for epimorphisms was established in\footnote{We are indebted to Badam Baplan for this reference.} \cite{lagrange}.  The last part of (i) follows from the first part, since any bijective ${ \Cat}$-morphism is clearly a ${ \Cat}$-isomorphism.

The claim (ii) is immediate from (i) applied to ${ \Cat} = \SigmaAlg$.

Now we prove (iii). If $\Phi \colon (\Y,\nu)  \to (\X,\mu)$ is a $\ProbAlg$-morphism, then we have $\nu(\Inc(\Phi)(E)) = \mu(E)$ for all $E \in \X$.  As $(\X,\mu), (\Y,\nu)$ are both $\ProbAlg$-algebras, this implies that $\Inc(\Phi)(E) = 0$ if and only if $E=0$.  Thus $\Inc(\Phi)$ is an injective $\SigmaAlg$-morphism, hence $\Inc(\Phi)$ is an $\AbsMes$-epimorphism by (ii).  By  Lemma \ref{epimorph}, $\Phi$ is thus also a $\ProbAlg$-epimorphism.  

The claim (iv) is a special case of the fact that categories of algebraic structures with a \emph{set} of operations admit all small limits and colimits, in particular, arbitrary categorical products and coproducts, see \cite[Theorem 5.30 and Remark 1.56]{adamek}, for the special case of $\Cat=\Bool, \SigmaAlg$ see, e.g., \cite[Theorems 11.2, 12.12]{koppelberg89}. 

For claim (v), from the universality of both coproducts we always have a canonical $\Bool$-map $f \colon \coprod^{\Bool}_{\alpha \in A} X_\alpha \to \coprod^{\SigmaAlg}_{\alpha \in A} X_\alpha$ for any $\SigmaAlg$-algebras $X_\alpha$.  The fact that this is in fact a canonical $\Bool$-inclusion (i.e., injective) was shown in \cite{lagrange}, we will also demonstrate it in Corollary \ref{bool-cat-extra} after we describe the categorical coproduct in $\SigmaAlg$ more explicitly; we will not need this fact until then.  To show that the two coproducts do not agree, let $\X$ be the Borel $\sigma$-algebra of $[0,1]$, then as is well-known $\X \otimes^\SigmaAlg \X$ can be identified with the Borel $\sigma$-algebra of $[0,1]^2$ (see, e.g., Proposition \ref{stomp}(iv)), while $\X \otimes^\Bool \X$ is instead the smaller $\Bool$-algebra of finite disjoint unions of rectangles $E \times F$ with $E,F \in \X$, and it is easy to see that the inclusion map from the latter to the former is the canonical map and fails to be a $\Bool$-isomorphism.
\end{proof}

The functors $\Mes \colon \AbsProb \to \ProbAlg$ and $\Inc \colon \ProbAlg \to \AbsProb$ do not quite generate an equivalence of categories, but they come close to it:

\begin{lemma}[Passing to the probability algebra]\label{ppa}\ 
\begin{itemize}
\item[(i)] $\Mes \circ \Inc$ is naturally isomorphic to $\ident_{\ProbAlg}$.
\item[(ii)]  There is a natural monomorphism $\iota$ from $\Inc \circ \Mes$ to $\ident_{\AbsProb}$.
\end{itemize}
\end{lemma}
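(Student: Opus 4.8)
The plan is to realise both natural transformations as induced by the canonical $\SigmaAlg$-quotient maps $q_X \colon X_\SigmaAlg \to X_\SigmaAlg/_{\mathcal{N}_X}$ of an $\AbsProb$-space $X = (X_\SigmaAlg, \mu_X)$: read in the opposite direction and equipped with the descended measure $\bar\mu_X$, these are precisely the canonical inclusions $\iota_X \colon \Inc(\Mes(X)) \to X$ of Definition \ref{abscat-def}(ix). Indeed $q_X$ is a $\SigmaAlg$-morphism, and by the very definition of the descent one has $\bar\mu_X \circ q_X = \mu_X$, so $q_X$ promotes to a bona fide $\AbsProb$-morphism $\iota_X$.

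For part (ii), I would first observe that the $\iota_X$ assemble into a natural transformation $\iota \colon \Inc \circ \Mes \Rightarrow \ident_{\AbsProb}$, but this needs no argument: for an $\AbsProb$-morphism $T \colon X \to Y$ the naturality square of $\iota$ is, after casting the $\Mes$-data into $\AbsProb$ via $\Inc$, exactly the square $\iota_Y \circ \Mes(T) = T \circ \iota_X$ that \emph{defines} $\Mes(T)$ in Definition \ref{abscat-def}(ix). It then remains to see that each $\iota_X$ is a monomorphism in $\AbsProb$. Since $q_X$ is surjective it is a $\SigmaAlg$-epimorphism by Lemma \ref{bool-props}(i), equivalently $\iota_X$ is an $\AbsMes$-monomorphism (Lemma \ref{bool-props}(ii)); as the forgetful functor $\AbsProb \to \AbsMes$ is faithful (faithful functors reflect monomorphisms; cf.\ Lemma \ref{epimorph}), $\iota_X$ is therefore an $\AbsProb$-monomorphism. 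This gives (ii).

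For part (i), let $(\mathcal{A}, \mu)$ be a $\ProbAlg$-space. By the strict-positivity condition built into the definition of a $\ProbAlg$-algebra, the null ideal of $\Inc(\mathcal{A}, \mu)$ is $\{0\}$; hence $\Mes(\Inc(\mathcal{A}, \mu)) = (\mathcal{A}/_{\{0\}}, \bar\mu)$ and the quotient map $q_{\mathcal{A}} \colon \mathcal{A} \to \mathcal{A}/_{\{0\}}$ is bijective---thus a $\SigmaAlg$-isomorphism by Lemma \ref{bool-props}(i)---and it remains measure-preserving as above. I would therefore define $\eta_{(\mathcal{A},\mu)} \colon \Mes(\Inc(\mathcal{A},\mu)) \to (\mathcal{A}, \mu)$ to be the $\ProbAlg$-morphism whose underlying $\SigmaAlg$-morphism is $q_{\mathcal{A}}$; being induced by a $\SigmaAlg$-isomorphism it is a $\ProbAlg$-isomorphism, and its naturality follows from Definition \ref{abscat-def}(ix) exactly as for $\iota$. (This $\eta_{(\mathcal{A},\mu)}$ is also just the restriction along $\Inc$ of $\iota_{\Inc(\mathcal{A},\mu)}$, which by the above is an $\AbsProb$-isomorphism.)

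The argument is entirely formal; the only steps meriting care are the monomorphism claim in (ii)---which rests on the description of $\AbsMes$-monomorphisms in Lemma \ref{bool-props} together with faithfulness of the forgetful functor $\AbsProb \to \AbsMes$---and keeping the opposite-category and casting conventions straight. In particular, it is worth stressing that no separate naturality verification is needed, since $\Mes$ is defined on morphisms precisely so that the relevant squares commute.
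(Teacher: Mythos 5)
Your argument is correct and follows essentially the same route as the paper: both realise $\iota_X$ via the surjective quotient $\SigmaAlg$-morphism onto $X_\SigmaAlg/_{\mathcal{N}_X}$, invoke Lemma \ref{bool-props}(ii) to get an $\AbsMes$-monomorphism, promote it to an $\AbsProb$-monomorphism via faithfulness (Lemma \ref{epimorph}), and for (i) observe that the null ideal of $\Inc(X)$ is trivial so the quotient is an isomorphism. Your explicit remark that the naturality squares are precisely the defining diagrams of $\Mes(T)$ in Definition \ref{abscat-def}(ix) is a clean way of discharging what the paper dismisses as a routine verification.
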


\begin{proof}
If $X = (\X, \mu_X)$ is a $\ProbAlg$-space then the $\AbsProb$-space $\Inc(X) = (\X, \mu_X)$ has no non-trivial null sets, so $\Mes(\Inc(X))$ can be identified with $X$ again. It is then a routine matter to verify that this yields the natural monomorphism required for (i).

If $X = (\X, \mu_X)$ is an $\AbsProb$-space, then $$\Inc(\Mes(X))= (\X/_{\mathcal{N}}, \mu_X)$$ where ${\mathcal N}$ is the null ideal of $\X$.  The quotient map from $\X$ to $\X/_{\mathcal{N}}$ is a $\SigmaAlg$-morphism which is surjective, hence by Lemma \ref{bool-props}(ii) it induces an $\AbsMes$-monomorphism from $\X/_{\mathcal{N}}$ to $\X$.  This $\AbsMes$-monomorphism is clearly measure-preserving and can thus be promoted to an $\AbsProb$-monomorphism from $\Inc(\Mes(X))$ to $X$ by Lemma \ref{epimorph}.  It is then a routine matter to verify that this yields the natural monomorphism required for (ii).
\end{proof}

The relationship between the $\ConcMes$-categorical product to the $\AbsMes$- categorical product is subtle: they are not completely compatible with respect to abstraction functor $\Abs$, nevertheless there is a lot of partial compatibility in special cases.

\begin{proposition}[Relation between categorical products in $\ConcMes$ and $\AbsMes$]\label{stomp}\
Let $A$ be a set.
\begin{itemize}
    \item [(i)]  The categorical product $\prod_{A}^\ConcMes$ is contained in the categorical product $\prod_{A}^\ConcMes$ (with respect to the abstraction functor $\Abs$).
    \item[(ii)] The categorical product in $\ConcMes$ does not agree with the categorical product in $\AbsMes$ when $A = \{1,2\}$.
    \item[(iii)] If $\Cat = \Polish, \CH$, then the categorical product in $\Cat$ agrees with the categorical product in $\AbsMes$ for arbitrary $A$, with respect to the casting functor $\Cast_{\Cat \to \AbsMes}$ (which is either $\Abs \circ \BorelFunc$ or $\Abs \circ \BaireFunc$).
    \item[(iv)]  If $X$ is a $\ConcMes$-space and $K$ is a $\CH$-space, then $(X \times^\ConcMes K_\ConcMes)_\AbsMes$ is a categorical product of $X_\AbsMes$ and $K_\AbsMes$.
\end{itemize}
\end{proposition}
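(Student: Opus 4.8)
The common mechanism behind all four parts is a single comparison map. Given $\ConcMes$-spaces $(X_\alpha)_{\alpha\in A}$, the projections $p_\beta\colon\prod^{\ConcMes}_\alpha X_\alpha\to X_\beta$ dualize to $\SigmaAlg$-morphisms $p_\beta^{\ast}\colon \Sigma_{X_\beta}\to\bigotimes^{\ConcMes}_\alpha\Sigma_{X_\alpha}$, $E\mapsto\pi_\beta^{-1}(E)$, so the product $\sigma$-algebra carries a cocone under $(\Sigma_{X_\alpha})_\alpha$ in $\SigmaAlg$; by the universal property of the $\SigmaAlg$-coproduct $\bigotimes^{\SigmaAlg}_\alpha\Sigma_{X_\alpha}$ (which exists by Lemma~\ref{bool-props}(iv)) there is a unique $\SigmaAlg$-morphism $q\colon\bigotimes^{\SigmaAlg}_\alpha\Sigma_{X_\alpha}\to\bigotimes^{\ConcMes}_\alpha\Sigma_{X_\alpha}$ intertwining the coproduct injections with the $p_\beta^{\ast}$. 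Since cylinders generate the product $\sigma$-algebra, $q$ is surjective, so by Lemma~\ref{bool-props}(ii) its opposite $q^{\op}$ is an $\AbsMes$-monomorphism $\Abs(\prod^{\ConcMes}_\alpha X_\alpha)\to\prod^{\AbsMes}_\alpha\Abs(X_\alpha)$ compatible with the projections; this is precisely assertion (i). Parts (ii)--(iv) then amount to whether $q$ can fail to be injective, resp.\ is always injective (hence an isomorphism). For (ii) it suffices to exhibit a single pair $X_1,X_2$ of $\ConcMes$-spaces for which $q$ is not injective, equivalently a cocone $g_i\colon\Sigma_{X_i}\to\B$ into a $\SigmaAlg$-algebra $\B$ that does not factor through $\Sigma_{X_1}\otimes^{\ConcMes}\Sigma_{X_2}$. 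I would take $X_1=X_2$ to be a sufficiently wild uncountable measurable space (for instance a set of cardinality larger than $\mathfrak{c}$ with its power set $\sigma$-algebra, or $[0,\omega_1)$ with its Borel $\sigma$-algebra) and verify non-injectivity of $q$ directly; this recovers the known fact that the free $\sigma$-complete product of two sufficiently large $\sigma$-algebras strictly contains the product $\sigma$-algebra (cf.\ \cite{fremlinvol3}). Isolating and checking this counterexample is the most delicate point; the rest is soft.

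For (iii) I would show $q$ is an isomorphism when every $X_\alpha$ is a $\CH$-space by verifying the $\SigmaAlg$-coproduct universal property for $(\Baire(X_\alpha),(p_\alpha^{\ast}))$. So let $g_\alpha\colon\Baire(X_\alpha)\to\B$ be $\SigmaAlg$-morphisms with $\B\in\SigmaAlg$; a mediating $h\colon\Baire(\prod^{\CH}_\alpha X_\alpha)\to\B$ with $h\circ p_\alpha^{\ast}=g_\alpha$ is unique if it exists, because the cylinders generate $\Baire(\prod^{\CH}_\alpha X_\alpha)=\bigotimes^{\ConcMes}_\alpha\Baire(X_\alpha)$ by Proposition~\ref{prod-top}(viii). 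To build $h$, pass to the monotone $\sigma$-complete commutative $C^{\ast}$-algebra $M=\Linfty(\B)$ of bounded measurable functions on $\B$, whose projection lattice is $\B$ (a standard construction; this is also the setting of the classical Loomis--Sikorski representation). Since continuous functions on a $\CH$-space are sup-norm limits of $\Baire$-simple functions, each $g_\alpha$ extends to a unital $\ast$-homomorphism $\CFunc(X_\alpha)\to M$, which by Gelfand duality (Theorem~\ref{gelfand-dualities}) is a $\CH$-morphism $\Spec(M)\to X_\alpha$; the universal property of $\prod^{\CH}$ (Proposition~\ref{prod-top}(i)) assembles these into $\Spec(M)\to\prod^{\CH}_\alpha X_\alpha$, i.e.\ a unital $\ast$-homomorphism $\Phi_0\colon\CFunc(\prod^{\CH}_\alpha X_\alpha)\to M$. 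Because every bounded $\Baire$-measurable function on a $\CH$-space is obtained from $\CFunc$ by iterated bounded pointwise limits of sequences and $M$ is monotone $\sigma$-complete, $\Phi_0$ extends (uniquely) to a $\sigma$-normal $\ast$-homomorphism on bounded $\Baire$-measurable functions; restricting to indicators gives $h$, and $h\circ p_\alpha^{\ast}=g_\alpha$ is checked on a generating family of $\Baire(X_\alpha)$ such as the sublevel sets of continuous real functions. The $\Polish$ case reduces to the $\CMet$ (hence $\CH$) case: every Polish $X_n$ is $\ConcMes$-isomorphic to a compact metrizable space (to $[0,1]$, a convergent sequence, or a finite set), and the categorical products in $\Polish$ and $\CMet$ agree by Proposition~\ref{prod-top}(viii).

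Part (iv) is proved by the same template, handling $K$ operator-algebraically and keeping $X$ abstract. Given $g_1\colon\Sigma_X\to\B$ and $g_2\colon\Baire(K)\to\B$, form $M=\Linfty(\B)$ as above; $g_1$ induces a $\ast$-homomorphism $\Linfty(\Sigma_X)\to M$ and $g_2$ a unital $\ast$-homomorphism $\CFunc(K)\to M$, and their images commute (all algebras are commutative), so they combine into a $\ast$-homomorphism out of $\CFunc(K)\otimes\Linfty(\Sigma_X)\cong\CFunc(K,\Linfty(\Sigma_X))$, the continuous $\Linfty(\Sigma_X)$-valued functions on $K$. One then extends this from the algebra of ``$K$-continuous'' functions to the algebra of bounded $\Sigma_X\otimes^{\ConcMes}\Baire(K)$-measurable functions by the same monotone-class/Baire-rank argument, using monotone $\sigma$-completeness of $M$, and restricts to indicators to obtain $h$; uniqueness is again because cylinders generate. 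I expect the main obstacle, in both (iii) and (iv), to be exactly this extension step --- showing that a $\sigma$-normal homomorphism into the weakly-complete target $M$ extends from the continuous (resp.\ ``$K$-continuous'') generators to all bounded Baire (resp.\ product-measurable) functions --- together with the bookkeeping needed to make $M=\Linfty(\B)$ and the identification of its projection lattice with $\B$ precise. It is worth emphasizing that the $\CH$ hypothesis on $K$ in (iv) is genuinely used here, since it is what supplies the $C^{\ast}$-algebra $\CFunc(K)$ and the Gelfand-duality handle; this is consistent with (ii), which shows the statement fails when $K$ is allowed to be an arbitrary $\ConcMes$-space.
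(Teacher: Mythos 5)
Your reduction of all four parts to the single comparison map $q\colon\bigotimes^{\SigmaAlg}_\alpha\Sigma_{X_\alpha}\to\bigotimes^{\ConcMes}_\alpha\Sigma_{X_\alpha}$ is sound, and part (i) — existence of $q$ from the coproduct universal property, surjectivity because cylinders generate, hence an $\AbsMes$-monomorphism by Lemma \ref{bool-props}(ii) — is a complete proof matching the paper's one-line argument. But parts (ii)--(iv) are exactly the parts the paper outsources to \cite{jt19} (Propositions A.1, 3.3, A.5 and Corollary 3.5 there), and in each case your sketch stops short of the actual content. The gap in (ii) is the most serious: you correctly reformulate the task as exhibiting a cocone $g_1,g_2\colon\Sigma_{X_i}\to\B$ that fails to factor through the product $\sigma$-algebra, but you never exhibit one, and the candidates you name do not obviously supply one. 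Indeed the most natural candidates provably fail: for any pair of $\SigmaAlg$-morphisms $g_1,g_2\colon 2^X\to 2^Z$ into a power set (with $|X|$ below the first measurable cardinal), each $z\in Z$ determines a pair of $\sigma$-complete, hence principal, ultrafilters on $X$, i.e.\ functions $f_1,f_2\colon Z\to X$, and the cocone factors through $2^X\otimes^{\ConcMes}2^X$ via $(f_1,f_2)^{-1}$ — in particular the identity cocone into $2^X$ factors via the diagonal even when $|X|>\mathfrak{c}$. So the witnessing target $\B$ must be a non-atomic quotient algebra (a measure algebra or a meager-quotient algebra), and producing a non-factoring pair there is a genuine construction, not a routine ``direct verification''; the non-measurability of the diagonal, which is what your candidate spaces provide, is not by itself enough.

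For (iii) and (iv) your operator-algebraic route via $M=\Linfty(\B)$ and Gelfand duality is genuinely different from the paper's (which simply cites \cite{jt19}) and is viable, but the step you yourself flag as ``the main obstacle'' — extending $\Phi_0$ $\sigma$-normally from $\CFunc(\prod^\CH_\alpha X_\alpha)$ (resp.\ from the $K$-continuous functions) to all bounded Baire (resp.\ product-measurable) functions — is where essentially all the content lies, and monotone $\sigma$-completeness of $M$ alone does not give it: it lets you define the extension along a chosen approximating sequence but says nothing about independence of the sequence. For (iii) the gap can be closed cleanly by representing $\B$ as $\Baire(S)$ modulo meager sets on its Stone space $S$ (Proposition \ref{prelim-ls}), identifying $M\cong\CFunc(S)$, and pulling Baire sets back along the continuous map $S\to\prod^\CH_\alpha X_\alpha$, where well-definedness is automatic; so your argument for (iii) is essentially correct once that substitution is made. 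For (iv), however, there is no continuous map into the abstract factor $X$ to pull back along, and ``the same monotone-class argument'' does not go through as stated; one needs the additional reduction that every set in $\Sigma_X\otimes^{\ConcMes}\Baire(K)$ already lies in $\Sigma_X\otimes^{\ConcMes}\sigma(f_1,f_2,\dots)$ for countably many continuous functions $f_n$ on $K$, so that the second factor can be replaced by a compact metrizable quotient with countably generated $\sigma$-algebra before the extension is attempted. That reduction is absent from your sketch, and without it the well-definedness of your extension is unproven.
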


\begin{proof} 
The claim (ii) is a corollary of (i), since the $\sigma$-algebra of $\prod^\ConcMes_{\alpha \in A} X_\alpha$ is generated by the projection maps to $X_\alpha$.  For (iii), in \cite[Proposition A.1]{jt19} an example is given of $\AbsMes$-morphisms $y_1 \colon Z \to (Y_1)_\AbsMes$, $y_2 \colon Z \to (Y_2)_\AbsMes$ for some $\AbsMes$-space $Z$ and $\ConcMes$-spaces $Y_1,Y_2$ which do not jointly arise from an $\AbsMes$-morphism $y \colon Z \to (Y_1 \times^{\ConcMes} Y_2)_\AbsMes$.  In contrast, the universal nature of the categorical product in $\AbsMes$ implies that $y_1,y_2$ must jointly arise from an $\AbsMes$-morphism $(y_1,y_2)^{\AbsMes} \colon Z \to Y_1 \times^\AbsMes Y_2$.  The claim (iii) follows.

For Claim (iv), it suffices by \eqref{equiv-prod} to show that
$$ \Hom_\AbsMes\left(Y \to \left(\prod^\Cat_{\alpha \in A} X_\alpha\right)_\AbsMes\right) = \prod_{\alpha \in A} \Hom_\AbsMes\left(Y \to (X_\alpha)_\AbsMes\right)$$
for any $\Cat$-spaces $X_\alpha$ and $\AbsMes$-space $Y$.  For $\Cat=\Polish$ this follows from \cite[Proposition 3.3]{jt19} (and \cite[Remark 1.7]{jt19}; for $\Cat=\CH$ this similarly follows from \cite[Corollary 3.5]{jt19} (extended to arbitrary products as noted in that paper) and \cite[Remark 1.7]{jt19}. The claim (v) similarly follows from \cite[Proposition A.5]{jt19}.
\end{proof}

In Section \ref{loomis-sec} we will use the (functorial form of the) Loomis--Sikorski theorem to give a more explicit description of the categorical product in $\AbsMes$.

Proposition \ref{stomp}(iv) has the following consequence.  Let $\Cat = \Polish,\CH$, and let $K_1,K_2,K_3$ be $\Cat$-spaces.  Then any measurable binary operation $\cdot \colon K_1 \times K_2 \to K_3$ (that is to say, a $\ConcMes$-morphism\footnote{Here we use the casting conventions from Definition \ref{cast}, thus for instance $K_1 \times^\ConcMes K_2$ is shorthand for $(K_1)_\ConcMes \times^\ConcMes (K_2)_\ConcMes$.} from $K_1 \times^\ConcMes K_2$ to $K_3$ induces a ``conditional binary operation''
\begin{equation}\label{cond-op}
\cdot \colon \Hom_\AbsMes(Y \to K_1) \times \Hom_\AbsMes(Y \to K_2) \to \Hom_\AbsMes(Y \to K_3)
\end{equation}
for any $\AbsMes$-space $Y$, since Proposition \ref{stomp}(iv) ensures that the left-hand side is identifiable with $\Hom_\AbsMes(Y \to K_1 \times^\Cat K_2)$, and then one can compose with $(\cdot)_\AbsMes \colon (K_1 \times^\Cat K_2)_\AbsMes \to (K_3)_\AbsMes$ to obtain the desired conditional map.  Thus for instance for any $\AbsMes$-space $Y$ one can give $\Hom_\AbsMes(Y \to \R)$ the structure of a commutative partially ordered unital real algebra, and also $\Hom_\AbsMes(Y \to \C)$ the structure of a commutative unital *-algebra, by constructions of this form (as well as analogues for ternary operations, in order to establish properties such as associativity).  (These observations can also be placed in the more general framework of \emph{conditional analysis}, as developed in \cite{drapeau2016algebra}, particularly if $Y$ arises from a probability algebra; but we will not need this theory in the current paper.)

\section{The canonical model}\label{canonical-sec}

In this section we construct the canonical model via von Neumann and Riesz duality, as per Figure \ref{fig:canon}.  We first need to introduce a category of von Neumann algebras.

  \begin{figure}
    \centering
    \begin{tikzcd}
 \ConcProb \arrow[d, "\Abs"', blue] \arrow[r, "\Linfty"]  & \vonNeumannop \arrow[r, tail, two heads] \arrow[d,   tail, "\Idem", two heads, shift left=0.75ex]
& 
\CStarAlgUnitTraceop \arrow[d, tail,two heads,"\Riesz",shift left=.75ex]  \\
\AbsProb \arrow[r, "\Mes"', blue] \arrow[ur,"\Linfty"]
    & \ProbAlg \arrow[r,"\Stone"',  tail, two heads] \arrow[u,"\Linfty", tail, two heads, shift left=0.75ex] & \CHProb \arrow[u,"\CFunc", two heads,  tail, shift left=.75ex]
\end{tikzcd}
    \caption{Construction of the canonical model functor. The diagram commutes up to natural isomorphisms.}
    \label{fig:canon}
\end{figure}

\begin{definition}[Von Neumann algebra]\label{vonneumann-def}  A \emph{$\vonNeumann$-algebra} $({\mathcal A}, \tau_{\mathcal A})$ is a commutative von Neumann algebra ${\mathcal A}$ equipped with a faithful trace $\tau_{\mathcal A}$, that is to say a $*$-linear functional $\tau_{\mathcal A} \colon {\mathcal A} \to \C$ with $\tau_{\mathcal A}(1)=1$, and $\tau_{\mathcal A}(aa^*) \geq 0$ for any $a \in {\mathcal A}$, with equality if and only if $a=0$.  A \emph{$\vonNeumann$-morphism} $\Phi \colon ({\mathcal A}, \tau_{\mathcal A}) \to ({\mathcal B}, \tau_{\mathcal B})$ between $\vonNeumann$-algebras is a von Neumann algebra homomorphism $\tilde \Phi \colon {\mathcal A} \to {\mathcal B}$ such that $\tau_{\mathcal A} = \tau_{\mathcal B} \circ \tilde \Phi$.
\end{definition}

It is clear that $\vonNeumann$ forms a category.  Every von Neumann algebra is also a unital $C^*$-algebra, and a faithful trace on a commutative von Neumann algebra becomes a state on the associated $C^*$-algebra.  From this it is easy to see that there is a forgetful faithful functor from $\vonNeumann$ to $\CStarAlgUnitTrace$.

The most familiar construction of $\vonNeumann$-algebras comes from $L^\infty$ spaces.  Indeed, if $X = (X_\ConcMes,\mu_X)$ is a $\ConcProb$-space, then the Banach algebra $\Linfty(X)$ of equivalence classes $[f]$ of bounded (concretely) measurable functions $f \colon X \to \C$ up to almost everywhere equivalence, and endowed with the essential supremum norm $\|f\|_{\Linfty(X)}$ and the trace $\tau(f) \coloneqq \int_X f\ d\mu$, is well-known to be a $\vonNeumann$-algebra.  Furthermore, if $T \colon X \to Y$ is a $\ConcProb$-morphism, then the Koopman operator $\Linfty(T) \colon \Linfty(Y) \to \Linfty(X)$ defined by 
$$ L^\infty(T)([f]) \coloneqq [f \circ T]$$
for bounded concretely measurable $f \colon Y \to \C$, is a $\vonNeumann$-morphism.  Thus we see that $\Linfty \colon \ConcProb \to \vonNeumannop$ is in fact a functor.

We can factor this functor through the functor $\Mes \circ \Abs$ from the previous section, by defining an analogous $\Linfty$ functor on the category $\ProbAlg$ of probability algebras.  Indeed, if $X = (\Inc(X)_\SigmaAlg, \mu_X)$ is a $\ProbAlg$-space, we can define $\Linfty(X)$ to be the space of all $\AbsMes$-morphisms $f \colon \Inc(X) \to \C$ (i.e., $f \in \Hom_\AbsMes(\Inc(X) \to \C ) = \Hom_\AbsMes( \Inc(X)_\AbsMes \to \C_\AbsMes )$) which are bounded in the sense that $|f| \leq M$ for some $M \geq 0$ (or equivalently $f_\SigmaAlg( \{ z \in \C: |z| \leq M \} ) = 1$), with $\|f\|_{\Linfty(X)}$ defined to equal the infimum of all such $M$. (Note there is no need to identify functions that agree almost everywhere since the base space is already a probability algebra.) Using conditional operations such as \eqref{cond-op}, one can verify that $\Linfty(X)$ is a $\CStarAlgUnit$-algebra.   Every element $E$ of $\Inc(X)_\SigmaAlg$ generates an idempotent element $1_E$ of $\Linfty(X)$, defined by
setting $1_E^*(F)$ for $F \in \C_\SigmaAlg$ to equal $E$ when $F$ contains $1$ but not $0$, $\overline{E}$ when $F$ contains $0$ but not $1$, $1$ when $F$ contains both $0$ and $1$, and $0$ when $F$ contains neither $0$ and $1$.  We refer to finite linear combinations of idempotents as \emph{simple functions}, it is easy to see that these form a dense subspace of $\Linfty(X)$.  One can then define a state $\tau$ on this algebra by defining
$$ \tau \left( \sum_{n=1}^N c_n 1_{E_n} \right) \coloneqq \sum_{n=1}^N c_n \mu_X(E_n)$$
for any finite sequence of complex numbers $c_n$ and $E_n \in \Inc(X)_\SigmaAlg$, and then extending by density; one can verify that this indeed defines a state (this is essentially an abstraction of the standard construction of the Lebesgue integral that proceeds first by integrating simple functions).  Thus $\Linfty(X)$ can be viewed as an element of $\CStarAlgUnitTrace$.  One can also construct an abstract $L^1(X)$-space (see Remark \ref{rem-lpspaces}) and show that $L^1(X)$ is the predual of $L^\infty(X)$ (see the duality between abstract $L^\infty$ and $L^1$ spaces in \cite[\S 365]{fremlinvol3}). It then follows from  Sakai's characterization of von Neumann algebras \cite{sakai} that $\Linfty(X)$ is indeed a $\vonNeumann$-algebra. 

To emphasize the analogy between $\ConcProb$-spaces and $\ProbAlg$-spaces, we also write
$$ \int_X f \coloneqq \int_X f\ d\mu_X \coloneqq \tau(f)$$
for any $\ProbAlg$-space $X = (\Inc(X)_\SigmaAlg, \mu_X)$ and $f \in \Linfty(X)$.  If $T \colon X \to Y$ is a $\ProbAlg$-morphism, one can define the $\vonNeumann$-morphism $\Linfty(T) \colon \Linfty(Y) \to \Linfty(X)$ by the Koopman operator
$$ \Linfty(T)(f) \coloneqq f \circ \Inc(T)_\AbsMes$$ which 
can be verified to indeed be a $\vonNeumann$-morphism (this is an abstraction of the change of variables formula for the Lebesgue integral).  Some tedious but routine verification (see, e.g., \cite{pavlov2020}) then shows that $\Linfty \colon \ProbAlg \to \vonNeumannop$ is a functor with
$$ \Linfty = \Linfty \circ \Mes \circ \Abs.$$
By abuse of notation we can also write $\Linfty \colon \AbsProb \to \vonNeumannop$ for the composition $\Linfty = \Linfty \circ \Mes$, giving rise to the commutativity of the left half of Figure \ref{fig:canon} (omitting the functor $\Idem$).

Now suppose that $({\mathcal A},\tau_{\mathcal A})$ is a $\vonNeumann$-algebra.  We can form the collection ${\mathcal P}_{\mathcal A}$ of real projections in ${\mathcal A}$, that is to say elements $p \in {\mathcal A}$ such that $p=p^* = p^2$.  As is well-known, these projections have the structure of a $\SigmaAlg$-algebra, with $p \wedge q = pq$, $\overline{p} = 1-p$, $p \vee q = 1 - (1-p)(1-q)$, and with $\bigvee_{n \in \N} p_n = \sum_{n \in \N} p_n$ (in the $L^2$ topology) if the $p_n$ are disjoint.  The trace $\tau_{\mathcal A}$ then becomes a countably additive probability measure on ${\mathcal P}_{\mathcal A}$, and we write $\Idem({\mathcal A},\tau_{\mathcal A})$ for the probability algebra
$$\Idem({\mathcal A},\tau_{\mathcal A}) \coloneqq ( {\mathcal P}_{\mathcal A}, \tau_{\mathcal A}).$$
If $\Phi \colon ({\mathcal A},\tau_{\mathcal A}) \to ({\mathcal B},\tau_{\mathcal B})$ is a $\vonNeumann$-morphism, we observe that the associated von Neumann homomorphism $\tilde \Phi \colon {\mathcal A} \to {\mathcal B}$ maps projections in ${\mathcal P}_{\mathcal A}$ to projections in ${\mathcal P}_{\mathcal B}$, in a manner that preserves the trace as well as being a $\SigmaAlg$-morphism.  We then define $\Idem(\Phi) \colon \Idem({\mathcal B},\tau_{\mathcal B}) \to \Idem({\mathcal A},\tau_{\mathcal A})$ to be the $\ProbAlg$-morphism associated with this $\SigmaAlg$-morphism.  It is then a routine matter to verify that $\Idem$ is a functor from $\vonNeumannop$ to $\ProbAlg$. 

We claim that $\Idem$ and $\Linfty$ form a duality of categories between $\vonNeumann$ and $\ProbAlg$.  First suppose that $X = (\Inc(X)_\SigmaAlg, \mu_X)$ is a $\ProbAlg$-space.  For every $E \in \Inc(X)_\SigmaAlg$, it is easy to see that the indicator function $1_E$ is a projection in $\Linfty(X)$.  Conversely we claim that all projections in $\Linfty(X)$ are of this form.  If $p \in \Linfty(X)$, then since $p-p^2=0$, $p$ is an $\AbsMes$-morphism from $X$ to $\C$ that becomes the zero morphism after concatenation with the map $z \mapsto z-z^2$, viewed as an $\AbsMes$-endomorphism on $\C$.  Pulling back, we conclude that $p_\SigmaAlg(\{0,1\})=1$, and hence $p = 1_E$ where $E \coloneqq p_\SigmaAlg(\{1\})$.  Using this correspondence $E \mapsto 1_E$ it is a routine matter to see that $X$ is $\ProbAlg$-isomorphic to $\Idem(\Linfty(X))$, and further routine verification shows that this isomorphism is natural.  Now let $({\mathcal A}, \tau_{\mathcal A})$ be a $\vonNeumann$-algebra.  By definition, we see that the von Neumann algebra $\Linfty(\Idem({\mathcal A}, \tau_{\mathcal A}))$ is the closure (in $\Linfty$) of formal linear combinations of projections, which one can arrange to be pairwise disjoint.  One can observe (by repeated use of the identity\footnote{The inequality $\max( \| ap \|_{\mathcal A}, \|a(1-p)\|_{\mathcal A} )\leq \|a\|_{\mathcal A}$ is immediate from $\|ab\|\leq \|a\|\|b\|$ for all $a,b\in \mathcal{A}$ and $\|p\|=1$ for all projections $p$. On the other hand, for any projection $p$ and integer $n\geq 1$, we have $\|a^n\|^{1/n}=\|(ap)^n + (a(1-p))^n\|^{1/n}\leq (\|ap\|^n + \|a(1-p)\|^n)^{1/n}$. The converse inequality now follows from applying Gelfand's spectral radius formula $\|b\|=\lim \|b^n\|^{1/n}$ (note that every $b\in \mathcal{A}$ is normal in a commutative von Neumann algebra) to both sides of the previous inequality.}  $\|a\|_{\mathcal A} = \max( \| ap \|_{\mathcal A}, \|a(1-p)\|_{\mathcal A} )$ in a commutative von Neumann algebra for arbitrary $a \in {\mathcal A}$ and projections $p$) 
that the corresponding actual linear combination of these projections in ${\mathcal A}$ has the same norm in ${\mathcal A}$ as the $\Linfty$ norm of the formal linear combination; the two expressions also have the same trace.  Also, from the spectral theorem one can show that any element in ${\mathcal A}$ can be approximated in norm to arbitrary accuracy by finite linear combinations of projections.  From these facts one can show that $\Linfty(\Idem({\mathcal A}, \tau_{\mathcal A}))$ is $\vonNeumann$-isomorphic to $\Idem({\mathcal A}, \tau_{\mathcal A})$, and further routine verification shows that this isomorphism is natural.  This gives the required duality of categories.  In particular $\Idem, \Linfty$ are full and faithful. We refer the interested to the independent work of Pavlov \cite{pavlov2020}, where (see \cite[\S 3.5]{pavlov2020}) this duality (and in fact more general dualities for commutative not necessarily tracial von Neumann algebras) are discussed in depth.  

If we now define
$$ \Stone \coloneqq \Spec \circ \Forget_{\vonNeumannop \to \CStarAlgUnitop} \circ \Linfty$$
then by construction $\Stone$ is a   functor from $\ProbAlg$ to $\CHProb$.  All the three functors used to create $\Stone$ are faithful and full functors, so $\Stone$ is a faithful and full functor as well.  It is now a routine matter to establish

\begin{theorem}[Construction of canonical model]\label{canon-thm}  The categories in Figures \ref{fig:canon} are indeed categories, and the functors in these figures are indeed functors between the indicated categories, with the indicated faithfulness and fullness properties.  Furthermore, the diagram commutes up to natural isomorphisms.
\end{theorem}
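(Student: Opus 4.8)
The plan is to treat this as an assembly of ingredients already prepared in the discussion preceding the statement, isolating the one genuinely new verification. First I would note that, apart from $\vonNeumann$, every category in Figure \ref{fig:canon} has already been shown to be a category: $\ConcProb$, $\AbsProb$, $\ProbAlg$ in Definition \ref{abscat-def}, $\CStarAlgUnitTrace$ in Definition \ref{tracialcstar-def}, and $\CHProb$ in Definition \ref{top-prob-cat}; while $\vonNeumann$ is a category by the observation following Definition \ref{vonneumann-def}. Likewise the functors $\Abs$, $\Mes$, the forgetful functor $\vonNeumannop\to\CStarAlgUnitTraceop$, and $\Riesz$, $\CFunc$ (the last two full and faithful by Theorem \ref{riesz-dualities}) have already been constructed, and in the paragraphs immediately above the theorem the functors $\Linfty\colon\ConcProb\to\vonNeumannop$, $\Linfty\colon\AbsProb\to\vonNeumannop$, $\Linfty\colon\ProbAlg\to\vonNeumannop$, $\Idem\colon\vonNeumannop\to\ProbAlg$ were built, together with the natural isomorphisms exhibiting $\Idem$ and $\Linfty$ as a duality of categories between $\vonNeumann$ and $\ProbAlg$, so that both are full and faithful. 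Consequently $\Stone:=\Riesz\circ\Forget_{\vonNeumannop\to\CStarAlgUnitTraceop}\circ\Linfty$ is a functor $\ProbAlg\to\CHProb$, and it is faithful as a composite of faithful functors. So the only assertion about the functors that is not immediate is the fullness of $\Stone$ (equivalently, by the figure, of $\Forget_{\vonNeumannop\to\CStarAlgUnitTraceop}$).

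The key point, and the one step I expect to be the main obstacle, is therefore to show that \emph{the forgetful functor $\vonNeumannop\to\CStarAlgUnitTraceop$ is full}: every unital $*$-homomorphism $\Phi\colon\mathcal A\to\mathcal B$ between commutative tracial von Neumann algebras with $\tau_{\mathcal B}\circ\Phi=\tau_{\mathcal A}$ is automatically a von Neumann algebra homomorphism, i.e.\ normal. I would prove this by checking that $\Phi$ preserves suprema of norm-bounded increasing nets of positive elements. If $a_\alpha\uparrow a$ in $\mathcal A$, then $(\Phi(a_\alpha))_\alpha$ is increasing and bounded above by $\Phi(a)$, hence has a supremum $b\le\Phi(a)$ in $\mathcal B$; using normality of the traces one gets $\tau_{\mathcal B}(b)=\lim_\alpha\tau_{\mathcal B}(\Phi(a_\alpha))=\lim_\alpha\tau_{\mathcal A}(a_\alpha)=\tau_{\mathcal A}(a)=\tau_{\mathcal B}(\Phi(a))$, so $\tau_{\mathcal B}(\Phi(a)-b)=0$, and faithfulness of $\tau_{\mathcal B}$ forces $b=\Phi(a)$. (An alternative route is the GNS picture: trace-intertwining lets $\Phi$ extend to an isometry $L^2(\mathcal A,\tau_{\mathcal A})\to L^2(\mathcal B,\tau_{\mathcal B})$ implementing $\Phi$, from which $\sigma$-weak continuity is read off directly.) Granting this, $\Forget_{\vonNeumannop\to\CStarAlgUnitTraceop}$ is full and faithful, hence so is the composite $\Stone$, and the faithfulness and fullness labels in Figure \ref{fig:canon} are all accounted for.

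It then remains to verify commutativity up to natural isomorphism, which is pure bookkeeping. For the left-hand square I would invoke the identities $\Linfty=\Linfty\circ\Mes$ on $\AbsProb$ and $\Linfty=\Linfty\circ\Mes\circ\Abs$ on $\ConcProb$ recorded when the $L^\infty$ functors were defined; for the central pair of opposed arrows, the duality natural isomorphisms $\Idem\circ\Linfty\cong\ident_{\ProbAlg}$ and $\Linfty\circ\Idem\cong\ident_{\vonNeumannop}$; for the right-hand square, the Riesz-duality natural isomorphisms $\Riesz\circ\CFunc\cong\ident_{\CHProb}$ and $\CFunc\circ\Riesz\cong\ident_{\CStarAlgUnitTraceop}$ from Theorem \ref{riesz-dualities}; and for the remaining triangle through $\Stone$, the definition of $\Stone$ together with the preceding isomorphisms, which in particular yields the natural isomorphism $\CFunc(\Stone(X))\cong\Forget_{\vonNeumannop\to\CStarAlgUnitTraceop}(\Linfty(X))$ that expresses that $\Stone(X)$ models $X$. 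A routine check (the usual coherence reconciliation, which we suppress here as throughout the paper under the casting conventions of Definition \ref{cast}) shows these natural isomorphisms are mutually compatible, giving commutativity up to natural isomorphism. Everything except the normality statement of the second paragraph is a direct consequence of results already established.
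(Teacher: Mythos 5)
Your proof is correct and follows the same overall architecture as the paper: Theorem \ref{canon-thm} is obtained by assembling the constructions carried out in the paragraphs preceding it (the $\Linfty$ functors, the $\Idem$/$\Linfty$ duality between $\vonNeumannop$ and $\ProbAlg$, and Riesz duality), after which the commutativity statements reduce to the identities $\Linfty=\Linfty\circ\Mes$, $\Linfty=\Linfty\circ\Mes\circ\Abs$ and the duality isomorphisms, exactly as you describe. Where you genuinely add value is in isolating the fullness of the forgetful functor $\vonNeumannop\to\CStarAlgUnitTraceop$: the paper marks this arrow as full and faithful in Figure \ref{fig:canon} and relies on it in the sentence ``All the three functors used to create $\Stone$ are faithful and full,'' but only ever justifies faithfulness. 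Your observation that fullness amounts to the automatic normality of trace-preserving unital $*$-homomorphisms between tracial commutative von Neumann algebras, and your proof via preservation of suprema of bounded increasing nets (with faithfulness of $\tau_{\mathcal B}$ closing the gap $b\le\Phi(a)$), is the standard and correct argument; the GNS/$L^2$-isometry alternative you sketch works equally well. This is a real gap-filling step rather than a different route.

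One small caution: your argument invokes normality of the traces $\tau_{\mathcal A},\tau_{\mathcal B}$, whereas Definition \ref{vonneumann-def} as written only demands faithfulness. Normality is clearly intended (and is in fact needed elsewhere, e.g.\ for $\tau_{\mathcal A}$ to restrict to a countably additive measure on the projection lattice so that $\Idem$ lands in $\ProbAlg$, and it does hold for the traces on $\Linfty(X)$ arising from probability algebras), so your use of it is consistent with the construction; but it would be worth flagging that you are reading ``trace'' as ``faithful normal trace,'' since without normality both the fullness claim and the duality with $\ProbAlg$ would fail.
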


Now we establish some basic properties of the canonical model.  Lusin's theorem asserts that $\CFunc(X)$ (after identifying functions that agree almost everywhere) becomes a dense subspace of $\Linfty(X)$ in the $L^2$ topology.  We now consider the following stronger property:

\begin{definition}[Strong Lusin property]  A $\CHProb$-space $X$ has the \emph{strong Lusin property} if every equivalence class $[f]$ in $\Linfty(X) = \Linfty(X_\ConcProb)$ contains precisely one element of $\CFunc(X)$, thus one has an identification $\Linfty(X)
\equiv \CFunc(X)$.
\end{definition}

Most $\CHProb$-spaces will not have this property, but remarkably the canonical models do:

\begin{proposition}[Basic properties of canonical model]\label{model-basic}\ 
\begin{itemize}
\item[(i)] ($\Stone$ is a model) The functor $\Cast_{\CHProb \to \ProbAlg} \circ \Stone \colon \ProbAlg \to \ProbAlg$ is naturally isomorphic to the identity.  In particular, $\Stone$ is full and faithful. 
\item[(ii)]  (Strong Lusin property) For any $\ProbAlg$-space $X$, $\Stone(X)$ has the strong Lusin property.
\end{itemize}
\end{proposition}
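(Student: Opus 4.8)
The plan is to deduce (i) from the duality of categories between $\vonNeumann$ and $\ProbAlg$ witnessed by $\Idem$ and $\Linfty$ (together with the natural isomorphism $\Idem \circ \Linfty \cong \ident_{\ProbAlg}$ already established in the construction of the canonical model), and then to bootstrap (ii) from (i). Recall that by construction $\Stone(X) = \Riesz(\Linfty(X))$, where $\Linfty(X)$ is regarded, via the forgetful functor, as a $\CStarAlgUnitTrace$-algebra with trace $\tau$. Write $K \coloneqq \Stone(X)_\CH = \Spec(\Linfty(X))$ and use Gelfand duality to identify $\Linfty(X) \cong \CFunc(K)$; then $\mu \coloneqq \mu_{\Stone(X)}$ is the unique Radon probability measure on $K$ representing $\tau$, and $\Stone(X)_\ProbAlg = (\Baire(K)/_{{\mathcal N}_\mu}, \mu)$ with ${\mathcal N}_\mu$ the $\mu$-null ideal. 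Every projection $p$ of $\Linfty(X)$, being an idempotent of $\CFunc(K)$, is the indicator $1_{U_p}$ of a unique clopen set $U_p \subseteq K$, and $p \mapsto U_p$ is a Boolean isomorphism from ${\mathcal P}_{\Linfty(X)}$ onto $\Clop(K)$; the first step is to promote this to a $\ProbAlg$-isomorphism $\Idem(\Linfty(X)) \cong \Stone(X)_\ProbAlg$.

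To do so I would verify that the composite map $\Clop(K) \to \Baire(K)/_{{\mathcal N}_\mu}$ is: (a) measure-preserving, since $\mu(U_p) = \int_K 1_{U_p}\, d\mu = \tau(p)$; (b) injective, since $\mu(U_p) = 0$ forces $\tau(p) = 0$ and hence $p = 0$ by faithfulness of the trace --- moreover the projections of the von Neumann algebra $\Linfty(X)$ span a norm-dense subalgebra, so clopen sets separate points of $K$, the compact Hausdorff space $K$ is zero-dimensional, and $\mu$ has full support; (c) a $\SigmaAlg$-morphism, since for pairwise disjoint projections $p_n$ one has $\bigcup_n U_{p_n} \subseteq U_{\bigvee_n p_n}$ with both sides of equal measure (namely $\sum_n \tau(p_n)$), hence equal modulo ${\mathcal N}_\mu$; and (d) surjective, since by the spectral theorem every element of $\Linfty(X) = \CFunc(K)$ is a uniform limit of genuine $\Clop(K)$-simple functions, so that $\Baire(K) = \sigma(\Clop(K))$, and therefore the image of $\Clop(K)$, being a sub-$\SigmaAlg$-algebra of $\Baire(K)/_{{\mathcal N}_\mu}$ that generates it, is the whole of it. This exhibits a measure-preserving $\SigmaAlg$-isomorphism, i.e.\ a $\ProbAlg$-isomorphism $\Stone(X)_\ProbAlg \cong \Idem(\Linfty(X))$; chasing the definitions (and the naturality of Gelfand and Riesz duality) shows it is natural in $X$, and composing with $\Idem \circ \Linfty \cong \ident_{\ProbAlg}$ proves (i).

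For (ii), it suffices to show that the canonical unital $*$-homomorphism $\CFunc(\Stone(X)) \to \Linfty(\Stone(X)_\ConcProb)$ sending $f$ to its $\mu$-almost-everywhere equivalence class $[f]$ is a bijection. Injectivity is immediate from the full support of $\mu$: a continuous function that vanishes $\mu$-almost everywhere vanishes on a dense subset of $K$, hence identically. For surjectivity, the relation $\Linfty = \Linfty \circ \Mes \circ \Abs$ together with (i) identifies $\Linfty(\Stone(X)_\ConcProb)$ with $\Linfty(\Stone(X)_\ProbAlg) \cong \Linfty(X)$, and under this identification the idempotent generators $1_E$, with $E$ ranging over the probability algebra $\Stone(X)_\ProbAlg \cong \Clop(K)$, correspond to the classes of the continuous indicators $1_{U_E}$. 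Since finite linear combinations of such generators are norm-dense in $\Linfty(\Stone(X)_\ConcProb)$ and $\|h\|_{L^\infty(\mu)}$ equals the uniform norm $\|h\|_{\CFunc(K)}$ for continuous $h$ (again by full support of $\mu$), every class in $\Linfty(\Stone(X)_\ConcProb)$ is a uniform limit of continuous functions, hence equals $[g]$ for some $g \in \CFunc(\Stone(X))$, which is moreover unique by the injectivity just shown. This gives (ii).

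The main obstacle is step (d) of (i): pinning down the Baire $\sigma$-algebra of $\Spec(\Linfty(X))$, modulo the $\mu$-null ideal, as the clopen algebra of $K$. This rests on the spectral and norm-approximation theory for commutative von Neumann algebras that was already invoked in constructing $\Idem$, and on keeping the ``modulo ${\mathcal N}_\mu$'' bookkeeping straight --- which turns out to be cleaner than one might expect, because norm (rather than merely almost-everywhere) approximation is available. The remaining work --- the naturality checks in (i) and the various identifications of $\Linfty$-algebras in (ii) --- is routine once these objects are in place.
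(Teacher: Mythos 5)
Your proof is correct, but it runs in the opposite direction to the paper's and uses different machinery. The paper proves (ii) \emph{first}: it observes that $\CFunc(\Stone(X))\equiv\Linfty(X)$ by construction, that the GNS $L^2$ metric on $\Linfty(X)$ matches the $L^2(\mu_{\Stone(X)})$ metric, and then combines Lusin's theorem (the unit ball of $\CFunc(\Stone(X))$ is $L^2$-dense in that of $\Linfty(\Stone(X))$) with the fact that the closed unit ball of a tracial von Neumann algebra is $L^2$-closed to conclude $\CFunc(\Stone(X))=\Linfty(\Stone(X))$; part (i) then drops out by applying $\Idem$ and the probability duality. You instead prove (i) first by an explicit Stone-duality computation -- identifying $\Idem(\Linfty(X))$ with $\Clop(K)$ and checking that $\Clop(K)\to\Baire(K)/_{{\mathcal N}_\mu}$ is a measure-preserving $\SigmaAlg$-isomorphism -- and then deduce (ii) from norm-density of simple functions together with the isometry $\CFunc(K)\hookrightarrow L^\infty(\mu)$ coming from full support. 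Your route is essentially the one the paper takes later in Section \ref{loomis-sec} (Theorem \ref{alt-canon-thm} and Proposition \ref{prelim-ls}), transplanted to the Riesz-duality construction; it is longer and requires the careful ``modulo null sets'' bookkeeping in steps (b)--(d), but it buys a concrete description of the Baire algebra of $\Spec(\Linfty(X))$ and makes the Stone-space structure of the model visible immediately, whereas the paper's argument is shorter but leans on the operator-algebraic fact about $L^2$-closedness of the unit ball. All the ingredients you invoke (density of the span of projections, normality of the trace on projections, density of simple functions in the abstract $\Linfty$, the duality $\Idem\circ\Linfty\cong\ident_{\ProbAlg}$) are available at this point in the paper, so there is no circularity.
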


\begin{proof}  Let $X$ be a $\ProbAlg$-space.  Then from Figure \ref{fig:canon} we have a natural $\CStarAlgUnitTrace$-isomorphism
$$ \Forget_{\vonNeumannop \to \CStarAlgUnitTraceop}( \Linfty(X) ) \equiv \CFunc( \Stone(X) ).$$
As $\Linfty(X)$ is a tracial von Neumann algebra, it comes with an $L^2$ metric by the Gelfand--Naimark--Segal construction, which by the above isomorphism agrees with the $L^2$ metric on $\Stone(X)$.  By Lusin's theorem, the closure of the closed unit ball of $\CFunc(\Stone(X))$ in the $L^2(\Stone(X))$ topology is the closed unit ball of $\Linfty(\Stone(X))$ (here we apply a forgetful functor to view $\Stone(X)$ as a $\ConcProb$-space).  Also, in the tracial von Neumann algebra $\Linfty(\Stone(X))$, the closed unit ball is also closed in $L^2$.  We conclude that
$$ \CFunc(\Stone(X)) = \Linfty(\Stone(X))$$
which is the strong Lusin property.  This implies the natural $\vonNeumann$-isomorphism
$$ \Linfty(X) \equiv \Linfty( \Stone(X) )$$
which on applying $\Idem$ gives (i).
\end{proof}

Note how Theorem \ref{uncountable-model} is immediate from Theorem \ref{canon-thm} and Proposition \ref{model-basic}(i).

\begin{remark} A measure space with the strong Lusin property is referred to as a ``perfect measure space'' in \cite{segal}, in which a version of the canonical model functor construction just presented is given.  An alternate proof of Proposition \ref{model-basic}(ii) using Banach lattice arguments is given in \cite[Proposition 12.25]{EFHN}.
\end{remark}

Analogously to how the Stone--{\v C}ech compactification $\beta X$ can be viewed as a universal compactification of an $\LCH$-space $X$, one can view $\Stone(X)$ as an ``universal concrete model'' of a $\ProbAlg$-space $X$.  To formalize this claim, we define a \emph{concrete model} for a $\ProbAlg$-space $X$ to be  $(\tilde X, \iota)$, where $\tilde X$ is a $\CHProb$-space and the $\AbsProb$-morphism $\iota \colon \Inc(X) \to \tilde{X}_\AbsProb$ is an $\AbsMes$-monomorphism.  We let $\Model(X)$ be the category of all such models, with a $\Model(X)$-morphism $T \colon (\tilde X, \iota) \to (\tilde X', \iota')$ to be a $\CHProb$-morphism $T_\CHProb \colon \tilde X \to \tilde X'$ such that $\iota' =_\AbsProb T\circ \iota$ (i.e., $\iota' = \iota \circ T_\AbsProb$). (Note that this gives $\Model(X)$ the structure of a partially ordered set.)
 
 \begin{figure} 
    \centering
    \begin{tikzcd}
    \Stone(X) \arrow[r,"\Stone(\pi)"] & \Stone(Y) \\
    \Inc(X) \arrow[u, "\iota'", tail] \arrow[r, "\Inc(\pi)"] & \Inc(Y) \arrow[u, "\iota'", tail]
    \end{tikzcd}
    \caption{Every $\ProbAlg$-morphism $\pi: X \to Y$ gives rise to an $\AbsProb$-morphism $\Inc(\pi) \colon \Inc(X) \to \Inc(Y)$ and a $\CHProb$-morphism $\Stone(\pi) \colon \Stone(X) \to \Stone(Y)$, linked by the above commutative diagram in $\AbsProb$, with $\iota$ the canonical inclusions. Casting functors have been suppressed to reduce clutter.}
    \label{fig:canon-diag}
\end{figure} 

 \begin{proposition}[Universality of the canonical model]\label{canon-model}  Let $X$ be a $\ProbAlg$-space.
 \begin{itemize}
 \item[(i)]  If $(\tilde X, \iota)$ is a concrete model of $X$, then $\tilde X_\ProbAlg$ is $\ProbAlg$-isomorphic to $X$, and $\iota$ is the composition of $\Inc$ applied to that isomorphism, with the natural monomorphism of $\Inc(\tilde X)$ to $X_\AbsProb$.  Conversely, if $\tilde X$ is $\ProbAlg$-isomorphic to $X$, then the pair $(\tilde X, \iota)$ is a concrete model of $X$, where $\iota$ is defined as above.
 \item[(ii)]  $(\Stone(X), \iota_{\Stone(X)})$ is a concrete model of $X$, where $\iota_{\Stone(X)} \colon \Inc(X) \to \Stone(X)_\AbsProb$ is the canonical inclusion formed by applying first $\Inc$ to the natural isomorphism from $X$ to $\Stone(X)_\ProbAlg$ from Proposition \ref{model-basic} and then composing it with the natural monomorphism from $\Inc(\Stone(X)_\ProbAlg)$ to $\Stone(X)_\AbsProb$ from Lemma \ref{ppa}(ii).  (The naturality of this model is then depicted in Figure \ref{fig:canon-diag}.)
 \item[(iii)]  A concrete model $(\tilde X, \iota)$ is an initial object (as defined in Definition \ref{special-morph}) of $\mathbf{Model}(X)$ if and only if $\tilde X$ has the strong Lusin property.  In particular, by Proposition \ref{model-basic}(ii), the concrete model in (ii) is initial in $\Model(X)$.
 \end{itemize}
 \end{proposition}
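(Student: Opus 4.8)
The plan is to treat the three parts in order, letting part~(i) carry essentially all of the categorical bookkeeping, so that the only genuinely analytic input appears in the nontrivial direction of part~(iii).

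For part~(i) I would dualize everything to $\SigmaAlg$. An $\AbsMes$-monomorphism is, by Lemma~\ref{bool-props}(ii), precisely a morphism whose $\SigmaAlg$-dual is surjective, so $\iota$ corresponds to a surjective $\SigmaAlg$-morphism $\iota_\SigmaAlg\colon \Sigma_{\tilde X}\to \Inc(X)_\SigmaAlg$, and the $\AbsProb$-morphism condition reads $\mu_{\tilde X}=\mu_X\circ\iota_\SigmaAlg$. Since $\Inc(X)$ is a probability algebra its measure is faithful, hence $\iota_\SigmaAlg(E)=0$ iff $\mu_{\tilde X}(E)=0$; thus the kernel of the surjection $\iota_\SigmaAlg$ is exactly the null ideal ${\mathcal N}_{\tilde X}$, and $\iota_\SigmaAlg$ descends to a measure-preserving $\SigmaAlg$-isomorphism $\Sigma_{\tilde X}/{\mathcal N}_{\tilde X}=(\tilde X_\ProbAlg)_\SigmaAlg\xrightarrow{\ \sim\ }\Inc(X)_\SigmaAlg$, i.e.\ a $\ProbAlg$-isomorphism $\alpha\colon X\to\tilde X_\ProbAlg$. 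Because the quotient map $\Sigma_{\tilde X}\to(\tilde X_\ProbAlg)_\SigmaAlg$ is, by the proof of Lemma~\ref{ppa}(ii), the $\SigmaAlg$-dual of the natural monomorphism $\Inc\circ\Mes\Rightarrow\ident_{\AbsProb}$, reading the factorization $\iota_\SigmaAlg=(\mathrm{iso})\circ(\mathrm{quotient})$ backwards in $\AbsMes$ yields exactly $\iota=\iota_{\mathrm{can}}\circ\Inc(\alpha)$, as asserted. The converse is the same computation run in reverse: given a $\ProbAlg$-isomorphism $X\cong\tilde X_\ProbAlg$, composing $\Inc$ of it with the Lemma~\ref{ppa}(ii) monomorphism produces an $\AbsMes$-monomorphism, so $(\tilde X,\iota)$ is a concrete model. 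Part~(ii) is then immediate: Proposition~\ref{model-basic}(i) supplies a natural $\ProbAlg$-isomorphism $X\cong\Stone(X)_\ProbAlg$, so the converse half of part~(i) produces precisely the model $(\Stone(X),\iota_{\Stone(X)})$ described in the statement.

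For part~(iii) the substantive direction is ``$\Leftarrow$''. Assuming $\tilde X$ has the strong Lusin property and given an arbitrary concrete model $(\tilde X',\iota')$, part~(i) furnishes canonical $\ProbAlg$-isomorphisms $X\cong\tilde X_\ProbAlg$ and $X\cong\tilde X'_\ProbAlg$; let $\psi\colon\tilde X_\ProbAlg\to\tilde X'_\ProbAlg$ be their composite. Applying $\Linfty$ (and using $\Linfty=\Linfty\circ\Mes\circ\Abs$ to identify $\Linfty(\tilde X_\ProbAlg),\Linfty(\tilde X'_\ProbAlg)$ with the von Neumann algebras of the $\ConcProb$-spaces $\tilde X,\tilde X'$) gives a $\vonNeumann$-morphism $\Linfty(\tilde X')\to\Linfty(\tilde X)$; restricting it along $\CFunc(\tilde X')\hookrightarrow\Linfty(\tilde X')$ and invoking the strong Lusin identification $\Linfty(\tilde X)\equiv\CFunc(\tilde X)$ produces a trace-preserving unital $*$-homomorphism $\CFunc(\tilde X')\to\CFunc(\tilde X)$, i.e.\ a $\CStarAlgUnitTrace$-morphism. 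Riesz duality (Theorem~\ref{riesz-dualities}) turns this into a $\CHProb$-morphism $T\colon\tilde X\to\tilde X'$, which by construction satisfies $T_\ProbAlg=\psi$; combining this with the naturality of $\iota_{\mathrm{can}}$ (Lemma~\ref{ppa}(ii)) and the factorizations of $\iota,\iota'$ from part~(i) yields $\iota'=T_{\AbsProb}\circ\iota$, so $T$ is a $\Model(X)$-morphism and $(\tilde X,\iota)$ is initial. For ``$\Rightarrow$'': by Proposition~\ref{model-basic}(ii) the model $(\Stone(X),\iota_{\Stone(X)})$ of part~(ii) has the strong Lusin property, hence is initial by the direction just proved; since any two initial objects of $\Model(X)$ are joined by mutually inverse $\Model(X)$-morphisms — whose underlying $\CHProb$-morphisms are therefore mutually inverse — an initial $(\tilde X,\iota)$ yields a $\CHProb$-isomorphism $\tilde X\cong\Stone(X)$, and the strong Lusin property, being exactly the statement that $\CFunc(-)\hookrightarrow\Linfty(-)$ is surjective, transfers across $\CHProb$-isomorphisms. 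The ``in particular'' clause is the ``$\Leftarrow$'' direction applied to $\Stone(X)$.

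I expect the main obstacle to be the ``$\Leftarrow$'' direction of part~(iii), and within it the two verifications that (a) the map built from $\Linfty(\psi)$ genuinely takes values in $\CFunc(\tilde X)$ rather than merely in $\Linfty(\tilde X)$ — this is where the strong Lusin hypothesis is indispensable, since without it the abstract pullback of a continuous function on $\tilde X'$ need not agree almost everywhere with any continuous function on $\tilde X$ — and (b) that the $T$ obtained really does satisfy $\iota'=T_{\AbsProb}\circ\iota$. Everything else is a matter of unwinding the Gelfand--Riesz dualities, the $\Linfty$--$\Idem$ duality, and the casting conventions.
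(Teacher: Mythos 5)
Your treatment of parts (i) and (ii) is essentially the paper's own argument: the paper likewise deduces from Lemma \ref{bool-props} that the $\SigmaAlg$-dual of $\iota$ descends to a $\SigmaAlg$-isomorphism after quotienting by the null ideal (your version, which identifies the kernel of $\iota_\SigmaAlg$ with ${\mathcal N}_{\tilde X}$ via faithfulness of $\mu_X$, is just a more hands-on phrasing of the same step), and part (ii) is read off from Proposition \ref{model-basic}(i). In part (iii) your construction of $T$ is also the paper's: the map $\CFunc(\tilde X') \to \Linfty(\tilde X') \equiv \Linfty(X) \equiv \CFunc(\tilde X)$ followed by $\Spec$/Riesz duality. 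Your item (b) in the closing paragraph is indeed a real verification: to get $T_\ProbAlg = \psi$ (equivalently, that $1_E \circ T$ and $1_E$ agree in $\Linfty$ for all measurable $E$, not merely that $f\circ T = f$ a.e.\ for continuous $f$) one needs the $L^2$-density of $\CFunc(\tilde X')$ in $\Linfty(\tilde X')$ from Lusin's theorem; "by construction" is an overstatement, but you flagged it.

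The genuine gap is in the ``$\Leftarrow$'' direction of (iii): you conclude that $(\tilde X,\iota)$ is initial after producing \emph{one} $\Model(X)$-morphism to each $(\tilde X',\iota')$, but initiality in the sense of Definition \ref{special-morph}(vi) requires that this morphism be \emph{unique}, and your proposal never addresses uniqueness. This cannot be dismissed as automatic: two distinct $\CHProb$-morphisms $T,T'\colon \tilde X \to \tilde X'$ can induce the same $\ProbAlg$-morphism (they may differ on a null set), so in general there can be more than one $\Model(X)$-morphism between two given models, and the uniqueness is precisely a second place where the strong Lusin hypothesis on $\tilde X$ is used. The paper's argument: if $\iota' =_{\AbsProb} T\circ\iota = T'\circ\iota$, then since $\Mes(\iota)$ is an isomorphism one gets $T_\ProbAlg = T'_\ProbAlg$, so for each $f\in\CFunc(\tilde X')$ the continuous functions $f\circ T$ and $f\circ T'$ agree in $\Linfty(\tilde X)$ and hence, by the strong Lusin property of $\tilde X$, are equal as elements of $\CFunc(\tilde X)$; since $\CFunc(\tilde X')$ separates points of $\tilde X'$, this forces $T=T'$. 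You should add this (or an equivalent) argument; the remainder of your ``$\Rightarrow$'' direction, which transports the strong Lusin property across the $\CHProb$-isomorphism between two initial objects, is fine and matches the paper.
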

 
 \begin{proof}  We begin with (i).  If $(\tilde X, \iota)$ is a concrete model, then by duality the morphism
 $$\Forget_{\AbsProb \to \AbsMes}(\iota) \equiv \Forget_{\ProbAlg \to \SigmaAlgop} \circ 
 \Mes(\iota)$$
is a $\SigmaAlg$-epimorphism; from Lemma \ref{bool-props}(iii) it is also a $\SigmaAlg$-monomorphism, hence a $\SigmaAlg$-isomorphism by Lemma \ref{bool-props}(i).  Thus $\iota$ is now invertible in $\AbsMes$ and also measure-preserving (i.e., an $\AbsProb$-morphism), hence it is also invertible in $\AbsProb$.  Applying $\Mes$ we conclude that $\tilde X_\ProbAlg$ is $\ProbAlg$-isomorphic to $X$, with $\iota$ related to this isomorphism as indicated.  The converse implication is routine.

Claim (ii) follows from (i) and Proposition \ref{model-basic}(i), so we turn to (iii).
First suppose that $(\tilde X, \iota)$ obeys the strong Lusin property.  We need to show that for any concrete model $(\tilde X', \iota')$ of $X$ there is precisely one $\CHProb$-morphism $T \colon \tilde X \to \tilde X'$ with $\iota' =_\AbsProb T \circ \iota$.  To show existence, we start with the obvious $\CStarAlgUnit$-map
$$ \Phi \colon \CFunc(\tilde X') \to \Linfty(\tilde X') \equiv \Linfty(X) \equiv \CFunc( \tilde X )$$
and apply $\Spec$ and natural isomorphisms to obtain a $\CH$-morphism
$$ T \colon \tilde X \to \tilde X'$$
with the property that $f \circ T$ and $f$ agree in $\Linfty(X)$ for every $f \in \CFunc(\tilde X')$.   In particular $\int_{\tilde X} f \circ T = \int_{\tilde X'} f$. By Theorem \ref{rrt} this implies that $T$ can be promoted to a $\CHProb$-morphism.  From Lusin's theorem we see that $\CFunc(\tilde X')$ is dense in $\Linfty(\tilde X') \equiv \Linfty(X)$ using the $L^2(X)$ topology, and using this one can show that $1_E \circ T$ and $1_E$ agree in $\Linfty(X)$ for any $E \in \tilde X'_\SigmaAlg$, thus $\iota' =_\AbsProb T \circ \iota$ as desired.  This establishes existence.  To show uniqueness, we see that if $T' \colon \tilde X \to \tilde X'$ is any other $\CHProb$-morphism with $\iota' =_\AbsProb T' \circ \iota$, then for $f \in \CFunc(\tilde X')$, $f \circ T, f \circ T' \in \CFunc(\tilde X)$ agree in $\Linfty(\tilde X)$ and are thus equal as continuous functions.  Thus, for any $\tilde x \in \tilde X$, we have $f(T(\tilde x)) = \CFunc(T'(\tilde x))$ for all $f \in \CFunc(\tilde X')$.  Since the functions in $\CFunc(\tilde X')$ separate points, we obtain $T=T'$, giving uniqueness.

Conversely, if $(\tilde X, \iota)$ is initial in $\Model(X)$, then by the preceding discussion it is $\Model(X)$-isomorphic to $(\Stone(X),\iota(X))$, and then it is straightforward to derive the strong Lusin property of $\tilde X$ from that of $\Stone(X)$.
 \end{proof}

\begin{remark} With a bit more effort, one can show that every concrete model $(\tilde X,\iota)$ of a $\ProbAlg$-space $X$ comes with a canonical identification of $\CFunc(\tilde X)$ as a $\CStarAlg$-subalgebra of $\Linfty(X)$ that is dense in the $L^2$ topology, and conversely any such dense subalgebra gives rise to a concrete model, unique up to $\Model(X)$-isomorphism (cf.~\cite[\S 12.3]{EFHN}); this duality of subalgebras and models is analogous for instance to the fundamental theorem of Galois theory.  The morphisms in $\Model(X)$ then are canonically identified with inclusions maps in $\Linfty(X)$.  When $\tilde X$ has the strong Lusin property, $\CFunc(\tilde X)$ is identified with all of $\Linfty(X)$, which explains the universality.  We leave the verifications of these claims to the interested reader.  The situation can again be compared with the Stone--{\v C}ech compactification, in which the role of the functor $\Linfty$ is instead played by $\CbFunc$.
\end{remark}

\begin{remark}\label{prob-prod} As one quick application of the canonical model functor $\Stone$ one can construct infinite tensor products $\bigotimes^{\ProbAlg}$ on $\ProbAlg$ (in the sense of Definition \ref{def-infinit-prod}) on arbitrarily many factors by starting with infinite tensor products $\bigotimes^{\CHProb}$ on $\CHProb$ (by observing, similarly to Example \ref{concprod-ex}, that we can realize $\CHProb$ as an action category which then comes with a canonical tensor product making $\CHProb$ a semicartesian symmetric monoidal category with arbitrarily infinite tensor products)\footnote{Notice that the $\ConcProb$ and $\CH$ tensor products both agree with the $\ConcMes$ categorical products basically by construction.}, and then defining the tensor product $\bigotimes^\ProbAlg_{\alpha \in A} X_{\alpha}$ of $\ProbAlg$-spaces $X_\alpha$ to be $\bigotimes^{\CHProb}_{\alpha \in A} \Stone(X_\alpha)$ casted back to $\ProbAlg$.  (We caution however that $\bigotimes^{\CHProb}_{\alpha \in A} \Stone(X_\alpha)$ need not obey the strong Lusin property, and thus need not to be equal to $\Stone(\bigotimes^\ProbAlg_{\alpha \in A} X_\alpha)$.)  An alternative way to construct the tensor product is to use probability duality and the standard tensor product operation on von Neumann algebras (which gives $\vonNeumann$ the structure of a cosemicartesian symmetric monoidal category).  We leave it to the reader to verify that these two tensor products are equal up to natural isomorphisms.
\end{remark}

As is well known, every continuous function from an $\LCH$-space $X$ to a $\CH$-space $K$ has a unique continuous extension to the Stone--{\v C}ech compactification $\beta X$, giving an equivalence
$$ \Hom_\LCH(X \to K) \equiv \Hom_\CH(\beta X \to K).$$
In category-theoretic language, $\beta$ is left-adjoint to the forgetful functor from $\CH$ to $\LCH$. There is an analogous property for the canonical model:

\begin{proposition}[Canonical representation]\label{factor-of-stone}  If $X$ is a  $\ProbAlg$-space and $K$ is a $\CH$-space, then to every $\AbsMes$-morphism $f \colon \Inc(X) \to K$ there is a unique $\CH$-morphism $\tilde f \colon \Stone(X) \to K$ which extends (or represents) $f$ in the sense that $f =_\AbsMes \tilde f \circ \iota'$, where $\iota' \colon \Inc(X) \to \Stone(X)$ is the canonical $\AbsProb$-morphism.  In other words, one has an equivalence
$$ \Hom_\AbsMes(\Inc(X) \to K) \equiv \Hom_\CH(\Stone(X) \to K).$$
\end{proposition}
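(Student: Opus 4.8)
Write $\iota' \colon \Inc(X) \to \Stone(X)$ for the canonical $\AbsMes$-monomorphism of Proposition \ref{canon-model}(ii), and let $\Theta_K \colon \Hom_\CH(\Stone(X) \to K) \to \Hom_\AbsMes(\Inc(X) \to K)$ be the map $\tilde f \mapsto \tilde f_\AbsMes \circ \iota'$ (casting $\tilde f$ to $\AbsMes$ via $\Abs \circ \BaireFunc$). This $\Theta_K$ is manifestly natural in $K$, so it suffices to prove it is a bijection for each fixed $\CH$-space $K$; the displayed equivalence (and its naturality) will follow. The plan is to transport the problem through Gelfand duality and the strong Lusin identification $\CFunc(\Stone(X)) \equiv \Linfty(X)$ of Proposition \ref{model-basic}, which turn $\Hom_\CH(\Stone(X) \to K)$ into $\Hom_{\CStarAlgUnit}(\CFunc(K) \to \CFunc(\Stone(X))) \equiv \Hom_{\CStarAlgUnit}(\CFunc(K) \to \Linfty(X))$, and then to recognize $\CStarAlgUnit$-morphisms $\CFunc(K) \to \Linfty(X)$ as $\AbsMes$-valued random elements of $K$ via Weil's theorem.

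First I would dispose of injectivity exactly as in the uniqueness half of Proposition \ref{canon-model}(iii): if $\Theta_K(\tilde f) = \Theta_K(\tilde f')$, then composing with $g_\AbsMes$ for $g \in \CFunc(K)$ and using that $h \mapsto h_\AbsMes \circ \iota'$ is the (injective, in fact bijective) strong Lusin identification $\CFunc(\Stone(X)) \xrightarrow{\sim} \Linfty(X)$, one gets $g \circ \tilde f = g \circ \tilde f'$ as continuous functions for all $g$; since $\CFunc(K)$ separates points of $K$, $\tilde f = \tilde f'$. For the assignment $f \mapsto \Phi_f$, note that each $g \in \CFunc(K)$ is the cast of a $\CH$-morphism $K \to \C$, so $\Phi_f(g) := g_\AbsMes \circ f$ is a bounded $\AbsMes$-morphism $\Inc(X) \to \C$, i.e.\ an element of $\Linfty(X)$; using the conditional operations \eqref{cond-op} and Proposition \ref{stomp}(iii) (so that the $\CH$- and $\AbsMes$-products of copies of $\C$ agree) one checks routinely that $\Phi_f \colon \CFunc(K) \to \Linfty(X)$ is a unital $*$-homomorphism, and that distinct $f$ give distinct $\Phi_f$ because $\Baire(K)$ is generated as a $\sigma$-algebra by the sets $g^{-1}(B)$ ($g \in \CFunc(K)$, $B \in \Borel(\C)$) and the set where two $\SigmaAlg$-morphisms out of $\Baire(K)$ agree is a $\sigma$-subalgebra.

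The substantial step is surjectivity of $\Phi_\bullet$: every $\CStarAlgUnit$-morphism $\Phi \colon \CFunc(K) \to \Linfty(X)$ equals $\Phi_f$ for some $f$. By Weil's theorem (Proposition \ref{prod-top}(vi)) regard $K$ as a closed $\CH$-subspace of $L = \prod^\CH_{\alpha \in A} S_\alpha$ with each $S_\alpha$ a compact subset of $\R$, and let $\pi_\alpha \in \CFunc(K)$ be the $\alpha$th coordinate. Since $\Phi$ is a $*$-homomorphism, $\Phi(\pi_\alpha)$ is a self-adjoint element of $\Linfty(X)$ with spectrum in $S_\alpha$, hence an $\AbsMes$-morphism $\Inc(X) \to S_\alpha$; by Proposition \ref{stomp}(iii) the family $(\Phi(\pi_\alpha))_\alpha$ assembles into one $\AbsMes$-morphism $F \colon \Inc(X) \to L$. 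The key point is that $F$ ``lands in $K$''. Every Baire set $E$ of $L$ with $E \cap K = \emptyset$ depends on only countably many coordinates (Proposition \ref{prod-top}(viii) identifies $\Baire(L)$ with $\prod^\ConcMes_\alpha \Borel(S_\alpha)$), say $E = \pi_{A_0}^{-1}(B)$ with $A_0$ countable and $B$ disjoint from the compact set $C := \pi_{A_0}(K) \subseteq M := \prod^\CMet_{\alpha \in A_0} S_\alpha$; choosing $h \in \CFunc(M)$ with $C = h^{-1}(\{0\})$, Stone--Weierstrass on $M$ together with $*$-homomorphy gives $\Phi(h \circ \pi_{A_0}|_K) = h_\AbsMes \circ F$, but $h \circ \pi_{A_0}|_K = 0$ in $\CFunc(K)$, so $h_\AbsMes \circ F = 0$ and therefore $F_\AbsMes$ kills $E$. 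Since (Proposition \ref{prod-top}(vii)) the restriction $\Baire(L) \twoheadrightarrow \Baire(K)$ is a surjective $\SigmaAlg$-morphism with kernel $\{E : E \cap K = \emptyset\}$, the morphism $F$ factors through $\Baire(K)$, yielding $f \in \Hom_\AbsMes(\Inc(X) \to K)$; a density argument (Tietze extension and Stone--Weierstrass on $L$, plus continuity of $g \mapsto g_\AbsMes \circ f$ in sup-norm) then shows $\Phi = \Phi_f$.

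Finally I would assemble the pieces: Gelfand duality (Theorem \ref{gelfand-dualities}) and Proposition \ref{model-basic} give $\Hom_\CH(\Stone(X) \to K) \equiv \Hom_{\CStarAlgUnit}(\CFunc(K) \to \Linfty(X))$ via $\tilde f \mapsto (g \mapsto g \circ \tilde f)$ read in $\Linfty(X)$, and the previous two paragraphs give $\Hom_{\CStarAlgUnit}(\CFunc(K) \to \Linfty(X)) \equiv \Hom_\AbsMes(\Inc(X) \to K)$ via $\Phi_f \leftrightarrow f$. Unwinding the definition of $\Stone$ and of $\iota'$ in Figure \ref{fig:canon} shows the composite of these two equivalences is exactly $\Theta_K$, which is therefore a bijection; this is the assertion, and an analogue of the left-adjointness of $\beta$ (to be remarked). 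The main obstacle is the surjectivity step — producing, from a purely $C^*$-algebraic morphism $\CFunc(K) \to \Linfty(X)$, an honest $\AbsMes$-morphism $\Inc(X) \to K$ — which is where Weil's theorem and the compatibility of the $\CH$-, $\CMet$- and $\AbsMes$-products (Propositions \ref{stomp} and \ref{prod-top}) carry the load, amounting in effect to constructing a joint spectral measure for the commuting family $\{\Phi(g)\}_{g \in \CFunc(K)}$.
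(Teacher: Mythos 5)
Your proof is correct, and its skeleton coincides with the paper's: both transport the problem through Gelfand duality and the strong Lusin identification $\CFunc(\Stone(X)) \equiv \Linfty(X)$, and both prove uniqueness by composing with elements of $\CFunc(K)$ and using that $\CFunc(K)$ separates points. The one genuine divergence is your ``substantial step'': you prove outright that \emph{every} $\CStarAlgUnit$-morphism $\Phi \colon \CFunc(K) \to \Linfty(X)$ is of the form $\Phi_f$ for an $\AbsMes$-morphism $f$, via Weil's theorem and a joint-spectral-measure construction. The paper avoids this entirely: it only goes in the direction $f \mapsto f^* = \Phi_f \mapsto \Spec(\Phi_f) =: \tilde f$ and then verifies $\tilde f \circ \iota' =_\AbsMes f$ by unwinding definitions. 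In fact your own last paragraph shows the surjectivity of $\Phi_\bullet$ is redundant for the stated proposition: since you establish that $\CFunc(\tilde f)$, read in $\Linfty(X)$, equals $\Phi_{\Theta_K(\tilde f)}$, and that $\Phi_\bullet$ is injective (the generated-$\sigma$-algebra argument), the bijectivity of $\Theta_K$ already follows from the bijectivity of $\tilde f \mapsto \CFunc(\tilde f)$ --- given $f$, set $\tilde f = \Spec(\Phi_f)$ and cancel $\Phi_\bullet$. (Surjectivity of $\Phi_\bullet$ is then a \emph{corollary} of the proposition rather than an input.) What your longer route buys is a self-contained and correct proof of that corollary --- that $\Linfty(X)$-valued characters of $\CFunc(K)$ are exactly pullbacks along abstract $K$-valued random elements --- which is a useful fact in its own right, but you could shorten the argument considerably by deleting the surjectivity paragraph and invoking the cancellation above.
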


\begin{proof}  We first prove existence.  The $\AbsMes$-morphism $f$ induces a pullback map $f^* \colon C(K) \to \Linfty(X)$, since for any $g \in C(K)$, $g \circ f$ is an $\AbsMes$-morphism from $\Inc(X)$ to a bounded subset of $\C$ and can thus be identified with an element of $\Linfty(X)$.  By construction, $\Linfty(X) \equiv C(\Stone(X))$.  Thus we may apply the functor $\Spec$ to obtain a $\CH$-morphism $\Spec(f^*) \colon \Stone(X) \to K$ (after performing some natural identifications), and the required property $f =_\AbsMes \tilde f \circ \iota'$ can be verified by chasing all the definitions.

To prove uniqueness, suppose we have two $\CH$-morphisms $\tilde f, \tilde f' \colon \Stone(X) \to K$ with $\tilde f \circ \iota' = \tilde f' \circ \iota'$.  Then for any $g \in C(K)$, $g \circ \tilde f$ and $g \circ \tilde f'$ agree in $L^\infty(\Stone(X))$, hence agree in $C(\Stone(X))$ by the strong Lusin property.  Since $C(K)$ separates points, we conclude that $\tilde f = \tilde f'$, giving uniqueness.
\end{proof}

As a corollary of this proposition, we see that $\Stone$ is left-adjoint\footnote{Indeed, since (left-)adjoints are unique up to natural isomorphisms, one could take this fact as a \emph{definition} of the canonical model functor $\Stone$ if desired, although then to verify the remaining properties of the model claimed in this section seems to require an equivalent amount of work to that in the approach presented here.} to the casting functor $\Cast_{\CHProb \to \ProbAlg}$.  As another corollary, if $K$ is a $\CHProb$-space, then by applying the above equivalence to the canonical inclusion $\iota \colon \Inc(K) \to K$ we obtain a $\CHProb$-morphism $\pi \colon \Stone(K) \to K$, which one can check to be a natural transformation from $\Stone \circ \Cast_{\CHProb \to \ProbAlg}$ to $\ident_{\CHProb}$.  Thus one can view any $\CHProb$-space $K$ as a ``factor'' of its canonical model $\Stone(K)$, and one can view the $\AbsProb$-space $\Inc(K)$ as an abstract full measure subspace of both of these $\CHProb$-spaces in which all the null sets have been ``deleted''.

We close this section with a surjectivity property of the morphisms generated the canonical model functor (cf. Lemma \ref{bool-props}(iii)).

\begin{proposition}  If $T\colon X \to Y$ is a  $\ProbAlg$-morphism, then $\Stone(T) \colon \Stone(X) \to \Stone(Y)$ is surjective.
\end{proposition}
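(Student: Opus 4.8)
The plan is to unwind the definition $\Stone = \Spec \circ \Forget_{\vonNeumannop \to \CStarAlgUnitop} \circ \Linfty$ and reduce the claim to the standard fact that the Gelfand dual of an injective unital $*$-homomorphism is surjective. Up to the natural isomorphisms of Theorem \ref{canon-thm}, $\Stone(T)$ is (the $\CH$-part of) the $\CH$-morphism $\Spec(\Linfty(T)) \colon \Spec(\Linfty(X)) \to \Spec(\Linfty(Y))$ attached to the Koopman operator $\Linfty(T) \colon \Linfty(Y) \to \Linfty(X)$, so it suffices to prove that this spectrum map is surjective.

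First I would check that $\Linfty(T)$ is injective. It is a $\vonNeumann$-morphism (as established in Section \ref{canonical-sec}), so by Definition \ref{vonneumann-def} it is trace-preserving: $\tau_{\Linfty(Y)} = \tau_{\Linfty(X)} \circ \Linfty(T)$. If $f \in \Linfty(Y)$ lies in its kernel, then $\tau_{\Linfty(Y)}(f f^*) = \tau_{\Linfty(X)}(\Linfty(T)(f)\,\Linfty(T)(f)^*) = 0$, and faithfulness of the trace on the von Neumann algebra $\Linfty(Y)$ (again Definition \ref{vonneumann-def}, using that $\Linfty(Y)$ is a $\vonNeumann$-algebra, which is exactly what is proven via Sakai's theorem in Section \ref{canonical-sec}) forces $f = 0$.

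Then I would deduce surjectivity of the spectrum map. The quick route: $\CStarAlgUnit$ is a concrete category, so by Lemma \ref{epimorph} the injective morphism $\Linfty(T)$ is a $\CStarAlgUnit$-monomorphism; since $\Spec$ is one half of a duality of categories by Theorem \ref{gelfand-dualities}, it carries this monomorphism to a $\CH$-epimorphism $\Spec(\Linfty(T))$, which is surjective by Proposition \ref{top-prop}(ii). Alternatively, a self-contained argument: were $\Spec(\Linfty(T))$ not surjective, its image would be a proper closed (hence, by compactness, non-dense) subset $C$ of $\Spec(\Linfty(Y))$, so Urysohn's lemma (Proposition \ref{urysohn}(ii)) would yield a nonzero $g \in \CFunc(\Spec(\Linfty(Y))) \equiv \Linfty(Y)$ vanishing on $C$; then $(\Linfty(T)g)(\lambda) = g(\Spec(\Linfty(T))(\lambda)) = 0$ for every character $\lambda$, so $\Linfty(T)g = 0$, contradicting injectivity. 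Either way $\Stone(T)$ is surjective.

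I do not anticipate a genuine obstacle: this is essentially the dual statement of Lemma \ref{bool-props}(iii) transported through Riesz/Gelfand duality. The only points that want care are invoking faithfulness of the trace rather than re-deriving it (it is built into the notion of a $\vonNeumann$-algebra, and $\Linfty(Y)$ is one), and correctly translating ``$\Stone(T)$'' through the chain of functors and natural isomorphisms in Figure \ref{fig:canon}.
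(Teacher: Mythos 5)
Your proof is correct, but it is organized differently from the paper's. The paper argues directly on the model spaces: if $\Stone(T)$ were not surjective, Urysohn's lemma would produce a nonzero $g \in \CFunc(\Stone(Y))$ with $g \circ \Stone(T) = 0$; the strong Lusin property (Proposition \ref{model-basic}(ii)) then forces $g$ to be nonzero in $L^\infty(\Stone(Y))$, so some positive-measure set pulls back under $\Stone(T)$ to a null set, contradicting measure preservation. You instead work on the dual side: you first establish that the Koopman operator $\Linfty(T)$ is injective (trace preservation plus faithfulness of the trace on a $\vonNeumann$-algebra), and then appeal to the standard fact that the Gelfand dual of an injective unital $*$-homomorphism is surjective --- either through the mono/epi dictionary of the duality combined with Proposition \ref{top-prop}(ii), or by the same Urysohn contradiction phrased upstairs in $\Spec(\Linfty(Y))$. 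Both arguments ultimately rest on the same fact, namely that a probability algebra has no nontrivial null elements so the associated trace is faithful; but your version bypasses the strong Lusin property, needing only the defining identification $\CFunc(\Stone(Y)) \equiv \Linfty(Y)$ coming from the construction of $\Stone$, and your purely categorical variant makes transparent that this proposition is the Gelfand-dual counterpart of Lemma \ref{bool-props}(iii). The one definitional point worth noting is that no natural isomorphism is actually needed to identify $\Stone(T)$ with $\Spec(\Linfty(T))$ --- this is how $\Stone$ is defined --- so your opening reduction is even more immediate than you suggest.
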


\begin{proof} Suppose for contradiction that $\Stone(T)$ is not surjective.  Then from Urysohn's lemma one can find non-zero $g \in \CFunc(\Stone(Y))$ such that $g \circ \Stone(T) = 0$.  By the strong Lusin property, $g$ is non-zero in $L^\infty(\Stone(Y))$, thus by taking sublevel sets there is a positive measure subset of $\Stone(Y)$ whose pullback by $\Stone(T)$ is a null set in $\Stone(X)$.  But this contradicts the measure-preserving nature of $\Stone(T)$.
\end{proof}

\section{Canonical disintegration}\label{disint-sec}

In this section we prove Theorem \ref{canon-disint}.  We begin with existence. Let $X, Y$ be $\ProbAlg$-spaces, and let $\pi \colon X \to Y$ be a $\ProbAlg$-morphism.  Then $\Stone(\pi)$ is a $\CHProb$-morphism from $\Stone(X)$ to $\Stone(Y)$, which gives rise to a Koopman operator $\pi^* \colon L^2(\Stone(Y)) \to L^2(\Stone(X))$ defined in the obvious fashion.  This operator is an $L^2$ isometry, so we can identify
$L^2(\Stone(Y))$ with a closed subspace of $L^2(\Stone(X))$, and similarly identify $\Linfty(\Stone(Y))$ with a subspace of $\Linfty(\Stone(X))$.  We let $f \mapsto \E(f|\Stone(Y))$ be the orthogonal projection from $L^2(\Stone(X))$ to $L^2(\Stone(Y))$.  From construction we see that
\begin{align}
\label{fgm}
 \int_{\Stone(X)} f g\ d\mu_{\Stone(X)} &= \int_{\Stone(X)} \E(f|\Stone(Y)) g\ d\mu_{\Stone(X)} \\ 
 \notag &= \int_{\Stone(Y)} \E(f|\Stone(Y)) g\ d\mu_{\Stone(Y)}
\end{align}
for all $f \in \Linfty(\Stone(X))$ an $g \in \Linfty(\Stone(Y))$ (making heavy use of the above identifications).  By duality and H\"older's inequality we conclude the contractive property
$$ \| \E(f|\Stone(Y))\|_{\Linfty(\Stone(Y))} \leq \| f\|_{\Linfty(\Stone(X))}$$
so in particular $\E(f|\Stone(Y))$ is an element of $\Linfty(\Stone(Y))$.  By Proposition \ref{model-basic}(ii), we can identify $\Linfty(\Stone(Y))$ with $\CFunc(\Stone(Y))$ (and $\Linfty(\Stone(X))$ with $\CFunc(\Stone(X))$), so by abuse of notation we also view $\E(f|\Stone(Y))$ as an element of $\CFunc(\Stone(Y))$ for any $f \in \CFunc(\Stone(X))$.  In particular, for any $y \in \Stone(Y)$, we have a functional $f \mapsto \E(f|\Stone(Y))(y)$ on $\Stone(X)_\CH$, which one can easily verify to be a state.  Applying Theorem \ref{rrt}, one can represent this functional by a Radon probability measure $\mu_y$ on $\Stone(X)_\CH$, thus
$$ \E(f|\Stone(Y))(y) = \int_{\Stone(X)_\CH} f\ d\mu_y$$
for all $f \in \CFunc(\Stone(X))$ and $y \in \Stone(Y)$.  In particular $y \mapsto \int_{\Stone(X)_\CH} f\ d\mu_y$ is continuous and from \eqref{fgm} we conclude \eqref{disint-form}.  This establishes existence.  For uniqueness, let $\mu'_y, y \in \Stone(Y)$ be another candidate disintegration.  Then for any $f \in \CFunc(\Stone(X))$, we see from \eqref{disint-form} that the continuous function $y \mapsto \int_{\Stone(X)_\CH} f\ d\mu_y - \int_{\Stone(X)_\CH} f\ d\mu'_y$ is orthogonal (in $L^2(\Stone(Y))$) to all elements of $\CFunc(\Stone(Y))$, and hence is identically zero (here we view $\CFunc(\Stone(Y)) \equiv \Linfty(\Stone(Y))$ as a subspace of $L^2(\Stone(Y))$).  Thus for every $y \in \Stone(Y)$, we have
$$ \int_{\Stone(X)_\CH} f\ d\mu_y = \int_{\Stone(X)_\CH} f\ d\mu'_y$$
for all $f \in \CFunc(\Stone(X))$.  Applying Theorem \ref{rrt}, we conclude that $\mu_y = \mu'_y$, giving uniqueness.

Finally we need to show that $\mu_y(E)=0$ when $E$ is measurable and disjoint from $\Stone(\pi)^{-1}(\{y\})$.  By inner regularity we may assume that $E$ is compact $G_\delta$.  Then $\Stone(\pi)(E)$ is compact and disjoint from $y$, hence by Proposition \ref{urysohn} one can find $\chi \in \CFunc(\Stone(Y))$ such that $\chi(y')=1$ for $y' \in \Stone(\pi)(E)$ and $\chi(y)=0$.  We also view $\chi$ as an element of $\CFunc(\Stone(X))$, then $\E(\chi|\Stone(Y))=\chi$, in particular
$$ \int_{\Stone(X)_\CH} \chi\ d\mu_y = \chi(y)=0$$
and hence $\mu_y(E)=0$ as required.  This concludes the proof of Theorem \ref{canon-disint}.

By following the construction in \cite[Section 5.5]{furstenberg2014recurrence}, one can use the canonical disintegration to build relative products of probability algebras, but now without the need to impose any regularity hypotheses on the algebras.

\begin{theorem}[Relative products in $\ProbAlg$]\label{rel-prod}  Suppose that one has $\ProbAlg$-morphisms $\pi_1 \colon X_1 \to Y$, $\pi_2 \colon X_2 \to Y$.  Then there exists a $\ProbAlg$-commutative diagram
\begin{center}
\begin{tikzcd}
    & X_1 \otimes_Y X_2 \arrow[dl, "\Pi_1"'] \arrow[dr,"\Pi_2"] \\ X_1 \arrow[dr,"\pi_1"'] && X_2 \arrow[dl,"\pi_2"] \\
    & Y
\end{tikzcd}
\end{center}
for some $\ProbAlg$-space $X_1 \otimes_Y X_2$ and $\ProbAlg$-morphisms $\Pi_1 \colon X_1 \otimes_Y X_2 \to X_1$, $\Pi_2 \colon X_1 \otimes_Y X_2 \to X_2$, which of course also leads to the $\vonNeumann$-commutative diagram
\begin{center}
\begin{tikzcd}
    & \Linfty(X_1 \otimes_Y X_2) \arrow[dl, "\Linfty(\Pi_1)"'] \arrow[dr,"\Linfty(\Pi_2)"] \\ \Linfty(X_1) \arrow[dr,"\Linfty(\pi_1)"'] && \Linfty(X_2) \arrow[dl,"\Linfty(\pi_2)"] \\
    & \Linfty(Y)
\end{tikzcd},
\end{center}
such that one has
\begin{equation}\label{f1f2}
 \int_{X_1 \otimes_Y X_2} f_1 f_2 = \int_Y \E(f_1|Y) \E(f_2|Y) 
\end{equation}
for all $f_1 \in \Linfty(X_1)$, $f_2 \in \Linfty(X_2)$, where we use the above commutative diagram to embed $\Linfty(Y)$ into $\Linfty(X_1), \Linfty(X_2)$, and embed these algebras in turn into $\Linfty(X_1 \otimes_Y X_2)$. Furthermore, $\Inc(X_1 \otimes_Y X_2)_\SigmaAlg$ is generated by $\Inc(X_1)_\SigmaAlg$ and $\Inc(X_2)_\SigmaAlg$ (where we identify the latter with subalgebras of the former in the obvious fashion).
\end{theorem}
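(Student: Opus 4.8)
We follow the construction of relative independent products from \cite[\S 5.5]{furstenberg2014recurrence}, but carried out entirely inside the canonical model so that no regularity hypotheses on the $\ProbAlg$-spaces are needed. First we pass to the canonical models: $\Stone(\pi_1)\colon \Stone(X_1)\to \Stone(Y)$ and $\Stone(\pi_2)\colon \Stone(X_2)\to \Stone(Y)$ are $\CHProb$-morphisms, and Theorem \ref{canon-disint} supplies canonical disintegrations $(\mu^{(1)}_y)_{y\in\Stone(Y)}$, $(\mu^{(2)}_y)_{y\in\Stone(Y)}$ of $\mu_{\Stone(X_1)}$, $\mu_{\Stone(X_2)}$ along $\Stone(\pi_1)$, $\Stone(\pi_2)$, with each $y\mapsto \int f\,d\mu^{(i)}_y$ continuous for $f\in\CFunc(\Stone(X_i))$ and with $\mu^{(i)}_y$ concentrated on $\Stone(\pi_i)^{-1}(\{y\})$ in the sense of that theorem. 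We also recall the identification $\Linfty(X_i)\equiv\CFunc(\Stone(X_i))$ from Proposition \ref{model-basic}(ii), under which $\E(\cdot|Y)$ on $\Linfty(X_i)$ becomes $\E(\cdot|\Stone(Y))$.

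Let $Z\coloneqq \Stone(X_1)_\CH\times^\CH\Stone(X_2)_\CH$ (the $\CH$-product, which exists by Proposition \ref{prod-top}(i)), with coordinate projections $p_i\colon Z\to\Stone(X_i)_\CH$. For each $y$, define the product state $\lambda_y$ on $\CFunc(Z)$ by setting $\lambda_y\big((p_1^* f_1)(p_2^* f_2)\big)\coloneqq \big(\int f_1\,d\mu^{(1)}_y\big)\big(\int f_2\,d\mu^{(2)}_y\big)$ on the subalgebra spanned by such products; this subalgebra is sup-norm dense by Stone--Weierstrass, and the functional extends uniquely to a state $\lambda_y$ (equivalently, $\lambda_y$ is represented by the product Radon measure $\mu^{(1)}_y\otimes\mu^{(2)}_y$ on $Z$). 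The map $y\mapsto\lambda_y(h)$ is continuous for each $h\in\CFunc(Z)$: on the dense subalgebra this is the vague continuity from Theorem \ref{canon-disint}, and continuity is preserved under the uniform limits used to pass to all of $\CFunc(Z)$. Hence $\lambda(h)\coloneqq \int_{\Stone(Y)}\lambda_y(h)\,d\mu_{\Stone(Y)}(y)$ is a well-defined state on $\CFunc(Z)$, and by Theorem \ref{rrt}(i) it is represented by a Radon probability measure $\mu_Z$ on $Z$. We define $X_1\otimes_Y X_2$ to be the $\CHProb$-space $(Z,\mu_Z)$ cast to $\ProbAlg$; each $p_i$ is $\mu_Z$-measure-preserving, since $\int_Z p_i^* f\,d\mu_Z=\int_{\Stone(Y)}\E(f|\Stone(Y))\,d\mu_{\Stone(Y)}=\int_{\Stone(X_i)} f\,d\mu_{\Stone(X_i)}$, so it casts to a $\ProbAlg$-morphism $\Pi_i$, and the $\vonNeumann$-diagram will follow by applying $\Linfty$ to the $\ProbAlg$-diagram.

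It remains to record the verifications. For commutativity of the square, since $\Linfty$ is faithful it suffices to check that for every $g\in\CFunc(\Stone(Y))\equiv\Linfty(Y)$ the functions $p_1^*(\Stone(\pi_1)^* g)$ and $p_2^*(\Stone(\pi_2)^* g)$ agree in $\Linfty(Z,\mu_Z)$; expanding $\int_Z|p_1^*(\Stone(\pi_1)^* g)-p_2^*(\Stone(\pi_2)^* g)|^2\,d\mu_Z$ as $\int_{\Stone(Y)}(\cdots)\,d\mu_{\Stone(Y)}$ and using that $g\circ\Stone(\pi_i)$ and $|g|^2\circ\Stone(\pi_i)$ are their own conditional expectations, together with $\int f\,d\mu^{(i)}_y=\E(f|\Stone(Y))(y)$, makes the inner integral equal $|g(y)|^2-|g(y)|^2-|g(y)|^2+|g(y)|^2=0$ for every $y$, so the $L^2$ norm vanishes. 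The identity \eqref{f1f2} is then immediate: under the embeddings $\Linfty(\Pi_i)$ the element $f_1 f_2$ is the class of $(p_1^* f_1)(p_2^* f_2)$, and integrating against $\mu_Z$ and unwinding the definition of $\lambda_y$ gives $\int_{\Stone(Y)}\E(f_1|Y)\,\E(f_2|Y)\,d\mu_{\Stone(Y)}=\int_Y\E(f_1|Y)\,\E(f_2|Y)$. Finally, $\Inc(X_1\otimes_Y X_2)_\SigmaAlg$ is the quotient of $\Baire(Z)$ by its null ideal, and $\Baire(Z)$ is generated by $p_1^*\Baire(\Stone(X_1)_\CH)$ and $p_2^*\Baire(\Stone(X_2)_\CH)$ by Proposition \ref{prod-top}(viii); since the quotient map is a surjective $\SigmaAlg$-morphism it carries a generating set to a generating set, and the images of the two factor algebras are precisely the copies of $\Inc(X_1)_\SigmaAlg$, $\Inc(X_2)_\SigmaAlg$ embedded via $\Pi_1$, $\Pi_2$.

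The one genuinely substantive step is the construction of $(Z,\mu_Z)$ from the fibrewise data, i.e.\ checking that $\lambda=\int_{\Stone(Y)}\lambda_y(\cdot)\,d\mu_{\Stone(Y)}$ is a bona fide state on $\CFunc(Z)$. This rests on the sup-norm density of the algebraic tensor product in $\CFunc(Z)$, so that the product functionals $\lambda_y$ and the continuity of $y\mapsto\lambda_y(h)$ propagate from the dense subalgebra, together with the vague-continuity clause of Theorem \ref{canon-disint}; the argument is in effect a clean re-derivation of the converse direction of the disintegration theorem, and everything downstream of it is formal bookkeeping with the dualities.
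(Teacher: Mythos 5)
Your construction is correct and coincides with the paper's: both disintegrate $\mu_{\Stone(X_i)}$ over $\Stone(Y)$ via Theorem \ref{canon-disint}, form the fibrewise product measures on $\Stone(X_1)_\CH\times^\CH\Stone(X_2)_\CH$, integrate over $\Stone(Y)$ using vague continuity and Stone--Weierstrass, and invoke the Riesz representation theorem to obtain the product measure. The only divergence is in the final generation claim, where the paper argues by $L^2$-density of linear combinations of products $f_1f_2$ while you instead use Proposition \ref{prod-top}(viii) together with the fact that a surjective $\SigmaAlg$-morphism carries a generating family to a generating family; both arguments are valid, and yours has the small additional merit of making explicit the commutativity $\pi_1\circ\Pi_1=\pi_2\circ\Pi_2$, which the paper leaves implicit.
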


\begin{proof}
From the canonical disintegration we have probability measures $\mu_{y,i}$ on $\Stone(X_i)_\CH$ for $y \in \Stone(Y)$ and $i=1,2$ that depend continuously on $y$ in the vague topology, and such that
$$ \E(f_i|\Stone(Y))(y) = \int_{\Stone(X_i)_\CH} f_i\ d\mu_{y,i}$$
for $f_i \in \CFunc(\Stone(X_i))$ and $y \in \Stone(Y)$.  We then define a probability measure $\mu$ on $\Stone(X_1)_\CH \times^\CH \Stone(X_2)_\CH$ by the formula
\begin{align*}
&\int_{\Stone(X_1)_\CH \times^\CH \Stone(X_2)_\CH} f(x_1,x_2)\ d\mu(x_1,x_2)\\
&\quad \coloneqq \int_{\Stone(Y)} \left(\int_{\Stone(X_1)_\CH} \int_{\Stone(X_2)_\CH} f(x_1,x_2)\ d\mu_{y,2}(x_2) d\mu_{y,1}(x_1)\right)\ d\mu_{\Stone(Y)}(y).
\end{align*}
Note from continuity in the vague topology (using Stone-Weierstrass to approximate $f$ uniformly by linear combinations of tensor products $f_1(x_1) f_2(x_2)$ of continuous functions $f_1,f_2$ if desired) that the expression in parentheses is a bounded continuous function on $y$.  The well-definedness of $\mu$ follows from the Riesz representation theorem (Theorem \ref{rrt}).  From construction we have
$$ \int_{\Stone(X_1)_\CH \times^\CH \Stone(X_2)_\CH} f_1(x_1) f_2(x_2)\ d\mu(x_1,x_2) = \int_Y \E(f_1|Y) \E(f_2|Y)$$
for any $f_1 \in \Linfty(X_1)$, $f_2 \in \Linfty(X_2)$, where we identify $\Linfty(X_i)$ with $\CFunc(\Stone(X_i))$.  If we then define 
$$X_1 \otimes_Y X_2 \coloneqq (\Stone(X_1)_\CH \times^\CH \Stone(X_2)_\CH, \mu)_\ProbAlg$$
then we obtain the identity \eqref{f1f2}.  By Stone-Weierstrass, the finite linear combinations of products $f_1 f_2$ with $f_1 \in \Linfty(X_1), f_2 \in \Linfty(X_2)$ are dense in $L^\infty(X_1 \otimes_Y X_2)$ in the $L^2$ topology, hence any element of $\Inc(X_1 \otimes_Y X_2)_\SigmaAlg$ can be approximated to arbitrarily small error by a finite boolean combination of elements of $\Inc(X_1)_\SigmaAlg, \Inc(X_2)_\SigmaAlg$.  Since $X_1 \otimes_Y X_2$ is a probability algebra, every element in $\Inc(X_1 \otimes_Y X_2)_\SigmaAlg$ then lies in the $\SigmaAlg$-algebra generated by $\Inc(X_1)_\SigmaAlg, \Inc(X_2)_\SigmaAlg$.  The claim follows.
\end{proof}

One can show that the relative (tensor) product $\otimes_Y$ gives the slice category $\ProbAlg\downarrow Y$ the structure of a semicartesian symmetric monoidal category with arbitrarily infinite tensor products; we leave the details to the interested reader.  An alternative construction of relative products of probability algebras (in the equivalent form of relative coproducts) is given in \cite[Section 458]{fremlinvol4}.

\section{Alternate construction via the Loomis--Sikorski theorem}\label{loomis-sec}

In this section we provide an alternate construction of the canonical model functor that avoids use of Riesz and probability dualities, proceeding instead via Stone duality.  This alternate construction is lengthier, but reveals more topological features of the canonical model, in particular that it is a Stone space in which the null sets are precisely the Baire-meager sets.  The functor $\Stone$ constructed in this fashion is not strictly speaking identical to the one constructed in Section \ref{canonical-sec}, but will turn out to be equivalent up to natural isomorphism.

 \begin{figure}
    \centering
    \begin{tikzcd}
\StoneCatSigma \arrow[r,blue, tail] \arrow[d,"\ClopenSigma", two heads,  tail, shift left=.75ex] \arrow[rrr,bend left,blue, tail,"\BaireMeagFunc"',two heads] & \StoneCat\arrow[d,"\Clopen", two heads,  tail, shift left=.75ex] \arrow[r,blue, tail,two heads] & \CH \arrow[d,blue,tail,"\BaireFunc"] & \CHDelete \arrow[l,tail,blue] \arrow[d,tail,blue] & \CHProb \arrow[l,blue,tail] \arrow[d,blue,tail] \\
\SigmaAlgop \arrow[u, tail,two heads,"\StoneFuncSigma",shift left=.75ex]  \arrow[r,blue, tail] \arrow[drr,blue, two heads, "\id"'] & \Boolop
\arrow[u, tail,two heads,"\StoneFunc",shift left=.75ex] & \ConcMes \arrow[d,blue,"\Abs"] & \ConcDelete \arrow[l,blue,tail] \arrow[d,blue,"\Abs"] & \ConcProb \arrow[l,blue,tail] \arrow[d,blue,"\Abs"] \\
&& \AbsMes \arrow[uur,"\Loomis", tail, near end,two heads] \arrow[llu,blue, two heads]
& \AbsDelete \arrow[l,blue,tail] \arrow[l,"\ominus",shift left=1.5ex] 
& \AbsProb \arrow[l,blue,tail] \arrow[d,blue,"\Mes"]
\\ 
&&&& \ProbAlg \arrow[u,"\Inc",shift left=1.25ex, tail] \arrow[uuu,bend right,"\Stone"',  tail, two heads, shift right = 1.5ex]
\end{tikzcd}
    \caption{Alternative construction of the canonical model functor $\Stone$. Casting functors (in blue) commute, but the other functors only partially commute with the rest of the diagram.}
    \label{fig:altcon}
\end{figure}

The construction is summarized in Figure \ref{fig:altcon}.  As this figure indicates, it requires several additional categories and functors.  We begin with the categories and functors associated to Stone duality.  Define a \emph{Baire-meager} set to be a Baire set that is also meager (the countable union of nowhere dense sets).

\begin{definition}[Stone duality]\label{stone-dual}\ 
\begin{itemize}
    \item[(i)] $\StoneCat$ is the full subcategory of $\CH$ where the $\StoneCat$-spaces are Stone spaces (i.e., totally disconnected $\CH$-spaces, or equivalently, $\CH$-spaces whose clopen sets form a base for the topology).  
    \item[(ii)] $\StoneCatSigma$ is the subcategory of $\StoneCat$ where the $\StoneCatSigma$-spaces are $\StoneCat$-space whose Baire-measurable sets are equal to clopen sets modulo Baire-meager sets, and whose $\StoneCatSigma$-morphisms are $\StoneCat$-morphism such that pullbacks of Baire-meager sets are Baire-meager.
    \item[(iii)]  There is the obvious forgetful faithful functor from $\StoneCatSigma$ to $\StoneCat$, and the forgetful full faithful functor from $\StoneCat$ to $\CH$.
    \item[(iv)] If $\B$ is a $\Bool$-algebra, $\StoneFunc(\B)$ is the $\StoneCat$-space $$\StoneFunc(\B)\coloneqq \Hom_\Bool(\B \to \{0,1\}),$$ which we view as a compact subspace of the $\StoneCat$-space $\{0,1\}^\B$ and thus is also a $\StoneCat$-space.  If $\Phi \colon \B \to \B'$ is a $\Bool$-homomorphism, we define the $\StoneCat$-morphism $\StoneFunc(\Phi) \colon \StoneFunc(\B') \to \StoneFunc(\B)$ by the formula
    $$ \StoneFunc(\Phi)(\alpha) \coloneqq \alpha \circ \Phi$$
    for all $\alpha \in \StoneFunc(\B)$.
    \item[(v)] If $X$ is a $\StoneCat$-space, $\Clopen(X)$ is the $\Bool$-algebra of clopen subsets of $\StoneFunc(X)$.  If $f \colon X \to Y$ is a $\StoneCat$-morphism, $\Clopen(f) \colon \Clopen(Y) \to \Clopen(X)$ is the $\Bool$-morphism defined by
    $$ \Clopen(f) (E) \coloneqq f^{-1}(E) $$
    for $E \in \Clopen(Y)$.
    \item[(vi)] The functor $\StoneFuncSigma :\SigmaAlg \to \StoneCatSigma$ (resp. $\ClopenSigma \colon \StoneCatSigma \to \SigmaAlg$) is the unique functor that commutes with the corresponding functor $\StoneFunc \colon \Bool \to \StoneCat$ (resp. $\Clopen \colon \StoneCat \to \Bool$) and the faithful functors from $\StoneCatSigma, \SigmaAlg$ to $\StoneCat, \Bool$.
\end{itemize}
\end{definition}

\begin{proposition}[Preliminary Loomis--Sikorski theorem]\label{prelim-ls}\  
\begin{itemize}
\item[(i)]  If $X$ is a $\StoneCat$-space, then the Baire $\sigma$-algebra of $X_\CH$ is generated by the clopen subsets of $X$.
\item[(ii)]  If a subset $E$ of $X$ is equal up to a meager set to a clopen subset of $X$, then the meager and clopen set is determined uniquely by $E$.
\item[(iii)] The categories and functors in Definition \ref{stone-dual} are well-defined and have the faithful and fullness properties indicated in Figure \ref{fig:altcon}.  
\item[(iv)] Also, $\StoneFunc, \Clopen$ form a duality of categories between $\Bool$ and $\StoneCat$, and similarly $\StoneFuncSigma$, $\ClopenSigma$ form a duality of categories between $\SigmaAlg$ and $\StoneCatSigma$.
\end{itemize}
\end{proposition}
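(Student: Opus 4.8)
The plan is to treat (i) and (ii) directly and quickly, then to deduce (iii) and (iv) together, since the real content of (iv) is that the classical Stone equivalence \emph{restricts} to the subcategories $\SigmaAlg$ and $\StoneCatSigma$.

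For (i): on a $\StoneCat$-space $X$ the complex linear span $\mathcal{S}$ of the indicator functions $1_U$, $U$ clopen, is a unital $*$-subalgebra of $\CFunc(X)$, and it separates points because clopen sets separate points in a compact Hausdorff totally disconnected space; by Stone--Weierstrass $\mathcal{S}$ is dense in $\CFunc(X)$, so every $f \in \CFunc(X)$ is a uniform (hence pointwise) limit of simple functions in $\mathcal{S}$ and is therefore measurable for the $\sigma$-algebra $\sigma(\Clopen(X))$ generated by the clopen sets; since $\Baire(X)$ is generated by $\CFunc(X)$ for $\CH$-spaces we get $\Baire(X) \subseteq \sigma(\Clopen(X))$, and the reverse inclusion holds because $U = 1_U^{-1}(\{1\}) \in \Baire(X)$. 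For (ii): if $E \triangle U$ and $E \triangle U'$ are meager with $U,U'$ clopen, then $U \triangle U' = (E\triangle U)\triangle(E\triangle U')$ is an open meager set, hence empty since the compact Hausdorff space $X$ is a Baire space; thus $U = U'$, and the meager part $E \triangle U$ is then determined as well.

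Next I would record the two facts that make the functors $\StoneFuncSigma,\ClopenSigma$ meaningful. First, for any $\StoneCat$-space $X$ the family $\mathcal{M}_X$ of meager Baire sets is a $\sigma$-ideal of $\Baire(X)$, so $\Baire(X)/\mathcal{M}_X$ is a $\sigma$-complete Boolean algebra; by (ii) the ``clopen part'' map $q_X$, sending $E$ to the unique clopen $U$ with $E \triangle U$ meager whenever such $U$ exists, is defined on all of $\Baire(X)$ exactly when $X \in \StoneCatSigma$, and in that case it realizes $\Clopen(X)$ as $\Baire(X)/\mathcal{M}_X$, which hence lies in $\SigmaAlg$. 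Second, and more substantially, if $\B \in \SigmaAlg$ and $S = \StoneFunc(\B)$ (so $\Clopen(S)\cong\B$ is $\sigma$-complete), then $S \in \StoneCatSigma$: the collection $\mathcal{D}$ of Baire subsets of $S$ agreeing with a clopen set modulo a meager set is a $\sigma$-subalgebra of $\Baire(S)$ — closure under complements is trivial, and for $E_n = U_n \triangle M_n$ ($U_n$ clopen, $M_n$ meager) one puts $U := \bigvee_n U_n$, the supremum in $\Clopen(S)$, and checks $U = \overline{\bigcup_n U_n}$, since any nonempty clopen $V \subseteq U \setminus \overline{\bigcup_n U_n}$ would make $U\setminus V$ a strictly smaller clopen upper bound of the $U_n$; hence $U \setminus \bigcup_n U_n$ is the boundary of an open set, so nowhere dense, and $\bigcup_n E_n$ differs from $U$ by a meager set. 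As $\mathcal{D}$ contains every clopen set it equals $\Baire(S)$ by (i). (In both facts the meager sets involved are automatically Baire once the relevant $E$ are Baire, so ``clopen modulo meager'' and ``clopen modulo Baire-meager'' coincide here.)

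Finally I would settle the morphisms. Preimage gives a $\sigma$-homomorphism $f^{-1}\colon \Baire(Y)\to\Baire(X)$ for any $\StoneCat$-morphism $f$; if $f$ is a $\StoneCatSigma$-morphism it carries $\mathcal{M}_Y$ into $\mathcal{M}_X$ by definition, hence descends to a $\sigma$-homomorphism $\Baire(Y)/\mathcal{M}_Y \to \Baire(X)/\mathcal{M}_X$, which under the identifications above is $\Clopen(f)$, so $\Clopen(f)\in\SigmaAlg$. Conversely, for $\Phi\in\Hom_\SigmaAlg(\B,\B')$ with $S=\StoneFunc(\B)$, $S'=\StoneFunc(\B')$, the composites $q_{S'}\circ(\StoneFunc(\Phi))^{-1}$ and $\Phi\circ q_S$ are $\sigma$-homomorphisms $\Baire(S)\to\B'$ agreeing on the clopen generators — the agreement set of two $\sigma$-homomorphisms being a $\sigma$-subalgebra — hence equal, which forces $(\StoneFunc(\Phi))^{-1}$ to send $\mathcal{M}_S=\ker q_S$ into $\mathcal{M}_{S'}=\ker q_{S'}$, i.e.\ $\StoneFunc(\Phi)$ is a $\StoneCatSigma$-morphism. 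Combining these with classical Stone duality (that $\StoneFunc,\Clopen$ are full, faithful and mutually inverse up to natural isomorphism between $\Bool$ and $\StoneCat^{\op}$; see, e.g., \cite{koppelberg89}), one gets that $\StoneFunc,\Clopen$ restrict to well-defined functors $\StoneFuncSigma\colon\SigmaAlg\to\StoneCatSigma$ and $\ClopenSigma\colon\StoneCatSigma\to\SigmaAlg$, that the Stone natural isomorphisms restrict to isomorphisms in these subcategories (a bijective Boolean homomorphism of $\sigma$-complete algebras is a $\SigmaAlg$-isomorphism, and a homeomorphism is trivially a $\StoneCatSigma$-isomorphism), and hence that they form the asserted duality — so in particular all four functors are full and faithful; the remaining faithfulness/fullness claims in Figure \ref{fig:altcon} relevant to this proposition concern forgetful functors (faithful) and full-subcategory inclusions. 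I expect the $\sigma$-algebra argument for $\mathcal{D}$, concretely the identity $\overline{\bigcup_n U_n}=\bigvee_n U_n$ linking topological closure to the Boolean supremum in $\Clopen(S)$, to be the one genuinely non-formal step; everything else is classical duality or routine bookkeeping with $\sigma$-ideals.
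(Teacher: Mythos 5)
Your proof is correct, and parts (i), (ii), and the verification that $\StoneFunc(\B)$ is a $\StoneCatSigma$-space (via the $\sigma$-algebra of clopen-modulo-meager sets, with the identity $\bigvee_n U_n = \overline{\bigcup_n U_n}$ and the nowhere-density of the boundary) follow the paper's argument essentially verbatim, just with more detail spelled out. Where you genuinely diverge is in the two morphism verifications. For the claim that $\StoneFunc(\Phi)$ pulls back Baire-meager sets to Baire-meager sets, the paper introduces an auxiliary class of ``Baire-meager*'' sets (Baire sets covered by countably many nowhere dense compact countable-intersections-of-clopens), shows these coincide with the Baire-meager sets, and checks pullback preservation at that concrete level; you instead identify $\Clopen(S)$ with the $\sigma$-quotient $\Baire(S)/\mathcal{M}_S$ and observe that $q_{S'}\circ(\StoneFunc(\Phi))^{-1}$ and $\Phi\circ q_S$ are $\sigma$-homomorphisms agreeing on the clopen generators, so the meager-ideal preservation falls out as a kernel condition. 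Similarly, for $\Clopen(f)$ being a $\SigmaAlg$-morphism you descend the preimage $\sigma$-homomorphism through the quotient, where the paper checks directly that decreasing clopen sequences with zero meet pull back to sequences with zero meet. Your route avoids the Baire-meager* bookkeeping entirely and is arguably cleaner, at the modest cost of having to set up the quotient identification $\Clopen(X)\cong\Baire(X)/\mathcal{M}_X$ as a $\SigmaAlg$-isomorphism up front — but that identification is implicit in the paper's development anyway, so nothing is lost. The remaining steps (restriction of the classical Stone natural isomorphisms, faithfulness/fullness of the forgetful functors) are handled the same way in both arguments.
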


\begin{proof} For (i), observe that as the clopen subsets of the $X$ separate points, the linear combinations of indicator functions of these clopen subsets are dense in $\CFunc(X)$ by the Stone-Weierstrass theorem.  The claim follows.

The claim (ii) is immediate from the Baire category theorem (no non-empty clopen set is meager).
Now we turn to (iii), (iv).  The well-definedness of the categories and functors in Definition  \ref{stone-dual}(i)-(v) is clear.  The fact that $\StoneFunc, \Clopen$ give a duality of categories is standard (e.g., see \cite[Chapter 3]{koppelberg89}).  To verify that $\StoneFuncSigma$ is well-defined, we need to show that for a $\SigmaAlg$-algebra $\B$, that the Baire sets of $\StoneFunc(\B_\Bool)$ are clopen modulo Baire-meager sets, and for a $\SigmaAlg$-morphism $\phi : \B \to \B'$ that the $\StoneCat$-morphism $$\StoneFunc(\phi_\Bool) : \StoneFunc(\B'_\Bool) \to \StoneFunc(\B'_\Bool)$$ pulls back Baire-meager sets to Baire-meager sets.  For the first claim, observe from the $\sigma$-completeness of $\B$ that the collection of subsets of $\StoneFunc(\B_\Bool)$ that differ from a clopen set by a Baire-meager set is a $\sigma$-algebra of Baire sets containing the clopen sets, giving the claim.  For the second claim, let us call a subset of a $\StoneCat$-space $X$ \emph{Baire-meager*} if it is Baire measurable and can be covered by countably many nowhere dense compact sets, each of which is the intersection of countably many clopen sets.  Repeating the arguments from the first claim we see that every Baire set is uniquely representable as a clopen set modulo Baire-meager* sets, hence the notions of Baire-meager* and Baire-meager coincide (since trivially every Baire-meager* set is Baire-meager).  It is not difficult to verify that $\StoneFunc(\phi_\Bool)$ pulls back Baire-meager* sets to Baire-meager* sets, giving the second claim.  

To verify that $\ClopenSigma$ is well-defined, we have to show that for a $\StoneCatSigma$-space $X$, that the $\Bool$-algebra $\Clopen(X_\StoneCat)$ is $\sigma$-complete, and that for a $\StoneCatSigma$-morphism $T \colon X \to Y$, the $\Bool$-morphism $\Clopen(T_\StoneCat)$ can be promoted to a $\SigmaAlg$-morphism.  For the first claim, let $E_n, n \in \N$ be an increasing sequence of clopen sets in $X_\StoneCat$, then $\bigcup_{n \in \N} E_n$ is Baire measurable, hence equal modulo a Baire-meager set to a unique clopen set $E$.  By the Baire category theorem, the clopen Baire-meager sets $E_n \backslash E$ are empty, thus $E$ is  the join of the $E_n$ in the clopen $\Bool$-algebra, giving\footnote{We caution however that the $\sigma$-completeness of the clopen algebra does \emph{not} imply that a countable union of clopen sets is clopen, because the countable join of the clopen algebra need \emph{not} be given by countable union.} the first claim.  For the second claim, if $E_n$ is a decreasing sequence of clopen sets in $\Clopen(X_\StoneCat)$ with $\bigwedge_{n \in \N} E_n = 0$, then $\bigcap_{n \in \N} E_n$ is Baire-meager, hence so is the pullback $\bigcap_{n \in \N} T^* E_n$, hence $\bigwedge_{n \in \N} T^* E_n = 0$, giving the second claim. 

If $X$ is a $\StoneCatSigma$-space, then  we see that $\StoneFuncSigma(\ClopenSigma(X))$ is equal in $\StoneCat$ to $\StoneFunc(\Clopen(X_\StoneCat)))$ by chasing the definitions, which by ordinary Stone duality is $\StoneCat$-isomorphic to $X$.  Therefore the $\StoneCatSigma$-spaces  $X$ and $\StoneFuncSigma(\ClopenSigma(X))$  are homeomorphic, hence also $\StoneCatSigma$-isomorphic since the definition of the category $\StoneCatSigma$ is purely topological in nature.  It is then a routine matter to verify that this isomorphism is natural.  Similarly, if $\B$ is a $\SigmaAlg$ space, then $\ClopenSigma(\StoneFuncSigma(\B))$ is $\Bool$-isomorphic to $\B$, hence also $\SigmaAlg$-isomorphic as the definition of $\SigmaAlg$ is purely Boolean algebra-theoretic in nature, and again it is a routine matter to verify that the isomorphism is natural.  The remaining claims in (iii), (iv) then follow from a tedious but routine verification.
\end{proof}

As one quick corollary of the above proposition we see that a categorical product on $\StoneCat$ (resp. $\StoneCatSigma$) exists and agrees with the categorical coproduct on $\Bool$ (resp. $\SigmaAlg$) with respect to $\StoneFunc$, $\Clopen$ (resp. $\StoneFuncSigma$, $\ClopenSigma$).  The $\StoneCat$ product can be verified to agree with the $\CH$ product with respect to the forgetful faithful functor, but the situation with the $\StoneCatSigma$ product is more subtle; as we shall see in Remark \ref{stonecat-remark}, the $\StoneCatSigma$ product is a (non-trivial) quotient of the $\StoneCat$ product.  Similarly for $\AbsMes$ (which is an equivalent category to $\StoneCat_\sigma$, as can be seen from Figure \ref{fig:altcon}).  As another application of the Stone dualities in the above proposition and
Lemma \ref{bool-props}(i), as well as\footnote{One also needs the fact (easily obtained from Zorn's lemma) that any $\Bool$-homomorphism $\phi \colon {\mathcal B}' \to \{0,1\}$ on a $\Bool$-subalgebra ${\mathcal B}'$ of a $\Bool$-algebra ${\mathcal B}$ can be extended (not necessairly uniquely) to ${\mathcal B}$.} Lemma \ref{epimorph}, we see that for $\Cat = \StoneCat, \StoneCatSigma$, that a $\Cat$-morphism is a $\Cat$-monomorphism (resp. a $\Cat$-epimorphism) if and only if it is injective (resp. surjective).

Next, we ``factor'' the forgetful functors from $\CHProb$, $\ConcProb$, $\AbsProb$ to $\CH$, $\ConcMes$, $\AbsMes$ respectively in Figure \ref{fig:prob-mes} by inserting categories intermediate between measurable spaces and measure spaces, in which there is an ideal of null sets, but no actual measure assigned to the space.

\begin{definition}[Null set categories]\label{null-def}\ 
\begin{itemize}
\item[(i)]  An \emph{$\AbsDelete$-space} is a pair $X = (X_\AbsMes, {\mathcal N}_X)$, where $X_\AbsMes = \X$ is an $\AbsMes$-space and ${\mathcal N}_X$ is a $\sigma$-ideal of $\X$ (a downwardly closed subset of $\X$ containing $0$ that is closed under countable joins).  Elements of ${\mathcal N}_X$ will be called \emph{null sets} of the $\AbsDelete$-space, and ${\mathcal N}_X$ itself will be called the \emph{null ideal}.  An \emph{$\AbsDelete$-morphism} $T \colon X \to Y$ between $\AbsDelete$-spaces $X = (X_\AbsMes, {\mathcal N}_X)$,  $Y = (Y_\AbsMes, {\mathcal N}_Y)$ is an $\AbsMes$-morphism $T_\AbsMes \colon X_\AbsMes \to Y_\AbsMes$ such that $T_\SigmaAlg( {\mathcal N}_X ) \subseteq{\mathcal N}_Y$ (i.e., null sets pull back to null sets).  There are  obvious forgetful functors from $\AbsDelete$ to $\AbsMes$, and from $\AbsProb$ to $\AbsDelete$ (where the null ideal is the ideal of sets of measure zero).
\item[(ii)]  A \emph{$\ConcDelete$-space} is a pair $X = (X_\ConcMes, {\mathcal N}_X)$, where $X_\ConcMes$ is a $\ConcMes$-space, and $X_\AbsDelete \coloneqq (X_\AbsMes, {\mathcal N}_X)$ is an $\AbsDelete$-space.  A \emph{$\ConcDelete$-morphism} $T \colon X \to Y$ is a $\ConcMes$-morphism $T_\ConcMes \colon X_\ConcMes \to Y_\ConcMes$ such that $T_\AbsMes \colon X_\AbsMes \to Y_\AbsMes$ can be promoted to an $\AbsDelete$-morphism from 
$X_\AbsDelete$ to $Y_\AbsDelete$.  There are obvious forgetful functors from $\ConcDelete$ to $\ConcMes$ and $\ConcProb$ to $\ConcDelete$, and an abstraction functor $\Abs$ from $\ConcDelete$ to $\AbsDelete$.
\item[(iii)]  A \emph{$\CHDelete$-space} is a pair $X = (X_\CH, {\mathcal N}_X)$, where $X_\CH$ is a $\CH$-space, and $X_\ConcDelete \coloneqq (X_\ConcMes, {\mathcal N}_X)$ is a $\ConcDelete$-space.  A \emph{$\CHDelete$-morphism} $T: X \to Y$ is a $\CH$-morphism $T_\CH :X_\CH \to Y_\CH$ such that $T_\ConcMes : X_\ConcMes \to Y_\ConcMes$ can be promoted to an $\ConcDelete$-morphism from 
$X_\ConcDelete$ to $Y_\ConcDelete$.  There are obvious forgetful functors from $\CHDelete$ to $\CH$, from $\CHDelete$ to $\ConcDelete$, and from $\CHProb$ to $\ConcDelete$.
\item[(iv)]  If $X$ is a $\StoneCatSigma$-space, $\BaireMeagFunc(X) = X_\CHDelete$ is the $\CHDelete$-space $(X_\CH, {\mathcal N}_X)$, where ${\mathcal N}_X$ is the ideal of Baire-meager sets in $X_\CH$.  If $T \colon X \to Y$ is a $\StoneCatSigma$-space, then $\BaireMeagFunc(T) = T_\CHDelete$ is the unique promotion of $T_\CH \colon X_\CH \to Y_\CH$ to a $\CHDelete$-morphism from $X_\CHDelete$ to $Y_\CHDelete$.  (Here it is important that $\StoneCatSigma$-morphisms pull back Baire-meager sets to Baire-meager sets.)
\end{itemize}
\end{definition}

It is easy to verify that the categories and functors in Definition \ref{null-def} are well-defined. This defines all the casting functors (the functors in  blue) in Figure \ref{fig:altcon}, and it is routine to check that these casting functors commute with each other (and with the casting functors in Figure \ref{fig:prob-mes}), and have the indicated faithfulness and fullness properties.

We define the \emph{Loomis--Sikorski functor} $\Loomis \colon \AbsMes \to \CHDelete$ by the formula
$$ \Loomis \coloneqq \BaireMeagFunc \circ \StoneFuncSigma.$$
From the functorial properties already established in Figure \ref{fig:altcon} we see that $\Loomis$ is full and faithful, as depicted in that figure.  This functor can be viewed as an analogue of the canonical model functor $\Stone \colon \ProbAlg \to \CHProb$, but between categories of measurable spaces rather than categories of probability spaces.

Next, we define a \emph{deletion functor} $\ominus$ from $\AbsDelete$ to $\AbsMes$:

\begin{definition}[Deletion functor]\ 
\begin{itemize}
    \item[(i)] If $X = (\X, {\mathcal N}_X)$ is an $\AbsDelete$-space, we define $\ominus(X)$ to be the $\AbsMes$-space 
    $$ \ominus(X) \coloneqq \X / {\mathcal N}_X.$$
    \item[(ii)]  If $T \colon X \to Y$ is an $\AbsDelete$-morphism between $\AbsDelete$-spaces $X = (\X, {\mathcal N}_X)$, $Y = (\Y, {\mathcal N}_Y)$, we let $\ominus(T) \colon \ominus(X) \to \ominus(Y)$ be the $\AbsMes$-morphism defined by setting $\ominus(T)_\SigmaAlg \colon \Sigma_Y / {\mathcal N}_Y \to \Sigma_X / {\mathcal N}_X$ be the descent of $T_\SigmaAlg \colon \Sigma_Y \to \Sigma_X$ by quotienting out the null ideals.
\end{itemize}
\end{definition}

It is not difficult to verify that $\ominus$ is a   functor from $\AbsDelete$ to $\AbsMes$.

\begin{remark}\label{incl} Using Stone duality, one can identify an $\AbsDelete$-space with a $\StoneCatSigma$-space $X$ together with an open subset $U$ of $X$ with the property that the countable join (in the clopen algebra) of any clopen subsets of $U$ remains in $U$.  The deletion functor then corresponds to deleting this open set $U$ from the $\StoneCatSigma$-space $X$ to create a new $\StoneCatSigma$-space $X \backslash U$. This may help explain the term ``deletion functor''.  Related to this, there is a natural monomorphism from $\ominus$ to $\Forget_{\AbsDelete \to \AbsMes}$, where the $\AbsMes$-inclusion $\iota \colon \ominus(X) \to X_\AbsMes$ for an $\AbsDelete$-space $X$ is defined by requiring $\iota_\SigmaAlg \colon \Sigma_X \to \Sigma_X / {\mathcal N}_X$ to be the quotient map.  It is a routine matter to verify that this is indeed a natural monomorphism.
\end{remark}

\begin{remark}  Applying Stone and Gelfand duality to the full and faithful functor of $\StoneCat$ to $\CH$, one expects to have a full faithful functor from $\Bool$ to $\CStarAlgUnit$.  This functor can be described explicitly by mapping a $\Bool$-algebra $\B$ to the associated $\CStarAlgUnit$-algebra $\overline{\C \otimes \B}$ formed by taking the $C^*$-algebra closure of formal complex linear combinations of elements of $\B$ (which can be given the structure of a *-algebra), and also mapping $\Bool$-morphisms accordingly.  We leave the details to the interested reader.
\end{remark}

Now we can give our version of the well-known Loomis--Sikorski theorem that gives a concrete representation to $\SigmaAlg$-algebras (or $\AbsMes$-spaces).

\begin{theorem}[Loomis--Sikorski theorem]\label{ls-thm}   The functor $\ominus \circ \Cast_{\CHDelete \to \AbsDelete} \circ \Loomis$ is naturally isomorphic to $\ident_{\AbsMes}$.  In particular, by Remark \ref{incl}, there is a natural monomorphism from $\ident_{\AbsMes}$ to
$\Cast_{\CHDelete \to \AbsMes} \circ \Loomis$.
\end{theorem}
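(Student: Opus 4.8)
The plan is to unwind the composite $\ominus \circ \Cast_{\CHDelete \to \AbsDelete} \circ \Loomis$ at the level of objects and morphisms and match it against the identity, using only Stone duality (Proposition \ref{prelim-ls}) and the combinatorial description of $\BaireMeagFunc$ and $\ominus$. Fix a $\SigmaAlg$-algebra $\B$, viewed as the $\AbsMes$-space $X = \B$. First I would compute $\Loomis(X) = \BaireMeagFunc(\StoneFuncSigma(\B))$: by Definition \ref{stone-dual}, $\StoneFuncSigma(\B)$ is the Stone space $\StoneFunc(\B_\Bool) = \Hom_\Bool(\B \to \{0,1\})$, equipped with the $\StoneCatSigma$-structure; by Definition \ref{null-def}(iv), $\BaireMeagFunc$ promotes this to the $\CHDelete$-space whose null ideal ${\mathcal N}$ is the ideal of Baire-meager sets in $\StoneFunc(\B_\Bool)$. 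Casting down to $\AbsDelete$ and applying $\ominus$ then yields the $\AbsMes$-space $\Sigma / {\mathcal N}$, where $\Sigma$ is the Baire $\sigma$-algebra of $\StoneFunc(\B_\Bool)$ (viewed abstractly). The crux is the identification $\Sigma / {\mathcal N} \cong \B$ as $\SigmaAlg$-algebras.

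The key step is therefore: the Baire $\sigma$-algebra of $\StoneFunc(\B_\Bool)$, modulo Baire-meager sets, is canonically $\SigmaAlg$-isomorphic to $\B$. Here I would argue exactly as in the proof of Proposition \ref{prelim-ls}: by part (i) of that proposition, $\Sigma$ is generated (as a $\sigma$-algebra) by the clopen sets of $\StoneFunc(\B_\Bool)$, which by ordinary Stone duality are exactly $\Clopen(\StoneFunc(\B_\Bool)) \cong \B_\Bool$; by the $\sigma$-completeness argument in that proof, every Baire set differs from a unique clopen set by a Baire-meager set (using part (ii) of Proposition \ref{prelim-ls}, i.e.\ the Baire category theorem, for uniqueness). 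Hence the map sending a clopen set to its class in $\Sigma / {\mathcal N}$ is a bijection onto $\Sigma / {\mathcal N}$. One checks this respects the Boolean operations, and — crucially — that it respects countable joins: a countable join in $\B$ of clopen sets $E_n$ corresponds to the clopen set $E$ such that $\bigcup_n E_n$ differs from $E$ by a Baire-meager set (the clopen Baire-meager sets $E_n \setminus E$ being empty by Baire category, exactly as in the well-definedness argument for $\ClopenSigma$ in Proposition \ref{prelim-ls}). This gives a $\SigmaAlg$-isomorphism $\Phi_\B \colon \B \to \ominus(\Cast_{\CHDelete \to \AbsDelete}(\Loomis(\B)))$.

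Having constructed $\Phi_\B$ objectwise, the remaining work is naturality: for a $\SigmaAlg$-morphism $\phi \colon \B \to \B'$ (equivalently an $\AbsMes$-morphism the other way), one must check that the square relating $\Phi_\B, \Phi_{\B'}$ and the images of $\phi$ under the two functors commutes. This reduces, by density of simple functions / the fact that both sides are $\SigmaAlg$-morphisms determined by their values on clopen generators, to the statement that $\StoneFunc(\phi_\Bool)$ pulls back clopen sets compatibly with $\phi$ — which is the definition of $\StoneFunc$ — together with the fact (already verified inside Proposition \ref{prelim-ls}) that $\StoneFunc(\phi_\Bool)$ pulls back Baire-meager sets to Baire-meager sets, so the induced map descends correctly to the quotients. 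The final sentence of the theorem is then immediate: combining the natural isomorphism $\ominus \circ \Cast_{\CHDelete \to \AbsDelete} \circ \Loomis \cong \ident_{\AbsMes}$ with the natural monomorphism $\iota \colon \ominus \Rightarrow \Forget_{\AbsDelete \to \AbsMes}$ of Remark \ref{incl} (precomposed with $\Cast_{\CHDelete \to \AbsDelete} \circ \Loomis$) produces a natural monomorphism $\ident_{\AbsMes} \Rightarrow \Cast_{\CHDelete \to \AbsMes} \circ \Loomis$, as claimed.

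I expect the main obstacle to be purely bookkeeping rather than conceptual: there is nothing deep left once Proposition \ref{prelim-ls} is in hand, but one must be careful that the quotient $\Sigma/{\mathcal N}$ is formed in the $\AbsMes$ (i.e.\ $\sigma$-complete Boolean) sense and that the countable-join structure transported across $\Phi_\B$ is the intrinsic one on $\B$ — the subtlety flagged in the footnote to Proposition \ref{prelim-ls} that a countable union of clopen sets need not be clopen means the isomorphism must be checked on joins directly, not inferred from set-theoretic unions. Verifying naturality with all the casting functors suppressed is where an error would most plausibly hide, so I would write that square out explicitly on clopen generators.
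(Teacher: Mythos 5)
Your proposal is correct and follows essentially the same route as the paper's proof: both construct the natural isomorphism via the $\SigmaAlg$-morphism sending an element of the algebra to the class of its associated clopen set in the Baire algebra modulo Baire-meager sets, with injectivity from Proposition \ref{prelim-ls}(ii) and surjectivity from the $\StoneCatSigma$-property of $\Loomis(X)$. Your extra care about countable joins (which the paper dismisses as "clear") is exactly the right place to be careful, and your naturality check matches the paper's "routine" verification.
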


\begin{proof} If $X$ is an $\AbsMes$-space, we define the associated $\AbsMes$-isomorphism
$$ \phi_X \colon \ominus( \Loomis(X)_\AbsDelete ) = \Loomis(X)_\SigmaAlg / {\mathcal N}_{\Loomis(X)} \to X $$
via its opposite
$$ (\phi_X)_\SigmaAlg \colon \Sigma_X \to \Loomis(X)_\SigmaAlg / {\mathcal N}_{\Loomis(X)}$$
by the formula
$$ (\phi_X)_\SigmaAlg(E) \coloneqq \pi(\{ \alpha \in \Loomis(X) \colon \alpha(E) = 1 \}),$$
where $\pi \colon \Loomis(X)_\SigmaAlg  \to \Loomis(X)_\SigmaAlg  / {\mathcal N}_{\Loomis(X)}$ is the quotient $\SigmaAlg$-morphism.  It is clear that $(\phi_X)_\SigmaAlg$ is a $\SigmaAlg$-morphism; it is injective by Proposition \ref{prelim-ls}(ii), and surjective because $\Loomis(X)_\CH$ can be promoted to a $\StoneCatSigma$-space. By Lemma \ref{bool-props}(ii), $\phi_X$ is an $\AbsMes$-isomorphism, and it is a routine matter to then conclude that $X \mapsto \phi_X$ is a natural isomorphism. 
\end{proof}

\begin{remark}\label{rem-loomissikorski}  The usual formulation of the Loomis--Sikorski theorem (as given for instance in \cite[314M]{fremlinvol3}) completes the Baire $\sigma$-algebra $\Loomis(X)_\SigmaAlg$ on $\Loomis(X)$ by including any set which differs from a clopen set by an \emph{arbitrary} meager set (not just a Baire-meager set), and similarly enlarging the null ideal to contain all meager sets.  From Proposition \ref{prelim-ls}(ii), this does not affect the quotient $\SigmaAlg$-algebra which remains isomorphic to $X_\SigmaAlg$.  However, this modification of $\Loomis(X)$ would no longer lie in $\CHDelete$ as the $\sigma$-algebra no longer is given by the Baire $\sigma$-algebra.  One can view this more traditional Loomis--Sikorski construction as the completion of the one used in this paper, but we have (perhaps surprisingly) found the hypothesis of completeness for the $\sigma$-algebras one encounters to be of little benefit, whereas the use of Baire $\sigma$-algebras is much more compatible with the topological structure of the spaces involved. 
\end{remark}

We now construct an alternate version $\Stone' \colon \ProbAlg \to \CHProb$ of the canonical model functor $\Stone \colon \ProbAlg \to \CHProb$.

\begin{theorem}[Alternate canonical model functor]\label{alt-canon-thm}\ 
\begin{itemize}
    \item[(i)]  There exists a unique   functor $\Stone' \colon \ProbAlg \to \CHProb$ such that 
    $$\Cast_{\CHProb \to \CHDelete} \circ \Stone' = \Loomis \circ \Cast_{\AbsProb \to \AbsMes} \circ \Inc$$
    and the natural monomorphism from $$\Forget_{\AbsProb \to \AbsMes} \circ \Inc$$  to $$\Cast_{\CHDelete \to \AbsMes} \circ \Loomis \circ \Cast_{\AbsProb \to \AbsMes} \circ \Inc$$ can be promoted to a natural monomorphism from $\Inc$ to $\Cast_{\CHProb \to \AbsProb} \circ \Stone'$.
    \item[(ii)]  $\Stone'$ is naturally isomorphic to $\Stone$.
\end{itemize}
\end{theorem}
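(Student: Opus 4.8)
The plan is to first construct $\Stone'$ explicitly from the Loomis--Sikorski functor, then identify it with $\Stone$ by exhibiting the two as Riesz duals of naturally isomorphic tracial $C^*$-algebras.

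\textbf{Construction and uniqueness, part (i).} Given a $\ProbAlg$-space $X = (\X,\mu_X)$ with underlying $\sigma$-complete Boolean algebra $\X = \Sigma_X$, the functor $\Loomis$ applied to $X_\AbsMes$ produces the $\CHDelete$-space $\Loomis(X) = (\Loomis(X)_\CH, {\mathcal N})$, where by Proposition \ref{prelim-ls} the space $\Loomis(X)_\CH = \StoneFuncSigma(\X)_\CH$ is a $\StoneCatSigma$-space and ${\mathcal N}$ is simultaneously the ideal of Baire-meager sets and the ideal of Baire sets that are clopen modulo a meager set; moreover Theorem \ref{ls-thm} furnishes a natural $\AbsMes$-isomorphism $\phi_X\colon \ominus(\Loomis(X)_\AbsDelete) \to X_\AbsMes$. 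Transporting $\mu_X$ along $\phi_X$ and pulling it back along the quotient $\SigmaAlg$-morphism $q_X \colon \Loomis(X)_\SigmaAlg \to \Loomis(X)_\SigmaAlg/{\mathcal N}$ defines a probability measure $\mu$ on the Baire $\sigma$-algebra of $\Loomis(X)_\CH$, and strict positivity of $\mu_X$ on the probability algebra forces the $\mu$-null Baire sets to be exactly ${\mathcal N}$. Since $\Loomis(X)_\CH$ is compact Hausdorff, $\mu$ is automatically Radon by Proposition \ref{automatic}(iii), so $\Stone'(X) \coloneqq (\Loomis(X)_\CH, \mu)$ is a $\CHProb$-space whose underlying $\CHDelete$-space is $\Loomis(X)$; setting $\Stone'(T) \coloneqq \Loomis(T)$ for a $\ProbAlg$-morphism $T$, the only point to check is that $\Loomis(T)$ is measure-preserving, which follows from the naturality in Theorem \ref{ls-thm} (which identifies the descent of $\Loomis(T)$ with $T$) together with the measure-preserving property of $T$. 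The first compatibility in (i) holds by construction. For uniqueness: the identity $\Cast_{\CHProb \to \CHDelete} \circ \Stone' = \Loomis \circ \Cast_{\AbsProb \to \AbsMes} \circ \Inc$ fixes the underlying $\CH$-space and null ideal of each $\Stone'(X)$ and, since $\Cast_{\CHProb\to\CHDelete}$ is faithful, fixes $\Stone'$ on morphisms; and the requirement that the Loomis--Sikorski natural monomorphism promote to a natural monomorphism $\Inc \to \Cast_{\CHProb \to \AbsProb}\circ \Stone'$ is exactly the assertion that the canonical inclusion $\Inc(X) \to \Stone'(X)_\AbsProb$ be measure-preserving, i.e.\ $\mu_{\Stone'(X)} = \mu_X \circ q_X$, which pins down the measure.

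\textbf{Comparison with $\Stone$, part (ii).} Recall from Section \ref{canonical-sec} that $\Stone(X) = \Riesz(\Linfty(X))$, with $\Linfty(X)$ regarded as a $\CStarAlgUnitTrace$-algebra, and observe that likewise $\Stone'(X) = (\Loomis(X)_\CH, \mu)$ is naturally $\CHProb$-isomorphic to $\Riesz(\CFunc(\Loomis(X)_\CH))$ (equipped with the trace $\int \cdot\, d\mu$) by Gelfand duality and the uniqueness clause of Theorem \ref{rrt}(i). Since $\Riesz$ is a functor, it suffices to produce a natural $\CStarAlgUnitTrace$-isomorphism $\CFunc(\Loomis(X)_\CH) \cong \Linfty(X)$. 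By the Stone--Weierstrass theorem $\CFunc(\StoneFuncSigma(\X)_\CH)$ is the sup-norm completion of the simple functions $\sum_n c_n 1_{C_{E_n}}$ (with $E_n \in \X$ and $C_{E_n} \subseteq \StoneFuncSigma(\X)_\CH$ the associated clopen set), while $\Linfty(X)$ is the $\Linfty$-completion of the simple functions $\sum_n c_n 1_{E_n}$. Writing any such simple function over a partition of $1$ in $\X$ into nonzero pieces, both norms equal $\max_n |c_n|$ --- on the topological side because each value is attained on a nonempty clopen set, on the algebraic side because the sublevel sets of $\sum_n c_n 1_{E_n}$ are joins of the $E_n$ and equal $1$ only when every $|c_n| \le M$. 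Hence $1_{C_E} \mapsto 1_E$ extends to an isometric unital $*$-isomorphism of the completions, and it is trace-preserving since $\int 1_{C_E}\, d\mu = \mu(C_E) = \mu_X(E)$. Naturality in $X$ is verified on simple functions using $\Loomis(T)^{-1}(C_E) = C_{T_\SigmaAlg(E)}$ and $\Linfty(T)(1_E) = 1_{T_\SigmaAlg(E)}$. Applying $\Riesz$ yields the required natural isomorphism $\Stone' \cong \Stone$.

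\textbf{Main obstacle.} The substantive input is the $\StoneCatSigma$-structure of $\Loomis(X)_\CH$ from Proposition \ref{prelim-ls} --- every Baire set is clopen modulo a meager set --- which is what makes ``null $=$ Baire-meager'' and, combined with the full support of $\mu$ (a consequence of strict positivity of $\mu_X$ on the probability algebra), underlies the coincidence of the two simple-function norms; everything else is bookkeeping to keep the casting and forgetful identifications of Figure \ref{fig:altcon} straight. As a byproduct one recovers the strong Lusin property for $\Stone'(X)$ --- either directly, by approximating a bounded Baire function uniformly by simple functions, replacing each clopen-mod-null representative by its clopen part, and using that on a full-support $\CH$-space the $\Linfty$-norm and sup-norm of a continuous function agree so the approximants converge uniformly, or a posteriori from $\Stone'(X) \cong \Stone(X)$ and Proposition \ref{model-basic}(ii) --- and hence, via Proposition \ref{canon-model}(iii), that $\Stone'(X)$ is an initial object of $\Model(X)$, in agreement with $\Stone$ being left adjoint to $\Cast_{\CHProb\to\ProbAlg}$.
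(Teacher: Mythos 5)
Your proposal is correct. Part (i) is essentially the paper's own argument: the same definition of $\Stone'(X)$ as the promotion of $\Loomis(\Inc(X)_\AbsMes)$ carrying the transported measure, the same identification of the null Baire sets with the Baire-meager ideal via strict positivity of $\mu_X$ on nonzero elements of the probability algebra, and the same reading of the two stated conditions as pinning down, respectively, the underlying $\CHDelete$-data and the measure.

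For part (ii) you take a genuinely different route. The paper first proves that $\Stone'(X)$ satisfies the strong Lusin property (every $\Linfty$-class of an indicator contains the unique clopen indicator representing it, then density; injectivity of $\CFunc(\Stone'(X)) \to \Linfty(\Stone'(X))$ from positivity of nonempty clopen sets), and then concludes by showing that both $(\Stone'(X),\iota_X)$ and $(\Stone(X),\iota_{\Stone(X)})$ are initial objects of $\Model(X)$ via Proposition \ref{canon-model}(iii), so that the natural isomorphism comes for free from uniqueness of initial objects. You instead construct an explicit natural $\CStarAlgUnitTrace$-isomorphism $\CFunc(\Stone'(X)) \cong \Linfty(X)$ on simple functions ($1_{C_E} \mapsto 1_E$, with the two norms both equal to $\max_n|c_n|$ over a partition into nonzero pieces -- a fact that, as you observe, rests on Stone duality matching nonzero elements of the algebra with nonempty clopen sets), extend by completion, and push the isomorphism through $\Riesz$ together with the counit $\Riesz\circ\CFunc\cong\ident_{\CHProb}$ of Riesz duality. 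The core computation (clopen indicators are dense and the identification is isometric and trace-preserving) is shared between the two proofs, but your packaging avoids any appeal to the universality machinery of $\Model(X)$ and makes the naturality of the comparison isomorphism completely explicit, at the cost of a hands-on verification of the naturality square on simple functions; the paper's packaging obtains naturality abstractly and records the strong Lusin property of $\Stone'(X)$ as a stated intermediate rather than a byproduct. One small remark on your closing paragraph: the coincidence of the two simple-function norms uses only Stone duality (nonzero $E$ versus nonempty $C_E$) and the fact that $\Linfty(X)$ is defined on the probability algebra where null sets are already absent; the identity ``null $=$ Baire-meager'' for $\mu$ is what is needed in part (i) to verify $\Cast_{\CHProb\to\CHDelete}\circ\Stone' = \Loomis\circ\Cast_{\AbsProb\to\AbsMes}\circ\Inc$, rather than for the norm comparison itself. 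This does not affect the correctness of the proof.
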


We remark that a variant of this construction appears implicitly in \cite[\S 3]{doob-ratio}, where in particular the strong Lusin property of the model is noted, which is already also mentioned in \cite{dieudonne-counter,halmos-dieudonne} prior to \cite{doob-ratio}.  

\begin{proof}  For (i), we define $\Stone'(X)$ for a $\ProbAlg$-space $X$ to be the promotion of $\Stone'(X)_\ConcDelete \coloneqq \Loomis(\Inc(X)_\AbsMes)$ to a $\CHProb$-space defined by setting $\mu_{\Stone'(X)}$ to be the pushforward of $\mu_X$ using the natural $\AbsMes$-inclusion from $\Inc(X)_\AbsMes$ to $\Stone'(X)_\AbsMes$, and for any $\ProbAlg$-morphism $T \colon X \to Y$ defining $\Stone'(T) \colon \Stone'(X) \to \Stone'(Y)$ to be the unique promotion of $\Stone'(T)_\ConcDelete = \Loomis(\Inc(T)_\AbsMes)$ to a $\CHProb$-morphism from $\Stone'(X)$ to $\Stone'(Y)$.  It is  a routine matter to show that this defines a   functor.  To verify the properties in (i), the only non-trivial task is to show that the null ideal of $\Stone'(X)$ agrees with the Baire-meager ideal.  By construction all Baire-meager sets have measure zero, hence as $\Stone'(X)$ comes from a $\StoneCatSigma$-space, it suffices to show that non-empty clopen sets have positive measure.  But by construction, the measure that $\mu_{\Stone'(X)}$ assigns to a clopen set is equal to the measure that $\mu_X$ assigns to the corresponding element of $\Inc(X)_\SigmaAlg$ arising from Stone duality, and the claim follows from the probability algebra nature of $X$.  Finally, the uniqueness claim in (i) is easily verified by expanding out all the definitions.

Now we prove (ii).  From (i) we see that for any $\ProbAlg$-space $X$, $(\Stone'(X), \iota_X)$ is a concrete model of $X$, where $\iota_X \colon \Inc(X) \to \Stone'(X)_\AbsProb$ is the natural monomorphism.  By construction, every indicator function in $\Linfty(\Stone'(X))$ is equal (modulo almost everywhere equivalence) to the unique indicator function of a clopen set, which of course lies in $\CFunc(\Stone'(X))$.  By linearity and density we conclude that every function in $\Linfty(\Stone'(X))$ is equivalent to a function in $\CFunc(\Stone'(X))$.  Since the topology of $\Stone'(X)$ is generated by clopen sets, and non-empty clopen sets have positive measure, we see that any two distinct elements in $\CFunc(\Stone'(X))$ also differ in $\Linfty(\Stone'(X))$.  Thus $\Stone'(X)$ obeys the strong Lusin property $\Linfty(\Stone'(X)) = \CFunc(\Stone'(X))$, hence by Proposition \ref{canon-model}(iii) $(\Stone'(X), \iota_X)$ is a initial concrete model of $X$.  By Proposition \ref{model-basic} the same is true for $\Stone$, and it is then a routine matter to construct the natural isomorphism between $\Stone$ and $\Stone'$.
\end{proof}

\begin{remark}
One can dispense with the Loomis--Sikorski functor to construct the canonical model directly from Stone duality\footnote{We thank the anonymous referee for suggesting this alternative proof.} by relying on the Carath\'eodory extension theorem. Indeed, if $(X,\mu)$ is a $\ProbAlg$-algebra, we can apply Stone duality, the Stone--Weierstra\ss \, theorem and the Carath\'eodory--Hahn--Kolmogorov extension theorem to extend the finitely additive measure $\mu$ from $\Clopen(\Stone(X))$ to $\Baire(\Stone(X))$, and then verify as above that the null ideal  coincides with the ideal of Baire meager sets, giving the claim. 
\end{remark}

In view of this natural isomorphism (and also because $\Stone'$ is easily verified to be injective on objects) one can replace $\Stone'$ by $\Stone$ without any substantial change to the statements in this paper if desired.

\begin{remark}[Equivalent forms of the strong Lusin property]\label{lusin-equiv}  From the above equivalences it is not difficult to see that for any $\CHProb$-space $X$, the following claims are equivalent:
\begin{itemize}
\item[(i)]  $X$ has the strong Lusin property.
\item[(ii)]  $X$ is $\CHProb$-isomorphic to $\Stone(Y)$ (or equivalently, $\Stone'(Y)$) for some $\ProbAlg$-space $Y$.
\item[(iii)] $X$ is $\CH$-isomorphic to a $\StoneCatSigma$-space, and the ideal of Baire null sets coincides with the ideal of Baire-meager sets.
\end{itemize}
In particular, the measures on a $\CHProb$-space $X$ with the strong Lusin property are \emph{hyperdiffusive}\footnote{Such measures were also termed \emph{residual} in \cite{armstrong}.} in the sense of Fishel and Parret  \cite{fishel} (all measurable meager sets are null).  The results of \cite{fishel} then imply that such measures are also normal in the sense of Dixmier \cite{dixmier}, in that one has $\sup_\alpha \int_X f_\alpha = \int_X f$ whenever $f_\alpha, \alpha \in A$ is an increasing family in $\CFunc(X)$ indexed by a directed set $A$ that has a least upper bound $f$ in the lattice $\CFunc(X)$.  (Note that this is \emph{not} the same as asserting that $f$ is the pointwise supremum of the $f_\alpha$.)  Also, because the $\SigmaAlg$-algebra associated to a $\ProbAlg$-space is complete, one can show that spaces obeying any of (i), (ii), (iii) are not merely $\StoneCatSigma$-spaces, but are in fact Stonean spaces (extremally disconnected Stone spaces), as the category of such spaces (with open continuous morphisms) is known (see, e.g., \cite{sikorski2013boolean}) to be dual to the category of complete Boolean algebras. We also remark that measures obeying the second conclusion of (iii) are referred to as \emph{category measures} in \cite{oxtoby}. 
We also mention \cite{nagel,nagel1,seever,stone} for variations of these themes in the theory of Banach lattices. 
\end{remark}

\begin{remark}\label{rem-lsmodel}
In \cite{fremlinvol3}, Fremlin employs the concrete model provided by the traditional Loomis--Sikorski representation (see Remark \ref{rem-loomissikorski}) to develop basic results in abstract measure theory (we collected some examples in Remark \ref{rem-lpspaces}). 
As shown in \cite[\S 363 C]{fremlinvol3}, the traditional Loomis--Sikorski concrete model enjoys a strong Lusin property. 
Also it can be used to define arbitrary categorical coproducts in $\SigmaAlg$ and arbitrarily infinite tensor products in $\ProbAlg$ (see \cite[Section 325]{fremlinvol3}). 
However it lacks the functorial properties of our canonical model (as developed in Section \ref{canonical-sec}) and thus the category-theoretical compatibility with the adjacent topological and functional analytic categories. For example we can provide two constructions of our canonical model based on Stone duality and on Riesz duality respectively, whereas the traditional Loomis--Sikorski concrete model rests only on Stone duality. This compatibility is essential in  applications of the canonical model to uncountable ergodic theory (cf., \cite{jt19,jt20,jamneshan2019fz,roth}). 
\end{remark}

\begin{remark}\label{rem-lpspaces}
As demonstrated in \cite{fremlinvol3}, one can develop many basic results in measure theory for measure algebras in abstract form and relate them to their classical counterparts for the traditional Loomis--Sikorski model. 
For example in \cite[Sections 363-366]{fremlinvol3}, abstract $L^p$-spaces on measure algebras are introduced.   
Given a $\Bool$-algebra $X$,  the space of abstract simple functions $\mathfrak{S}(X)$ is defined to be the linear hull of indicator functions $1_E$, where $E$ is in the clopen algebra  $\StoneFunc(X)_\Bool$ (see  \cite[\S 361 D]{fremlinvol3}). Then  the $L^\infty$-space of $X$ is defined to be $\CFunc{(\StoneFunc(X))}$ (see \cite[\S 363 A]{fremlinvol3}), and it shown that $\mathfrak{S}(X)$ is dense in $\CFunc{(\StoneFunc(X))}$ (see \cite[\S 363 C]{fremlinvol3}). 
If $(X,\mu)$ is a measure algebra, the abstract $L^\infty$-space of $X$ can be identified with the concrete $L^\infty$-space of its traditional Loomis--Sikorski space. 
This identification is in the sense of a simultaneous Riesz\footnote{A Riesz space is an ordered vector space in which the order and vector space structures are compatible.} space isomorphism  and Banach space isomorphism. Hence a strong Lusin property starting with  $\CFunc{(\StoneFunc(X))}$ as the definition of an abstract $L^\infty$-space is derived.  

The abstract $L^0$-space of $(X,\mu)$ is defined to be the set of all $\Set$-functions $f:\R\to X$ such that 
\begin{itemize}
    \item[(i)] $f(r)=\bigvee_{r'>r} f(r')$ for all $r\in \R$, 
    \item[(ii)] $\bigwedge_{r\in \R} f(r)=0$, 
    \item[(iii)] $\bigvee_{r\in \R} f(r)=1$,   
\end{itemize}
see \cite[\S 364 A]{fremlinvol3}. (This definition mimics the defining properties of the level set function $r\mapsto \{f> r\}$ for a real measurable function $f:X\to \R$, where now $X$ is a concrete measure space.) This abstract $L^0$-space is isomorphic to the space of $\SigmaAlg$-homomorphisms $\Hom{(\Borel(\R)\to X)}$ as Riesz spaces (see \cite[Theorem 364 D]{fremlinvol3}). Moreover, the abstract  $\mathfrak{S}(X),L^\infty(X)$ spaces are Riesz subspaces of the abstract $L^0(X)$ (see \cite[\S 364 K]{fremlinvol3}).  
Using the level-set description of abstract measurable maps, an abstract $L^1$-norm can be introduced by the traditional Lebesgue integral 
$$\|f\|_1\coloneqq \int_0^\infty \mu(|f|>r) dr,$$ 
which allows to derive a definition of abstract $L^p$-spaces as  
\[
L^p(X)\coloneqq \{f\in L^0(X)\colon \||f|^p\|_1<\infty \}, 
\]
where $\{|f|^p>r\}$ is equal to $\{|f|>r^{1/p}\}$ for $r\geq 0$ and $1\in X$ otherwise (see \cite[\S 366 A]{fremlinvol3}). 
It can be shown that these abstract $L^p$-spaces are isomorphic to the concrete $L^p$-spaces of the Loomis--Sikorski concrete model in the sense of Riesz space and Banach space isomorphies (see \cite[\S 365 B, 366 B]{fremlinvol3}).  
One can check that the definition of $L^\infty, L^2$ in \cite{fremlinvol3}, when applied to a $\ProbAlg$-space, agrees (up to natural identifications) with the one given here.

It is remarkable that several basic results such as the Radon-Nikod\'ym theorem, the $L^p$-$L^q$-duality and existence of conditional expectations have proofs in abstract $L^p$-spaces without using a concrete representation (cf., \cite[\S 366 D, \S 365 E, \S 365 R]{fremlinvol3}). The Radon-Nikod\'ym theorem can be used to construct relative products (see  \cite[Section 458]{fremlinvol4}).
\end{remark}

Next we give an explicit description of the categorical product in $\AbsMes$.  This will be done in terms of a categorical product on $\CHDelete$:

\begin{definition}[$\ConcDelete$, $\CHDelete$ and $\AbsMes$ products]\label{delete-prod}\text{}  
\begin{itemize}
    \item[(i)] Let $X_\alpha = ((X_\alpha)_\ConcMes, {\mathcal N}_{X_\alpha})$, $\alpha \in A$ be a family of $\ConcDelete$-spaces.  We define the product $\prod^\ConcDelete_{\alpha \in A} X_\alpha$ to be the $\ConcDelete$-space
$$X \coloneqq ( X_\ConcMes, {\mathcal N}_X ),$$
where $X_\ConcMes \coloneqq \prod^\ConcMes_{\alpha \in A} (X_\alpha)_{\ConcMes}$, and ${\mathcal N}_X$ is the $\sigma$-ideal of $X_\ConcMes$ generated by $\bigcup_{\beta \in A} ({\pi_\beta})_{\SigmaAlg}( \mathcal{N}_{X_\beta}),$ where $({\pi_\beta})_\ConcMes \colon X_\ConcMes \to (X_\beta)_\ConcMes$ are the canonical $\ConcMes$-projections.  We also promote the ${\pi_\beta}_\ConcMes$ to $\ConcDelete$-morphisms $\pi_\beta \colon X \to X_\beta$ in the obvious fashion.
    \item[(ii)]  If $X_\alpha$, $\alpha \in A$ are a family of $\CHDelete$-spaces, we define $\prod^\CHDelete_{\alpha \in A} X_\alpha$ to be the unique $\CHDelete$-space whose cast to $\Cat$ is $\prod^\Cat_{\alpha \in A} (X_\alpha)_\Cat$ for $\Cat = \CH, \ConcDelete$, and define the projections $\pi_\beta \colon \prod^\CHDelete_{\alpha \in A} X_\alpha \to X_\beta$ similarly.
    \item[(iii)]  Let $X_\alpha, \alpha \in A$ be a family of $\AbsMes$-spaces.  We define the product $\prod^\AbsMes_{\alpha \in A} X_\alpha$ as
    $$ \prod^\AbsMes_{\alpha \in A} X_\alpha \coloneqq \ominus \circ \Cast_{\CHDelete \to \AbsDelete} \left(  \prod^\CHDelete_{\alpha \in A} \Loomis(X_\alpha) \right) $$
    with the $\AbsMes$-projection morphisms ${\pi_\beta} \colon \prod^\AbsMes_{\alpha \in A} X_\alpha \to X_\beta$ defined analogously.
\end{itemize}
\end{definition}

\begin{proposition}[Universality of $\ConcDelete$-product and $\AbsMes$-product]\label{absmes-prod} \text{}  
\begin{itemize}
    \item [(i)]  For $\Cat = \ConcDelete, \CHDelete, \AbsMes$, the $\Cat$-product defined in Definition \ref{delete-prod} is universal.
    \item[(ii)]  The $\CH$, $\CHDelete$, $\ConcMes$, and $\ConcDelete$ products agree with each other with respect to forgetful functors, and the categorical product in $\ConcDelete$ agrees with the categorical product in $\AbsMes$ with respect to $\ominus \circ \Cast_{\ConcDelete \to \AbsDelete}$.
\end{itemize}
\end{proposition}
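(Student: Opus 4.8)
The plan is to establish the three product universality claims in (i) one category at a time, moving from the most concrete to the most abstract, and then deduce the compatibility claims in (ii) as essentially bookkeeping. First I would handle the $\ConcDelete$-product. Given $\ConcDelete$-spaces $X_\alpha$ and a test $\ConcDelete$-space $Z$ together with $\ConcDelete$-morphisms $f_\alpha \colon Z \to X_\alpha$, the universal property of $\prod^\ConcMes$ (Proposition \ref{prod-top}(i)) produces a unique $\ConcMes$-morphism $f \colon Z_\ConcMes \to (\prod^\ConcMes_{\alpha} (X_\alpha)_\ConcMes)$; the only thing to check is that $f$ promotes to a $\ConcDelete$-morphism, i.e.\ that $f_\SigmaAlg$ sends the null ideal ${\mathcal N}_X$ generated by $\bigcup_\beta (\pi_\beta)_\SigmaAlg({\mathcal N}_{X_\beta})$ into ${\mathcal N}_Z$. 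Since $\pi_\beta \circ f = f_\beta$ in $\ConcMes$ and each $f_\beta$ pulls back null sets to null sets, $f_\SigmaAlg$ pulls back every generator of ${\mathcal N}_X$ into ${\mathcal N}_Z$, and as ${\mathcal N}_Z$ is a $\sigma$-ideal this extends to all of ${\mathcal N}_X$ because $f_\SigmaAlg$ is a $\SigmaAlg$-morphism. This gives existence; uniqueness is inherited from $\ConcMes$ since the forgetful functor $\ConcDelete \to \ConcMes$ is faithful.

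Next, the $\CHDelete$-product. By Definition \ref{delete-prod}(ii), $\prod^\CHDelete_\alpha X_\alpha$ is the $\CHDelete$-space whose $\CH$-cast is $\prod^\CH_\alpha (X_\alpha)_\CH$ and whose $\ConcDelete$-cast is $\prod^\ConcDelete_\alpha (X_\alpha)_\ConcDelete$; one first checks these two casts are consistent, which reduces to the fact from Proposition \ref{prod-top}(i) and Figure \ref{fig:baire} that $\BaireFunc(\prod^\CH_\alpha (X_\alpha)_\CH) = \prod^\ConcMes_\alpha \BaireFunc((X_\alpha)_\CH)$ (this is the non-routine ingredient recorded in Proposition \ref{prod-top}(viii), relying on Weil's theorem). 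Given this, universality of $\prod^\CHDelete$ follows by combining universality of $\prod^\CH$ (for the topological part of a test morphism) with the $\ConcDelete$-universality just proved (for the null-ideal part): a $\CHDelete$-morphism into the product is exactly a $\CH$-morphism into $\prod^\CH$ whose $\ConcMes$-underlying map promotes to a $\ConcDelete$-morphism into $\prod^\ConcDelete$, and both pieces are determined uniquely.

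For the $\AbsMes$-product, Definition \ref{delete-prod}(iii) sets $\prod^\AbsMes_\alpha X_\alpha \coloneqq \ominus(\Cast_{\CHDelete\to\AbsDelete}(\prod^\CHDelete_\alpha \Loomis(X_\alpha)))$. The strategy is to show $\Hom_\AbsMes(Z \to \prod^\AbsMes_\alpha X_\alpha) = \prod_\alpha \Hom_\AbsMes(Z \to X_\alpha)$ for every $\AbsMes$-space $Z$, using the chain of adjunctions/identifications already available: the Loomis--Sikorski theorem (Theorem \ref{ls-thm}) identifies $X_\alpha$ with $\ominus(\Loomis(X_\alpha)_\AbsDelete)$ naturally, and $\ominus$ with the natural monomorphism $\iota$ of Remark \ref{incl} lets one pass between $\AbsDelete$-morphisms into an $\AbsDelete$-space $Y$ and $\AbsMes$-morphisms into $\ominus(Y)$ (morphisms out of $\ominus(Z_?)$... more precisely, one uses that $\Hom_\AbsMes(Z \to \ominus(Y))$ is in natural bijection with those $\AbsDelete$-morphisms from a suitable $\AbsDelete$-promotion of $Z$ to $Y$, since $\ominus$ is, on the Stone-dual side of Remark \ref{incl}, a "delete an open set" operation). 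Concatenating these with the $\CHDelete$-universality (cast down to $\AbsDelete$ via the faithful casting functor) yields the desired natural bijection. I expect \textbf{this last step to be the main obstacle}: one must be careful that $\ominus$ interacts correctly with products — i.e.\ that $\ominus(\prod^\CHDelete_\alpha \Loomis(X_\alpha)_\AbsDelete)$ really does represent the product functor on $\AbsMes$ and not merely a subobject of it — and the cleanest way is probably to verify the universal property directly against test objects rather than trying to commute $\ominus$ past the product. Finally, claim (ii) is immediate: the agreement of the $\CH$, $\CHDelete$, $\ConcMes$, $\ConcDelete$ products with respect to the forgetful functors is built into Definition \ref{delete-prod}(i)--(ii) by construction (and uses Proposition \ref{prod-top}(viii) for the $\CH$/$\ConcMes$ compatibility), while the agreement of the $\ConcDelete$-product with the $\AbsMes$-product with respect to $\ominus \circ \Cast_{\ConcDelete\to\AbsDelete}$ follows by unwinding Definition \ref{delete-prod}(iii) together with Theorem \ref{ls-thm} (so that $\Loomis$ followed by $\ominus \circ \Cast$ recovers the identity on $\AbsMes$, up to natural isomorphism), reducing everything to the already-established $\CHDelete$-product compatibility.
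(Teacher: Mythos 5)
Your treatment of the $\ConcDelete$ and $\CHDelete$ cases, and of part (ii), matches the paper's proof: lift through the $\ConcMes$ (resp.\ $\CH$) product, observe that the lift automatically respects the null ideals because each $f_\beta$ does and the product null ideal is generated by the pullbacks of the factor ideals, and use Proposition \ref{prod-top}(viii) for the $\CH$/$\ConcMes$ compatibility. The existence half of the $\AbsMes$ case is also exactly the paper's route: apply $\Loomis$ to the test family $f_\alpha \colon Z \to X_\alpha$, invoke the $\CHDelete$-universality just established to get a map into $\prod^\CHDelete_{\alpha}\Loomis(X_\alpha)$, and return via $\ominus \circ \Cast_{\CHDelete \to \AbsDelete}$ together with the natural isomorphism of Theorem \ref{ls-thm}.

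The one place you stop short is uniqueness in the $\AbsMes$ case --- equivalently, injectivity of the natural map from $\Hom_\AbsMes(Z \to \prod^\AbsMes_{\alpha} X_\alpha)$ to $\prod_{\alpha}\Hom_\AbsMes(Z \to X_\alpha)$ --- which you flag as ``the main obstacle'' but do not actually resolve (your worry about the construction yielding ``merely a subobject'' concerns surjectivity, which your existence argument already handles). The paper's resolution is short but is a genuine extra ingredient: a lift $f$ is determined by the compositions $\pi_\beta \circ f$ because, dually, the locus where two $\SigmaAlg$-morphisms out of $(\prod^\AbsMes_{\alpha} X_\alpha)_\SigmaAlg$ agree is a $\sigma$-subalgebra, so it suffices that the images of the $(\pi_\beta)_\SigmaAlg$ generate the whole $\SigmaAlg$-algebra of the product. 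That generation statement is first checked upstairs for $\prod^\CHDelete_{\alpha}\Loomis(X_\alpha)$, where it is the fact (Weil's theorem / Proposition \ref{prod-top}(viii)) that the Baire algebra of a $\CH$-product is generated by the pullbacks of the factors' Baire algebras, and is then pushed down through $\ominus \circ \Cast_{\CHDelete \to \AbsDelete}$, since passing to the quotient by the null ideal preserves generation. Without some such argument your claimed Hom-set identity is only established in one direction, so this step needs to be added.
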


\begin{proof}  We first prove (i) for $\Cat = \ConcDelete$.  Thus suppose we have $\ConcDelete$-morphisms $f_\alpha \colon Y \to X_\alpha$, and we wish to lift these to a common $\ConcDelete$-morphism $f \colon Y \to X$ with $X \coloneqq \prod^\ConcDelete_{\alpha \in A} X_\alpha$ and $f_\beta = \pi_\beta \circ f$ for all $\beta \in A$.  Uniqueness follows easily from the universality of the $\ConcMes$-product; but existence also follows easily from observing that the $\ConcMes$-morphism $(f_\alpha)_{\alpha \in A}^\ConcMes \colon Y_\ConcMes \to X_\ConcMes$ can be promoted to a $\ConcDelete$-morphism from $Y$ to $X$.  The claim (i) for $\Cat = \CHDelete$ is established similarly.  The claims (ii) are then routinely verified (using Proposition \ref{prod-top}(viii)).

It remains to verify the $\Cat = \AbsMes$ case of (i).  Let let $f_\alpha \colon Y \to X_\alpha$, $\alpha \in A$ be $\AbsMes$-morphisms for various $\AbsMes$-spaces $Y, X_\alpha$.  We wish to show that there is a unique $\AbsMes$-morphism $f \colon Y \to \prod^\AbsMes_{\alpha \in A} X_\alpha$ whose projections to $X_\beta$ equal $f_\beta$ for all $\beta \in A$.  For existence, we apply the categorical product in $\CHDelete$ to $\Loomis(f_\alpha) \colon \Loomis(Y) \to \Loomis(X_\alpha)$ to obtain a map $(\Loomis(f_\alpha))_{\alpha \in A}^\CHDelete \colon \Loomis(Y) \to \prod_{\alpha \in A}^\CHDelete \Loomis(X_\alpha)$.  Applying $\ominus \circ {\Cast}_{\CHDelete \to \AbsDelete}$ and the Loomis--Sikorski theorem, we obtain an $\AbsMes$-morphism $f$ with the required properties.  To obtain uniqueness, it suffices to show that the pullbacks of $(X_\alpha)_\SigmaAlg$ to $(\prod^\AbsMes_{\alpha \in A} X_\alpha)_\SigmaAlg$ generate the entire $\SigmaAlg$-algebra.  The pullbacks of $\Loomis(X_\alpha)_\SigmaAlg$ to $(\prod^{\CHDelete}_{\alpha \in A} \Loomis(X_\alpha))_\SigmaAlg$ generate the entire $\SigmaAlg$-algebra. The claim follows by applying
$\ominus \circ \Cast_{\CHDelete \to \AbsDelete}$
and the Loomis--Sikorski theorem.
\end{proof}

Now we can complete the proof of Lemma \ref{bool-props}.

\begin{corollary}\label{bool-cat-extra}  The categorical $\Bool$-coproduct is contained in the categorical $\SigmaAlg$-coproduct.
\end{corollary}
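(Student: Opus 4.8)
\textbf{Proof proposal for Corollary \ref{bool-cat-extra}.}

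The plan is to reduce the statement to a concrete comparison of two coproducts that we already understand well, namely the categorical $\Bool$-coproduct $\coprod^{\Bool}_{\alpha\in A}X_\alpha$ and the categorical $\SigmaAlg$-coproduct $\coprod^{\SigmaAlg}_{\alpha\in A}X_\alpha$ of a family of $\SigmaAlg$-algebras $X_\alpha$. By the universal property of the $\Bool$-coproduct applied to the $\Bool$-morphisms $X_\alpha\to \Forget(\coprod^{\SigmaAlg}_{\alpha\in A}X_\alpha)$, we obtain a canonical $\Bool$-morphism $f\colon \coprod^{\Bool}_{\alpha\in A}X_\alpha \to \coprod^{\SigmaAlg}_{\alpha\in A}X_\alpha$, and ``contained in'' (in the sense of Definition \ref{def-monoidal}, as used in Lemma \ref{bool-props}(v)) means precisely that $f$ is injective, i.e., a $\Bool$-monomorphism. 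So the task is to show $f$ is injective.

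First I would dualize via Stone duality (Proposition \ref{prelim-ls}(iv)): the categorical $\Bool$-coproduct corresponds to the $\StoneCat$-product $\prod^{\StoneCat}_{\alpha\in A}\StoneFunc(X_\alpha)$, which by the remark following Proposition \ref{prelim-ls} agrees with the $\CH$-product, hence is just $\prod^{\CH}_{\alpha\in A}\StoneFunc((X_\alpha)_\Bool)$; and by the same remark the categorical $\SigmaAlg$-coproduct corresponds to the $\StoneCatSigma$-product $\prod^{\StoneCatSigma}_{\alpha\in A}\StoneFuncSigma(X_\alpha)$. Under the equivalence $\AbsMes \simeq \StoneCatSigma$ visible in Figure \ref{fig:altcon}, and using the explicit description of the $\AbsMes$-product from Definition \ref{delete-prod}(iii) together with Proposition \ref{absmes-prod}, the $\StoneCatSigma$-product of the $\StoneFuncSigma(X_\alpha)$ is obtained from the $\CHDelete$-product $\prod^{\CHDelete}_{\alpha\in A}\Loomis(X_\alpha)$ by deleting the Baire-meager ideal; and by Proposition \ref{absmes-prod}(ii) the underlying $\CH$-space of this $\CHDelete$-product is exactly $\prod^{\CH}_{\alpha\in A}\StoneFunc((X_\alpha)_\Bool)$. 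Thus, after dualizing, the canonical map $f$ corresponds (on the dual side) to the composite of: the $\CH$-identity on $\prod^{\CH}_{\alpha\in A}\StoneFunc((X_\alpha)_\Bool)$, followed by the deletion passage $\ominus$ to the associated $\AbsMes$-space; in Boolean-algebra terms, $f$ is identified with the quotient map from the clopen algebra $\Clopen\big(\prod^{\CH}_{\alpha\in A}\StoneFunc((X_\alpha)_\Bool)\big)$ modulo the ideal of clopen Baire-meager sets.

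The key observation is then Proposition \ref{prelim-ls}(ii) (equivalently the Baire category theorem): no non-empty clopen subset of a $\StoneCat$-space is meager, so the ideal of \emph{clopen} Baire-meager sets is trivial, consisting only of the empty set. Hence the quotient map just described has trivial kernel and is injective, which is exactly the assertion that the categorical $\Bool$-coproduct is contained in the categorical $\SigmaAlg$-coproduct. I expect the main obstacle to be purely bookkeeping: carefully matching up the dual picture of the $\Bool$-coproduct (a $\StoneCat$-product = $\CH$-product) with the dual picture of the $\SigmaAlg$-coproduct (a $\StoneCatSigma$-product, realized via $\Loomis$ and $\ominus$ as in Definition \ref{delete-prod} and Proposition \ref{absmes-prod}), and in particular confirming that the canonical $\Bool$-map $f$ is indeed carried to the clopen-algebra quotient map by the Baire-meager ideal rather than to some other morphism; once that identification is in place, the injectivity is immediate from the Baire category theorem as in Proposition \ref{prelim-ls}(ii). (This also reproves, from the present paper's machinery, the injectivity statement attributed to \cite{lagrange} in the proof of Lemma \ref{bool-props}(v).)
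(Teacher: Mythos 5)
Your proposal is correct and follows essentially the same route as the paper: both realize the $\SigmaAlg$-coproduct concretely via the Loomis--Sikorski spaces and the $\CHDelete$-product, and both conclude by showing a non-empty clopen subset of the product cannot lie in the null ideal (the paper argues rectangle by rectangle, invoking choice, while you argue once for the whole clopen algebra via the quotient map and the Baire category theorem --- a cosmetic difference). The one point to tighten is your description of the null ideal: by Definition \ref{delete-prod}(i) it is the $\sigma$-ideal generated by the pullbacks of the factors' Baire-meager ideals, not the Baire-meager ideal of the product itself, so you should add the observation that pullbacks of nowhere dense sets under the open projections are nowhere dense, whence this generated ideal still consists of meager sets and your Baire category argument applies verbatim.
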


\begin{proof}  For $\SigmaAlg$-algebras $\X_\alpha$, our task is to show that the natural $\Bool$-map from $\coprod^{\Bool}_{\alpha \in A} \X_\alpha$ to $\coprod^{\SigmaAlg}_{\alpha \in A} \X_\alpha$ is injective.  We may assume that none of the $\X_\alpha$ are the $2^0$-element Boolean algebra $\{0=1\}$, as the claim is trivial in this case.  As is well-known, one can explicitly write down a Boolean coproduct $\coprod^{\Bool}_{\alpha \in A} \X_\alpha$ as the $\Bool$-algebra of formal finite joins of disjoint ``rectangles'' $E_{\alpha_1} \otimes \dots \otimes E_{\alpha_n}$ with $E_{\alpha_i} \in \X_{\alpha_i}$ and $\alpha_1,\dots,\alpha_n \in A$, so it suffices to show that the image of any such ``rectangle'' in $\coprod^{\SigmaAlg}_{\alpha \in A} \X_\alpha$ is non-zero if all of the $E_{\alpha_i}$ are non-zero.

For each $\SigmaAlg$-space $\X_\alpha$ we can form a $\CHDelete$-space $\tilde X_\alpha \coloneqq \Loomis((\X_\alpha)_{\AbsMes})$, which is non-empty since the $\X_\alpha$ are not $2^0$-element algebras.  We then form the categorical product in $\CHDelete$
$$ \tilde X \coloneqq \prod^\CHDelete_{\alpha \in A} \tilde X_\alpha.$$
Each element $E_{\alpha_i}$ then has a counterpart $\tilde E_{\alpha_i} \in (\tilde X_{\alpha_i})_{\SigmaAlg}$ defined by
$$ \tilde E_{\alpha_i} \coloneqq \phi_{\alpha_i}(E_{\alpha_i})$$
where $\phi_{\alpha_i}$  is the natural $\Bool$-isomorphism between $\Clopen( \StoneFunc( (X_{\alpha_i})_\Bool) )$ and $(X_{\alpha_i})_\Bool$.  Since $E_{\alpha_i}$ is non-zero, $\tilde E_{\alpha_i}$ is not in the null ideal of $\tilde X_{\alpha_i}$.  From the axiom of choice, we then see that the product set
$$ \prod_{i=1}^n \tilde E_{\alpha_i} \times \prod_{\alpha \neq \alpha_1,\dots,\alpha_n} \tilde X_{\alpha_i}$$
does not lie in the null ideal of $\tilde X$, and the claim follows.
\end{proof}

\begin{remark}\label{stonecat-remark} The above theory also provides a reasonably explicit, albeit strange, description of the categorical product in $\StoneCatSigma$.  Namely, one can identify $\prod^\StoneCatSigma_{\alpha \in A} X_\alpha$ with the space of $\Bool$-morphisms from $\prod^\ConcMes_{\alpha \in A} (X_\alpha)_\ConcMes$ onto the trivial algebra $\{0,1\}$ that annihilate all the Baire-meager sets in $\prod^\CH_{\alpha \in A} (X_\alpha)_\CH$ (this is a $\StoneCat$-space, but not necessarily a $\StoneCatSigma$-space).  By restricting these $\Bool$-morphisms to clopen sets we obtain a $\StoneCat$-morphism from $(\prod^\StoneCatSigma_{\alpha \in A} X_\alpha)_{\StoneCat}$ to $\prod^{\StoneCat}_\alpha (X_\alpha)_{\StoneCat}$, which is surjective (a $\StoneCat$-epimorphism) by the dual of Corollary \ref{bool-cat-extra}.  We leave the verification of these claims to the interested reader.
\end{remark}

We now combine the above product theory with the Loomis--Sikorski functor and the Riesz representation theorem to give a version of the Kolmogorov extension theorem in the category $\AbsMes$ of abstract measurable spaces.  Unlike the classical Kolmogorov extension theorem, no regularity properties (such as standard Borel properties) on the underlying measurable spaces are required; on the other hand, the measures constructed live in the categorical product in $\AbsMes$ rather than the categorical $\ConcMes$-product.

\begin{theorem}[Abstract Kolmogorov extension theorem]\label{kolmo}  Let $(X_\alpha)_{\alpha \in A}$ be a family of $\AbsMes$-spaces indexed by some (possibly uncountable) set $A$.  Suppose that for each finite subset $F$ of $A$, one has a probability measure $\mu_F$ on the $\AbsMes$-space $X_F \coloneqq \prod_{\alpha \in F} X_\alpha$, thus promoting this $\AbsMes$-space $X_F$ to an $\AbsProb$-space $(X_F,\mu_F)$.  Suppose furthermore that whenever $F \subseteq F' \subseteq A$ are finite, one has $(\pi_{X_{F'} \to X_F})_* \mu_{F'} = \mu_F$ where $\pi_{X_{F'} \to X_F} \colon X_{F'} \to X_F$ is the canonical $\AbsMes$-projection, thus $\pi_{X_{F'} \to X_F}$ can be promoted to an $\AbsProb$-morphism from $(X_{F'}, \mu_{F'})$ to $(X_F, \mu_F)$.  Then there exists a unique probability measure $\mu_A$ on the $\AbsMes$-space $X_A \coloneqq \prod_{\alpha \in A} X_\alpha$ such that $(\pi_{X_A \to X_F})_* \mu_A = \mu_F$ for all finite $F \subseteq A$.
\end{theorem}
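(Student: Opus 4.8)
The strategy is to transfer the problem to the Loomis--Sikorski models, where the $\AbsMes$-product becomes a genuine Tychonoff product of compact Hausdorff spaces, build the desired measure there by a Riesz-representation argument applied to the dense subalgebra of continuous functions depending on only finitely many coordinates, and finally push the measure back to $\AbsMes$ by quotienting out the appropriate null ideal. Throughout we may assume every $X_\alpha$ is non-trivial, since otherwise $X_{\{\alpha\}}$ carries no probability measure, contradicting the hypothesis.

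First I would write $\widetilde X_\alpha \coloneqq \Loomis(X_\alpha)$, a $\CHDelete$-space whose underlying $\CH$-space $K_\alpha \coloneqq (\widetilde X_\alpha)_\CH$ is non-empty and whose null ideal is exactly the Baire--meager ideal of $K_\alpha$ (since $\Loomis = \BaireMeagFunc \circ \StoneFuncSigma$). By Definition \ref{delete-prod}(iii), Proposition \ref{absmes-prod} and Proposition \ref{prod-top}(viii), for any $F \subseteq A$ the $\AbsMes$-space $X_F \coloneqq \prod_{\alpha \in F} X_\alpha$ is naturally identified with $\ominus \circ \Cast_{\CHDelete \to \AbsDelete}\bigl(\prod^\CHDelete_{\alpha \in F} \widetilde X_\alpha\bigr)$, so that $(X_F)_\SigmaAlg = \Baire(K_F) / \mathcal N_{K_F}$, where $K_F \coloneqq \prod^\CH_{\alpha \in F} K_\alpha$, $\mathcal N_{K_F}$ is the $\sigma$-ideal of $\Baire(K_F)$ generated by the pulled-back Baire--meager ideals of the factors, and the $\AbsMes$-projections descend from the $\CH$-projections $\pi_{F' \to F} \colon K_{F'} \to K_F$. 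Consequently each $\mu_F$ pulls back along the quotient map to a countably additive Baire probability measure $\mu_{K_F}$ on $K_F$ that annihilates $\mathcal N_{K_F}$; by Proposition \ref{automatic}(iii) each $\mu_{K_F}$ is automatically Radon, and the compatibility hypothesis $(\pi_{X_{F'} \to X_F})_* \mu_{F'} = \mu_F$ translates into $(\pi_{F' \to F})_* \mu_{K_{F'}} = \mu_{K_F}$ for finite $F \subseteq F'$.

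Next I would construct a Radon probability measure on the $\CH$-space $K_A \coloneqq \prod^\CH_{\alpha \in A} K_\alpha$ (compact Hausdorff by Tychonoff, Proposition \ref{prod-top}(i)). By the Stone--Weierstrass theorem the functions $h \circ \pi_F$, with $F \subseteq A$ finite and $h \in \CFunc(K_F)$, span a dense unital $*$-subalgebra of $\CFunc(K_A)$, and on it the formula $\lambda(h \circ \pi_F) \coloneqq \int_{K_F} h \, d\mu_{K_F}$ is well defined: passing to a common finite set $F'' \supseteq F, F'$ and using the surjectivity of $\pi_{F''}$ together with $(\pi_{F'' \to F})_* \mu_{K_{F''}} = \mu_{K_F}$ reconciles any two representations. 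Since the $\pi_F$ are surjective, $\lambda$ is non-negative; it has $\lambda(1) = 1$ and is norm-bounded by $1$, so it extends to a state on $\CFunc(K_A)$, which by the Riesz representation theorem (Theorem \ref{rrt}(i), case $\Cat = \CH$) is represented by a unique Radon probability measure $\mu_{K_A}$ on $K_A$. Taking $F$ a singleton (and then general finite $F$) and invoking the uniqueness in Theorem \ref{rrt} gives $(\pi_F)_* \mu_{K_A} = \mu_{K_F}$ for all finite $F$.

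Finally I would descend $\mu_{K_A}$ to $X_A$ and check the remaining assertions. The null ideal $\mathcal N_{K_A}$ of $\prod^\CHDelete_{\alpha \in A} \widetilde X_\alpha$ is generated by the sets $\pi_\beta^* N$ with $N$ Baire--meager in $K_\beta$, and $\mu_{K_A}(\pi_\beta^* N) = \mu_{K_\beta}(N) = 0$ since $\mu_{K_\beta}$ annihilates the Baire--meager ideal; hence $\mu_{K_A}$ vanishes on $\mathcal N_{K_A}$ and descends to a probability measure $\mu_A$ on $(X_A)_\SigmaAlg = \Baire(K_A)/\mathcal N_{K_A}$, with $(\pi_{X_A \to X_F})_* \mu_A = \mu_F$ for every finite $F$ following immediately from $(\pi_F)_* \mu_{K_A} = \mu_{K_F}$ once the quotients are unwound. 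For uniqueness, Proposition \ref{absmes-prod} (equivalently, the fact that the pullbacks of the $(X_F)_\SigmaAlg$ generate $(X_A)_\SigmaAlg$ as a $\sigma$-complete Boolean algebra) shows that any competing $\mu_A'$ agrees with $\mu_A$ on the directed union of these pullbacks, which is a sub-Boolean algebra generating the whole algebra; a monotone class argument (performed, if one likes, in the concrete model $\Baire(K_A)$) then forces $\mu_A' = \mu_A$. The step I expect to be the main obstacle is the coherence bookkeeping in the second paragraph --- verifying that Definition \ref{delete-prod} and Proposition \ref{absmes-prod} really do present $\prod^\AbsMes_{\alpha} X_\alpha$ as the $\CHDelete$-product of the Loomis--Sikorski models with Baire--meager null ideals, and that the projection morphisms and the family $(\mu_F)$ lift naturally to the $\CH$-level; once this is in place, automatic Radonness of Baire measures on $\CH$-spaces, Stone--Weierstrass density, and Riesz representation carry out the rest with only routine verifications.
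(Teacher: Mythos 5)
Your proposal is correct and follows essentially the same route as the paper: pass to the Loomis--Sikorski models, realize the $\AbsMes$-product as the $\CHDelete$-product of these models, extend the compatible family of states from the Stone--Weierstrass-dense subalgebra of finitely-supported continuous functions to a state on the full product, apply the Riesz representation theorem, and descend through the null ideal. The only (immaterial) difference is your uniqueness argument via a monotone-class/generating-algebra step, where the paper instead lifts the competing measure back to a state and invokes density plus the uniqueness clause of Riesz representation.
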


\begin{proof}  We begin with existence.  By Proposition \ref{absmes-prod}, one can identify $X_F$ with $\ominus \circ \Cast_{\CHDelete \to \AbsDelete}( \tilde X_F )$ where $\tilde X_F$ is the $\CHDelete$-space $\tilde X_F \coloneqq \prod^\CHDelete_{\alpha \in F} \Loomis(X_\alpha)$, and similarly for $X_A$.  The probability measure $\mu_F$ on $X_F$ then induces a probability measure $\tilde \mu_F$ on $\tilde X_F$ which annihilates the null ideal of this $\CHDelete$-space.  For $F \subseteq F' \subseteq A$ finite, one easily checks that
$$ \pi_{\tilde X_{F'} \to \tilde X_F} \tilde \mu_{F'} = \tilde \mu_F$$
where $\pi_{\tilde X_{F'} \to \tilde X_F} \colon \tilde X_{F'} \to \tilde X_F$ is the canonical $\CHDelete$-projection.  By the Riesz representation theorem, each $\tilde \mu_F$ represents a state $\lambda_F \colon \CFunc(\tilde X_F) \to \C$ on $\tilde X_F$.  If we identify $\CFunc(\tilde X_F)$ with a subalgebra of $\CFunc(\tilde X_{F'})$ and of $\CFunc(\tilde X_A)$ for $F \subseteq F' \subseteq A$, we see that $\lambda_F$ and $\lambda_{F'}$ agree on $\CFunc(\tilde X_F)$ for all finite $F \subseteq F' \subseteq A$.  But from the Stone-Weierstrass theorem, the union of the $\CFunc(\tilde X_F)$ for $F \subseteq A$ finite is dense in $\CFunc(\tilde X_A)$.  Thus we see that the states $\lambda_F$ on $\tilde X_F$ extend to a state $\lambda_A \colon \CFunc(\tilde X_A) \to \C$ on $\tilde X_A$.  By the Riesz representation theorem (Theorem \ref{rrt}), this state is represented by a probability measure $\tilde \mu_A$ on $\tilde X_A$, and the uniqueness aspect of this theorem we have
$$ \pi_{\tilde X_{A} \to \tilde X_F} \tilde \mu_{A} = \tilde \mu_F$$
for any finite $F \subseteq A$, where $\pi_{\tilde X_{A} \to \tilde X_F} \colon \tilde X_{A} \to \tilde X_F$ is the canonical $\CHDelete$-projection.  In particular, $\tilde \mu_A$ annihilates the pullback of any null ideal of an individual factor $\tilde X_\alpha$ of $\tilde X_F$, and hence annihilates the entire null ideal.  As such, $\tilde \mu_A$ descends to a probability measure $\mu_A$ on $X_A$, which has the required properties.  This establishes existence.

For uniqueness, suppose there is another measure $\mu'_A$ on $X_A$ with the stated properties.  Then as before this induces a measure $\tilde \mu'_A$ on $\tilde X_A$ that annihilates the null ideal.  This represents a functional $\lambda'_A$ on $\tilde X_A$ that agrees with $\lambda_F$ on $\CFunc(\tilde X_F)$ for every finite $F \subseteq A$, and hence is identically equal to $\lambda_A$ by density.  From the uniqueness aspect of the Riesz representation theorem, we then have $\tilde \mu'_A = \tilde \mu_A$, hence $\mu'_A = \mu_A$, giving uniqueness.
\end{proof}

It is a classical fact \cite{andersen-jessen} that the analogue of Theorem \ref{kolmo} for $\ConcMes$ fails without additional hypotheses on the factor spaces.  However, the analogue of Theorem \ref{kolmo} for $\CH$ (using the Baire $\sigma$-algebra) follows easily from the Riesz representation theorem by a variant of the argument used to prove Theorem \ref{kolmo}.

\begin{remark}
We sketch an alternative "dual" proof of our abstract Kolmogorov theorem suggested to us by the anonymous referee.  
Suppose that $(X_\alpha)_{\alpha\in A}$ is a direct system of $\SigmaAlg$-algebras. 
Then we want to verify that any consistent family $(\mu_\alpha)_{\alpha\in A}$ of abstract probability measures in the sense as stated in Theorem \ref{kolmo}, where $\mu_\alpha$ is a probability measure on $X_\alpha$ such that $(X_\alpha,\mu_\alpha)$ promotes to an $\AbsProb$-space, extends to a unique probability measure $\mu$ on the $\SigmaAlg$-direct limit $\varinjlim_{\alpha\in A} X_\alpha$. 
As for existence, we observe that the direct limit $\varinjlim_{\alpha\in A} \Mes(X_\alpha,\mu_\alpha)$ exists in $\ProbAlg$ (see, e.g., \cite[328 H]{fremlinvol3}), and by the universal property of direct limits, there is a unique $\SigmaAlg$-morphism from $\varinjlim_{\alpha\in A} X_\alpha$ to the cast of $\varinjlim_{\alpha\in A} \Mes(X_\alpha,\mu_\alpha)$ in $\SigmaAlg$ (with respect to the forgetful functor).  
Uniqueness follows from the fact that the $\SigmaAlg$-algebra $\varinjlim_{\alpha\in A} X_\alpha$ is generated by the images of the $\SigmaAlg$-algebras $X_\alpha$.  
\end{remark} 

\appendix

\section{Review of category theory}\label{category-appendix}

In this appendix we review some concepts and notations in category theory that we will need.  
A prominent role will be played by various categorical and non-categorical notions of products and coproducts. 
The interested reader is additionally referred to standard introductory textbooks on category theory, e.g. \cite{leinster,maclane-categories}, and to \cite[Chapter 1]{heunen} for an introduction to (symmetric) monoidal categories which formalize relevant notions of Fubini type products (resp. coproducts) that are prevalent in probabilistic categories. 

\subsection{Categories and functors}\label{categories}

\begin{definition}[Category] A \emph{category} $\Cat$ is a class of objects (which we refer to as \emph{$\Cat$-objects}, \emph{$\Cat$-spaces} or \emph{$\Cat$-algebras}), together with a set $\Hom_\Cat(X \to Y)$ associated to any pair $X,Y \in \Cat$ of $\Cat$-objects, whose elements we call \emph{$\Cat$-morphisms} $f \colon X \to Y$ from the domain $X$ to the codomain $Y$.  The category $\Cat$ is equipped with a composition operation $\circ \colon \Hom_\Cat(Y \to Z) \times \Hom_\Cat(X \to Y) \to \Hom_\Cat(X \to Z)$ for any three $\Cat$-objects $X,Y,Z$ which is associative in the sense that
$$ (f \circ g) \circ h = f \circ (g \circ h)$$
whenever $f \colon Z \to W$, $g \colon Y \to Z$, $h \colon X \to Y$ are $\Cat$-morphisms.  We also assume that to every $\Cat$-object $X \in \Cat$ there is an identity $\Cat$-morphism $\id_X \colon X \to X$ such that 
$$ f = f \circ \id_{X} = \id_{Y} \circ f$$
for every $\Cat$-morphism $f \colon X \to Y$ from one $\Cat$-object $X$ to another $Y$. 
\end{definition}

As a general convention, when the ambient category $\Cat$ is clear from context, we will drop the prefix $\Cat$-, for instance $\Cat$-morphisms will also be referred to a ``morphism in $\Cat$'', or simply a ``morphism'' if it is clear which category one is working in.

We now give two fundamental examples of categories: the categories of sets and groups.

\begin{example}[The category $\Set$]\label{set-def} A \emph{$\Set$-object} (or \emph{$\Set$-space}) is a set $X$.  A \emph{$\Set$-morphism} is a function $f \colon X \to Y$ between two sets.  Composition of two $\Set$-morphisms $f \colon X \to Y$, $g \colon Y \to Z$ is given by the usual composition law $(g \circ f)(x) \coloneqq g(f(x))$ for $x \in X$.
\end{example}

\begin{example}[The category $\Group$]\label{grp-def} A \emph{$\Group$-object} is a group $G = (G_\Set,\cdot)$.  A \emph{$\Group$-morphism} $f \colon G \to H$ is a group homomorphism $f_\Set \colon G_\Set \to H_\Set$ between the underlying sets.  Composition of two $\Group$-morphisms is given by the $\Set$-composition law.
\end{example}

One can take a category $\Cat$ and ``reverse all its arrows'' to obtain a new category $\Cat^\op$:

\begin{definition}[Opposite category]\label{opp-cat} Let $\Cat$ be a category.  We define the \emph{opposite category} $\Cat^\op$ as follows.
\begin{itemize}
    \item[(i)] A $\Cat^\op$-object is the same as a $\Cat$-object.
    \item[(ii)] A $\Cat^\op$-morphism $f \colon X \to Y$ of two $\Cat^\op$-objects $X, Y$ is a $\Cat$-morphism $f \colon Y \to X$. 
    \item[(iii)]  The composition $g \circ f$ of two $\Cat^\op$-morphisms $f$ and $g$ is defined by the composition of $\Cat$-morphisms $f \circ g$,  
    see Figure \ref{fig:op}.
\end{itemize}
By abuse of notation we identify $(\Cat^\op)^\op$ with $\Cat$ in the obvious fashion.
\end{definition}

\begin{figure}
    \centering
    \begin{tikzcd}
    X \arrow[r,"g"]  & Z & X \arrow[d, "f"'] & Z \arrow[l, "g"'] \arrow[dl, "f \circ g"] \\
    Y \arrow[u, "f"] \arrow[ur, "g \circ f"'] & & Y 
    \end{tikzcd}
    \caption{A diagram in $\Cat$ on the left, and its counterpart in $\Cat^\op$ on the right. Note the reversed direction of all the arrows.}
    \label{fig:op}
\end{figure}

\begin{definition}[Slice category]\label{def-slice}
Let $\Cat$ be a category. 
We denote by $\Cat\downarrow X$ the \emph{slice category} over an object $X\in \Cat$, where $(\Cat\downarrow X)$-objects are $\Cat$-morphisms $Y\to X$, i.e., $\Cat$-morphisms whose codomain is $X$, and $(\Cat\downarrow X)$-morphisms are $\Cat$-morphisms $g\colon Y\to Y'$ from a $(\Cat\downarrow X)$-object $f\colon Y\to X$ to another $(\Cat\downarrow X)$-object $f'\colon Y'\to X$ such that the identity $f'\circ g=f$ holds in $\Cat$. 
See Figure \ref{fig:slicecat}, for a diagram of how composition is defined in $\Cat\downarrow X$. 

Dually, we can define the \emph{coslice category} $X\downarrow\Cat$ with respect to an $X\in \Cat$, whose objects are $\Cat$-morphisms $X\to Y$, i.e., $\Cat$-morphisms whose domain is $X$, and whose morphisms are $\Cat$-morphisms $g\colon Y'\to Y$ from a $(X\downarrow\Cat)$-object $f'\colon X\to Y'$ to another $(X\downarrow\Cat)$-object $f\colon X\to Y$ such that the identity $g\circ f'=f$ holds in $\Cat$. 
\end{definition}

\begin{figure}
    \centering
    \begin{tikzcd}
	Y \\
	\\
	{Y'} && X \\
	\\
	{Y''}
	\arrow[from=1-1, to=3-3]
	\arrow[from=3-1, to=3-3]
	\arrow[from=5-1, to=3-3]
	\arrow[from=1-1, to=3-1]
	\arrow[from=3-1, to=5-1]
\end{tikzcd}
    \caption{The composition of morphisms in a slice category.}
    \label{fig:slicecat}
\end{figure}

We isolate some special types of morphisms and objects:

\begin{definition}[Special morphisms and objects]\label{special-morph}  Let $\Cat$ be a category.
\begin{itemize}
    \item [(i)]  A $\Cat$-morphism $\pi \colon X \to Y$ is a \emph{$\Cat$-epimorphism} if whenever $f, f' : Y \to Z$ are $\Cat$-morphisms with $f \circ \pi = f' \circ \pi$, one has $f = f'$.
    \item [(ii)]  Dually, a $\Cat$-morphism $\iota \colon Y \to X$ is a \emph{$\Cat$-monomorphism} if whenever $f, f' \colon Z \to Y$ are $\Cat$-morphisms with $\iota \circ f = \iota \circ f'$, one has $f = f'$.
    \item[(iii)]  A $\Cat$-\emph{bimorphism} $\phi$ is a $\Cat$-epimorphism that is also a $\Cat$- monomorphism.
    \item[(iv)]  A $\Cat$-morphism $\phi \colon X \to Y$ is a \emph{$\Cat$-isomorphism} if there is an inverse $\Cat$-morphism $\phi^{-1} \colon Y \to X$ such that $\id_X =  \phi^{-1}\circ \phi$ and $\id_Y = \phi \circ \phi^{-1}$.
    \item[(v)]  A $\Cat$-morphism $\phi \colon X \to X$ is a \emph{$\Cat$-endomorphism} if the domain and codomain are the same object, and a \emph{$\Cat$-automorphism} if it is a $\Cat$-endomorphism and a $\Cat$-isomorphism.  
        \item[(vi)]  A $\Cat$-object $X$ is \emph{terminal} (resp. \emph{initial}) if for every $\Cat$-object $Y$ there is a unique $\Cat$-morphism from $Y$ to $X$ (resp. from $X$ to $Y$).  
\end{itemize}
See also Figure \ref{fig:mono}.
\end{definition}

\begin{figure}
    \centering
    \begin{tikzcd}
    Z  & X \arrow[l, "f \circ \pi"'] \arrow[d, "\pi"] & Z \arrow[r, "\iota \circ f"] \arrow[dr, "f"] & X \\
    & Y \arrow[ul, "f"'] & & Y \arrow[u, "\iota"']
    \end{tikzcd}
    \caption{If $\pi$ is an epimorphism, then $f$ is uniquely determined by $f \circ \pi$.  If $\iota$ is a monomorphism, then $f$ is uniquely determined by $\iota \circ f$.}
    \label{fig:mono}
\end{figure}

Clearly the composition of two $\Cat$-monomorphisms is again a $\Cat$-monomorphism, and similarly for $\Cat$-epimorphisms. Every $\Cat$-isomorphism is a $\Cat$-bimorphism.  The converse is true for some of the categories we will study here (e.g., $\Set$, $\Bool$, $\Bool_\sigma$, $\CH$, $\StoneCat$, $\AbsMes$), but not all (for instance, the inclusion map from $(0,1)$ to $[0,1]$ is a $\Polish$-bimorphism but not a $\Polish$-isomorphism).  

\begin{example}\label{set-example}  It is easily verified that a function $X \to Y$ between two sets $X,Y \in \Set$ (i.e., a $\Set$-morphism) is a $\Set$-monomorphism if and only if it is injective, a $\Set$-epimorphism if and only if it is surjective, and a $\Set$-isomorphism (or $\Set$-bimorphism) if and only if it is bijective.  The analogous claims for the category $\Group$ are also true, but not as easy to demonstrate; the difficult step is to show that for any proper subgroup $H$ of $G$ there is a group homomorphism $f \colon H \to K$ into a third group $K$ that admits more than one extension to a group homomorphism of $G$. A canonical choice of such a $K$ is provided by the amalgamated free product $G \ast_H G$ (which, in the categorical language used in this paper, is the categorical colimit of the diagram $G \leftarrow H \rightarrow G$) .  On the other hand, not all $\ConcMes$-epimorphisms are surjective; for instance, the inclusion of $\{1\}$ into $\{1,2\}$, where we endow $\{1,2\}$ with the trivial $\sigma$-algebra $\{ \emptyset, \{1,2\}\}$, is a $\ConcMes$-epimorphism which is not surjective.  (The existence of non-surjective $\ConcMes$-epimorphisms causes difficulty when trying to represent abstract measurable maps by concrete ones; see \cite[\S 5]{jt19} for further discussion.)
\end{example}

A pair of categories $\Cat, \Cat'$ can be related to each other by functors\footnote{We will work exclusively with covariant functors in this paper.}. 

\begin{definition}[Functor]  Let $\Cat,\Cat'$ be categories.
 A \emph{functor} $$\Func = \Func_{\Cat \to \Cat'} \colon \Cat \to \Cat'$$ assigns to each $\Cat$-object $X$ a $\Cat'$-object $\Func(X)$, and to each $\Cat$-morphism $f = f_{X \to Y}$ a $\Cat$'-morphism $\Func(f) = \Func(f_{X \to Y}) = \Func(f)_{\Func(X) \to \Func(Y)}$ such that 
    $$ \Func( f_{Y \to Z} \circ g_{X \to Y} ) = \Func(f)_{\Func(Y) \to \Func(Z)} \circ \Func(g)_{\Func(X) \to \Func(Y)}$$
    for any $\Cat$-functors $f_{Y \to Z}, g_{X \to Y}$ between the $\Cat$-objects $X,Y,Z$. 
\end{definition}

\begin{example}[Identity functor]  For every category $\Cat$ there is the identity functor $\ident_{\Cat} \colon \Cat \to \Cat$ that acts trivially on the objects and morphisms of the category. 
\end{example}

\begin{example}[Forgetful functors]  For every unlabeled arrow in the diagrams of categories in this paper between two functors $\Cat$, $\Cat'$, there is an obvious  \emph{forgetful functor} $\Forget_{\Cat \to \Cat'}$, which is a   functor that takes any $\Cat$-object $X$ and ``forgets'' some structure on it to produce a $\Cat'$-object (which by abuse of notation we often also call $X$), and usually leaves the $\Cat$-morphisms unchanged (but now interpreted as $\Cat'$-morphisms).  For instance, there is a forgetful functor $\Forget_{\Group \to \Set}$ formed by taking a group $K = (K,\cdot)$ and forgetting the group structure, to only retain the underlying set $K$.  We consider the composition of two or more forgetful functors to again be a forgetful functor, thus for instance $\Forget_{\CH \to \Set}$ is the forgetful functor $\Forget_{\ConcMes \to \Set} \circ \Forget_{\CH\to \ConcMes}$.  In most cases (particularly when the forgetful functor is deemed to be a casting functor, see Definition \ref{cast}) we will not need to explicitly refer to such functors by name. 
\end{example}

One can compose functors together in the obvious fashion to obtain further functors.  
We record some special types of functors:

\begin{definition}[Special functors]\label{special-func}  Let $\Cat, \Cat'$ be categories.
\begin{itemize}
    \item [(i)]  A functor $\Func \colon \Cat \to \Cat'$ is \emph{faithful} (resp. \emph{full}) if for any two $\Cat$-objects $X,Y$, the map $$\Func \colon \Hom_\Cat(X \to Y) \to \Hom_{\Cat'}(\Func(X) \to \Func(Y))$$ is injective (resp. surjective). In the diagram of categories in this paper, we use arrows with tails $\rightarrowtail$ between categories to indicate faithful functors (resp. arrows with two heads in one direction $\twoheadrightarrow$  to indicate full functors). We use arrows with tails and two heads $\taildoubleheadrightarrow$ between categories to indicate a functor which is both faithful and full. 
    
    When $\Func \colon \Cat \to \Cat'$ is a faithful functor, then we also call $\Cat$ a \emph{subcategory} of $\Cat'$. If the faithful functor is full, we call $\Cat$ a \emph{full subcategory} of $\Cat'$. 
    \item[(iii)]  A   functor $\Func \colon \Cat \to \Cat'$ is \emph{invertible}\footnote{It would strictly speaking be more natural from a category theory perspective to work with equivalence of categories here rather than invertible functors.} if it has an inverse $\Func^{-1} \colon \Cat' \to \Cat$ that is also a functor.  Invertible functors are indicated by arrows with heads in both directions $\doubleheadrightleftarrow$. 
\end{itemize}
\end{definition}

\begin{example}  The forgetful functor from $\SigmaAlg$ to $\Bool$ is by definition faithful.  In fact, all the forgetful functors we use in this paper are faithful.  On the other hand, the abstraction functor $\Abs \colon \ConcMes \to \AbsMes$ is not even faithful, even if it arguably deserves to be classified as a forgetful functor; for instance, if $X$ is a $\ConcMes$-space with the trivial $\sigma$-algebra, then any permutation on $X$ is a $\ConcMes$-morphism that becomes the identity $\AbsMes$-morphism when applying $\Abs$.  
\end{example}

We point out that the following example is not usually considered in category theory. 

\begin{example}[Range of a functor]\label{range-ex}  If $\Func \colon \Cat \to \Cat'$ is a functor that is injective on objects, then we can define the category $\Func(\Cat)$ to be the category whose  $\Func(\Cat)$-objects are of the form $\Func(X)$ for some $\Cat$-object $X$, and whose $\Func(\Cat)$-morphisms are of the form $\Func(f)$ for some $\Cat$-morphism $f$, with the obvious composition law.  This is then a subcategory of $\Cat'$ with the obvious faithful functor.  
\end{example}

The following lemma is trivial but useful:

\begin{lemma}[Faithful functors, epimorphisms, and monomorphisms]\label{epimorph} 
If a functor $\Func \colon \Cat \rightarrowtail \Cat'$ is faithful and $f$ is a $\Cat$-morphism with $\Func(f)$ a $\Cat'$-monomorphism (resp. $\Cat'$-epimorphism), then $f$ is also a $\Cat$-monomorphism (resp. $\Cat$-epimorphism).  In particular, if $\Cat$ is a concrete category (so that there is a faithful forgetful functor to $\Set$), every injective $\Cat$-morphism is monomorphic, and every surjective $\Cat$-morphism is epimorphic. 
\end{lemma}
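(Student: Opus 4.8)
The statement to prove is Lemma~\ref{epimorph}: if $\Func\colon\Cat\to\Cat'$ is faithful and $\Func(f)$ is a $\Cat'$-monomorphism (resp.\ epimorphism), then $f$ is a $\Cat$-monomorphism (resp.\ epimorphism); consequently, in a concrete category every injective morphism is monic and every surjective morphism is epic.

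The plan is to argue directly from the definitions of monomorphism and epimorphism (Definition~\ref{special-morph}), using functoriality and faithfulness at the single crucial point. First I would handle the epimorphism case. Suppose $\pi\colon X\to Y$ is a $\Cat$-morphism with $\Func(\pi)$ a $\Cat'$-epimorphism, and suppose $f,f'\colon Y\to Z$ are $\Cat$-morphisms with $f\circ\pi = f'\circ\pi$. Applying $\Func$ and using the functor axiom $\Func(g\circ h)=\Func(g)\circ\Func(h)$, I get $\Func(f)\circ\Func(\pi)=\Func(f'\circ\pi)=\Func(f\circ\pi)=\Func(f)\circ\Func(\pi)$; more precisely, from $f\circ\pi=f'\circ\pi$ I obtain $\Func(f)\circ\Func(\pi)=\Func(f')\circ\Func(\pi)$. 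Since $\Func(\pi)$ is a $\Cat'$-epimorphism, this forces $\Func(f)=\Func(f')$. Now faithfulness of $\Func$ says the map $\Hom_\Cat(Y\to Z)\to\Hom_{\Cat'}(\Func(Y)\to\Func(Z))$ is injective, so $f=f'$. Hence $\pi$ is a $\Cat$-epimorphism. The monomorphism case is the exact formal dual: given $\iota\colon Y\to X$ with $\Func(\iota)$ monic, and $f,f'\colon Z\to Y$ with $\iota\circ f=\iota\circ f'$, apply $\Func$, use functoriality to get $\Func(\iota)\circ\Func(f)=\Func(\iota)\circ\Func(f')$, conclude $\Func(f)=\Func(f')$ from the monomorphism property, and then $f=f'$ from faithfulness.

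For the final sentence, I would specialize $\Cat'=\Set$ and take $\Func$ to be the faithful forgetful functor $\Forget_{\Cat\to\Set}$ that exists by hypothesis (concreteness of $\Cat$). If $f$ is an injective $\Cat$-morphism, then $\Forget_{\Cat\to\Set}(f)$ is an injective function, hence a $\Set$-monomorphism by Example~\ref{set-example}; the first part of the lemma then gives that $f$ is a $\Cat$-monomorphism. Similarly, if $f$ is surjective, $\Forget_{\Cat\to\Set}(f)$ is a surjective function, hence a $\Set$-epimorphism by Example~\ref{set-example}, so $f$ is a $\Cat$-epimorphism.

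There is no real obstacle here --- the lemma is, as the paper says, trivial; the only thing to be careful about is bookkeeping of domains and codomains so that the composites $\Func(f)\circ\Func(\pi)$ etc.\ are legitimately defined, and invoking faithfulness on the correct hom-set. I would write the two cases in parallel to emphasize the duality, and keep the concrete-category corollary to a single sentence invoking Example~\ref{set-example}.
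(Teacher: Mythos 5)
Your proof is correct and is exactly the standard argument the paper has in mind (the paper states this lemma without proof, calling it trivial): apply $\Func$, use functoriality to pass it through the composites, invoke the monomorphism/epimorphism property of $\Func(f)$, and finish with faithfulness; the concrete-category corollary then follows by taking $\Func$ to be the forgetful functor to $\Set$ and citing Example \ref{set-example}. The only blemish is the momentarily garbled chain of equalities in the epimorphism case, which you immediately correct, so nothing is missing.
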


\subsection{Natural transformations}

We now recall the notion of a natural transformation between two functors. This notion helps us capture what it means for a given construction (such as a categorical or monoidal product or coproduct) in a category to be ``functorial'', and what it means for one such construction to be ``contained in'' another, even when the underlying category is abstract rather than concrete.  It also makes precise the (often vaguely defined) concept of what it means for a certain morphism to be ``canonical''.

\begin{definition}[Natural transformation]  Let $\Func, \Func' \colon \Cat \to \Cat'$ be two   functors between categories $\Cat, \Cat'$.  A \emph{natural transformation} $\Nat \colon \Func \to \Func'$ from $\Func$ to $\Func'$ is an assignment of a $\Cat'$-morphism $\Nat(X)$ to each $\Cat$-object $X$ such that the diagram
\begin{center}
    \begin{tikzcd}
    \Func(X) \arrow[r, "\Nat(X)"] \arrow[d, "\Func(f)"] & \Func'(X) \arrow[d, "\Func'(f)"] \\
    \Func(Y) \arrow[r, "\Nat(Y)"] & \Func'(Y)
\end{tikzcd}
\end{center}
commutes for every $\Cat$-morphism $f\colon X \to Y$.  We say that $\Nat$ is a \emph{natural isomorphism} (resp. \emph{natural monomorphism}, \emph{natural epimorphism}) if $\Nat(X)$ is a $\Cat'$-isomorphism (resp. $\Cat'$-monomorphism, $\Cat'$-epimorphism) for every $\Cat$-object $X$.

 An \emph{equivalence of categories}  between two categories $\Cat, \Cat'$ is a pair of functors $\Func \colon \Cat \to \Cat', \Func' \colon \Cat' \to \Cat$ such that $\Func' \circ \Func$ is naturally isomorphic to $\ident_\Cat$ and $\Func \circ \Func'$ is naturally isomorphic to $\ident_{\Cat'}$.  
 A \emph{duality of categories} between two categories $\Cat, \Cat'$ is a pair of functors $\Func \colon \Cat^\op \to \Cat', \Func' \colon \Cat' \to \Cat^\op$ such that $\Func' \circ \Func$ is naturally isomorphic to $\ident_{\Cat^\op}$ and $\Func \circ \Func'$ is naturally isomorphic to $\ident_{\Cat'}$.

We will refer to a \emph{canonical} $\Cat$-map (resp. $\Cat$-monomorphism, $\Cat$-epimorphism, $\Cat$-isomorphism) between two $\Cat$-objects $X, Y$ to be the morphism given by the ``obvious'' natural transformation (resp. natural monomorphism, epimorphism, isomorphism) that can relate the two objects, in those cases where the ``obvious'' choice of natural transfomation is clear from context.
\end{definition}

\begin{example}\label{nat-inv}  If $\Nat \colon \Func_1 \to \Func_2$ is a natural isomorphism then so is its inverse $\Nat^{-1} \colon \Func_2 \to \Func_1$, defined in the obvious fashion.
\end{example} 

\begin{example} The identity functor establishes a duality of categories between an arbitrary category $\Cat$ and its opposite category $\Cat^\op$.  Further examples of dualities of categories are given in Figure \ref{fig:duality}.
\end{example}

\begin{figure}
    \centering
    \begin{tikzcd}
      \CStarAlgUnitInfop \arrow[d,tail,two heads,"\Spec",shift left=.75ex] & \CStarAlgUnitop \arrow[d,tail,"\Spec",two heads,shift left=.75ex]  & \CStarAlgNdop \arrow[d,tail,two heads,"\Spec",shift left=.75ex] & \CStarAlgMultop \arrow[d,tail,"\Spec",two heads,shift left=.75ex] &  \\ 
    \CHpt \arrow[u,"\CFunc", two heads,  tail, shift left=.75ex,blue]  & 
     \CH \arrow[u,"\CFunc", tail, two heads, shift left=.75ex,blue]  & 
    \LCHpr  \arrow[u, "\CoFunc", tail,two heads,shift left=.75ex,blue] & \LCH \arrow[u, "\CoFunc", tail, two heads,shift left=.75ex,blue] \\
    \CStarAlgUnitInfTraceop \arrow[d,  tail,two heads,"\Riesz",shift left=.75ex] & \CStarAlgUnitTraceop \arrow[d,  tail,"\Riesz",two heads,shift left=.75ex]  & \CStarAlgNdTraceop \arrow[d,  tail,two heads,"\Riesz",shift left=.75ex] & \CStarAlgMultTraceop \arrow[d,  tail,"\Riesz",two heads,shift left=.75ex] &  \\ 
    \CHptProb \arrow[u,"\CFunc", two heads,   tail, shift left=.75ex,blue]  & 
     \CHProb \arrow[u, "\CFunc", tail,two heads,shift left=.75ex,blue]  & 
    \LCHprProb  \arrow[u,  "\CoFunc", tail,two heads,shift left=.75ex,blue] & \LCHProb \arrow[u,  "\CoFunc", tail, two heads,shift left=.75ex,blue] \\
    \vonNeumannop \arrow[d, tail,two heads,"\Idem",shift left=.75ex]  \\ 
    \ProbAlg \arrow[u,"\Linfty", two heads,   tail, blue, shift left=.75ex]  \\
    \Boolop \arrow[d,  tail,two heads,"\StoneFunc",shift left=.75ex] & \AbsMes \arrow[d,  tail,"\StoneFuncSigma",two heads,shift left=.75ex]  \\ 
    \StoneCat \arrow[u,"\Clopen", two heads,   tail, shift left=.75ex]  & 
     \StoneCatSigma \arrow[u, "\ClopenSigma", tail,two heads,shift left=.75ex]  
\end{tikzcd}
    \caption{The dualities of categories that appear in this paper.  The rows correspond to Gelfand dualities, Riesz dualities, probability dualities, and Stone dualities respectively. Various additional functors between these categories have been omitted for clarity.}
    \label{fig:duality}
\end{figure}

\subsection{Categorical products, inverse limits, and tensor products}\label{sec-product}

In this section, we recall the concepts of categorical products and coproducts, inverse and direct limits, and symmetric monoidal categories (the latter category-theoretically formalizes a general notion of tensor products). 
We then discuss how to combine and category-theoretically compare these concepts in order to apply them to relate various product constructions for the topological, probabilistic and functional analytic objects introduced in this paper. 

\begin{definition}[Categorical products and coproducts]
Let $\Cat$ be a category. 
\begin{itemize}
    \item[(i)] A \emph{categorical product} of a family $X_\alpha, \alpha \in A$ of objects in $\Cat$ is an object $X\in \Cat$ such that there is an indexed family  $(\pi_\alpha)_{\alpha \in A}$ of $\Cat$-morphisms $\pi_\alpha \colon X \to X_\alpha$ satisfying the following universal property: If $Y$ is another object in $\Cat$ and $(f_\alpha)_{\alpha\in A}$ is another indexed family of $\Cat$-morphisms $f_\alpha\colon Y\to X_\alpha$, then there exists a unique $\Cat$-morphism $\phi\colon Y\to X$ such that $\pi_\alpha\circ \phi=f_\alpha$ for each $\alpha\in A$. Note that if a categorical product of the $X_\alpha$ exists, then it is unique up to $\Cat$-isomorphisms.  In this case, we denote the categorical product by $\prod_{\alpha \in A} X_\alpha=\prod_{\alpha \in A}^\Cat X_\alpha$. 
    \item[(ii)]  Dually, a \emph{categorical coproduct} of a family $X_\alpha, \alpha \in A$ of objects in $\Cat$ is an object $X\in \Cat$ such that there is an indexed family  $(\iota_\alpha)_{\alpha \in A}$ of $\Cat$-morphisms $\iota_\alpha \colon X_\alpha \to X$ satisfying the following universal property: If $Y$ is another object in $\Cat$ and $(f_\alpha)_{\alpha\in A}$ is another indexed family of $\Cat$-morphisms $f_\alpha\colon X_\alpha\to Y$, then there exists a unique $\Cat$-morphism $\phi\colon X\to Y$ such that $\phi \circ \iota_\alpha=f_\alpha$ for each $\alpha\in A$. Note that if a categorical coproduct of the $X_\alpha$ exists, then it is unique up to $\Cat$-isomorphisms.  In this case, we denote the categorical coproduct by $\coprod_{\alpha \in A} X_\alpha=\coprod_{\alpha \in A}^\Cat X_\alpha$. 
\end{itemize}
For products (resp. coproducts) of two objects in $\Cat$ we use $X_1 \times X_2=X_1 \times^{\Cat} X_2$ (resp.~$X_1 \sqcup X_2=X_1 \sqcup^\Cat X_2$) as shorthand for $\prod_{\alpha \in \{1,2\}}^\Cat X_\alpha$ (resp. $\coprod_{\alpha \in \{1,2\}}^\Cat X_\alpha$).  See Figure \ref{fig:univ}.
\end{definition}

\begin{figure}
    \centering
    \begin{tikzcd}
    Y \arrow[ddr, bend right, "f_1"'] \arrow[dr, dotted, "\phi"] \arrow[drr, bend left, "f_2"] \\
    & X_1 \times X_2 \arrow[r,"\pi_2"'] \arrow[d,"\pi_1"] & X_2 \\
    & X_1
    \end{tikzcd}
    \caption{A diagram of the universal property of the categorical product of a pair $X_1,X_2$ of objects in a category $\Cat$.  A similar diagram exists for the coproduct $X_1 \sqcup X_2$ (after reversing all the arrows).}
    \label{fig:univ}
\end{figure}

\begin{example}\label{coterm}    Given a family $X_\alpha, \alpha \in A$ of sets $X_\alpha$, the Cartesian product $\prod_{\alpha \in A} X_\alpha$ is a categorical product of the $X_\alpha$ in $\Set$, and the disjoint union $\biguplus_{\alpha \in A} X_\alpha$ is a categorical coproduct in $\Set$.  
\end{example}

\begin{remark}\label{hom-prod}  Let $(X_\alpha)_{\alpha \in A}$ be a family of objects in some category $\Cat$.  If a categorical product $\prod_{\alpha \in A} X_\alpha$ exists, then (after making some obvious canonical identifications) one has the identity
$$ \Hom_\Cat\left( Y \to \prod^\Cat_{\alpha \in A} X_\alpha \right) = \prod^\Set_{\alpha \in A} \Hom_\Cat( Y \to X_\alpha )$$
for any $\Cat$-object $Y$; indeed this can be viewed as an alternate definition of a categorical product in $\Cat$.  Similarly, if a categorical coproduct $\coprod^\Cat_{\alpha \in A} X_\alpha$ exists, then one has the identity
$$ \Hom_\Cat\left( \coprod^\Cat_{\alpha \in A} X_\alpha \to Y \right) = \coprod^\Set_{\alpha \in A} \Hom_\Cat( X_\alpha \to Y ),$$
after making the obvious canonical identifications.
\end{remark}

\begin{example}\label{exp-groups}
A categorical product $\prod_{\alpha \in A} K_\alpha$ of groups $K_\alpha$ can be constructed by taking the Cartesian product $\prod^\Set_{\alpha \in A} K_\alpha$ and endowing it with the group operation in the obvious fashion.  A categorical coproduct $\coprod^\Group_{\alpha \in A} K_\alpha$ can be formed by the free product construction.
\end{example}

\begin{definition}[Inverse and direct limits]\label{def-limits}
Let $\Cat$ be a category. 
\begin{itemize}
    \item[(i)] Let $(X_\alpha)_{\alpha\in A}$ be a directed\footnote{A partially ordered set $(A,\leq)$ is said to be \emph{directed} if for each pair $\alpha,\beta\in A$ there is $\gamma\in A$ such that $\alpha\leq \gamma$ and $\beta\leq \gamma$.} family of objects $X_\alpha\in \Cat$ such that there is a family of $\Cat$-morphisms $f_{\alpha,\beta}\colon X_\beta\to X_\alpha$ for all $\alpha\leq \beta$ satisfying 
    \begin{itemize}
        \item $f_{\alpha,\alpha}=\id_{X_\alpha}$ for all $\alpha$;
        \item $f_{\alpha,\gamma}=f_{\alpha,\beta}\circ f_{\beta,\gamma}$ for all $\alpha\leq \beta\leq \gamma$. 
    \end{itemize}
    Then we call the tuple $(X_\alpha,f_{\alpha,\beta})$ an \emph{inverse system} of objects and morphisms in $\Cat$. 
    A $\Cat$-object $X$ is called an \emph{inverse limit} of the inverse system $(X_\alpha,f_{\alpha,\beta})$ if there exists a family of $\Cat$-morphisms $\pi_\alpha \colon X\to X_\alpha$ for each $\alpha$ satisfying $\pi_\alpha=f_{\alpha,\beta}\circ \pi_\beta$ for all $\alpha\leq \beta$. The pair $(X,\pi_\alpha)$ must satisfy the following universality property. If $(Y,\psi_\alpha)$ is another pair of a $\Cat$-object $Y$ and $\Cat$-morphisms  $\psi_\alpha \colon X\to X_\alpha$ for all $\alpha$ such that $\psi_\alpha=f_{\alpha,\beta}\circ \psi_\beta$ for all $\alpha\leq \beta$, then there must exist a unique $g\colon Y\to X$ such that the diagram
\[\begin{tikzcd}
	&& Y \\
	\\
	&& X \\
	\\
	{X_\beta} &&&& {X_\alpha}
	\arrow["{f_{\alpha,\beta}}", from=5-1, to=5-5]
	\arrow["{\psi_\beta}"', from=1-3, to=5-1]
	\arrow["{\psi_\alpha}", from=1-3, to=5-5]
	\arrow["g"', dashed, from=1-3, to=3-3]
	\arrow["{\pi_\beta}", from=3-3, to=5-1]
	\arrow["{\pi_\alpha}"', from=3-3, to=5-5]
\end{tikzcd}\]
    commutes for all $\alpha\leq \beta$. 
    If the inverse system $(X_\alpha,f_{\alpha,\beta})$ possesses an inverse limit $X$ in $\Cat$, then we denote this inverse limit by $X=\varprojlim_{\alpha\in A} X_\alpha$.  By the universality property, if an inverse limit exists, then its is unique up to $\Cat$-isomorphisms. 
    \item[(ii)] Let $(X_\alpha)_{\alpha\in A}$ be a directed family of objects $X_\alpha\in \Cat$ such that there is a family of $\Cat$-morphisms $f_{\alpha,\beta}\colon X_\alpha\to X_\beta$ for all $\alpha\leq \beta$ satisfying 
    \begin{itemize}
        \item $f_{\alpha,\alpha}=\id_{X_\alpha}$ for all $\alpha$;
        \item $f_{\alpha,\gamma}=f_{\beta,\gamma}\circ f_{\alpha,\beta}$ for all $\alpha\leq \beta\leq \gamma$. 
    \end{itemize}
    Then we call the tuple $(X_\alpha,f_{\alpha,\beta})$ a \emph{direct system} of objects and morphisms in $\Cat$. 
    A $\Cat$-object $X$ is called an \emph{direct limit} of the direct system $(X_\alpha,f_{\alpha,\beta})$ if there exists a family of $\Cat$-morphisms $\iota_\alpha \colon X_\alpha\to X$ for each $\alpha$ satisfying $\iota_\alpha= \iota_\beta\circ f_{\alpha,\beta}$ for all $\alpha\leq \beta$. The pair $(X,\iota_\alpha)$ must satisfy the following universality property. If $(Y,\lambda_\alpha)$ is another pair of a $\Cat$-object $Y$ and $\Cat$-morphisms  $\lambda_\alpha \colon X\to X_\alpha$ for all $\alpha$ such that $\lambda_\alpha=\lambda_\beta\circ f_{\alpha,\beta}$ for all $\alpha\leq \beta$, then there must exist a unique $h\colon X\to Y$ such that the diagram
\[\begin{tikzcd}
	{X_\alpha} &&&& {X_\beta} \\
	&& X \\
	\\
	&& Y
	\arrow["{\iota_\alpha}"', from=1-1, to=2-3]
	\arrow["{\iota_\beta}", from=1-5, to=2-3]
	\arrow["{f_{\alpha,\beta}}", from=1-1, to=1-5]
	\arrow["{\lambda_\alpha}"', from=1-1, to=4-3]
	\arrow["{\lambda_\beta}", from=1-5, to=4-3]
	\arrow["h", dashed, from=4-3, to=2-3]
\end{tikzcd}\]
    commutes for all $\alpha\leq \beta$. 
    If the direct system $(X_\alpha,f_{\alpha,\beta})$ possesses a direct limit $X$ in $\Cat$, then we denote this direct limit by $X=\varinjlim_{\alpha\in A} X_\alpha$. By the universality property, if a direct limit exists, then its is unique up to $\Cat$-isomorphisms. 
\end{itemize}
\end{definition}

\begin{example}
The inverse limit of a $\Set$-inverse system $(X_\alpha,f_{\alpha,\beta})$ is a particular subset of the Cartesian product $\prod_\alpha^\Set X_\alpha$ determined by the family of $\Set$-morphisms $f_{\alpha,\beta}$. 
On the other hand, the direct limit of a $\Set$-direct system $(X_\alpha,f_{\alpha,\beta})$ is a particular quotient of the disjoint union $\coprod_\alpha^\Set X_\alpha$ determined by the family of $\Set$-morphisms $f_{\alpha,\beta}$. 
We have similar constructions in the category $\Group$ of groups (cf.~Example \ref{exp-groups}). 
\end{example}

There is a link between inverse and direct limits: 

\begin{remark}\label{hom-invers}  Let $(X_\alpha, f_{\alpha,\beta})$ be an inverse system of objects and morphisms in some category $\Cat$.  If the inverse limit $\varprojlim_\alpha X_\alpha$ exists, then (after making some obvious canonical identifications) one has the identity
$$ \Hom_\Cat\left( Y \to \varprojlim_\alpha X_\alpha \right) = \varprojlim_\alpha \Hom_\Cat( Y \to X_\alpha )$$
for any $\Cat$-object $Y$; indeed this can be viewed as an alternate definition of inverse limits.  Similarly, if $(X_\alpha, f_{\alpha,\beta})$ is a direct system of objects and morphisms in some category $\Cat$ and its direct limit $\varinjlim_\alpha X_\alpha$ exists, then 
$$ \Hom_\Cat\left( \varinjlim_\alpha X_\alpha \to Y \right) = \varprojlim_\alpha \Hom_\Cat( X_\alpha \to Y ),$$
after making the obvious canonical identifications.
\end{remark}

In measure theory, there are natural product constructions which are not categorical as the following example discusses. 

\begin{example}\label{no-univ-prob}   Let $X = (X,\X,\mu)$ and $Y = (Y,\Y,\nu)$ be $\ConcProb$-spaces (i.e., concrete probability spaces), as defined in Definition \ref{top-prob-cat}.  Then the usual probability space product (also known as the \emph{Fubini product}),
$$ X \times^\ConcProb Y = (X \times Y, \X \otimes \Y, \mu \times \nu)$$
will almost never be categorical.  For instance, if $X=Y=[0,1]$ with Lebesgue measure, then the diagonal set $[0,1]^\Delta \coloneqq \{ (x,x): x \in [0,1]\} \subseteq X \times Y$ equipped with Lebesgue probability measure is another product of $X$ and $Y$ (it projects via $\ConcProb$-morphisms to both $X$ and $Y$), but has no $\ConcProb$-morphism to $X \times^\ConcProb Y$.  Indeed, categorical products almost never exist in $\ConcProb$, because of the non-uniqueness of joinings. In fact, the area of optimal transport would be completely trivial if there existed categorical products in $\ConcProb$! 
\end{example}

The lack of categorical products in probabilistic categories is dually reflected in the lack of categorical coproducts in corresponding categories of tracial commutative $C^*$- and von Neumann algebras.
However in both cases there are natural notions of products resp. coproducts. In the probabilistic categories, these are the previously mentioned Fubini products (a similar construction is available in the category of probability algebras, see Remark \ref{prob-prod}), and in the algebraic categories, we have the dual notion of tensor products. 
The formalism of \emph{(symmetric) monoidal categories} allows to capture these non-categorical product (resp. coproduct) constructions, and \emph{monoidal functors} help to relate them. 

\begin{definition}[Symmetric monoidal categories and functors]\label{def-monoidal}
Let $\Cat$ be a category. 
\begin{itemize}
    \item[(i)] A \emph{symmetric monoidal structure} on a category $\mathcal{C}$ is defined by the following data: 
\begin{itemize}
    \item[(1)] A bifunctor $\otimes\colon \mathcal{C}\times \mathcal{C}\to \mathcal{C}$ called the \emph{tensor product};
    \item[(2)] an object $\mathtt{I}$ called the \emph{identity object} or \emph{unitor};
    \item[(3)] and four natural isomorphisms, called the \emph{structure isomorphisms}, satisfying the following coherence conditions:
    \begin{itemize}
        \item[(a)] the \emph{associator} $\alpha$ with components $$\alpha_{X,Y,Z}\colon (X\otimes Y)\otimes Z \to X\otimes (Y\otimes Z)$$ satisfying the pentagon identity which can be expressed via the commutative diagram
    \begin{center}
    \begin{tikzcd}
	{} & {(X\otimes Y)\otimes (W\otimes Z)} & {}  \\
	 {X\otimes (Y\otimes (W\otimes Z))}  & {} & {((X\otimes Y) \otimes W) \otimes Z}  	\\
	 {X\otimes ((Y\otimes W)\otimes Z)} & {} &  {(X\otimes (Y\otimes W))\otimes Z} \\
	\arrow["{\alpha_{X,Y,W\otimes Z}}", from=2-1, to=1-2]
	\arrow["{\alpha_{X\otimes Y,W,Z}}", from=1-2, to=2-3]
	\arrow["{\alpha_{X,Y,W}\otimes \id_Z}"', from=3-3, to=2-3]
	\arrow["{\id_X\otimes \alpha_{Y,W,Z}}"', from=2-1, to=3-1]
	\arrow["{\alpha_{X,Y\otimes W,Z}}"', from=3-1, to=3-3]
    \end{tikzcd}
    \end{center}
        \item[(b)] the \emph{left unitor} $\lambda$ with components $$\lambda_X\colon \mathtt{I}\otimes X\to X$$ and the \emph{right unitor} $\rho$ with components $$\rho_X\colon X\otimes \mathtt{I}\to X$$ satisfying the triangle identity which can be expressed via the commutative diagram
        \begin{center}
              \begin{tikzcd}
	{A\otimes (\mathtt{I}\otimes B)} && {(A\otimes \mathtt{I})\otimes B} \\
	\\
	& {A\otimes B}
	\arrow["{\id_X\otimes \lambda_Y}"', from=1-1, to=3-2]
	\arrow["{\rho_X\otimes \id_Y}", from=1-3, to=3-2]
	\arrow["{\alpha_{X,\mathtt{I},Y}}", from=1-1, to=1-3]
\end{tikzcd}
\end{center}
        \item[(c)] the \emph{braiding} $\beta$ with components $$\beta_{X,Y}\colon X\otimes Y\to Y\otimes X$$ satisfying the hexagon identity which can be expressed via the commutative diagram
        \begin{center}
            \begin{tikzcd}
	{(X\otimes Y)\otimes Z} && {X\otimes (Y \otimes Z)} && {(Y\otimes Z)\otimes X} \\
	\\
	{(Y\otimes X)\otimes Z} && {Y \otimes (X \otimes Z)} && {Y \otimes (Z \otimes X)}
	\arrow["{\alpha_{X,Y,Z}}", from=1-1, to=1-3]
	\arrow["{\beta_{X,Y\otimes Z}}", from=1-3, to=1-5]
	\arrow["{\alpha_{Y,Z,X}}", from=1-5, to=3-5]
	\arrow["{\beta_{X,Y\otimes \mathtt{id}_Z}}"', from=1-1, to=3-1]
	\arrow["{\alpha_{Y,X,Z}}"', from=3-1, to=3-3]
	\arrow["{\mathtt{id}_Y\otimes \beta_{X,Z}}"', from=3-3, to=3-5]
\end{tikzcd}
        \end{center}
        Moreover, we require that $$\beta_{Y,X}\circ \beta_{X,Y}=\id_{X\otimes Y}.$$
    \end{itemize} 
    A \emph{symmetric monoidal category} is a tuple $$(\mathcal{C},\otimes,\mathtt{I})=(\mathcal{C},\otimes,\mathtt{I},\alpha,\lambda,\rho,\beta)$$ where $\mathcal{C}$ is a category equipped with a symmetric monoidal structure given by the data  $\otimes,\mathtt{I},\alpha,\lambda,\rho,\beta$. 
    We call $X\otimes Y$ the \emph{tensor product} of $X,Y\in \Cat$. 
    If $A$ is a finite index, and $X_\alpha,\alpha\in A$ are $\Cat$-objects, then we denote by $\bigotimes_{\alpha\in A} X_\alpha=\bigotimes_{\alpha\in A}^\Cat X_\alpha$ their tensor product. 
    \end{itemize}
    \item[(ii)] A symmetric monoidal category $(\mathcal{C},\otimes,\mathtt{I})$ is said to be \emph{semicartesian} (resp.~\emph{cosemicartesian}) if the unitor $\mathtt{I}$ is a terminal (resp.~initial) object. 
    \item[(iii)] Let $(\mathcal{C},\otimes_\mathcal{C},\mathtt{I}_\mathcal{C},\alpha_\mathcal{C},\lambda_\mathcal{C},\rho_\mathcal{C},\beta_\mathcal{C})$ and $(\mathcal{D},\otimes_\mathcal{D},\mathtt{I}_\mathcal{D},\alpha_\mathcal{D},\lambda_\mathcal{D},\rho_\mathcal{D},\beta_\mathcal{D})$ be symmetric monoidal categories. 
    A functor $F\colon \mathcal{C}\to \mathcal{D}$ is said to be a \emph{braided monoidal functor} if there are 
    \begin{itemize}
        \item[(1)] a natural transformation from the bifunctor $\otimes_\mathcal{D}\circ F\times F$ to the bifunctor $F\circ \otimes_\mathcal{C}$ with components denoted by $\phi_{X,Y}$,
        \item[(2)] and a morphism $\phi\colon \mathtt{I}_\mathcal{D}\to F(\mathtt{I}_\mathcal{C})$ in $\mathcal{D}$,
    \end{itemize}  
    such that for all $X,Y,Z\in \mathcal{C}$ the following diagrams commute in $\mathcal{D}$:
    \begin{center}
        \begin{tikzcd}
	{(F(X) \otimes_\mathcal{D} F(Y))\otimes_\mathcal{D} F(Z)} & {F(X) \otimes_\mathcal{D} (F(Y) \otimes_\mathcal{D} F(Z))} \\
	\\
	{F(X\otimes_\mathcal{C}Y)\otimes_\mathcal{D}F(Z)} & {F(X)\otimes_\mathcal{D} F(X\otimes Z)} \\
	\\
	{F((X\otimes_\mathcal{C}Y)\otimes_\mathcal{C}Z)} & {F(X\otimes_\mathcal{C}(Y\otimes_\mathcal{C}Z))}
	\arrow["{\alpha_\mathcal{D}}", from=1-1, to=1-2]
	\arrow["{\mathtt{id}_X\otimes_\mathcal{D}\phi_{Y,Z}}", from=1-2, to=3-2]
	\arrow["{\phi_{X,Y}\otimes_\mathcal{D}\mathtt{id}_Z}"', from=1-1, to=3-1]
	\arrow["{\phi_{X\otimes_\mathcal{C} Y,Z}}"', from=3-1, to=5-1]
	\arrow["{\phi_{X, Y\otimes_\mathcal{C} Z}}", from=3-2, to=5-2]
	\arrow["{F\circ \alpha_\mathcal{C}}"', from=5-1, to=5-2]
\end{tikzcd}

    \begin{tikzcd}
	{F(X)\otimes_\mathcal{D}\mathtt{I}_\mathcal{D}} & {F(X)\otimes_\mathcal{D}F(\mathtt{I}_\mathcal{C})}   \\
	{} \\
	{F(X)} & {F(X\otimes_\mathcal{C} \mathtt{I}_\mathcal{C})}   \\
	\arrow["{\phi_{X,\mathtt{I}_\mathcal{C}}}", from=1-2, to=3-2]
	\arrow["{\mathtt{id}_{F(X)}\otimes_\mathcal{D}\phi}", from=1-1, to=1-2]
	\arrow["{F(\rho_\mathcal{C})}", from=3-2, to=3-1]
	\arrow["{\rho_\mathcal{D}}"', from=1-1, to=3-1]
\end{tikzcd}

    \begin{tikzcd}
	 {\mathtt{I}_\mathcal{D}\otimes_\mathcal{D}F(Y)} & {F(\mathtt{I}_\mathcal{C})\otimes_\mathcal{D} F(Y)} \\
	{} \\
	 {F(Y)} & {F(\mathtt{I}_\mathcal{C}\otimes_\mathcal{C}Y)} \\
	\arrow["{\phi\otimes_\mathcal{D}\mathtt{id}_{F(Y)}}", from=1-1, to=1-2]
	\arrow["{\lambda_\mathcal{D}}"', from=1-1, to=3-1]
	\arrow["{\phi_{\mathtt{I}_\mathcal{C},Y}}", from=1-2, to=3-2]
	\arrow["{F(\lambda_\mathcal{C})}", from=3-2, to=3-1]
\end{tikzcd}
\end{center}
\[
\begin{tikzcd}
	{F(X)\otimes_\mathcal{D}F(Y)} && {F(Y)\otimes_\mathcal{D}F(X)} \\
	&&  \\
	{F(X\otimes_\mathcal{C}Y)} && {F(Y\otimes_\mathcal{C}X)}
	\arrow["{\beta_\mathcal{D}}", from=1-1, to=1-3]
	\arrow["{F(\beta_\mathcal{C})}", from=3-1, to=3-3]
	\arrow["{\phi_{X,Y}}"', from=1-1, to=3-1]
	\arrow["{\phi_{Y,X}}", from=1-3, to=3-3]
\end{tikzcd}
    \]
    \item[(iv)] A functor from a semicartesian symmetric monoidal category $(\c{C},\otimes_{\c{C}},\mathtt{I}_{\c{C}})$ to another one $(\c{D},\otimes_{\c{D}},\mathtt{I}_{\c{D}})$ is said to be \emph{braided} if it is braided as a functor of symmetric monoidal categories and additionally the following diagrams commute for all $X,Y\in \mathcal{C}$ (where $\phi_{X,Y}$ are the components of the natural transformation in the definition of a braided functor above):
\[
    \begin{tikzcd}
	{F(X)\otimes_\mathcal{D}F(Y)} &    \\
	{} \\
	{F(X)} & {F(X\otimes_\mathcal{C} Y)}   \\
	\arrow["{\phi_{X,Y}}", from=1-1, to=3-2]
	\arrow["{\pi_{F(X)}}", from=1-1, to=3-1]
	\arrow["{F(\pi_X)}", from=3-2, to=3-1]
\end{tikzcd}
    \begin{tikzcd}
	  & {F(X)\otimes_\mathcal{D} F(Y)} \\
	{} \\
	{F(X \otimes_\mathcal{C} Y)}  & {F(Y)} \\
	\arrow["{\pi_{F(Y)}}", from=1-2, to=3-2]
	\arrow["{\phi_{X,Y}}"', from=1-2, to=3-1]
	\arrow["{F(\pi_Y)}", from=3-1, to=3-2]
\end{tikzcd}
\]
Dually, one can define braided functors between cosemicartesian symmetric monoidal categories, the details of which we leave to the reader. 
    \end{itemize}
    If $F\colon \c{C}\to \c{D}$ is braided monoidal functor between symmetric monoidal categories $\c{C}$ and $\c{D}$, then we say that tensor products in $\c{C}$ are \emph{related to} tensor products in $\c{D}$ (with respect to $F$). 
    If the natural transformation with components $\phi_{X,Y}$ is a natural monomorphism (resp.~natural isomorphism), then we say that tensor products in $\c{C}$ are \emph{contained in} (resp.~\emph{agree with}) tensor products in $\c{D}$ (with respect to $F$). 
\end{definition}

\begin{example}[Cartesian and cocartesian monoidal categories]\label{exp-cartesian}
Let $\Cat$ be a category with finite categorical products. In particular, $\Cat$ has a terminal object, namely the empty categorical product (this is a consequence of the universal property for the empty categorical product). 
We can equip $\Cat$ with a semicartesian symmetric monoidal structure, where the tensor product is the categorical product, the unitor is the terminal object, and the structure isomorphisms are defined in the obvious way.   
Semicartesian symmetric monoidal categories arising from categories with finite categorical products are called \emph{Cartesian monoidal categories}. 

Dually, if $\Cat$ is category with finite categorical coproducts (such a category has always an initial object, namely the empty coproduct), then we can equip $\Cat$ with a cosemicartesian symmetric monoidal structure, where the tensor product is the categorical coproduct, the unitor is the initial object, and the structure isomorphisms are defined in the obvious way.     
Cosemicartesian symmetric monoidal categories arising from categories with finite categorical coproducts are called \emph{cocartesian monoidal categories}.
\end{example}

\begin{remark}
By \cite[Theorem 3.5]{gerhold}, $\Cat$ is semicartesian if and only if there are natural transformations from the functor $-\otimes Y$ to the identity functor $\id_\Cat$ and from the functor $X\otimes -$ to $\id_\Cat$ for all $X,Y\in \Cat$ with components $\pi_X$ and $\pi_Y$, that is, the diagrams 
\begin{equation}\label{eq-semicartesian}
\begin{tikzcd}
	{X\otimes Y} && X && {X\otimes Y} && Y \\
	\\
	{X'\otimes Y} && {X'} && {X\otimes Y'} && {Y'} \\
	&&& {}
	\arrow["{\pi_X}", from=1-1, to=1-3]
	\arrow["{\pi_{X'}}", from=3-1, to=3-3]
	\arrow["{f\otimes \mathtt{id}_Y}"', from=1-1, to=3-1]
	\arrow["f", from=1-3, to=3-3]
	\arrow["{\pi_Y}", from=1-5, to=1-7]
	\arrow["g", from=1-7, to=3-7]
	\arrow["{\mathtt{id}_{X}\otimes g}"', from=1-5, to=3-5]
	\arrow["{\pi_{Y'}}"', from=3-5, to=3-7]
\end{tikzcd}
\end{equation}
commute for all $\Cat$-morphisms $f\colon X\to X'$ and $g\colon Y\to Y'$. 
Moreover, these natural transformations are required to be compatible with the left and right unitor in the obvious way. 
We call the components $\pi_X$ and $\pi_Y$ the \emph{projections} or \emph{marginalizations}. 

Dually, $\Cat$ is cosemicartesian if and only if there are natural transformations from $\id_\Cat$ to $-\otimes Y$  and from $\id_\Cat$ to $X\otimes -$ for all $X,Y\in \Cat$ with components $\iota_X$ and $\iota_Y$, that is, the diagrams 
\begin{equation}\label{eq-cosemicartesian}
\begin{tikzcd}
	 X && {X\otimes Y} && Y &&  {X\otimes Y}\\
	\\
	{X'} &&  {X'\otimes Y} && {Y'} &&  {X\otimes Y'}\\
	&&& {}
	\arrow["{\iota_X}", from=1-1, to=1-3]
	\arrow["{\iota_{X'}}", from=3-1, to=3-3]
	\arrow["{f\otimes \mathtt{id}_Y}", from=1-3, to=3-3]
	\arrow["f"', from=1-1, to=3-1]
	\arrow["{\iota_Y}", from=1-5, to=1-7]
	\arrow["g"', from=1-5, to=3-5]
	\arrow["{\mathtt{id}_{X}\otimes g}", from=1-7, to=3-7]
	\arrow["{\iota_{Y'}}"', from=3-5, to=3-7]
\end{tikzcd}
\end{equation}
commute for all $\Cat$-morphisms $f\colon X\to X'$ and $g\colon Y\to Y'$. 
Moreover, these natural transformations are required to be compatible with the left and right unitor in the obvious way. We call the components $\iota_X$ and $\iota_Y$ the \emph{inclusions}. 
\end{remark}

Symmetric monoidal categories help to formalize \emph{finite} categorical and non-categorical notions of products. To formalize infinite products in semicartesian (resp.~semicocartesian) categories, we can combine finite monoidal tensor products with inverse (resp.~direct) limits.

\begin{definition}[Infinite tensor products]\label{def-infinit-prod}
(cf.~\cite[Definition 3.1]{fritz}) 
Let $(\Cat,\otimes,\mathtt{I})$ be a semicartesian symmetric monoidal category. Let $(X_j)_{j\in J}$ be a family of objects in $\Cat$. Let $A$ be the directed set of finite subsets of $J$ ordered by inclusion. For $\alpha \in A$, put $X_F\coloneqq \bigotimes_{\alpha\in F} X_\alpha$, and for $\alpha\leq \beta$ in $A$, consider the $\Cat$-morphism $f_{\alpha,\beta}\colon X_{\beta}\to X_\alpha$ (given by \eqref{eq-semicartesian}). The pair $(X_\alpha,f_{\alpha,\beta})$ forms an inverse system in $\Cat$. 
An inverse limit $\varprojlim_\alpha X_\alpha$, if it exists, is said to be an \emph{infinite tensor product} of the $X_j$, if it preserves the functor $-\otimes Y$ for every $\Cat$-object $Y$, that is,  
$$(\varprojlim_\alpha X_\alpha)\otimes Y = \varprojlim_\alpha (X_\alpha\otimes Y)$$
(up to canonical identifications).  

Dually, we can define infinite tensor products in a cosemicartesian symmetric monoidal category replacing inverse systems and limits by direct systems and limits.  

We denote by $\bigotimes_{j\in J} X_j=\bigotimes^\Cat_{j\in J} X_j$ the infinite tensor product of the $X_j$. 

Let $(\c{C},\otimes_\mathcal{C},\mathtt{I}_\mathcal{C})$ and  $(\c{D},\otimes_\mathcal{D},\mathtt{I}_\mathcal{D})$ be semicartesian (resp.~cocartesian) symmetric monoidal categories both admitting infinite tensor products.  
A braided functor $F\colon \mathcal{C}\to \mathcal{D}$ is said to \emph{relate} infinite tensor products if $F$ preserves inverse limits (resp.~direct limits). We can then also speak of that infinite tensor products in $\mathcal{C}$ are \emph{contained in} (\emph{agree with}) infinite tensor products in $\mathcal{D}$. 
\end{definition}

\begin{remark}\label{rem-cartesian}
Notice that in a category where infinite categorical products (resp.~coproducts) exist, the infinite tensor products in the associated cartesian symmetric monoidal category coincide with infinite categorical products (resp.~coproducts).  
\end{remark}

\begin{example}\label{concprod-ex} It follows from the construction of (Fubini type) product measure spaces in \cite[Chapter 3.5]{bogachev2006measure} or \cite[Chapter 254]{fremlinvol2} (note that no separability or standard Borel hypotheses are needed for these product space constructions on arbitrary probability spaces) that $\ConcProb$ has the structure of a cocartesian symmetric monoidal category which admits infinite tensor products.  However, this tensor product does not give $\ConcProb$ the structure of a Cartesian symmetric monoidal category, as already noted in Example \ref{no-univ-prob}.
\end{example}

If $\Cat$ is a semicartesian symmetric monoidal category and $\Cat'$ cartesian monoidal category, then any functor $\Func \colon \Cat \to \Cat'$ relates the two tensor products; however, the two tensor products only agree with respect to $\Func$ if one has the relation
\begin{equation}\label{equiv-prod}
\Hom_{\Cat'}\left( Y \to \Func\left( \bigotimes_{\alpha \in A}^\Cat X_\alpha \right)
\right) = \prod^\Set_{\alpha \in A} \Hom_{\Cat'}( Y \to \Func(X_\alpha) )
\end{equation}
for all $\Cat'$-objects $Y$ and $\Cat$-objects $X_\alpha$, in the sense that the natural map from the left-hand side to the right-hand side is bijective.  Similarly, $\Cat$-tensor product is only contained in the $\Cat'$-tensor product if one has
$$ \Hom_{\Cat'}\left( Y \to \Func\left( \bigotimes_{\alpha \in A}^\Cat X_\alpha \right)
\right) \subseteq \prod^\Set_{\alpha \in A} \Hom_{\Cat'}( Y \to \Func(X_\alpha) )$$
for all $\Cat'$-objects $Y$ and $\Cat$-objects $X_\alpha$, in the sense that the natural map from the left-hand set to the right-hand set is injective.  There are similar equivalences for categorical coproducts which we leave to the reader. 

The following examples may help illustrate these relations:

\begin{example}\label{exp-grp-prod}  The categorical product $\prod^\Group$ agrees with the categorical product $\prod^\Set$ (the direct product of groups uses the Cartesian product of the underlying sets), but the categorical coproduct $\coprod^\Group$, while canonically related to the categorical $\Set$-coproduct, does not agree with it or even contain it (the canonical $\Set$-morphism between the two coproducts maps all of the identity elements of each group to a single point).  The categorical coproduct $\coprod^{\SigmaAlg}$ does not agree with the categorical coproduct $\coprod^\Bool$, but does at least contain it (for instance, if $\X, \X'$ are $\SigmaAlg$-algebras, then the categorical coproduct  $\X \otimes^{\SigmaAlg} \X'$ contains the categorical coproduct $\X \otimes^\Bool \X'$ as a Boolean subalgebra). 
\end{example}

We review a special case of the construction of Grothendieck categories, called action categories, which will be useful for us to associate to a category of spaces (resp.~algebras) a corresponding category of probability spaces (resp.~tracial algebras). 

\begin{definition}[Action category]\label{def-action}
Let $\Cat$ be a category and $\mathfrak{P}\colon \Cat\to \Set$ be a functor. 
The objects of the associated \emph{action category} $\Cat\ltimes \mathfrak{P}$ consist of pairs $(X,\mu)$, where $X\in \Cat$ and $\mu\in \mathfrak{P}(X)$, and a $\Cat\ltimes \mathfrak{P}$-morphism from a  $\Cat\ltimes \mathfrak{P}$-object $(X,\mu)$ to another  $\Cat\ltimes \mathfrak{P}$-object $(Y,\nu)$ is a $\Cat$-morphism $f\colon X\to Y$ with the property that $\mathfrak{P}(f)(\mu)=\nu$.  
\end{definition}

\begin{remark}\label{tensor-action}
Suppose that $\Cat$ is a Cartesian symmetric monoidal category and $\mathfrak{P}\colon \Cat\to \Set$ is a braided functor such that categorical products in $\Cat$ agree with product of sets with respect to $\mathfrak{P}$. By chasing definitions, one can verify that the action category $\Cat\ltimes \mathfrak{P}$ has the structure of a semicartesian symmetric monoidal category with the induced tensor product $(X,\mu)\otimes^{\Cat\ltimes \mathfrak{P}} (Y,\nu)\coloneqq (X\otimes^\Cat Y,\mu\times \nu)$, where $\mu\times \nu$ is the unique element in $\mathfrak{P}(X\otimes^\Cat Y)$ which corresponds to $(\mu,\nu)\in \mathfrak{P}(X)\times^\Set \mathfrak{P}(Y)$. 

Moreover if $\Cat$ admits arbitrary categorical products and $\mathfrak{P}$ preserves inverse limits (resp.~direct limits), then $\Cat\ltimes \mathfrak{P}$ admits infinite tensor products with respect to the induced tensor product.    
\end{remark}

\begin{example}\label{concprob-ex}
Consider the category of concrete measurable spaces $\ConcMes$. 
The functor $\mathtt{Prb}\colon \ConcMes\to \Set$ sends an object $(X,\Sigma_X)\in \ConcMes$ to the set $\mathtt{Prb}(X,\Sigma_X)$ of probability measures on $(X,\Sigma_X)$ and a morphism $f\colon (X,\Sigma_X)\to (Y,\Sigma_Y)$ in $\ConcMes$ to the pushforward map $\mathtt{Prb}(f)\colon \mathtt{Prb}(X,\Sigma_X)\to \mathtt{Prb}(Y,\Sigma_Y)$ defined by $\mathtt{Prb}(f)(\mu)=f_\ast\mu$. 
The action category $\Cat\ltimes \mathfrak{P}$ can be identified with the category $\ConcProb$ of concrete probability spaces and measure-preserving maps. 
Since $\ConcMes$ admits arbitrary categorical products, $\ConcProb$ admits infinite tensor products (cf.~Remark \ref{concprod-ex}).  
\end{example}

\section{A counterexample to a claim of Halmos}\label{halmos-sec}

Suppose that $X = (\Inc(X)_\SigmaAlg, \mu_X)$ is a $\ProbAlg$-space, and let $\Gamma$ be a discrete group acting on $X$ by $\ProbAlg$-isomorphisms, thus for each $\gamma \in \Gamma$ one has an $\ProbAlg$-isomorphism $T^\gamma \colon X \to X$ such that $T^\gamma T^{\gamma'} = T^{\gamma \gamma'}$ for $\gamma,\gamma' \in \Gamma$.  We can form the invariant factor $\Inv(X)$ by replacing the $\SigmaAlg$-algebra $\Inc(X)_\SigmaAlg$ by the invariant subalgebra
$$ \Inc(X)_\SigmaAlg^\Gamma \coloneqq \{ E \in \Inc(X)_\SigmaAlg: (T^\gamma)^* E = E \hbox{ for all } \gamma \in \Gamma \}.$$
There is then an obvious factor $\ProbAlg$-morphism $\pi \colon X \to \Inv(X)$.  Applying the canonical model functor $\Stone$, we obtain a $\CHProb$-morphism $\Stone(\pi) \colon \Stone(X) \to \Stone(\Inv(X))$, and applying canonical disintegration (Theorem \ref{canon-disint}), we may disintegrate $\mu_{\Stone(X)}$ into probability measures $\mu_y$ supported on fibers $\Stone(\pi)^{-1}(\{y\})$ for all $y \in \Stone(\Inv(X))$.  From the uniqueness of the disintegration it follows that the measures $\mu_y$ are invariant with respect to the continuous action $\gamma \mapsto \Stone(T^\gamma)$.  It was asserted without proof in \cite[\S 4]{halmos-dieudonne} that these measures are furthermore ergodic with respect to this action, that is to say all invariant Baire-measurable subsets on $\Stone(X)$ have measure $1$ or $0$ with respect to $\mu_y$.  The purpose of this appendix is to supply a counterexample to this claim in which $\Gamma$ is uncountable.  The failure is somewhat dramatic in the sense that \emph{every} fiber measure $\mu_y$ is non-ergodic.  We are unable to determine whether the claim may still hold for countable $\Gamma$, either for all $y$ or for almost all $y$.

To build the counterexample, we first define the $\CHProb$-space
$$ Y \coloneqq \Stone([0,1])$$
where the unit interval $[0,1]$ is given the usual Lebesgue measure.  Clearly for any natural number $n$, $[0,1]$ can be partitioned into $n$ measurable subsets of measure $1/n$, so the same is true for $\Stone([0,1])$.  In particular we see that every point $y$ in $Y$ has zero outer measure, in the sense that it can be contained in sets of arbitrarily small measure.

Next, we define the product space
$$ X \coloneqq Y \times^\CHProb \prod_{y \in Y}^\CHProb \{-1,1\}$$
where $\{-1,1\}$ is the discrete two-element multiplicative group with probability Haar measure.  In particular, each element of $X$ takes the form
$$ (y_1, (\alpha_{y_2})_{y_2 \in Y})$$
with $y_1 \in Y$ and $\alpha_{y_2} \in \{-1,+1\}$ for $y_2 \in Y$.  We abbreviate $\prod_{y \in Y}^\CHProb \{-1,1\}$ as $\{-1,1\}^Y$; this is a compact Hausdorff abelian group equipped with Haar probability measure $\mu_{\{-1,1\}^Y}$.  For any $f \in L^1(X)$ we have a conditional expectation $\E(f|Y) \in L^1(Y)$ defined by
$$ \E(f|Y)(y) \coloneqq \int_{\{-1,1\}^Y} f(y,\alpha)\ d \mu_{\{-1,1\}^Y}(\alpha).$$

Observe that for any $h \in \CFunc(Y)$ and any finite subset $I \subseteq Y$, the function $h \otimes \prod_{y \in I} \epsilon_y \colon X \to \C$ defined by
$$ h \otimes \prod_{y \in I} \epsilon_y\left( y_1, (\alpha_{y_2})_{y_2 \in Y}\right) \coloneqq h(y_1) \prod_{y \in I} \alpha_y $$
is an element of $\CFunc(X)$.  If $I$ is empty we write $h \otimes \prod_{y \in I} \epsilon_y$ as $h \otimes 1$.  From the Stone--Weierstrass theorem we see that the space of finite linear combinations of such functions $h \otimes \prod_{y \in I} \epsilon_y$ is dense in $\CFunc(X)$ in the uniform topology, and hence also dense in $L^2(X)$ in the $L^2$ topology.

We let $\Gamma$ be the discrete abelian multiplicative group
$$ \Gamma = \Hom_\CH( Y \to \{-1,1\} )$$
of continuous maps $\gamma \colon Y \to \{-1,+1\}$ from $Y$ to $\{-1,1\}$.  We define a $\CHProb$-action $T^\gamma \colon X \to X$ of $\Gamma$ on $X$ by the formula
$$ T^\gamma( y_1, (\alpha_{y_2})_{y_2 \in Y} ) \coloneqq 
(y_1, (\alpha_{y_2} \gamma(y_1) \gamma(y_2))_{y_2 \in Y} ).$$
It is not difficult to verify (using Fubini's theorem) that this is a $\CHProb$-action.

The intuition here is that the set
$$ \{ (y_1, (\alpha_{y_2})_{y_2 \in Y}): \alpha_{y_1} = +1 \}$$
appears to be an invariant subset of $X$ ``of measure $1/2$''.  However, it turns out that this set is not Baire measurable and so will not show up in the invariant factor of $X_\ProbAlg$.  However, when passing to the canonical models, there are analogues of this invariant set which are measurable with respect to the individual $\mu_y$ and can be used to contradict the ergodicity of these measures.

We turn to the details.  For any $h \in \CFunc(Y)$ we see that $h \otimes 1$ is an invariant element of $L^2(X)$; on taking closures we see that $L^2(Y)$ is contained in the invariant factor of $L^2(X)$.  We claim that this is the entire invariant factor.  To show this, it suffices by the Alaoglu-Birkhoff theorem \cite{ab40abstract} to check that for every $f \in L^2(X)$, that the element of minimal norm in the closed convex hull of the orbit $\{ f \circ T^\gamma: \gamma \in \Gamma \}$ is equal to $\E(f|Y)$.  By density and linearity it suffices to verify this when $f$ is of the form $f = h \otimes \prod_{y \in I} \epsilon_I$ for some $h \in \CFunc(Y)$ and finite $I \subseteq Y$.  If $I$ is empty this follows from the invariance of the $h \otimes I$, so now suppose that $I$ is non-empty.  Then $\E(f|Y)=0$ and the objective is now to show that convex combinations of $f \circ T^\gamma$ can have arbitrarily small $L^2(X)$ norm.

Direct calculation shows that for any $\gamma \in \Gamma$ we have
$$
T^\gamma f = \left(\prod_{y \in I} \gamma(y)\right) (h\gamma^{|I|}) \otimes \prod_{y \in I} \epsilon_I,$$
where $|I|>0$ denotes the cardinality of $I$.  If $|I|$ is even, we can use Urysohn's lemma to choose $\gamma \in \Gamma$ so that $\prod_{y \in I} \gamma(y)=-1$, and thus 
$$ T^\gamma\left(h \otimes \prod_{y \in I} \epsilon_y\right) = - h \otimes \prod_{y \in I} \epsilon_y,$$
giving the claim in this case.  If instead $|I|$ is odd, we use the fact that $I$ has zero outer measure and Urysohn's lemma to find $\gamma \in \Gamma$ such that $\gamma=+1$ on a neighbourhood of $I$ of arbitrarily small measure, and $-1$ otherwise, then we see that the function
$$\frac{1+T^\gamma}{2} \left(h \otimes \prod_{y \in I} \epsilon_y\right)$$ 
is bounded and supported on a set of arbitrarily small measure, hence is arbitrarily small in $L^2(X)$ norm as required.  This establishes that $Y$ is the invariant factor of $X$: $\Inv(X_\ProbAlg) \equiv Y_\ProbAlg$.  Thus the projection map $\pi \colon X_\ProbAlg \to \Inv(X_\ProbAlg)$ can be identified with the obvious projection map from $X$ to $Y$, casted to $\ProbAlg$.

Now we pass to the canonical models $\Stone(X), \Stone(Y)$.  For $y \in \Stone(Y)$, we have from construction that
\begin{equation}\label{stone-e}
\int_{\Stone(X)} f\ d\mu_y = \E(f|Y)(y)
\end{equation}
for $f \in \CFunc(\Stone(X)) \equiv \Linfty(X)$, where we view $\E(f|Y) \in \Linfty(Y)$ as an element of $\CFunc(\Stone(Y))$.  We claim that the function $1 \otimes \epsilon_y \in \Linfty(X) \equiv \CFunc(\Stone(X))$ is invariant but non-constant in $L^\infty(\Stone(X), \mu_y)$, which will demonstrate the non-ergodicity of $\mu_y$.

 To do this it will suffice to establish the identities
\begin{align*}
\int_{\Stone(X)} (1 \otimes \epsilon_y)\ d\mu_y &= 0\\
\int_{\Stone(X)} |(1 \otimes \epsilon_y)|^2\ d\mu_y &= 1 \\
\int_{\Stone(X)} \left|(1 \otimes \epsilon_y) \circ T^\gamma - (1 \otimes \epsilon_y)\right|^2\ d\mu_y &= 0
\end{align*}
for all $\gamma \in \Gamma$.  
But direct calculation shows that
\begin{align*}
\E(1 \otimes \epsilon_y|Y)(y') &= 0 \\
\E\left(|1 \otimes \epsilon_y|^2 |Y\right)(y') &= 1 \\
\E\left(\left|(1 \otimes \epsilon_y) \circ T^\gamma - (1 \otimes \epsilon_y)\right|^2 |Y\right)(y') &= |\gamma(y)-\gamma(y')|^2
\end{align*}
for any $y' \in Y$, and the claim now follows from \eqref{stone-e}.

\subsection*{Acknowledgements}
AJ was supported by DFG-research fellowship JA 2512/3-1. 
TT was supported by a Simons Investigator grant, the James and Carol Collins Chair, the Mathematical Analysis \& Application Research Fund Endowment, and by NSF grant DMS-1764034.  We thank Balint Farkas, Tobias Fritz, Markus Haase, and David Roberts for helpful comments and references. We are indebted to an anonymous referee for a careful reading and many useful comments and suggestions.


\normalsize

\end{document}